\newtheorem{assumption}{Assumption}
\numberwithin{equation}{section}
\renewcommand{\paragraph}[1]{\subsubsection{#1}}
\def\transpose{{\hspace{-1pt}\top}}
\def\R{\mathbb{R}}
\def\dx{\,{\rm d}x}
\def\Sad{\,{\rm S}^{\rm a}_{\rm d}}
\def\Sas{\,{\rm S}^{\rm a}_{\rm s}}
\def\Scd{\rm{S}^{\rm c}_{\rm d}}
\def\Scs{\rm{S}^{\rm c}_{\rm s}}
\def\<{\langle}
\def\>{\rangle}
\def\conv{{\rm conv}}
\def\mF{{\sf F}}
\def\mG{{\sf G}}
\def\a{{\rm a}}
\def\b{{\rm b}}
\def\c{{\rm c}}
\def\bqcf{{\rm bqcf}}
\newcommand{\weakto}{\rightharpoonup}
\def\nn{{\rm nn}}
\def\calL{\mathcal{L}}
\def\calR{\mathcal{R}}
\newcommand{\triple}{(\rho \alpha\beta)}
\newcommand{\tripleTau}{(\tau\gamma\delta)}
\newcommand{\omeRho}{\omega_{\rho}}
\newcommand{\barZeta}{\bar{\zeta}}
\newcommand{\ZZh}{Z_h}
\newcommand{\zzhalpha}{{z_h}_{\alpha}}
\newcommand{\zzhbeta}{{z_h}_{\beta}}
\newcommand{\la}{\langle}
\newcommand{\ra}{\rangle}
\definecolor{docol}{rgb}{0, 0, 0}
\definecolor{cocol}{rgb}{0,0, 0}
\definecolor{ascol}{rgb}{0, 0,0}
\newcommand{\helen}[1]{{\color{black}#1}}
\newcommand{\co}[1]{{\color{cocol} #1}}
\newcommand{\dao}[1]{{\color{docol} #1}}
\def\Xint#1{\mathchoice
{\XXint\displaystyle\textstyle{#1}}%
{\XXint\textstyle\scriptstyle{#1}}%
{\XXint\scriptstyle\scriptscriptstyle{#1}}%
{\XXint\scriptscriptstyle\scriptscriptstyle{#1}}%
\!\int}
\def\XXint#1#2#3{{\setbox0=\hbox{$#1{#2#3}{\int}$ }
\vcenter{\hbox{$#2#3$ }}\kern-.6\wd0}}
\def\dashint{\Xint-}
\title{Force-Based Atomistic/Continuum Blending for Multilattices}\thanks{DO was supported by the NSF PIRE Grant OISE-0967140 and NSF RTG program DMS-1344962.  XL was supported by the Simons Collaboration Grant with Award ID: 426935.  CO was supported by ERC Starting Grant 335120. }
\author{Derek Olson, Xingjie Li, Christoph Ortner, Brian Van Koten}
\begin{document}

\begin{abstract}
   We formulate the blended force-based quasicontinuum (BQCF) method for
   multilattices and develop rigorous error estimates in terms of the
   approximation parameters: atomistic region, blending region and continuum
   finite element mesh.  Balancing the approximation parameters yields a
   convergent atomistic/continuum multiscale method for multilattices with point
   defects, including a rigorous convergence rate in terms of the computational
   cost. The analysis is illustrated with numerical results for a Stone--Wales
   defect in graphene.
\end{abstract}

\maketitle

% We present an analysis of the blended force-based quasicontinuum (BQCF) method for point defects in multilattices and augment this analysis with numerical results for a Stone--Wales defect in graphene.  This analysis gives the first rigorous analysis for an atomistic-to-continuum method valid for defects in multilattices, which comprise those materials with more than one atom per unit cell. \commentco{probably want a more ``conceptual'' paragraph first - need to think about it}
% \end{abstract}

\section{Introduction}

A full twenty years has passed since the original proposal of the quasicontinuum method~\cite{ortiz1996quasicontinuum} captivated the materials science community with the potential to model material phenomena spanning vastly different length scales.  The quasicontinuum (QC) method was among the first of the so-called atomistic-to-continuum (AtC) coupling algorithms which sought to bridge the gap between length scales from the nano to macroscale.  A remarkable number of these AtC methods have been proposed since (see e.g.~\cite{tadmor2011,miller2009,acta.atc} for a thorough discussion of many of these), and recently a mathematical framework has begun to emerge to analyze and compare several of these methods for defects in crystalline materials comprised of a Bravais lattice.  Indeed, all three of the blended force-based quasicontinuum method (BQCF), blended energy-based quasicontinuum (BQCE), and blended ghost force correction (BGFC) methods have recently been analyzed in the context of a single defect in a two or three dimensional Bravais lattice~\cite{blended2014,OrtnerZhang2014bgfc} as has the optimization-based AtC approach of~\cite{olson2015}.  Analyses in two and three dimensional Bravais lattices also exist for the AtC method of~\cite{lu2013}, but this has not yet been extended to allow for defects.  Meanwhile, the methods~\cite{MakridakisMitsoudisRosakis2012,shapeev_2011,shapeev2012} have been shown to be consistent (or free of ghost forces) for pair potential interactions only.

In the present work, we resolve the long-standing challenge to develop a rigorous numerical analysis for AtC methods in the context of \textit{multilattices}, which allows for more than one atom to be present in the unit cell of the crystal. This description includes important materials such as hcp metals, diamond structures, and recently discovered 2D materials such as graphene and hexagonal boron nitride.

Concretely, we generalise the formulation and analysis of the blended force-based quasicontinuum (BQCF) method. Our main result is that, for a point defect in a homogeneous host crystal, the BQCF method for multilattices exhibits the same rate of convergence as in the Bravais lattice case. This is in sharp contrast with the blended energy-based quasicontinuum method for which a reduced convergence rate is expected in the multilattice setting \cite{OrtnerZhang2014bgfc}.

The present work represents the first analysis that has been undertaken that remains valid for an AtC method which permits defects in a two or three dimensional multilattice. Even analyses of AtC methods for defect-free multilattices remain extremely sparse:  the homogenized QC method~\cite{AbdulleLinShapeevII,AbdulleLinShapeev2012}, for example, only allows for dead load external forces while the cascading Cauchy--Born method was rigorously analyzed only in one-dimensional multilattices for phase transforming materials~\cite{dobson2007multilattice}.

As its name entails, the BQCF method is a force-based AtC method where a hybrid force operator is constructed instead of a hybrid energy functional~\cite{dobson_esaim,Shenoy:1999a,shilkrot2002coupled,lu2013,Bochev_08_MMS,Bochev_08_OUP}.  The primary advantage of force-based methods is that the forces can easily be defined in a way to avoid spurious interface effects (ghost forces); that is, the defect-free perfect crystal is a bona fide equilibrium configuration of the AtC force operator.  The cost of defining the BQCF method and other force-based methods to be free of ghost forces is that these force fields are no longer conservative, which creates significant challenges in their numerical analysis \cite{dobson2010sharp, lu2014}.  The blended force-based methods, originally studied in~\cite{li2012positive,Bochev_08_MMS,Bochev_08_OUP, lu2013}, seek to overcome this problem by a smooth blending between atomistic and continuum forces over a region called the blending, overlap, or handshake region.  Similar force-based blending methods have also been applied to coupling peridynamics with classical elasticity~\cite{seleson2013}.

An alternative to the force-based paradigm is the energy-based paradigm where a global, hybrid energy is defined which is some combination of atomistic and continuum energies.  This encompasses the original quasicontinuum method and many other offshoots and ancestors~\cite{ortiz1996quasicontinuum,xiao2004,abraham1998,E2006,shimokawa,datta2004,eidel2009,Bauman_08_CM}.  The peril of these methods is the aforementioned ghost forces, and it remains open to construct a general, ghost-force free, energy-based AtC method for Bravais lattices in two or three dimensions.  As such we do not concern ourselves with an energy-based AtC method for multilattices; however, see~\cite{OrtnerZhang2014bgfc,shapeevMulti} for promising directions.

\subsection{Outline}
We begin in Section~\ref{model} by formulating an atomistic model for a multilattice material describing a single point defect embedded in an infinite homogeneous crystal.  This is a canonical extension of the framework adopted for Bravais lattices in~\cite{olson2015,blended2014,bqcf13,Ehrlacher2013,OrtnerZhang2014bgfc}.

In Section~\ref{bqcf} we then formulate the BQCF method for this model and state our main results: (1) existence of a solution to the multilattice BQCF method and (2) a sharp error estimate. We also convert this error estimate to an estimate in terms of the computational complexity of the BQCF method in Section~\ref{num} which in particular allows us to balance approximation parameters to obtain a formulation optimised for the error / cost ratio. We present a numerical verification of these rates by testing the method on a Stone--Wales defect in graphene.  The complexity estimates obtained for the BQCF method for point defects in multilattices match those estimates in~\cite{blended2014} for Bravais lattices.

Finally, Section~\ref{analysis} covers the technical details needed to prove our main result, Theorem~\ref{main_thm}.  These technical details can be seen as generalizations of the results of Bravais lattices, and the primary new component is having to account for shifts between atoms in the same unit cell.

\subsection{Notation}
We introduce new notation throughout the paper required to carry out the analysis.  For the convenience of the reader, we have listed many of these in Appendix~\ref{sec:appnotation}.  Here, we briefly establish several basic conventions we make throughout.  We use $d$ and $n$ to denote the dimensions of the domain and range respectively, calligraphic fonts (e.g. $\mathcal{L}, \mathcal{M}$) to denote lattices, sans-serif fonts (e.g. $\mF, \mG$) for $n \times d$ matrices, the lower case Greek letters $\alpha, \beta, \gamma, \delta, \iota, \chi$ are used as subscripts denoting atomic species, and the lower case Greek letters $\rho, \tau, \sigma$ denote vectors (bond directions) between lattice sites.

The symbol $| \cdot |$ is used to denote the $\ell^2$ norm of a single vector in $\mathbb{R}^m$, while $\| \cdot\|$ is used to denote either an $\ell^p$ or $L^p$ norm over a specified set.  We use $\cdot$ for the dot product between two vectors, $\otimes$ as the tensor product, and $:$ as the inner product on tensors.

Derivatives of functions $f: \mathbb{R}^d \to \mathbb{R}^n$ are denoted by $\nabla f : \mathbb{R}^d \to \mathbb{R}^{d \times n}$ and higher order derivatives by $\nabla^j f$.  Given $F:X \to Y$ where $X$ and $Y$ are Banach spaces, we denote Fr\'echet or Gateaux derivatives by $\delta^j F$, $j$ indicating the order. We will most commonly interpret these derivatives as (multi-)linear forms and use them when $Y = \mathbb{R}$, in which case we will then write the Gateaux derivatives as
\begin{align*}
&\<\delta F(x), y\> , \quad x,y \in X\\
&\<\delta^2 F(x)z,y\>, \quad x,y,z \in X \quad \mbox{and so on for higher order derivatives.}
\end{align*}
We reserve $D$ for specific finite difference operators (defined in \eqref{finite_diff1} and \eqref{finite_diff2}), and use $B_R$ to denote the ball of radius $R$ about the origin.

We use the modified Vinogradov notation $x \lesssim y$ throughout the manuscript to mean there exists a positive constant $C$ such that $x \leq Cy$. Where appropriate, we clarify what the constant $C$ is allowed to depend on; in particular if there is any dependence on approximation parameters then it will always be made explicit.

\section{Atomistic Model}\label{model}

\subsection{Defect-free Multilattice}
% We first describe the defect-free multilattice and formulate the
% assumptions on the total potential energy of this multilattice.
We consider an infinite Bravais lattice, or simply a {\em lattice}, {$\mathcal{L}$, defined by
\[
\calL := \mF\mathbb{Z}^d, \quad \text{ for some } \mF \in \mathbb{R}^{d \times d}, \quad \det(\mF) = 1, \quad \mbox{and $d \in \{2,3\},$}
\]
where the requirement $\det(\mF) = 1$ is purely a notational convenience.  From a physical standpoint by taking symmetry into account, it can be shown that there are only 14 unique physical lattices in 3D and five in 2D (see e.g.~\cite{tadmor2011}); however, we consider the lattice to merely be a mathematical framework. A multilattice is then obtained by associating a basis of $S$ atoms to each lattice site, and this is also referred to as a crystal when the Bravais lattice is interpreted as one of the unique physical lattices.}

For each site $\xi \in \mathcal{L}$, these
$S$ atoms are located inside the unit cell of $\xi$ at positions $\xi +
p_\alpha^{\rm ref}$ for $p_\alpha^{\rm ref} \in \mathbb{R}^d$ and $\alpha = 0, \ldots, S-1$.  The
multilattice is then defined by
\[
\mathcal{M} := \bigcup_{\alpha = 0}^{S-1}\mathcal{L} + p_\alpha^{\rm ref}.
\]
We call each $\mathcal{L} + p_\alpha^{\rm ref}$ a sublattice; {{here the addition ``+'' means a translation of the lattice $\mathcal{L}$ by the
vector $p_{\alpha}^{\rm ref}$.}} Without loss of generality, we further assume $p_0^{\rm ref} = 0$ (one atom is always located at a lattice site).  Furthermore, we make the distinction between a lattice site, which we use to refer to a site in the Bravais lattice, $\mathcal{L}$, and an atom which is an element in the multilattice $\mathcal{M}$.

Two simple examples of multilattices are shown in Figure~\ref{fig:multilattice}
including the 2D hexagonal lattice (e.g., graphene) for which
\begin{equation}\label{graph_param}
\mathcal{L} = a_0\begin{pmatrix} \sqrt{3} &\sqrt{3}/2 \\ 0 &3/2\end{pmatrix}\mathbb{Z}^2, \quad S = 2, \quad p_0 = \begin{pmatrix} 0 \\ 0\end{pmatrix}, \quad p_1 = a_0\begin{pmatrix} \sqrt{3}/2 \\ 1/2\end{pmatrix}, \quad a_0 = \frac{\sqrt{2}}{3^{3/4}}.
\end{equation}
(The $a_0 = \frac{\sqrt{2}}{3^{3/4}}$ prefactor is due to the normalisation $\det(\mF) = 1$.)

\begin{figure}
\subfigure[2D graphene: the dashed circles indicate the interaction
   neighbourhoods of the highlighted atoms.]{
   \includegraphics[width=0.42\textwidth]{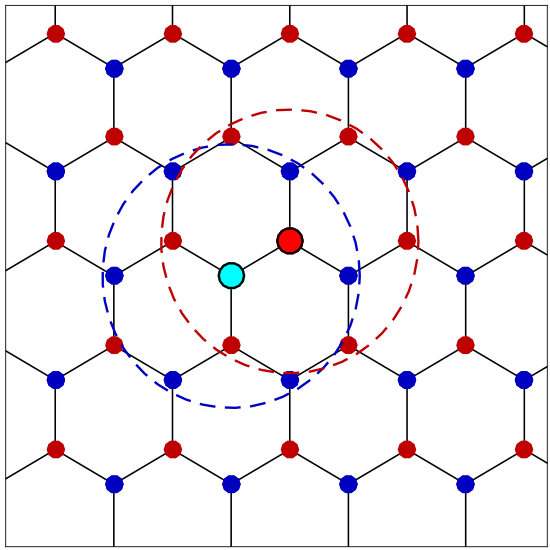}}
\qquad
\subfigure[3D rock salt: the interior cube represents a possible
      choice of unit cell.]{
   \includegraphics[width=0.42\textwidth]{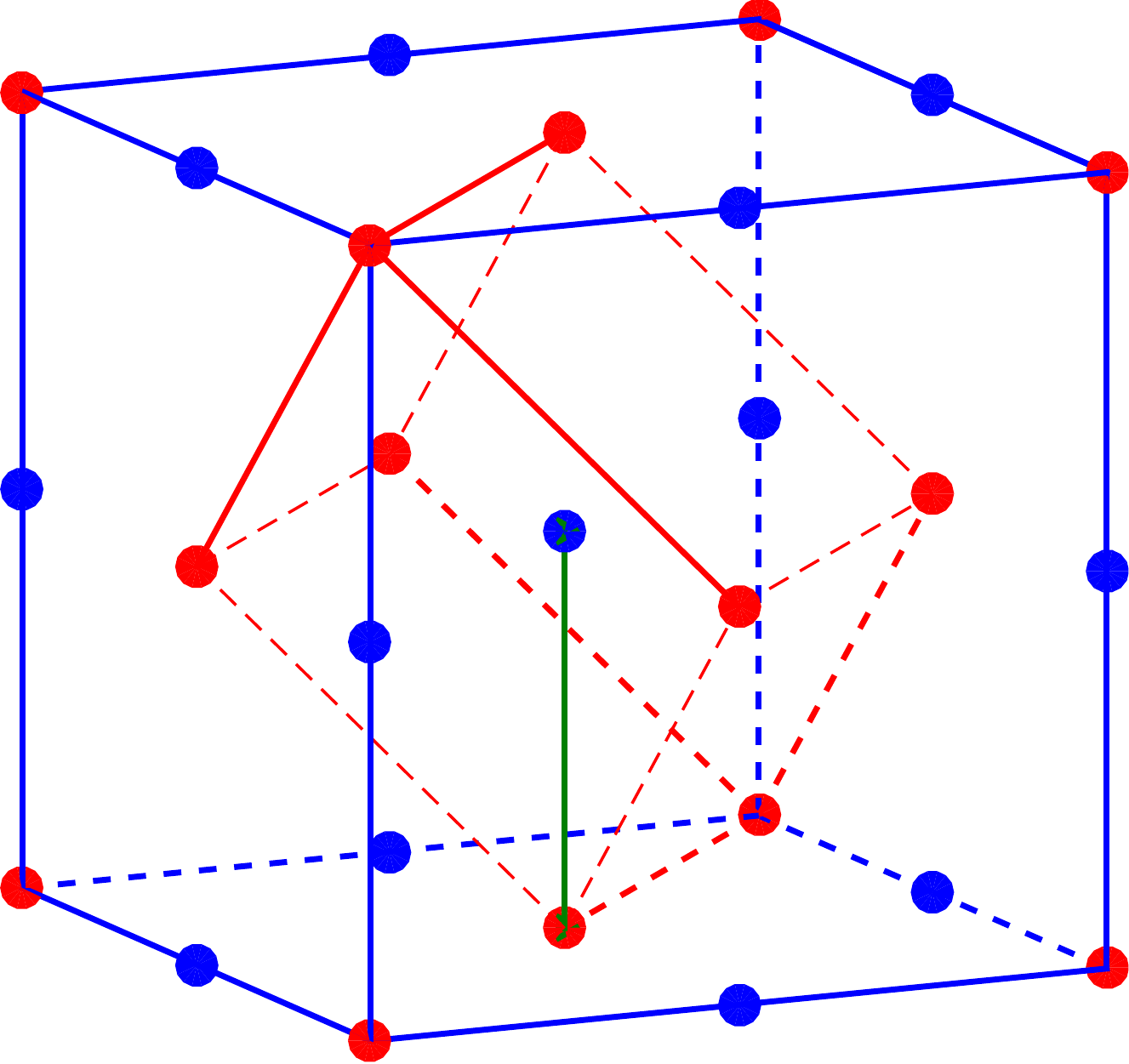}}
\caption{Examples of multilattice structures.} \label{fig:multilattice}
\end{figure}

For each species of atom, we define the deformation field $y_\alpha(\xi)$ as the deformation of the atom of species $\alpha$ at site $\xi$.  We note that $y_\alpha:\mathcal{L} \to \mathbb{R}^n$ where the range dimension $n \in \{2,3\}$ may be different than the domain dimension $d$ to allow, e.g., for out of plane displacements in $2D$. {However, we remark that our later assumptions on stability of the multilattice (Assumption~\ref{assumption2}) will place a restriction on the out of plane behavior; for example bending, or rippling, cannot currently be incorporated into the analysis.  We further discuss the issues involved in this in our concluding discussion, Section~\ref{discussion}.}  In the case of these out of plane displacements, we will use $\xi \in \mathbb{R}^2$ as both a vector in $\mathbb{R}^2$ and as the vector $\begin{pmatrix} \xi \\ 0 \end{pmatrix} \in \mathbb{R}^3$.  (We remark that though we will not consider dislocations, we could also consider $n = 1$ for an anti-plane screw dislocation model by fixing a second coordinate to be constant in this framework.)

The set of all sublattice deformations is denoted by $\bm{y}:= (y_{\alpha})_{\alpha=0}^{S-1}$ and displacements by $\bm{u}:= (u_{\alpha})_{\alpha=0}^{s-1}$. Equivalently we can describe the kinematics of a multilattice by a pair $(Y, \bm{p})$ where $Y : \mathcal{L} \to \mathbb{R}^n$ is a deformation field and $p_0, \dots, p_{S-1} : \mathcal{L} \to \mathbb{R}^n$ are shift fields. The two descriptions are related by
\begin{align*}
Y(\xi) =~ y_0(\xi), \quad p_\alpha(\xi) = y_\alpha(\xi) - y_0(\xi); \qquad \mbox{and} \qquad y_\alpha(\xi) =~ Y(\xi) + p_\alpha(\xi),
\end{align*}
and analogous expressions hold for displacements as well.

We now turn to a description of the energy. We will make the fundamental modeling
assumption that the total potential energy of the system can be written as a sum
of \textit{site potentials}---that is,
\begin{equation}\label{atDefEnergy}
   \hat{\mathcal{E}}^\a_{\rm hom}(\bm{y}) := \sum_{\xi \in \mathcal{L}} \hat{V}(D\bm{y}(\xi)),
\end{equation}
where the various new symbols introduced are specified in the following. \dao{We also note that this assumption is not restrictive as almost any reasonable classical potential such as an $n$-body potential, pair functional, or bond-order potential may be written in this form.} \dao{The main restriction is that long-range Coulomb interaction is excluded.}

We use $D\bm{y}(\xi)$ to denote the collection of finite differences (relative atom positions)
needed to compute the energy at site $\xi$. More precisely, we specify a
{\em finite} set of triples
\begin{equation*}
   \mathcal{R} \subset \mathcal{L} \times \{0, 1, \ldots, S-1\} \times \{0, 1, \ldots, S-1\}\setminus \bigcup_{\alpha = 0}^{S-1}\{ (0\alpha\alpha)\},
\end{equation*}
and use
\begin{equation}\label{finite_diff1}
   D_{\triple}\bm{y}(\xi) := y_\beta(\xi + \rho) - y_\alpha(\xi)
\end{equation}
to denote the relative positions of species $\beta$ at site $\xi+\rho$ and
species $\alpha$ at site $\xi.$ The collection of finite differences, or
finite difference {\dao{stencils}}, $D\bm{y}$, is then defined by
\begin{equation}\label{finite_diff2}
D\bm{y}(\xi) := \left(D_{\triple}\bm{y}(\xi)\right)_{\triple \in \mathcal{R}}.
\end{equation}
%
% given a site $\xi$, we assume a finite interaction range $$ with $|\rho| \leq r_{\rm cut}$.  Thus elements of $\mathcal{R}$ are triples, $\triple$, with $\rho$ denoting a lattice vector between sites and $\alpha$ and $\beta$ indicating the species of atoms involved.  Since the potential energy may only depend upon on distances between atoms, we define the finite difference notation
% \[
% D_{\triple}\bm{y}(\xi) := y_\beta(\xi + \rho) - y_\alpha(\xi).
% \]
In terms of $(Y,\bm{p})$, this this notation becomes
\[
D_{\triple}(Y,\bm{p}) := Y(\xi + \rho) - Y(\xi) + p_\beta(\xi+\rho) - p_\alpha(\xi)
\quad \text{and} \quad
 D(Y,\bm{p}) := \big( D_{\triple} (Y,\bm{p}))_{\triple \in \mathcal{R}}.
\]
For future reference we remark that we can write
\begin{equation*}
   D_{\triple} \bm{y} = D_\rho y_\beta(\xi) + p_\beta(\xi)-p_\alpha(\xi),
   %    \quad \text{and} \quad
   % D_{\triple}(Y,\bm{p})  = D_\rho Y(\xi) + D_{\rho} p_\beta(\xi) + p_\beta(\xi)-p_\alpha(\xi),
\end{equation*}
where $D_\rho f(\xi) := f(\xi + \rho) - f(\xi)$.  Moreover, we define the set of lattice vectors in $\mathcal{R}$ as
\[
\mathcal{R}_1 := \{ \rho \in \mathcal{L} : \exists \triple \in \mathcal{R}\}.
\]

The site potential is then a function $\hat{V} : (\R^n)^\mathcal{R} \to \R \cup \{+\infty \}$, where $+\infty$ allows for singularities in the potential (though we will later assume certain smoothness of the potential for convenience of the analysis).

Since the homogeneous reference configuration, $\bm{y}^{\rm ref}$, defined by
\begin{equation}\label{ref_config}
\bm{y}^{\rm ref}_\alpha(\xi) := \xi + p^{\rm ref}_\alpha,
\end{equation}
for constant $p_\alpha^{\rm ref} \in \mathbb{R}^n$
yields infinite energy, {\dao{(due to an infinite sum over constant values of the site potential in the reference configuration),}} we thus will consider an energy difference functional defined on displacements from the reference state instead of \eqref{atDefEnergy}. For a displacement
$\bm{u} \equiv (U, \bm{p})$ from the reference state $\bm{y}^{\rm ref}$ let
\begin{equation*}
   V(D\bm{u}(\xi)) = \hat{V}(D(\bm{y}^{\rm ref} + \bm{u})(\xi)),
\end{equation*}
and then the associated energy difference functional is defined by
\begin{equation}\label{atDispEnergy}
\mathcal{E}^\a_{\rm hom}(\bm{u}) := \sum_{\xi \in \mathcal{L}} V(D\bm{u}(\xi)) - V(0).
\end{equation}
where $V(0)$ is a constant which will not affect minimization or force computations, so for simplicity, we \dao{assume without loss of generality that $V(0)=0$}.  In Theorem~\ref{well_defined} \dao{below}, we recall a result \dao{of~\cite{olsonOrtner2016}} that characterizes for which displacements\dao{,} $\bm{u}$\dao{,}
$\mathcal{E}^\a_{\rm hom}(\bm{u})$ is well-defined.

A convenient notation for derivatives of $V$ is the following: if  $\triple, \tripleTau \in \mathcal{R}$ and $\bm{g} = (\bm{g}_{\triple})_{\triple \in \mathcal{R}} \in (\R^n)^{\mathcal{R}}$, we set
\begin{align*}
[V_{,\triple}(\bm{g})]_{i} :=~& \frac{\partial V(\bm{g})}{\partial \bm{g}_{\triple}^i }, \quad i = 1,\ldots, n, \\
V_{,\triple}(\bm{g}) :=~& \frac{\partial V(\bm{g})}{\partial \bm{g}_{\triple}}, \\
[V_{,\triple\tripleTau}(\bm{g})]_{ij} :=~& \frac{\partial^2 V(\bm{g})}{\partial \bm{g}_{\tripleTau}^j \partial \bm{g}_{\triple}^i},  \quad i,j = 1,\ldots, n, \\
V_{,\triple\tripleTau}(\bm{g}) :=~& \frac{\partial^2 V(\bm{g})}{\partial \bm{g}_{\tripleTau} \partial \bm{g}_{\triple}},
\end{align*}
and {\dao{note that this}} can be extended to derivatives of arbitrary order.  Furthermore, we adopt the convention that if $\triple \notin \mathcal{R}$, then $V_{,\triple} = 0$.

The following standing assumptions on the interaction range and site potentials are made.
\begin{assumption}\label{assumption1}
\hspace{2em}
\begin{enumerate}
\item[(V.1)] The interaction range, $\mathcal{R}$, satisfies
\begin{align*}
    & \mbox{\dao{For each $\alpha \in \{0,\ldots,S-1\}$, the set of vectors $\rho$ such that $(\rho\alpha\alpha) \in \mathcal{R}$ spans $\mathbb{R}^d$,}} \\
   & \mbox{and } (0\alpha\beta) \in \mathcal{R} \quad \text{for all $\alpha \neq \beta \in \{0,\ldots,S-1\}$ }. %\label{cond2}.
\end{align*}
%\item[V.2] There exists $R_{\rm def} > 0$ such that $V_\xi \equiv V$ for all $|\xi| \geq R_{\rm def}$.
\item[(V.2)] $V$ is four times continuously differentiable with uniformly bounded derivatives and satisfies $V(0) = 0$ (for simplicity of notation).  Since $V:(\mathbb{R}^n)^{\mathcal{R}} \to \mathbb{R}$, the statement that $V$ has uniformly bounded derivatives means there exists $M$ such that for any multi-index $\gamma$ with $|\gamma| \leq 4$, $|\partial_\gamma V| \leq M$.
\end{enumerate}
\end{assumption}

We remark that (V.1) may always be met by enlarging the interaction range, \dao{$\mathcal{R}$}. \co{On the other hand, (V.2) is made for simplicity of the analysis; it can be weakened to admit interatomic potentials with typical singularities under collisions of atoms, but this would introduce several additional technicalities in our analysis.}

Next, we specify the function space over which $\mathcal{E}_{\rm hom}^\a(\bm{u})$ is defined, which can be achieved in several equivalent ways. A convenient route is by first defining a continuous, piecewise linear interpolant of an atomistic displacement.  Let $\mathcal{T}_\a$ be a simplicial decomposition of $\mathcal{L}$ obtained as in~\cite{blended2014}: first let $\hat{T} := \conv \{0, e_1, e_2\}$ \dao{(where $\conv$ represents the convex hull of a set)} be the unit triangle in $2D$ and $\hat{T}_1, \ldots, \hat{T}_6$ six congruent tetrahedra in $3D$ that subdivide the unit cube \dao{(see Figure 1 in~\cite{blended2014} for an illustration in $3D$)} and then define
\begin{equation*}\label{eq:Ta}
\mathcal{T}_\a =
\begin{cases}
\{\xi + \mF \hat{T}, \xi -\mF\hat{T}: \xi \in \mathcal{L} \}, \quad \mbox{if $d = 2$,} \\
\{\xi + \dao{\mF}\hat{T}_i: \xi \in \mathcal{L}, i = 1, \ldots, 6 \}, \quad \mbox{if $d = 3$}.
\end{cases}
\end{equation*}
We will often refer to this as the atomistic triangulation or \textit{fully refined} triangulation.  As noted before, we may always enlarge the interaction range, $\mathcal{R}$, so we may assume without loss of generality that
 \begin{equation*} \label{assumption:mesh}
\dao{     \text{if } {\rm conv}\{ \xi, \xi+\rho \} \text{ is an edge of $\mathcal{T}_\a$,
     then there exist $\alpha, \beta$ such that } \triple \in \mathcal{R}.}
 \end{equation*}

Given a {\helen{discrete set of displacement values}} $u: \mathcal{L} \to \mathbb{R}^n$, we then denote the continuous, piecewise linear interpolant of $u$ with respect to $\mathcal{T}_\a$ by $Iu \equiv \bar{u}$. We will use both notations, \dao{$Iu$ and $\bar{u}$}, depending on which is notationally more convenient. Subsequently, we define the function space
\begin{equation*}\label{disSpace}
\begin{split}
\mathcal{U} :=~& \left\{ \bm{u} = (u_\alpha)_{\alpha = 0}^{S-1} : u_\alpha:\mathcal{L} \to \mathbb{R}^n, \|\bm{u}\|_\a < \infty \right\}, \, \mbox{where} \\
\|\bm{u}\|_\a^2 :=~& \sum_{\alpha = 0}^{S-1}\|\nabla Iu_\alpha\|_{L^2(\mathbb{R}^d)}^2 + \sum_{\alpha \neq \beta}\| Iu_\alpha - Iu_\beta\|_{L^2(\mathbb{R}^d)}^2.
\end{split}
\end{equation*}

Clearly, $\|\cdot\|_\a$ is not a norm on $\mathcal{U}$ since $\|\bm{u}\|_\a = 0$ only implies that each $u_\alpha(\xi)$  is a constant independent of $\alpha$.  However, $\|\cdot\|_\a$ is a semi-norm on $\mathcal{U}$ and hence a true norm on the quotient space
\begin{equation*}
   \bm{\mathcal{U}} := \mathcal{U}/\mathbb{R}^n
      := \big\{  \{(u_\alpha + C)_{\alpha = 0}^{S-1} : C \in \mathbb{R}^n \}
               \,:\, \bm{u} \in \mathcal{U} \big\}.
\end{equation*}
Since the atomistic energy is invariant with respect to addition by constants, it is exactly this quotient space which we utilize as our function space.  {\helen{We also note that $\bm{u}$ and $(U,\bm{p})$ are two equivalent descriptions for the displacements}, and an equivalent norm on this space which will be convenient in terms of the $(U,\bm{p})$ description is}
\begin{equation*}%\label{atom_norm}
\|(U,\bm{p})\|_\a := \|\nabla IU\|_{L^2(\mathbb{R}^d)}^2 + \sum_{\alpha = 1}^{S-1} \| Ip_\alpha\|_{L^2(\mathbb{R}^d)}^2.
\end{equation*}

A dense subspace of $\mathcal{U}$ that we will use as a test function space is $\bm{\mathcal{U}}_0$ where
\begin{equation*}\label{testSpace}
\begin{split}
\mathcal{U}_0 :=~& \left\{\bm{u} {\helen{\in \mathcal{U}}} : {\rm supp}(\nabla Iu_0), \,  \mbox{and} \, {\rm supp}(Iu_\alpha - Iu_0) \, \mbox{are compact}\right\}, \\
\bm{\mathcal{U}}_0 :=~& \mathcal{U}_0/\mathbb{R}^n.
\end{split}
\end{equation*}
As proven in~\cite{olsonOrtner2016}, this test space is dense in $\bm{\mathcal{U}}$.
\begin{lemma}\cite[Lemma A.1]{olsonOrtner2016}\label{lem:dense}
The quotient space $\bm{\mathcal{U}}_0$ is dense in $\bm{\mathcal{U}}$.
\end{lemma}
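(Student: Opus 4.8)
The plan is to approximate an arbitrary element of $\bm{\mathcal{U}}$ by truncating in two separate stages: first the deformation field $U$ and then the shift fields $p_\alpha$. Fix a representative $\bm{u} \equiv (U,\bm{p}) \in \mathcal{U}$, so that $\nabla IU \in L^2(\mathbb{R}^d)$ and each $Ip_\alpha \in L^2(\mathbb{R}^d)$ for $\alpha = 1,\dots,S-1$ (using the equivalent norm $\|(U,\bm{p})\|_\a$ introduced above). I would choose a smooth cutoff $\eta_R : \mathbb{R}^d \to [0,1]$ with $\eta_R \equiv 1$ on $B_R$, $\eta_R \equiv 0$ outside $B_{2R}$, and $|\nabla \eta_R| \lesssim R^{-1}$, and define truncated \emph{lattice} functions by $p_\alpha^R(\xi) := \eta_R(\xi) p_\alpha(\xi)$ for $\alpha \geq 1$, leaving $p_0 \equiv 0$. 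For the deformation field, truncating $U$ itself is not the right move (a nonzero far-field gradient need not decay), so instead I would truncate the \emph{gradient}: pick a background affine map —by density we may in fact assume $U$ already has zero mean over $B_{2R}$ on the relevant scale, or subtract a constant which is harmless in the quotient— and set $U^R$ to be the lattice function whose interpolant agrees with a Poincaré-type correction of $\eta_R \nabla IU$; more concretely, since $\|\nabla IU\|_{L^2} < \infty$, one shows $IU - c_R \to IU$ with compactly supported gradient for suitable constants $c_R$ by a standard Poincaré/Hardy argument on annuli, exactly as in the Bravais-lattice references \cite{blended2014,Ehrlacher2013}.

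The key steps, in order, would be: (i) verify $\|\nabla I(U - U^R)\|_{L^2(\mathbb{R}^d)}^2 \to 0$ as $R \to \infty$, which reduces to controlling $\int_{B_{2R}\setminus B_R} |\nabla \eta_R|^2 |IU - c_R|^2 \lesssim R^{-2}\int_{B_{2R}\setminus B_R}|IU - c_R|^2$ and then invoking a Poincaré inequality on the annulus together with $\int_{\mathbb{R}^d \setminus B_R}|\nabla IU|^2 \to 0$; (ii) verify $\sum_{\alpha=1}^{S-1}\|I(p_\alpha - p_\alpha^R)\|_{L^2(\mathbb{R}^d)}^2 \to 0$, which is immediate since $I p_\alpha^R \to I p_\alpha$ pointwise and is dominated by $|Ip_\alpha| \in L^2$, so dominated convergence applies (here no Poincaré correction is needed because the $p_\alpha$ are controlled in $L^2$, not merely in $\dot{H}^1$); (iii) check that the truncated objects genuinely lie in $\mathcal{U}_0$, i.e.\ $\mathrm{supp}(\nabla IU^R)$ and $\mathrm{supp}(Ip_\alpha^R)$ are compact — the latter is clear from the support of $\eta_R$, and the former from the construction of $U^R$ as a cutoff of the gradient; (iv) translate back from the $(U,\bm{p})$ description to the $(u_\alpha)$ description and pass to the quotient, noting that all estimates are invariant under adding constants so the conclusion descends to $\bm{\mathcal{U}}_0 \subset \bm{\mathcal{U}}$.

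The main obstacle is step (i): handling the deformation field $U$, whose interpolant need not itself be in $L^2$ even though its gradient is. This is the classical difficulty in defining the atomistic function space for infinite crystals, and the fix is the annular Poincaré (or weighted Hardy) inequality that lets one replace $U$ by $U$ minus a slowly-varying constant before cutting off. Since this is precisely the argument carried out in \cite[Lemma A.1]{olsonOrtner2016} (and parallels the Bravais-lattice treatment), I would simply cite that reference for the technical core and restrict the new content here to checking that the multilattice shift fields $p_\alpha$ cause no additional trouble — which, as noted in step (ii), they do not, because they enter the norm through an $L^2$ term rather than a homogeneous Sobolev seminorm.
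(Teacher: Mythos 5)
Your proposal is correct and follows essentially the same route as the proof the paper relies on: the lemma is quoted from \cite[Lemma A.1]{olsonOrtner2016}, and the argument there is exactly the two-stage truncation you describe — a direct cutoff of the shifts (handled by dominated convergence since each $Ip_\alpha \in L^2$) combined with a cutoff of the displacement after subtracting an annular mean, controlled via a scaled Poincar\'e inequality on $B_{2R}\setminus B_R$; this is the same device the paper itself packages later as the truncation operator $T_R u_\alpha = \eta(x/R)\bigl(Iu_\alpha - \smash{\frac{1}{|A_R|}}\int_{A_R} Iu_0\,dx\bigr)$ in the definition of $\Pi_h$. The only point worth tightening is the minor commutation between multiplying by $\eta_R$ and re-interpolating on $\mathcal{T}_\a$, which is standard and does not affect the conclusion.
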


\subsection{Point Defect}
We
now introduce a framework to embed a point defect in a homogeneous
multilattice. This problem has been heavily used in analyzing and comparing
different AtC methods for simple lattices
in~\cite{olson2015,blended2014,acta.atc,OrtnerZhang2014bgfc} as it allows for a
range of non-trivial benchmark problems and serves as a first step in analyzing
more complicated scenarios such as interacting defects~\cite{hudson2015}.  {\helen{Point
defects can be}} thought of as zero-dimensional defects representing a
change to a single site in the lattice. Common examples include vacancies,
interstitials, substitutions, and in graphene, the Stone--Wales defect which we
use for our numerical verification.

Our first task is to define an analog of $\mathcal{E}^\a_{\rm hom}$ for point
defects, which is well-defined on the function space $\bm{\mathcal{U}}$.
We accomplish this through a site-dependent site potential, $V_\xi$, which must take
into account the defective structure of the lattice near the defect core, \dao{which we assume to be at or near the
origin}. We then write the atomistic potential energy as
\begin{equation}\label{defEnergy}
   \mathcal{E}^\a(\bm{u}) := \sum_{\xi \in \mathcal{L}}
               V_\xi(D\bm{u}(\xi)).
\end{equation}

As in Assumption~\ref{assumption1}, we require certain smoothness of the site-dependent site potential in addition to homogeneity outside of a defect core.
\begin{assumption}\label{assumptionSite}
\quad
\begin{enumerate}
\item[(V.3)] There exists $R_{\rm def} > 0$ such that $V_\xi \equiv V$ for all \dao{$|\xi| \geq R_{\rm def}$}.
\item[(V.4)] Each $V_\xi$ is four times continuously differentiable with uniformly bounded derivatives.
\end{enumerate}
\end{assumption}

We now recall from \cite[Theorem 2.2]{olsonOrtner2016} that $\mathcal{E}^\a$
and $\mathcal{E}^\a_{\rm hom}$ are well-defined on $\bm{\mathcal{U}}$; \dao{the main idea of the proof is that both are defined on displacements having compact support, and by density of $\bm{\mathcal{U}}_0$ in $\bm{\mathcal{U}}$, they may be uniquely extended by continuity to all of $\bm{\mathcal{U}}$}.
\begin{theorem}\cite[Lemma 3.3]{olsonOrtner2016}\label{well_defined}
Assume the reference configuration $\bm{y}^{\rm ref}$ with $y^{\rm ref}_\alpha(\xi) = \xi + p^{\rm ref}_\alpha$ is an equilibrium configuration of the defect free energy meaning that
\begin{equation}\label{ostrich1}
\sum_{\xi \in \mathcal{L}} \sum_{\triple \in \mathcal{R}} \hat{V}_{,\triple}(D\bm{y}^{\rm ref}(\xi)) \cdot D\bm{v}(\xi) = 0, \quad \forall \, \bm{v} \in \bm{\mathcal{U}}_0.
\end{equation}
Then $\mathcal{E}^\a_{\rm hom}(\bm{u})$ and $\mathcal{E}^\a(\bm{u})$ may be uniquely extended to continuous functions on $\bm{\mathcal{U}}$ which are ${\rm C}^3$ (three times continuously differentiable) on $\bm{\mathcal{U}}$.
\end{theorem}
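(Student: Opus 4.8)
The plan is to prove the well-definedness and $\mathrm{C}^3$-regularity of $\mathcal{E}^\a$ and $\mathcal{E}^\a_{\rm hom}$ on $\bm{\mathcal{U}}$ by a truncation-and-density argument, following the template established for Bravais lattices (e.g.\ in \cite{Ehrlacher2013}) but keeping careful track of the shift fields. First I would observe that on the test space $\bm{\mathcal{U}}_0$ both functionals are manifestly finite: a displacement $\bm{u} \equiv (U,\bm{p}) \in \bm{\mathcal{U}}_0$ has $\nabla IU$ and each $Ip_\alpha - Ip_0$ compactly supported, so only finitely many site potentials $V_\xi(D\bm{u}(\xi))$ differ from $V_\xi(0) = 0$; hence the sums in \eqref{defEnergy} and \eqref{atDispEnergy} reduce to finite sums of smooth (by (V.2), (V.4)) functions, and they are $\mathrm{C}^3$ there trivially. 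The equilibrium hypothesis \eqref{ostrich1} will be used to rewrite the first-order term: it guarantees that the formal first variation of $\hat{\mathcal{E}}^\a_{\rm hom}$ at $\bm{y}^{\rm ref}$ annihilates all of $\bm{\mathcal{U}}_0$, which is exactly what is needed so that no constant (per-site, $\xi$-independent) contribution survives when we expand $V_\xi(D\bm{u}(\xi)) = V_\xi(D\bm{y}^{\rm ref}(\xi) + D\bm{u}(\xi)) - V_\xi(D\bm{y}^{\rm ref}(\xi))$ and Taylor-expand in the small (far-field) perturbation.

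The core of the argument is a Lipschitz / boundedness estimate of $\mathcal{E}^\a$ and its first three derivatives on $\bm{\mathcal{U}}_0$ in terms of the $\|\cdot\|_\a$ semi-norm, so that continuity gives a unique extension. The key steps, in order: (i) split each site $\xi$ into the finitely many $\xi$ with $|\xi| < R_{\rm def}$ (where $V_\xi \neq V$) and the rest; the defect-core sites contribute a finite smooth piece, so it suffices to treat $\mathcal{E}^\a_{\rm hom}$. (ii) For a site $\xi$ in the homogeneous region write $D_{\triple}\bm{u}(\xi) = D_\rho U(\xi) + (p_\beta(\xi+\rho) - p_\alpha(\xi))$ and Taylor-expand $V$ about $0$ to third order with integral remainder; by (V.2) the remainder is controlled by $|D\bm{u}(\xi)|^4$ and the quadratic/cubic terms by $|D\bm{u}(\xi)|^2$, $|D\bm{u}(\xi)|^3$. (iii) Sum over $\xi$: bound $\sum_\xi |D\bm{u}(\xi)|^2$ by $\|\bm{u}\|_\a^2$ using that $|D_\rho U(\xi)| \lesssim \|\nabla IU\|_{L^2(\xi + \text{nbhd})}$ (finite difference controlled by gradient of the interpolant on a neighbourhood of fixed size, standard for $\mathcal{T}_\a$) and that $|p_\beta(\xi+\rho) - p_\alpha(\xi)| \le |p_\beta(\xi+\rho) - p_\beta(\xi)| + |p_\beta(\xi) - p_\alpha(\xi)| + |p_0(\xi) - p_0(\xi)|$ — i.e.\ a shift-difference at a shifted site is controlled by $\nabla Ip_\beta$ plus the in-cell shift differences, both of which appear in $\|\bm{u}\|_\a$. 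The higher powers $\sum_\xi |D\bm{u}(\xi)|^3$, $\sum_\xi |D\bm{u}(\xi)|^4$ are then bounded via $\ell^p \hookrightarrow \ell^2$ for $p \ge 2$ on discrete sequences. The linear term is precisely cancelled by \eqref{ostrich1}. The same scheme, differentiating the Taylor expansion, yields bounds on $\langle \delta^j \mathcal{E}^\a(\bm{u}) , \bm{v}_1, \dots, \bm{v}_j\rangle$ for $j=1,2,3$ that are continuous in $\bm{u}$ (using four derivatives of $V$, which is why (V.2)/(V.4) demand $\mathrm{C}^4$), so each $\delta^j \mathcal{E}^\a$ extends continuously and $\mathcal{E}^\a \in \mathrm{C}^3(\bm{\mathcal{U}})$.

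Finally I would invoke Lemma~\ref{lem:dense} ($\bm{\mathcal{U}}_0$ dense in $\bm{\mathcal{U}}$): a functional that is uniformly continuous — indeed locally Lipschitz with the above estimates — on a dense subspace extends uniquely to a continuous functional on the whole space, and the analogous statement for the derivatives upgrades this to a $\mathrm{C}^3$ extension. One must check consistency, i.e.\ that the quotient by $\mathbb{R}^n$ is well defined: translating all $u_\alpha$ by a common constant $C$ leaves every $D_{\triple}\bm{u}(\xi)$ unchanged, so $\mathcal{E}^\a$ descends to $\bm{\mathcal{U}}$, as claimed. The main obstacle is step (iii): unlike the Bravais case, the finite differences $D_{\triple}\bm{u}$ mix a genuine lattice-vector difference $D_\rho$ of the macroscopic field $U$ with a difference of shift fields evaluated at \emph{two different sites} $\xi+\rho$ and $\xi$; getting the clean bound $\sum_\xi |D\bm{u}(\xi)|^2 \lesssim \|\bm{u}\|_\a^2$ requires the triangle-inequality splitting above together with the observation that $\sum_\xi |Ip_\beta(\xi+\rho) - Ip_\beta(\xi)|^2 \lesssim \|\nabla Ip_\beta\|_{L^2}^2$ — which in turn needs $\nabla Ip_\beta \in L^2$, something not directly in the definition of $\|\cdot\|_\a$ but which follows since $\nabla Ip_\beta = \nabla I u_\beta - \nabla I u_0$ and both $\nabla Iu_\beta, \nabla I u_0 \in L^2$. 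Making this rigorous — and uniform in the number of summed sites — is the crux; everything else is bookkeeping with the Taylor remainder and (V.1)--(V.4).
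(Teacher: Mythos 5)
Your proposal is correct and follows essentially the same route the paper indicates: the paper imports this result from \cite{olsonOrtner2016} and sketches exactly the argument you give, namely that both functionals are finite on the compactly supported test space $\bm{\mathcal{U}}_0$, the equilibrium condition \eqref{ostrich1} kills the first-order (non-$\ell^2$-summable) term, local Lipschitz bounds on $\mathcal{E}^\a$ and $\delta^j\mathcal{E}^\a$, $j\le 3$, follow from (V.2)/(V.4) and the control of $\sum_\xi|D\bm{u}(\xi)|^2$ by $\|\bm{u}\|_\a^2$, and density of $\bm{\mathcal{U}}_0$ (Lemma~\ref{lem:dense}) yields the unique ${\rm C}^3$ extension. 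The only blemish is the reversed embedding notation ($\ell^2\hookrightarrow\ell^p$ for $p\ge 2$ is what you actually use), which does not affect the argument.
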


% \medspace
\begin{remark}
The condition~\eqref{ostrich1} that the reference configuration be an equilibrium is equivalent to requiring the shifts are equilibrated within each cell.  See~\cite[Lemma 9]{olsonOrtner2016} for details. Such reference configurations are thus straightforward to generate numerically.
\end{remark}

\medskip

Since we will eventually be working with a finite domain on which there is no difference between the original functionals and their extensions, we make no distinction between an energy and its continuous extension.

We are now able to pose the defect equilibration problem which we wish to
approximate with the BQCF method, {\helen{that is, to find $\bm{u}^\infty \in\bm{\mathcal{U}} $ such that}}
\begin{equation}\label{def_problem}
\bm{u}^\infty \in \arg\min_{\bm{u} \in \bm{\mathcal{U}}} \mathcal{E}^\a(\bm{u}),
\end{equation}
where {\helen{$\arg\min$}} represents the set of local minima of a functional.

While Assumptions~\ref{assumption1} and~\ref{assumptionSite} can be readily
weakened in various ways, the next assumption concerning existence and stability
of a defect configuration minimizing $\mathcal{E}^\a$ is essential for our
analysis:
\begin{assumption}\label{assumption2} (Strong Stability)
   There exists a solution, $\bm{u}^\infty$, to~\eqref{def_problem} and a constant $\gamma_\a > 0$
   such that
\[
\<\delta^2\mathcal{E}^\a(\bm{u}^\infty)\bm{v},\bm{v}\> \geq \gamma_\a \|\bm{v}\|_\a^2
\qquad \forall \bm{v} \in \bm{\mathcal{U}}_0.
\]
\end{assumption}

\medskip

Proving Assumption~\ref{assumption2} turns out to be notoriously difficult;
indeed the only result of this kind we are aware of is for a special case of
a screw dislocation in a simple lattice~\cite[Remark 3.2]{hudson2015} under anti-plane
deformation. Nevertheless, we expect it to hold for {\em virtually all}
realistic defects and realistic interatomic potentials. We also mention
that it can be numerically checked \textit{a posteriori} once the defect
configuration has been computed.

% \co{***This assumption importantly allows us to establish decay estimates on both the displacements, $U^\infty = u_0^\infty$, and shifts, $\bm{p}^\infty$, and $k$th order finite differences defined by $D_{\rho_1\rho_2\cdots\rho_k} u(\xi) := D_{\rho_1}D_{\rho_2}\cdots D_{\rho_k} u(\xi)$ for nonzero lattice vectors $\rho_1, \rho_2,\ldots, \rho_k$.  We refer to~\cite{olsonOrtner2016} for a more thorough discussion and proof and~\cite{Ehrlacher2013} for a corresponding result for point defects in Bravais lattices.***}
% \commentco{I'd remove this paragraph; the assumption is box-standard stability,
% I don't think it needs extra motivation.}

A useful consequence of Assumption~\ref{assumption2} is the following
regularity result, which is proven in \cite{olsonOrtner2016} and
extends the analogous simple lattice result~\cite{Ehrlacher2013}. \co{These decay rates will be an essential component for converting the BQCF
error estimates in terms of solution regularity that are presented in Section~\ref{bqcf}
into complexity estimates that are numerically verified in Section~\ref{num}.}

\begin{theorem}\cite[Theorem 2.5]{olsonOrtner2016}\label{decay_thm}
For $\bm{\rho} = \rho_1 \dots \rho_k$, the defect solution $(U^\infty, \bm{p}^\infty)$ satisfies
\begin{equation}\label{decay_est}
\begin{split}
|D_{\bm{\rho}} U^\infty(\xi)| \lesssim~& (1 + |\xi|)^{1-d-k}, \quad \mbox{for $1 \leq k \leq 3$}, \\
|D_{\bm{\rho}} p_\alpha^\infty(\xi)| \lesssim~& (1+|\xi|)^{-d-k}, \quad \mbox{for $0 \leq k \leq 2$, and all $\alpha = 0, \ldots, S-1$.}
\end{split}
\end{equation}
The implied constant is allowed to depend on the interaction range through the maximum of $|\rho|$ for $\rho \in \mathcal{R}_1$, the site potential, and $\gamma_\a$.
\end{theorem}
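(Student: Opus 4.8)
The plan is to recast the Euler--Lagrange equation as a linear lattice equation with a well-localised right-hand side and to extract the decay from estimates on the lattice Green's function of the homogeneous multilattice, bootstrapping to reach the sharp exponents. Let $\mathcal{L}_0 := \delta^2\mathcal{E}^\a_{\rm hom}(\bm 0)$ be the linearisation of the homogeneous energy about the reference state (well-defined and bounded by Theorem~\ref{well_defined} and (V.2)). Using $\langle\delta\mathcal{E}^\a(\bm u^\infty),\bm v\rangle = 0$ together with $\delta\mathcal{E}^\a_{\rm hom}(\bm 0) = 0$, which is exactly~\eqref{ostrich1}, I would write
\[
\mathcal{L}_0\bm u^\infty = \bm g, \qquad \bm g := \big(\mathcal{L}_0\bm u^\infty - \delta\mathcal{E}^\a_{\rm hom}(\bm u^\infty)\big) + \big(\delta\mathcal{E}^\a_{\rm hom}(\bm u^\infty) - \delta\mathcal{E}^\a(\bm u^\infty)\big).
\]
By (V.3) the second bracket is supported in a fixed neighbourhood of the defect core, while by Taylor expansion with (V.2) the contribution of the first bracket at site $\xi$ is controlled by the square of the local finite differences $D\bm u^\infty(\xi)$; hence $\bm g$ acts as a localised ``body force'' whose tail decays at twice the rate currently known for $D\bm u^\infty$. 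Moreover the total force vanishes, $\sum_\xi\bm g(\xi) = 0$ (a consequence of translation invariance of $\mathcal{E}^\a_{\rm hom}$), so the leading far field of $\bm u^\infty$ is that of an elastic dipole rather than a point force — precisely what produces the extra power of $|\xi|$ in~\eqref{decay_est}.

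The core analytic input is a decay estimate for $\mathcal{L}_0^{-1}$ adapted to the multilattice structure. Writing $\mathcal{L}_0$ in the $(U,\bm p)$ variables, its shift--shift block is at leading order multiplication by a constant symmetric matrix $\mathsf{H}$ carrying no spatial differences, and applying Assumption~\ref{assumption2} to shift-only test functions forces $\mathsf{H}$ to be positive definite, hence uniformly invertible. Condensing the shifts out by a Schur complement reduces $\mathcal{L}_0$ to a second-order, constant-coefficient, elliptic finite-difference operator on $U$ alone — the discrete Cauchy--Born elasticity operator — whose ellipticity is again inherited from Assumption~\ref{assumption2}. Standard discrete elliptic theory (comparison with the continuum linear-elasticity Green's function, or discrete Fourier analysis) then yields $|D_{\bm{\rho}}\mathcal{G}(\xi)|\lesssim(1+|\xi|)^{1-d-k}$ for the Green's function $\mathcal{G}$ of this operator; convolving against $\bm g$ and using $\sum_\xi\bm g(\xi)=0$ gives the bound for $D_{\bm{\rho}}U^\infty$, and the slaving relation $\bm p = -\mathsf{H}^{-1}(\text{first differences of }U) + \text{h.o.t.}$ transfers it to the shifts, with $D_{\bm{\rho}}p_\alpha^\infty$ decaying one order faster than $D_{\bm{\rho}}U^\infty$, i.e.\ like $(1+|\xi|)^{-d-k}$.

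The remaining step is a bootstrap on the decay rate of $D\bm u^\infty$. One seeds it with the a priori regularity $\bm u^\infty\in\bm{\mathcal{U}}$, that is $\nabla IU^\infty, Ip^\infty_\alpha\in L^2$, which through the previous paragraph already produces some algebraic decay; feeding this back into the estimate for $\bm g$ improves the rate, and the iteration converges to the exponents in~\eqref{decay_est}, terminating at $k=3$ (resp.\ $k=2$ for the shifts) because $V\in C^4$ limits the order of finite differences of the equilibrium equation that can be controlled.

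I expect the main obstacle to be the second step: building the discrete multilattice Green's function estimates so that the macroscopic (second-order elliptic) and microscopic (algebraic) degrees of freedom are correctly decoupled, and in particular deducing uniform invertibility of $\mathsf{H}$ and ellipticity of the effective operator from the single coercivity constant $\gamma_\a$ of Assumption~\ref{assumption2}. A secondary subtlety is that Assumption~\ref{assumption2} controls $\delta^2\mathcal{E}^\a(\bm u^\infty)$ rather than $\mathcal{L}_0$ directly; the two differ by a perturbation that is compact (the localised defect part) plus small at infinity (by the very decay one is proving), so the coercivity of $\mathcal{L}_0$ must be established in a way that does not presuppose the conclusion — for instance through a separate phonon-stability argument for the homogeneous multilattice.
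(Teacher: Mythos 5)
You should first be aware that this paper contains no proof of Theorem~\ref{decay_thm} to compare against: the result is imported verbatim from \cite[Theorem 2.5]{olsonOrtner2016}, the multilattice extension of the simple-lattice decay estimates of \cite{Ehrlacher2013}. That said, your sketch is a faithful outline of the strategy used in that reference: rewrite the Euler--Lagrange equation as $\mathcal{L}_0\bm{u}^\infty=\bm{g}$ with $\mathcal{L}_0=\delta^2\mathcal{E}^\a_{\rm hom}(\bm{0})$, observe that $\bm{g}$ is the sum of a compactly supported defect term and a quadratic remainder, use $\langle\bm{g},c\rangle=0$ for constants to upgrade the far field from monopole to dipole, condense the shifts by a Schur complement onto an effective constant-coefficient elliptic operator for $U$, invoke decay of the resulting lattice Green's function, and bootstrap.

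Two points in your outline are where the real work lies and are currently only gestured at. First, the seeding of the bootstrap: $\nabla IU^\infty, I\bm{p}^\infty\in L^2$ carries no pointwise information, so the first pass cannot simply "produce some algebraic decay"; one must combine the Green's-function decay with $\ell^2$ bounds of $\bm{g}$ over dyadic annuli (via Cauchy--Schwarz) to extract an initial pointwise rate before the iteration can be closed at the exponents in~\eqref{decay_est}. Second, coercivity of $\mathcal{L}_0$ and positive definiteness of the shift block $\mathsf{H}$ do not follow formally from Assumption~\ref{assumption2}, which controls $\delta^2\mathcal{E}^\a(\bm{u}^\infty)$; the standard remedy is not a separate phonon-stability hypothesis but a translation argument — test with functions supported far from the core, where $V_\xi\equiv V$ and $D\bm{u}^\infty$ is small in $\ell^\infty$, and pass to the limit — and this should be made explicit since the whole Schur-complement reduction rests on it. With those two steps supplied, your argument is the standard one and would reproduce the cited theorem.
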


\medskip

These decay rates will be an essential component for converting the BQCF
error estimates in terms of solution regularity that are presented in Section~\ref{bqcf}
into complexity estimates that are numerically verified in Section~\ref{num}.

Since we will compare discrete atomistic configurations
 with continuous finite element functions, it will be useful to reformulate
Theorem~\ref{decay_thm} in terms of gradients of smooth interpolants, which
we define in the next lemma (see~\cite{blended2014} for further details and the proof).

% To that end, For this task, we denote a smooth interpolant of a
% displacement $u$ (or shift $p$) by $\tilde{I}u$ or $\tilde{u}$ ($\tilde{I}p$ or
% $\tilde{p}$).  The definition of $\tilde{I}u$ is taken from~.

\begin{lemma}\label{smoothInterpolant}
Let $u:\mathcal{L} \to \mathbb{R}^n$, then there exists a unique function $\tilde{I}u:\mathbb{R}^d \to \mathbb{R}^n$ with  $\tilde{I}u \in {\rm C}^{2,1}(\mathbb{R}^d)$ such that
\begin{enumerate}
\item $\tilde{I}u$ is multiquintic in $\xi + \mF(0,1)^d$ for each $\xi \in \mathcal{L}$.
% \item $\tilde{I}u(\xi) = u(\xi)$ for all  $\xi \in \mathcal{L}$.
\item Given any multiindex $\gamma$ with $|\gamma| \leq 2$, the interpolant
   satisfies $\partial_\gamma \tilde{I}u(\xi) = D^{\nn}_\gamma u(\xi)$
   where $D^{\nn}_\gamma$ are nearest-neighbor finite difference operators,
\begin{align*}
D^{\nn,0}_i u(\xi) :=~& u(\xi), \\
D^{\nn,1}_i u(\xi) :=~& \frac{1}{2}(u(\xi + \mF e_i) - u(\xi - \mF e_i)) \quad (e_i \mbox{ is the $i$th standard basis vector}), \\
D^{\nn,2}_i u(\xi) :=~& u(\xi + \mF e_i) -2u(\xi) + u(\xi - \mF e_i), \\
D^{\nn}_\gamma u(\xi) :=~& D^{\nn,|\gamma_1|}_{1}\cdots D^{\nn,|\gamma_d|}_{d}u(\xi).
\end{align*}
\end{enumerate}
\end{lemma}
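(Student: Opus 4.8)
The plan is to construct $\tilde I u$ cell by cell as a tensor-product Hermite-type interpolant and then verify that the pieces match with $C^{2,1}$ regularity across cell interfaces. First I would reduce to the reference cube $[0,1]^d$ by the affine change of variables $x = \mF\hat x$; since $\mF$ is a fixed invertible matrix, a function is multiquintic on $\xi + \mF[0,1]^d$ and $C^{2,1}$ across faces if and only if its pullback has the same property on the lattice-site cubes $\Z^d + [0,1]^d$, so it suffices to build the interpolant on the integer lattice. On a single cube $[0,1]^d$ the space of multiquintic polynomials has dimension $6^d$, and there is exactly a $6^d$-dimensional set of Hermite data, namely the values $D^{\nn}_\gamma u$ for $|\gamma_i|\le 2$ componentwise (this is $3^d$ data at each of... no — rather, the standard one-dimensional quintic Hermite element on $[0,1]$ interpolates the three data $(f,f',f'')$ at each of the two endpoints, i.e.\ $6$ degrees of freedom; taking tensor products gives $6^d$ degrees of freedom on the cube, matched by prescribing $\partial_\gamma(\tilde I u)$ for all $\gamma$ with $0\le\gamma_i\le 2$ at each of the $2^d$ vertices). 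Unisolvence of the 1D quintic Hermite problem is classical, and unisolvence on the cube follows by the usual tensor-product argument, so $\tilde I u$ exists and is unique on each cube.

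Next I would show the cell-wise interpolants glue together into a globally $C^{2,1}$ function. Consider two adjacent cubes sharing a face $F$ (say $x_d = 0$). On $F$ the restriction of the multiquintic polynomial from either side is a multiquintic polynomial in $x_1,\dots,x_{d-1}$, and it is determined by the $2^{d-1}$ vertex data $\partial_{(\gamma_1,\dots,\gamma_{d-1},0)} u$ and $\partial_{(\gamma_1,\dots,\gamma_{d-1},1)} u$ (i.e.\ the $\gamma_d=0$ part), using that the Hermite degrees of freedom associated with a vertex of $F$ restrict exactly to the $(d-1)$-dimensional Hermite data of that vertex; since these data are the same discrete quantities $D^{\nn}_\gamma u$ evaluated at the shared vertices, the two traces agree, giving $C^0$ matching. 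The same argument applied to $\partial_{x_d}\tilde I u$ and $\partial_{x_d}^2\tilde I u$ — whose traces on $F$ are again multiquintic and determined by the $\gamma_d = 1$ and $\gamma_d = 2$ vertex data respectively — yields matching of the first and second normal derivatives; mixed derivatives up to order $2$ then match automatically since tangential derivatives of matching traces match. Hence $\tilde I u\in C^2(\R^d)$, and since on each cube it is a polynomial its third derivatives are polynomials, hence locally Lipschitz, so in fact $\tilde I u\in C^{2,1}(\R^d)$; the prescribed property (2) holds by construction of the Hermite data, and (1) holds by construction. Uniqueness is immediate: any two such functions agree cube-by-cube by the unisolvence above.

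The main obstacle — and the only genuinely substantive point — is the gluing step, specifically verifying that the trace and normal-derivative traces on a shared face depend only on data intrinsic to that face (the shared vertices), so that no compatibility is lost and no extra constraint is needed. This is exactly the reason the tensor-product quintic ($C^2$) element, rather than a lower-order element, is required: cubic Hermite gives only $C^{1,1}$, whereas we need two continuous derivatives to later bound $\nabla^2$ of the interpolant against the second finite differences of $u$ in Theorem~\ref{decay_thm}. Everything else is routine tensor-product Hermite interpolation theory, so in the paper I would simply cite \cite{blended2014} for the detailed construction and verification and only indicate that the dimension $d\in\{2,3\}$ and the fixed matrix $\mF$ play no essential role beyond the affine reduction above.
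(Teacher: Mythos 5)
Your construction is correct and is essentially the standard tensor-product quintic Hermite argument that the paper itself relies on: the paper gives no proof of Lemma~\ref{smoothInterpolant}, deferring entirely to \cite{blended2014}, and your cell-wise unisolvence plus face-trace gluing is exactly that argument. The only cosmetic caveat is that $C^{2,1}(\mathbb{R}^d)$ should be understood locally (second derivatives locally Lipschitz), since for unbounded lattice data the third derivatives need not be globally bounded; your "locally Lipschitz" phrasing already handles this.
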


\medskip

We will apply $\tilde{I}$ to both displacements and shifts using the notation
\[
\tilde{I}(U, \bm{p}) = (\tilde{I}U, \tilde{I}\bm{p}) = (\tilde{U}, \tilde{\bm{p}}).
\]
Then, combining Theorem~\ref{decay_thm} and Lemma \eqref{smoothInterpolant}
yields the following result.

\begin{theorem}\label{decay_thm1}
The defect solution $(U^\infty, \bm{p}^\infty)$ satisfies
\begin{equation}\label{decay_est_cont}
\begin{split}
|\nabla^j \tilde{I}U^\infty(x)| \lesssim~& (1+|x|)^{1-d-j}, \quad \mbox{for $j = 1,2$}, \\
|\nabla^j \tilde{p}_\alpha^\infty(x)| \lesssim~& (1+|x|)^{-d-j}, \quad \mbox{for $j = 0,1,2$, and all $\alpha = 0, \ldots, S-1$,}
\end{split}
\end{equation}
where the implied constant is again allowed to depend on the interaction range, the site potential, and $\gamma_\a$.
\end{theorem}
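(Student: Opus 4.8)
The statement follows by feeding the discrete decay bounds of Theorem~\ref{decay_thm} through local polynomial estimates for the smooth interpolant $\tilde I$ of Lemma~\ref{smoothInterpolant}. The plan is first to record the following local estimate, which is proved by a standard scaling / Bramble--Hilbert argument on a fixed reference cell exactly as in \cite{blended2014}: writing $T_\xi := \xi + \mF(0,1)^d$ and letting $\hat\omega_\xi$ denote the (finite, $\mF$-dependent bounded-diameter) set of lattice sites that are vertices of $T_\xi$ together with their nearest neighbours, one has, for every $u:\mathcal{L}\to\R^n$,
\begin{equation*}
\|\nabla^j \tilde I u\|_{L^\infty(T_\xi)} \;\lesssim\; \sum_{k=j}^{2}\ \max_{\substack{\rho_1,\dots,\rho_k\in\{\pm\mF e_1,\dots,\pm\mF e_d\}\\ \zeta\in\hat\omega_\xi}}\ \big| D_{\rho_1}\cdots D_{\rho_k}u(\zeta)\big|,\qquad j=0,1,2,
\end{equation*}
with implied constant depending only on $\mF$. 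The essential feature here is that only finite differences of order $k\ge j$ appear: this reflects that $\tilde I$ reproduces polynomials of degree $<j$ (for $j\le 2$ this is immediate from property~(2) of Lemma~\ref{smoothInterpolant} once the nodal derivative data are read in the cell coordinates $t=\mF^{-1}(x-\xi)$, since $\tilde I(Ax+c)\equiv Ax+c$), so on the reference cell the nodal functionals may be applied to $u$ minus an arbitrary polynomial of degree $\le j-1$; equivalently, one replaces low-order differences of $u$ by higher-order ones via a discrete Taylor expansion about $\xi$.

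Granting this, the proof is bookkeeping. One checks that $\pm\mF e_i\in\mathcal{R}_1$ (each $\mathrm{conv}\{\xi,\xi\pm\mF e_i\}$ is an edge of $\mathcal{T}_\a$), so Theorem~\ref{decay_thm} applies to the differences above, and that $1+|\zeta|\sim 1+|x|$ for all $\zeta\in\hat\omega_\xi$ and $x\in T_\xi$, uniformly in $\xi$. For the displacement, Theorem~\ref{decay_thm} gives $|D_{\rho_1}\cdots D_{\rho_k}U^\infty(\zeta)|\lesssim (1+|\zeta|)^{1-d-k}$ for $1\le k\le 3$; the displayed estimate with $j=1$ then yields $(1+|x|)^{-d}+(1+|x|)^{-1-d}\lesssim (1+|x|)^{1-d-1}$, and with $j=2$ it yields $(1+|x|)^{1-d-2}$ --- here it is crucial that no $k=0$ term is present, since $U^\infty$ itself need not decay. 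For the shifts, Theorem~\ref{decay_thm} gives $|D_{\rho_1}\cdots D_{\rho_k}p^\infty_\alpha(\zeta)|\lesssim(1+|\zeta|)^{-d-k}$ for $0\le k\le 2$ (the case $k=0$ being decay of $p^\infty_\alpha$ itself), and the displayed estimate with $j=0,1,2$ gives $(1+|x|)^{-d}$, $(1+|x|)^{-d-1}$, $(1+|x|)^{-d-2}$ respectively, which is \eqref{decay_est_cont}.

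Finally I would note that $\tilde I U^\infty,\tilde p^\infty_\alpha\in\CC^{2,1}(\R^d)$ by Lemma~\ref{smoothInterpolant}, so the pointwise bounds above control genuine derivatives; for $|x|\lesssim 1$ they degenerate to the uniform bound already contained in the case $|\zeta|\lesssim 1$. The only dependence introduced beyond that of Theorem~\ref{decay_thm} is on the fixed matrix $\mF$, so the implied constants depend on the same quantities (interaction range, site potential, $\gamma_\a$). The main obstacle is precisely the $j=2$ bound for $\tilde I U^\infty$: a crude estimate using only the boundedness of $\tilde I$ on nodal data would give the non-sharp rate $(1+|x|)^{1-d}$, and recovering the extra power requires the polynomial-reproduction property of $\tilde I$ (equivalently, the discrete Taylor argument that eliminates the first differences of $U^\infty$ in favour of its second differences).
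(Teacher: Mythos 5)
Your proposal is correct and is essentially the argument the paper intends: the paper gives no written proof, simply stating that the result follows by combining Theorem~\ref{decay_thm} with Lemma~\ref{smoothInterpolant} and deferring the interpolant's local/polynomial-reproduction properties to \cite{blended2014}, which is exactly the local estimate you set up before doing the bookkeeping. Your observation that polynomial reproduction (rather than mere boundedness on nodal data) is what rescues the sharp $j=2$ rate for $\tilde{I}U^\infty$ is the one genuinely non-trivial point, and you handle it correctly.
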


\section{BQCF Method Formulation and Main Results}\label{bqcf}

Any AtC approximation of the defect problem~\eqref{def_problem}
must include the following ingredients: the atomistic and continuum
domains, a coarsened finite element mesh in the continuum region, a
specification of the continuum model, and finally and most importantly
a mechanism for coupling the atomistic and continuum components.

We define the atomistic and continuum domains for the multilattice BQCF method
by making similar choices as in the BQCF method for Bravais
lattices~\cite{blended2014}. We first give an intuitive description of the
domains involved, but will (re-)define them again below after introducing the
\textit{blending function}. Choose a computational domain $\Omega \subset \mathbb{R}^d$
to be a (large) polygonal domain containing the origin (the defect). Fix a
``defect core'' region $\Omega_{\rm core}$ such that, if $V_\xi \not\equiv V$,
then $\xi \in \Omega_{\rm core}$.  Then take $\Omega_\a$, the atomistic domain,
to be a polygonal domain with $\Omega_{\rm core} \subset \Omega_\a \subset
\Omega$, and set $\Omega_\c$, the continuum domain to be $\Omega_\c =
\Omega\setminus\Omega_{\rm core}$. In blending methods, the atomistic and
continuum domains overlap in a blending region $\Omega_{\rm b} = \Omega_\c \cap
\Omega_\a$ over which the atomistic and continuum forces will be blended.

Next, we define a finite element mesh $\mathcal{T}_h$ over $\Omega$ with
nodes $\mathcal{N}_h$. For now we only require that the finite element mesh
is fully refined over $\Omega_\a$, that is,
if $T \cap \Omega_\a \neq \emptyset$, then $T \in \mathcal{T}_h$ if and only if $T \in \mathcal{T}_\a$, but we will state further assumptions in
Section~\ref{sec:approx_params}.

The continuum model we adopt is the Cauchy--Born model~\cite{cauchy, born1954, ortiz1996quasicontinuum}, a nonlinear
hyperelastic model, which is amenable to AtC couplings due to the definition of the
strain energy density function in terms of the atomistic potential $V$,
\[
W_{\rm CB}(\mG, \bm{p}) := V\Big((\mG \rho + p_\beta - p_\alpha)_{\triple \in \mathcal{R}}\Big) \quad \mbox{for $\mG \in \mathbb{R}^{n \times d}$ and $\bm{p} \in (\mathbb{R}^n)^{S}$},
\]
without resorting to any constitutive laws. {\helen{We note that $G$ here is the deformation gradient of lattice sites in a unit cell while $\bm{p}$
are the displacements of shift vectors; in contrast with typical continuum treatments of multilattices, we maintain the shift vectors as degrees of freedom in the Cauchy--Born model and do not minimize them out.}}

For $W^{1,\infty}$ displacement fields, $U$, and $L^\infty$ shift fields, $\bm{p}$,
this leads to a Cauchy--Born energy functional, formally (for now) defined by
\begin{equation*}\label{cb_energy}
\mathcal{E}^\c(U, \bm{p}) := \int_{\mathbb{R}^d} W_{\rm CB}(\nabla U(x), \bm{p}(x))\, dx = \int_{\mathbb{R}^d} V\big(\nabla(U,\bm{p})\big)\, dx
\end{equation*}
where
\begin{equation*}\label{cont_grad}
\nabla(U,\bm{p}) := \big(\nabla_{\triple}(U,\bm{p})\big)_{\triple \in \mathcal{R}} := \big(\nabla_\rho U + p_\beta - p_\alpha\big)_{\triple \in \mathcal{R}}
\end{equation*}
is a continuum variant of the atomistic \dao{finite difference stencil
\begin{equation*}
D(U,\bm{p})(x) = \big(D_{\triple}(U,\bm{p})(x)\big)_{\triple \in \mathcal{R}} := \big(D_\rho U(x) + p_\beta(x+\rho) - p_\alpha(x)\big)_{\triple \in \mathcal{R}}.
\end{equation*}}

The admissible finite element space we consider will be $\mathcal{P}_1$ finite elements for both the displacements and the shifts subject to homogeneous boundary conditions.  However, we will again consider equivalence classes of finite element functions by taking a quotient space.  Thus, we define
\begin{equation*}\label{fin_spaces}
\begin{split}
\mathcal{U}_h :=~& \left\{u \in {\rm C}^0(\Omega) : u|_{T} \in \mathcal{P}_1(T), \quad \forall \, T \in \mathcal{T}_h\right\}, \\
\bm{\mathcal{U}}_h :=~& \mathcal{U}_h / \mathbb{R}^n, \\
\mathcal{U}_{h,0} :=~& \left\{u \in {\rm C}^0(\mathbb{R}^d): u|_{T} \in \mathcal{P}_1(T), \quad \forall \, T \in \mathcal{T}_h, u = 0 \mbox{ on } \mathbb{R}^d\setminus\Omega \right\}, \\
\bm{\mathcal{U}}_{h,0} :=~& \mathcal{U}_{h,0}/ \mathbb{R}^n, \\
\bm{\mathcal{P}}_{h,0}
:=~& \dao{\left\{\bm{p}=(p_0,\dots,p_{S-1}): p_0=0, \text{ and }p_1,\dots,p_{S-1} \in \big(\mathcal{U}_{h,0}\big)^{S-1}\right\}}.
\end{split}
\end{equation*}
These spaces are endowed with the norm
\begin{equation*}\label{eq:ml}
\|(U, \bm{p})\|_{\rm ml}^2 := \|\nabla U \|_{L^2(\mathbb{R}^d)}^2 + \sum_{\alpha = 0}^{S-1}\|p_\alpha\|^2_{L^2(\mathbb{R}^d)} = \|\nabla U \|_{L^2(\mathbb{R}^d)}^2 + \|\bm{p}\|^2_{L^2(\mathbb{R}^d)},
\end{equation*}
where $\|\bm{p}\|^2_{L^2(\mathbb{R}^d)} = \sum_{\alpha = 0}^{S-1}\|p_\alpha\|^2_{L^2(\mathbb{R}^d)}$ is used for brevity.  Along with the finite element space, we also introduce the standard piecewise linear finite element interpolant, $I_h$, defined as usual through $I_h u(\nu) = u(\nu)$ for $\nu \in \mathcal{N}_h$.

The BQCF method is defined by blending forces on each degree of freedom, \dao{$(\nu,\alpha) \in \mathcal{N}_h \times \{0,\ldots,S-1\}$}, where the forces are defined by a weighted average of atomistic and continuum forces:
\begin{equation}\label{f_bqcf}
\mathcal{F}^{\bqcf}_{\nu,\alpha}(U,\bm{p}) :=  (1-\varphi(\nu))\frac{ \partial \mathcal{E}^\a(U,\bm{p})}{\partial u_\alpha(\nu)} + \varphi(\nu)\frac{ \partial \mathcal{E}^\c(U,\bm{p})}{\partial u_\alpha(\nu)},
\end{equation}
where \dao{the \textit{blending function}, $\varphi$, satisfies} $\varphi \in \rm{C}^{2,1}(\R^d)$ with $\varphi = 0$ in $\Omega_{\rm core}$
and $\varphi = 1$ in $\R^d \setminus \Omega_{\rm a}$.  The BQCF method then seeks to solve $\mathcal{F}^{\bqcf}_{\nu,\alpha}(U,\bm{p}) = 0$ for all $\nu \notin \partial \Omega$. Equivalently, we can write the force balance equations in weak form using the variational operator
{\helen{
\begin{align}
\<&\mathcal{F}^{\bqcf}(U,\bm{p}), (W,\bm{r})\> \nonumber\\
&\; := \sum_{\nu}\sum_{\alpha}\mathcal{F}^{\bqcf}_{\nu,\alpha}(U,\bm{p})\cdot \left(W+r_{\alpha}\right)(\nu)\nonumber\\
&\; = \sum_{\nu}\sum_{\alpha} (1-\varphi(\nu))\frac{ \partial \mathcal{E}^\a(U,\bm{p})}{\partial u_\alpha(\nu)}\cdot\left(W+r_{\alpha}\right)(\nu) + \varphi(\nu)\frac{ \partial \mathcal{E}^\c(U,\bm{p})}{\partial u_\alpha(\nu)}\cdot\left(W+r_{\alpha}\right)(\nu)\nonumber\\
&\; = \<\delta\mathcal{E}^\a(U,\bm{p}),((1-\varphi)W,(1-\varphi)\bm{r})\>\nonumber\\
&\qquad\qquad+  \<\delta\mathcal{E}^\c(U,\bm{p}),(I_h(\varphi W),I_h(\varphi \bm{r}))\>,\label{v_bqcf}
\end{align}
}}
where the last equal sign comes from direct calculation.
The BQCF approximation to the defect optimization problem~\eqref{def_problem}
is then to {\it find $(U, \bm{p}) \in \bm{\mathcal{U}}_{h,0} \times \bm{\mathcal{P}}_{h,0}$
such that }
\begin{equation}\label{bqcf_approx}
   \<\mathcal{F}^{\bqcf}(U,\bm{p}), (W,\bm{r})\>  = 0,
    \quad \forall (W,\bm{r}) \in \bm{\mathcal{U}}_{h,0} \times \bm{\mathcal{P}}_{h,0}.
\end{equation}
The variational formulation is preferred for the
analysis while the force-based formulation (from which the name BQCF is derived) is
preferred for implementation.  The pointwise formulation~\eqref{f_bqcf} was essentially how the original BQCF method was proposed for Bravais lattices~\cite{badia2007force}, and this was analyzed in a finite-difference framework without defects for Bravais lattices in~\cite{lu2013, li2012positive}.  The variational formulation~\eqref{v_bqcf} was introduced in~\cite{blended2014} for Bravais lattices, and its subsequent analysis led to one of the first complete analyses of an AtC method capable of modeling defects.

%%% CO: if we want to keep this, then ideally in a formal remark
% A motivating factor for using $\mathcal{P}^1$ finite elements for the shift fields as well as the displacements is to maintain this interpretation of the BQCF method.  Also note that the forces are blended by site and not by atom, and this is importantly allows us to use the same domain and blending function definitions as the BQCF method for Bravais lattices.

%%% CO: I don't think this is needed - this paper will likely only be read
%%%     by experts anyhow
% The force interpretation~\eqref{f_bqcf} also illuminates the vital features the blending function should possess; $\varphi(x)$ should be zero near the defect core (in $\Omega_{\rm core}$) thus giving full weight to the atomistic force and no wait to the continuum force and one far away from the defect in $\Omega\setminus\Omega_\a$ so that the continuum forces are the only ones that exist.  It is only over the blending region, $\Omega_{b}$, where the atomistic and continuum forces coexist and are said to be blended together.  In this region, the blending function should ``smoothly'' transition from zero to one.

\subsection{Assumptions on the Approximation Parameters}
\label{sec:approx_params}
We now summarise the precise technical requirements on the approximation
parameters, $\varphi, \Omega, \Omega_\a, \Omega_\b, \Omega_\c, \mathcal{T}_h$,
which will be analogous to those in \cite{blended2014}.

We begin by summarising basic assumptions on the blending function:
\begin{enumerate}
\item $\varphi \in \rm{C}^{2,1}$ and $0 \leq \varphi \leq 1$

\item If $V_\xi \not\equiv V$, then $\varphi(\xi) = 0$.  This means that
   $\varphi$ vanishes near any defect, hence the pure atomistic force is
   employed in those regions.

\item There exists $K > 0$ such that $\varphi(x) = 1$ if $|x| \geq K$.
      That is, $\varphi$ is identically one far away from the defect.
\end{enumerate}

As the second step we specify the computational domain $\Omega$ and its
corresponding partition $\mathcal{T}_h$. {\helen{First, we shall require that ${\rm supp}(1-\varphi)\subset\Omega $ always holds.}} To state the required properties
for $\mathcal{T}_h$, we first precisely specify the sub-domains in terms of
$\varphi$ and $\Omega$. Let
\begin{equation*}
   r_{\rm cut} := \max \{ |\rho| :  \triple \in \mathcal{R} \}
\end{equation*}
be an interaction cut-off radius, {let $r_{\rm cell}$ be the radius of the smallest ball circumscribing the unit cell of $\mathcal{L}$, and define $r_{\rm buff} := \max\{ r_{\rm cut}, r_{\rm cell}\}$}. Then we set
%
% It is precisely the blending function that also sets the exact technical requirements for the domain decomposition procedure alluded to at the beginning of this section.  The difference is that we must pad these inexact descriptions with a buffer region accounting for the size of the interaction range.  Specifically, set
\begin{align*}
\Omega_\a :=~& {\rm supp}(1-\varphi) + B_{4r_{\rm buff}}, \quad \Omega_{\rm b} :=~ {\rm supp}(\nabla \varphi) + B_{4r_{\rm buff}}, \\
\Omega_\c :=~& {\rm supp}(\varphi) \cap \Omega + B_{4r_{\rm buff}}, \quad \Omega_{\rm core} :=~ \Omega\setminus\Omega_\c.
\end{align*}
% (The requirements on the finite element mesh are unchanged once these exact definitions are made.)
The size and shape regularity of the various subdomains are parameterized
in terms of inner and outer radii: for ${\rm t} \in \{ \a, \c, \b, {\rm core}\}$,
we set
\[
   r_{\rm t} := \sup_{r}\{r > 0: {\helen{ B_{r} }} \subset
                        \Omega_{\rm t} \cup \Omega_{\rm core} \},
   \quad R_{\rm t} := \inf_{R} \{R > 0: \Omega_{\rm t} \subset {\helen{B_{R}}}\},
\]
{\helen{where we recall the notation $B_R$ to denote the ball of radius $R$ about the origin. }}  The corresponding outer and inner radii for the complete domain $\Omega$
are, respectively, denoted by $R_{\rm o}$ and $r_{\rm i}$:
\dao{
\[
   r_{\rm i} := \sup_{r}\{r > 0: {\helen{ B_{r} }} \subset
                        \Omega \},
   \quad R_{\rm o} := \inf_{R} \{R > 0: \Omega \subset {\helen{B_{R}}}\}.
\]
}
% To apply this to $\Omega$, we define $R_{\rm o} := \inf_{R} \{R > 0: \Omega \subset B_{R}(0)\}$ so $R_{\rm o}$ is an exterior measurement of $\Omega$ and $r_{\rm i} := \sup_{r}\{r > 0: B_{r}(0) \subset \Omega\}$ so $r_{\rm i}$ is an interior measurement.
Finally, we define an overlapping exterior domain,
\begin{align*}
   \Omega_{\rm ext} := \mathbb{R}^d \setminus {\helen{ B_{r_{\rm i}/2}}},
\end{align*}
which will be used to quantify the far-field error made by truncating
to a finite computational domain.

\begin{figure}
   \centering
   {\includegraphics[width=0.45\textwidth]{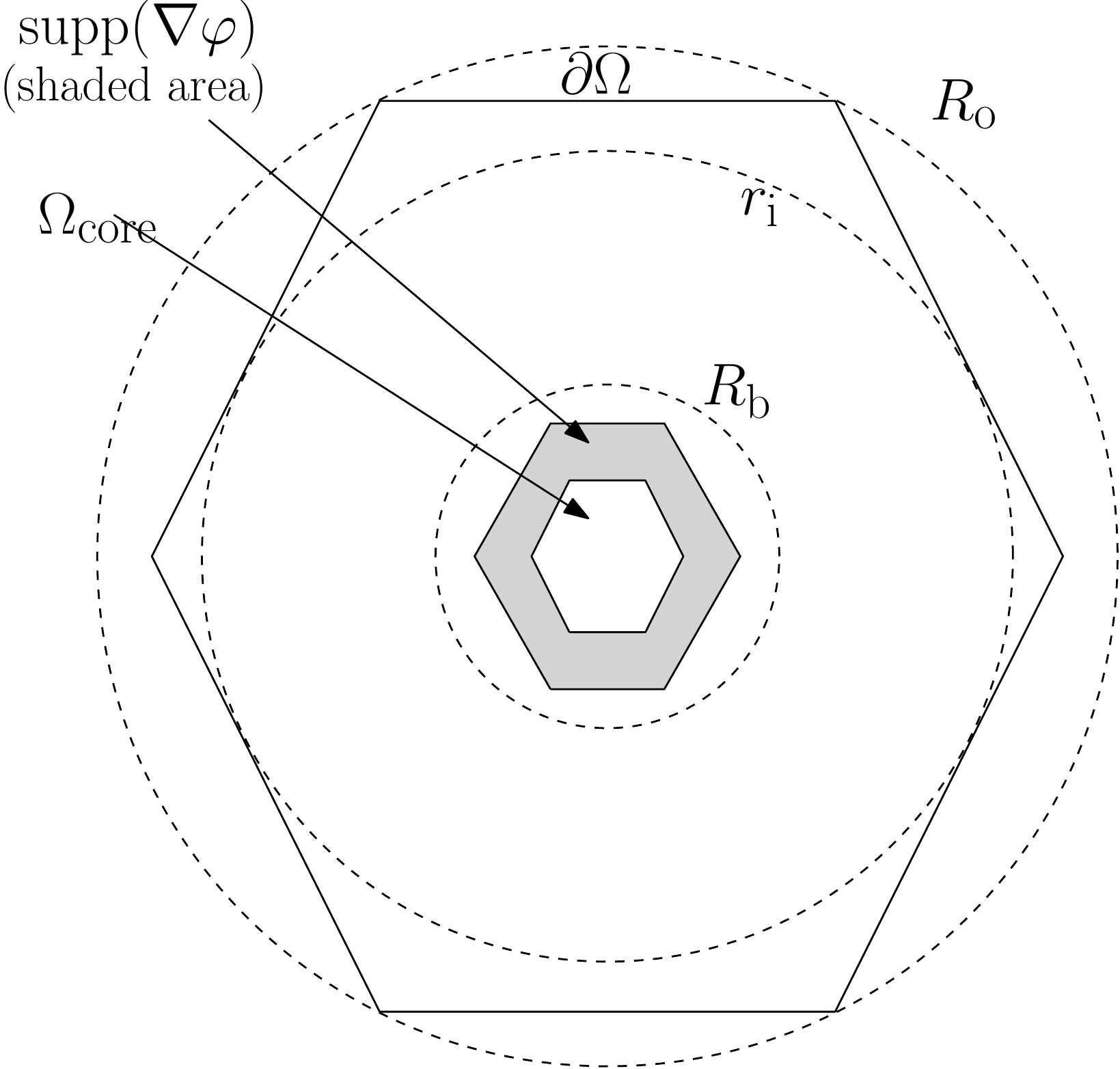}}
   \caption{A diagram showing a selected number of domains and their inner and outer radii.}\label{fig:domains}
\end{figure}

For the sake of completeness, we now restate a crucial condition on the finite
element mesh:
\begin{enumerate}[resume]
\item  The finite element mesh is fully refined over $\Omega_\a$, that is,
  if $T \cap \Omega_\a \neq \emptyset$, then $T \in \mathcal{T}_h$ if and only if $T \in \mathcal{T}_\a$.
\end{enumerate}

To conclude this discussion we note that only the blending function $\varphi$
and the finite element mesh $\mathcal{T}_h$ are free approximation parameters,
while the subdomains and corresponding radii are derived (in particular,
$\Omega = \bigcup \mathcal{T}_h$). In our analysis we will require bounds
on the ``shape regularity'' of $\varphi$, $\mathcal{T}_h$, and the domains defined above:

\begin{assumption} \label{assumption-shapereg}
   In addition to (1)--(4) there exist constants $C_{\mathcal{T}_h}, C_\varphi > 0$, which shall be
   fixed throughout, such that
   \begin{align*}
      \|\nabla^j \varphi\|_{L^\infty} \leq C_\varphi R_\a^{-j} \qquad
      \text{for $j = 1,2,3$, \quad and} \qquad
      \max_{T \in \mathcal{T}_h} \frac{\sigma_T}{\rho_T} \leq C_{\mathcal{T}_h},
   \end{align*}
   where $\sigma_T$ denotes the radius of the smallest ball
circumscribing $T$ and $\rho_T$ the radius of the largest ball contained in $T$.  Defining the mesh size function
\[
h(x) := \max_{\substack{T \in \mathcal{T}_h: \\ x \in T}} \sigma_T,
\]
there exists $s \geq 1$ such that the mesh satisfies the growth condition
\[
|h(x)| \leq C_{\mathcal{T}_h}\Big(\frac{|x|}{R_{\a}}\Big)^s, \quad   |x| \geq R_\a.
\]

Moreover, there exists  $C_{\rm o} > 0$ and a positive integer $\lambda$ such that
\begin{equation}\label{def_constant_Co}
  R_{\rm o} \leq C_{\rm o} R_{\rm core}^{\lambda} \quad \mbox{\dao{and} } \quad \frac{1}{4} R_{\a} \leq R_{\rm core} \leq \frac{3}{4} R_{\rm a}.
\end{equation}
\end{assumption}

While $C_\varphi$ will feature heavily in our analysis, the parameter
$C_{\mathcal{T}_h}$ will only enter implicitly in the form of constants in
interpolation error estimates.  The condition $\frac{1}{4} R_{\a} \leq R_{\rm core} \leq \frac{3}{4} R_{\rm a}$ greatly simplifies the analysis.  It is likely this could be weakened by an extremely refined analysis as can be done in one dimension~\cite{li2012positive}, but the asymptotic estimates obtained would be unchanged with the exception of an improved prefactor so we do not pursue this.  \dao{Moreover, though one can generate blending functions which satisfy these assumptions using splines, we point out that in practical implementations one can relax the regularity requirements on the blending functions, and this has provided no loss in performance in simulations carried out for  lattices in~\cite{bqcf13}.}

\subsection{Main Result}
\label{sec:main-result-subsec}
Our main result concerns the existence of a solution to~\eqref{bqcf_approx} and an estimate on the error committed.
\begin{theorem}\label{main_thm}
   Suppose that Assumptions~\ref{assumption1},~\ref{assumptionSite}, and~\ref{assumption2} are valid.
   Then there exists $R_{\rm core}^*$ such that, for any approximation parameters
   satisfying Assumption~\ref{assumption-shapereg} as well as
   $R_{\rm core} \geq R_{\rm core}^*$,
   there exists a solution $(U^{\bqcf}, \bm{p}^{\bqcf}) \in
   \bm{\mathcal{U}}_{h,0} \times \bm{\mathcal{P}}_{h,0}$ to the BQCF
   equations~\eqref{bqcf_approx} that satisfies
\begin{equation}\label{main_estimate}
\begin{split}
\|\nabla IU^\infty -& \nabla U^{\bqcf} \|_{L^2(\mathbb{R}^d)} +
 \| I \bm{p}^\infty - \bm{p}^{\bqcf}\|_{L^2(\mathbb{R}^d)}
 \lesssim~ \gamma_{\rm tr} \Big(
   \|h \nabla^2 \tilde{I}U^{\infty}\|_{L^2(\Omega_\c)} \\
   &\qquad + \|h \nabla \tilde{I}\bm{p}^\infty \|_{L^2(\Omega_\c)} +
   \|\nabla\tilde{I} {U}^{\infty}\|_{L^2(\Omega_{\rm ext})}
   + \|\tilde{I} \bm{p}^\infty\|_{L^2(\Omega_{\rm ext})}\Big),
\end{split}
\end{equation}
% \begin{equation}\label{main_estimate}
% \begin{split}
% \|\nabla IU^\infty -& \nabla U^{\bqcf} \|_{L^2(\mathbb{R}^d)} + \sum_{\alpha=0}^{S-1} \| Ip^\infty_\alpha - p^{\bqcf}_\alpha\|_{L^2(\mathbb{R}^d)} \lesssim~ \gamma_{\rm tr}\big(\|h(x) \nabla^2 \tilde{I}U^{\infty}\|_{L^2(\Omega_\c)} \big. \\
% &\quad+ \big. \sum_\alpha\|h \nabla \tilde{p}_\alpha^\infty\|_{L^2(\Omega_\c)} + \|\nabla\tilde{U}^{\infty}\|_{L^2(\Omega_{\rm ext})} + \sum_\alpha\|\tilde{I}p_\alpha^\infty\|_{L^2(\Omega_{\rm ext})}\big),
% \end{split}
% \end{equation}
where
\[
\gamma_{\rm tr} := \begin{cases} &\sqrt{1 + \log(R_{\rm o}/R_{\a})}, \quad \mbox{if $d = 2$,} \\
                                 &1, \quad \mbox{if $d = 3$.} \end{cases}
\]
The implied constant, as well as $R_{\rm core}^*$, may depend on $C_\varphi$ and $C_{\mathcal{T}_h}$,
the interatomic potentials $V, V_\xi$, the maximum of $|\rho|$ for $\rho \in \mathcal{R}_1$, and the stability constant, $\gamma_\a$.
% is allowed to depend upon the interaction range, $\mathcal{R}$, the ratio $\kappa$, $R_{\rm core}^*$, and the shape regularity of the finite element mesh, $C_{\mathcal{T}_h}$, and the coercivity constant, $\gamma_\a$.
\end{theorem}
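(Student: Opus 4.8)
The plan is to recast the BQCF equations \eqref{bqcf_approx} as a root--finding problem for the nonlinear operator $\mathcal{F}^{\bqcf} : \bm{\mathcal{U}}_{h,0}\times\bm{\mathcal{P}}_{h,0} \to \big(\bm{\mathcal{U}}_{h,0}\times\bm{\mathcal{P}}_{h,0}\big)^*$ and to apply a quantitative inverse function theorem of the type used for Bravais lattices in \cite{blended2014}. Since $\mathcal{F}^{\bqcf}$ is not a gradient (the force--based coupling is non--conservative), the argument cannot be purely variational; instead it requires three ingredients evaluated at a good trial state: a consistency (truncation) estimate, a stability (inf--sup) estimate for the linearised operator, and a uniform Lipschitz bound on the linearisation. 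The trial state is $(\bar U,\bar{\bm p})$, obtained by cutting off the smooth interpolant $(\tilde I U^\infty,\tilde I\bm p^\infty)$ to $\Omega$ and applying the $P_1$ nodal interpolant $I_h$ on $\mathcal{T}_h$; its distance to $(U^\infty,\bm p^\infty)$ in $\|\cdot\|_{\rm ml}$ is controlled, via standard finite--element interpolation estimates and the decay rates of Theorem~\ref{decay_thm1}, by exactly the right--hand side of \eqref{main_estimate}.

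For the consistency estimate I would bound $\langle\mathcal{F}^{\bqcf}(\bar U,\bar{\bm p}),(W,\bm r)\rangle$ for arbitrary test pairs $(W,\bm r)$, subtracting the atomistic equilibrium $\langle\delta\mathcal{E}^\a(U^\infty,\bm p^\infty),(W,\bm r)\rangle=0$ and decomposing the residual into three contributions: (i) a coarsening error, the difference of atomistic forces at $(\bar U,\bar{\bm p})$ and at $(U^\infty,\bm p^\infty)$, bounded by the $P_1$ interpolation error of $\tilde I U^\infty$ and $\tilde I\bm p^\infty$ and hence by $\|h\nabla^2\tilde I U^\infty\|_{L^2(\Omega_\c)}+\|h\nabla\tilde I\bm p^\infty\|_{L^2(\Omega_\c)}$; (ii) a modelling (blending) error, the $\varphi$--weighted difference between atomistic and Cauchy--Born forces in $\Omega_{\rm b}$, which after summation by parts yields factors $\nabla\varphi,\nabla^2\varphi$ (hence $R_\a^{-1},R_\a^{-2}$) multiplied by second--order Taylor remainders of $V$ along the slowly varying solution — this step encodes the ghost--force--free property and must be carried out separately for the displacement stencil $D_\rho U$ and for the shift differences $p_\beta-p_\alpha$, using the faster decay $|\nabla^j\tilde p_\alpha^\infty|\lesssim(1+|x|)^{-d-j}$; and (iii) a far--field truncation error supported in $\Omega_{\rm ext}$, bounded by $\|\nabla\tilde I U^\infty\|_{L^2(\Omega_{\rm ext})}+\|\tilde I\bm p^\infty\|_{L^2(\Omega_{\rm ext})}$. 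Estimating $(W,\bm r)$ on the annular blending region in terms of its global energy norm costs the logarithmic factor $\gamma_{\rm tr}$ in two dimensions via a Poincar\'e/trace inequality on $\Omega_{\rm b}$.

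For stability I would transfer Assumption~\ref{assumption2} to the linearised BQCF operator at $(\bar U,\bar{\bm p})$. First, the strong stability $\langle\delta^2\mathcal{E}^\a(\bm u^\infty)\bm v,\bm v\rangle\geq\gamma_\a\|\bm v\|_\a^2$ on $\bm{\mathcal{U}}_0$ must be shown to persist, with a slightly reduced constant, at the trial state and on the finite--element space, using Lipschitz continuity of $\delta^2\mathcal{E}^\a$ and the decay of $(U^\infty,\bm p^\infty)$; next, a Cauchy--Born consistency estimate (the strains outside the core being small and slowly varying) shows $\delta^2\mathcal{E}^\c$ is close to $\delta^2\mathcal{E}^\a$ on such states; finally, since $\mathcal{F}^{\bqcf}$ is non--symmetric one cannot invoke coercivity directly and must prove an inf--sup condition: for a given trial direction $(V,\bm s)$ one tests against a suitably corrected function, moving the multipliers $1-\varphi$ and $\varphi$ past the bilinear forms with commutators controlled by $\|\nabla\varphi\|_{L^\infty}\lesssim R_\a^{-1}$, so that all error terms are small once $R_{\rm core}$ (equivalently $R_\a$) exceeds a threshold $R_{\rm core}^*$. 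I expect this stability transfer to be the main obstacle: the multilattice structure is genuinely new here, since the coupling blocks between $U$ and $\bm p$ in both Hessians, and the extra shift contribution in $\|\cdot\|_\a$, must be tracked through every perturbation step.

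The remaining Lipschitz estimate — $\|\delta\mathcal{F}^{\bqcf}(U_1,\bm p_1)-\delta\mathcal{F}^{\bqcf}(U_2,\bm p_2)\|\lesssim\|(U_1-U_2,\bm p_1-\bm p_2)\|_{\rm ml}$ uniformly in a neighbourhood of $(\bar U,\bar{\bm p})$ — follows from the uniform bounds on the third derivatives of $V$ and $V_\xi$ in (V.2) and (V.4), together with $L^\infty$ control of the reference and trial configurations and inverse inequalities on the fully refined atomistic region. With consistency $\eta\lesssim\gamma_{\rm tr}\,(\text{RHS of }\eqref{main_estimate})$, a uniform lower bound on the stability constant for $R_{\rm core}\geq R_{\rm core}^*$, and a uniform Lipschitz constant, the inverse function theorem yields a solution $(U^{\bqcf},\bm p^{\bqcf})\in\bm{\mathcal{U}}_{h,0}\times\bm{\mathcal{P}}_{h,0}$ with $\|(U^{\bqcf}-\bar U,\bm p^{\bqcf}-\bar{\bm p})\|_{\rm ml}\lesssim\eta$; combining this with the interpolation bound for $\|(\nabla I U^\infty-\nabla\bar U,I\bm p^\infty-\bar{\bm p})\|$ through the triangle inequality gives \eqref{main_estimate}.
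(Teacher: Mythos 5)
Your overall architecture (inverse function theorem at a projected trial state, with consistency, stability and Lipschitz estimates) is the same as the paper's, and your consistency sketch identifies the right error contributions and the right source of the factor $\gamma_{\rm tr}$. The genuine gap is in the stability step. You assert that, because $\mathcal{F}^{\bqcf}$ is non-symmetric, ``one cannot invoke coercivity directly'' and propose instead an inf--sup argument in which the multipliers $1-\varphi$ and $\varphi$ are commuted past the bilinear forms, with the commutators ``controlled by $\|\nabla\varphi\|_{L^\infty}\lesssim R_\a^{-1}$'' and hence small once $R_{\rm core}$ is large. This does not work as stated: the first-order commutator terms have the form $\int \mathbb{C}:\big(\nabla\varphi\otimes W\big):\nabla W$, and since $W$ on the blending annulus of width $\sim R_\a$ is only controlled by $\|W\|_{L^2(\Omega_\b)}\lesssim R_\a\|\nabla W\|_{L^2(\Omega_\b)}$ (Poincar\'e on the annulus), the product $\|\nabla\varphi\|_{L^\infty}\|W\|_{L^2(\Omega_\b)}\|\nabla W\|_{L^2}$ is $O(1)\,\|\nabla W\|_{L^2}^2$, not $o(1)$ as $R_\a\to\infty$. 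Only the second-order commutators ($\|\nabla^2\varphi\|_{L^\infty}\lesssim R_\a^{-2}$) gain a factor $R_\a^{-1}$. So the perturbative smallness your threshold $R_{\rm core}^*$ relies on is absent precisely where it is needed, in the blending region.

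The paper avoids this by proving outright \emph{coercivity} of the non-symmetric linearised operator, $\<\delta\mathcal{F}^{\bqcf}(U_h,\bm p_h)(W,\bm r),(W,\bm r)\>\geq \tfrac{\gamma_\a}{2}\|(W,\bm r)\|_{\rm ml}^2$ (Theorems~\ref{stab_theorem_full} and~\ref{stab_theorem}), which suffices for invertibility despite the lack of symmetry. The proof is not perturbative: it is a contradiction/concentration-compactness argument in which a putative sequence of unstable modes is rescaled, split into a strongly convergent part $X_n$ (handled by atomistic stability near the core) and a weakly convergent part $Z_n$, and the blending function is written as $\varphi=(\sqrt{\varphi})^2$ so that one factor of $\sqrt{\varphi}$ can be moved onto each argument of the quadratic form, turning each piece into a full atomistic or Cauchy--Born Hessian to which the known coercivity applies; the cross terms and the $O(1)$ commutators vanish only in the limit, by weak convergence and by the decay of the defect solution, not by an $R_\a^{-1}$ bound. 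If you want to keep your perturbative framework you would have to find a substitute for this mechanism; as written, the stability step of your proposal fails.
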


\begin{remark}
   The quantity $\gamma_{\rm tr}$ arises from a trace inequality that is needed when estimating interpolants on the
   atomistic mesh in terms of interpolants on the continuum mesh, c.f.~[Lemma
   4.6]\cite{blended2014}.
\end{remark}

\medskip

Section~\ref{analysis} is devoted to proving Theorem~\ref{main_thm}, but before
we embark on this, we first demonstrate how the error estimate can be combined
with the regularity estimates of Theorem~\ref{decay_thm1} to yield an optimised
BQCF scheme with balanced approximation parameters. This is followed by a
numerical test on a Stone--Wales defect in graphene, validating our theoretical
convergence rates.

\subsection{Optimal parameter choices}
Once we restrict ourselves to a Cauchy--Born energy with $\mathcal{P}_1$ discretisation as the continuum model, the free parameters in the design of the BQCF method are the domain, $\Omega$;
blending function, $\varphi$; and finite element mesh, $\mathcal{T}_h$ in the
sense that once these are set according to Section~\ref{sec:approx_params},
then the BQCF method~\eqref{bqcf_approx} is fully formulated.  Ideally, these
parameters should be chosen in an optimal way so as to obtain the most efficient
method.

% Clearly, it is not wise to have a finite element mesh with elements
% whose size is on the order of the lattice spacing, for example.  This naturally
% gives rise to the question of how the finite element mesh \textit{should} be
% graded.  Other natural questions are the choice of blending function and how
% large of a domain $\Omega$ to take.  We have already remarked that the
% forthcoming analysis only relies on the assumptions made on the blending
% function so we simply note that there are many choices to make for the blending
% function which meet these requirements, see e.g.~\cite{bqce12}, and these
% choices will not effect the asymptotic estimates but may improve the prefactor.
% The finite element mesh and choice of $\Omega$ may however be optimized.

The choice of blending function is, in the case of the BQCF method,
arbitrary as long as Assumption~\ref{assumption-shapereg} is satisfied.
There are many choices to make for the blending
function which meet these requirements, see e.g.~\cite{bqce12}.

The finite element mesh and hence the choice of $\Omega$ may, however, be optimized.
The key to choosing the finite element mesh and size of $\Omega$ lies in applying the
decay results of Theorem~\ref{decay_thm1} to our error estimate~\eqref{main_estimate},~\cite{acta.atc,bqcf13,bqce12}.  \dao{In obtaining our optimized parameters, we do not provide rigorous proofs but instead use heuristic assumptions to arrive at approximate choices which can then be rigorously analyzed numerically.  To start, we}
 assume that the mesh size function $h(x)$ is radial, i.e.,
$h(x) \equiv h(|x|)$. Then, ignoring logarithmic factors in $\gamma_{\rm tr}$ \dao{and employing the estimate $|1+r|^{-1} \lesssim r^{-1}$ for $r \geq 1$}, the error
estimate~\eqref{main_estimate} can be further estimated by
\begin{equation*}
\begin{split}
&\|\nabla IU^\infty - \nabla U^{\bqcf} \|_{L^2(\mathbb{R}^d)}^2 +
\| I\bm{p}^\infty - \bm{p}^{\bqcf}\|_{L^2(\mathbb{R}^d)}^2
\\
&\qquad\qquad\qquad\qquad\qquad\qquad\lesssim \int_{r_{\rm core}}^{R_{\c}}|h(r)|^2r^{-3-d} \, dr  + \int_{1/2r_{\rm i}}^{\infty}r^{-1-d} \, dr
\end{split}
% \label{main_esty1}
\end{equation*}
\dao{Next, we note that from the definitions of $\Omega_{\rm c}, \Omega$, and $r_{\rm i}$, we have $r_{\rm i} = R_{\rm c} + 4r_{\rm buff}$ so that we may make the replacement $r_{\rm i} \approx R_{\rm c}$.} Denoting the number of degrees of freedom by ${\rm DoF}$ (nodes in the continuum finite element mesh times the number of species in the multilattice), we can then carry out an optimization problem consisting of minimizing this error estimate subject to a fixed number of degrees of freedom, ${\rm DoF}$.   This problem is exactly the same as for the Bravais lattice and is
\begin{align*}
\min_{h \in L^2, R_{\c} > 0}  \int_{r_{\rm core}}^{R_{\c}}|h(r)|^2r^{-3-d} \, dr  + \int_{1/2\dao{R_{\rm c}}}^{\infty}r^{-1-d} \, dr.
\end{align*}
\dao{This problem is solved in~\cite{olsonThesis} where it is found that there are approximate minimisers of the form $h(r) =
\big(r / R_\a \big)^{\frac{1+d}{1+d/2}}$.  A simplified approximate solution can be obtained by first minimizing $\int_{r_{\rm core}}^{R_{\c}}|h(r)|^2r^{-3-d} \, dr$ with respect to $h$ where the same expression for $h$ will result, but instead of also minimizing with respect to $R_{\rm c}$, one can simply note that the error then becomes
\begin{equation}\label{error_dof}
\int_{r_{\rm core}}^{R_{\c}}|h(r)|^2r^{-3-d} \, dr  + \int_{1/2R_{\rm c}}^{\infty}r^{-1-d} \, dr \lesssim r_{\rm core}^{-d-2} + R_{\c}^{-d} \lesssim~ R_{\rm a}^{-d-2} + R_{\c}^{-d}.
\end{equation}
In order to balance the sources of error, one should take  $R_{\c} = R_\a^{\dao{2/d}+1}$.  Finally, by simply writing the number of degrees of freedom as the sum of those in the atomistic and continuum regions, it is possible to derive the result that $\#{\rm DoF} \approx  R_{\rm a}^d$; further details can be found in~\cite{olsonThesis,Dev2013,acta.atc,blended2014}.}
% but one of
% the key points is that the number of degrees of freedom, ${\rm DoF}$, can be
% approximated by
% \[
% \#{\rm DoF} = \int_{R_\a}^{R_\c} h^{-d}(r)r^{d-1}\, dr +  R_\a^d
% \]
% from which it is then evident that the derived expressions for $h(r)$ and $R_{\c}$ in terms of $R_{\a}$ can then be back substituted to yield an expression for $\#{\rm DoF}$ in terms of $R_\a$ alone.

After making the estimation $\gamma_{\rm tr} \leq (\log {\rm DoF})^{1/2}$~\cite{blended2014} for $d = 2$,  the main error estimate, ~\eqref{main_estimate}, currently written in terms of solution regularity, may now be replaced \dao{by an estimate of~\eqref{error_dof} in terms of computational cost since $\#{\rm DoF} \approx  R_{\rm a}^d$:}
\begin{equation}\label{main_esty2}
\begin{split}
&\|\nabla IU^\infty - \nabla U^{\bqcf} \|_{L^2(\mathbb{R}^d)}^2 +
   \| I \bm{p}^\infty - \bm{p}^{\bqcf} \|_{L^2(\mathbb{R}^d)}^2 \\
	&\qquad\qquad\qquad\qquad\qquad\qquad\lesssim~ \begin{cases} ({\rm DoF})^{-1-2/d} \log{\rm DoF}, &\quad d = 2 , \\
	({\rm DoF})^{-1-2/d}, &\quad d = 3,
	\end{cases}
\end{split}
\end{equation}
which exactly matches the rate for the Bravais lattice case~\cite{blended2014}.
This is due to the fact that the limiting factor in both error estimates is
the $\mathcal{P}_1$ finite element approximation.

\begin{remark}
   \dao{In the Bravais lattice analysis~\cite{blended2014}, the expression of $R_{\c}$ in terms of $R_{\a}$
	 is incorrect which has led to an error in the expression for the error estimate in terms of the degrees of freedom. In that paper, a different mesh scaling is also used, but should the same mesh scaling be used, the error estimates in terms of the degrees of freedom would be identical up to a constant prefactor.}
\end{remark}

\subsection{Numerical tests} \label{num}
In addition to providing a means to estimating the computational cost of the
BQCF  method, the estimate~\eqref{main_esty2} is also convenient to verify
 numerically. We have carried this out for
a Stone--Wales defect in graphene using both the BQCF method and a fully
atomistic method.

For the latter we simply minimize the full atomistic
energy over displacements that are non-zero only on the computational domain
$\Omega$ (clamped boundary conditions).  Using the methods
discussed in Section~\ref{analysis}, it is not difficult to show that the
solution, $(U^{\rm Dir},\bm{p}^{\rm Dir})$, to this atomistic Galerkin method
exists and satisfies the error estimate
\begin{equation}\label{minor_esty1}
   \|\nabla IU^\infty - \nabla U^{\rm Dir} \|_{L^2(\mathbb{R}^d)}
      + \| I\bm{p}^\infty - \bm{p}^{\rm Dir}\|_{L^2(\mathbb{R}^d)}
      \lesssim~ ({\rm DoF})^{-1/2}.
\end{equation}

We now set the model up for the Stone--Wales defect in graphene, recalling first
 the multilattice parameter values given
in Section~\ref{model}.  We choose a Stillinger-Weber~\cite{stillinger1985}
type interatomic potential with a pair potential and bond angle potential component.
The interaction range we consider is
\begin{align*}
\mathcal{R} = \big\{&(\rho_1 00), (\rho_2 00), (-\rho_1 00),(-\rho_2 00),(\rho_1-\rho_2 00),(\rho_2-\rho_1 00),\\
&(001),(010),(-\rho_2 01),(\rho_2 10),(-\rho_1 01),(\rho_1 10),\\
& (\rho_1 11), (\rho_2 11), (-\rho_1 11),(-\rho_2 11),(\rho_1-\rho_2 11),(\rho_2-\rho_1 11)\big\},
\end{align*}
which is  depicted in Figure~\ref{fig:multilattice}.  In this notation, the site potential is given by
\begin{align*}
&\hat{V}(D\bm{y}(\xi)) = \sum_{\triple \in \mathcal{R}} \frac{1}{2}\phi(D_{\triple}\bm{y}(\xi)) + \vartheta(D_{(-\rho_1 01)}\bm{y}(\xi), D_{(-\rho_1-\rho_2 01)}\bm{y}(\xi))  \\
&\qquad+ \vartheta(D_{(-\rho_1 01)}\bm{y}(\xi), D_{(-\rho_2 01)}\bm{y}(\xi)) + \vartheta(D_{(-\rho_1-\rho_2 01)}\bm{y}(\xi), D_{(-\rho_2 01)}\bm{y}(\xi)) \\
&\qquad+ \vartheta(D_{(\rho_1 10)}\bm{y}(\xi), D_{(\rho_1+\rho_2 10)}\bm{y}(\xi)) + \vartheta(D_{(\rho_1 10)}\bm{y}(\xi), D_{(\rho_2 10)}\bm{y}(\xi)) \\
&\qquad+ \vartheta(D_{(\rho_1+\rho_2 10)}\bm{y}(\xi), D_{(\rho_2 10)}\bm{y}(\xi)),
\end{align*}
where $\phi(r) = r^{-12} - 2r^{-6}$ is a pair potential term and
\[
\vartheta(r_1, r_2) = \Big(\frac{r_1\cdot r_2}{|r_1| \,  |r_2|}+1/2\Big)^2
\]
is a three-body term that penalizes angles that differ from $\frac{2\pi}{3}$.

 % The interaction range and potential are defined to include nearest neighbor bonds for the three-body term and next nearest neighbor bonds for the pair potential component.

The Stone--Wales defect shown in Figure~\ref{fig:stone} is obtained by rotating
the bond between the two carbon atoms at the origin site by ninety degrees about
the midpoint of this bond. One way of incorporating this defect into our
framework is to define a reference configuration $(Y_0, p_1)$ where $Y_0(\xi) =
\mF \xi$ for all $\xi \neq 0$ with $\mF$ and $p_1$ given by the graphene
parameters in~\eqref{graph_param}. At the origin, we set $Y_0(0) = \mbox{Rot}(0) $
and $p_1(0) = \mbox{Rot}(p_1)$, where $\mbox{Rot}$ represents the ninety degree
rotation about the midpoint of the segment ${\rm conv}\{0, p_1\}$.  Then we set
$V_\xi(D(U,p)(\xi)) = \hat{V}(D(Y_0 + U,p_1 + p)(\xi))$.

\begin{figure}
   \centering
\subfigure[A perfect graphene sheet. ]
{\includegraphics[width=0.35\textwidth]{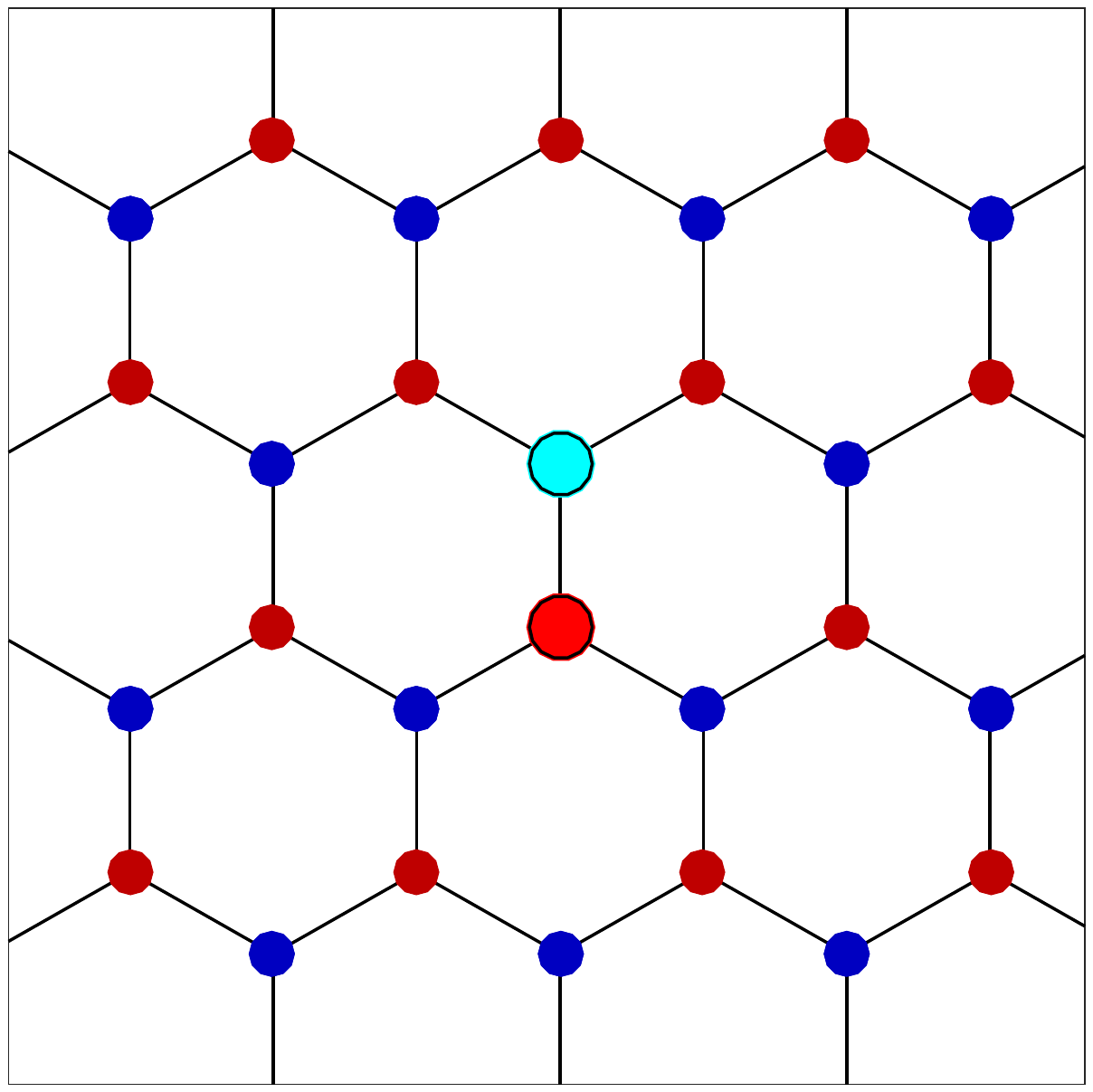}}
\subfigure[An unrelaxed Stone--Wales defect. ]
{\includegraphics[width=0.35\textwidth]{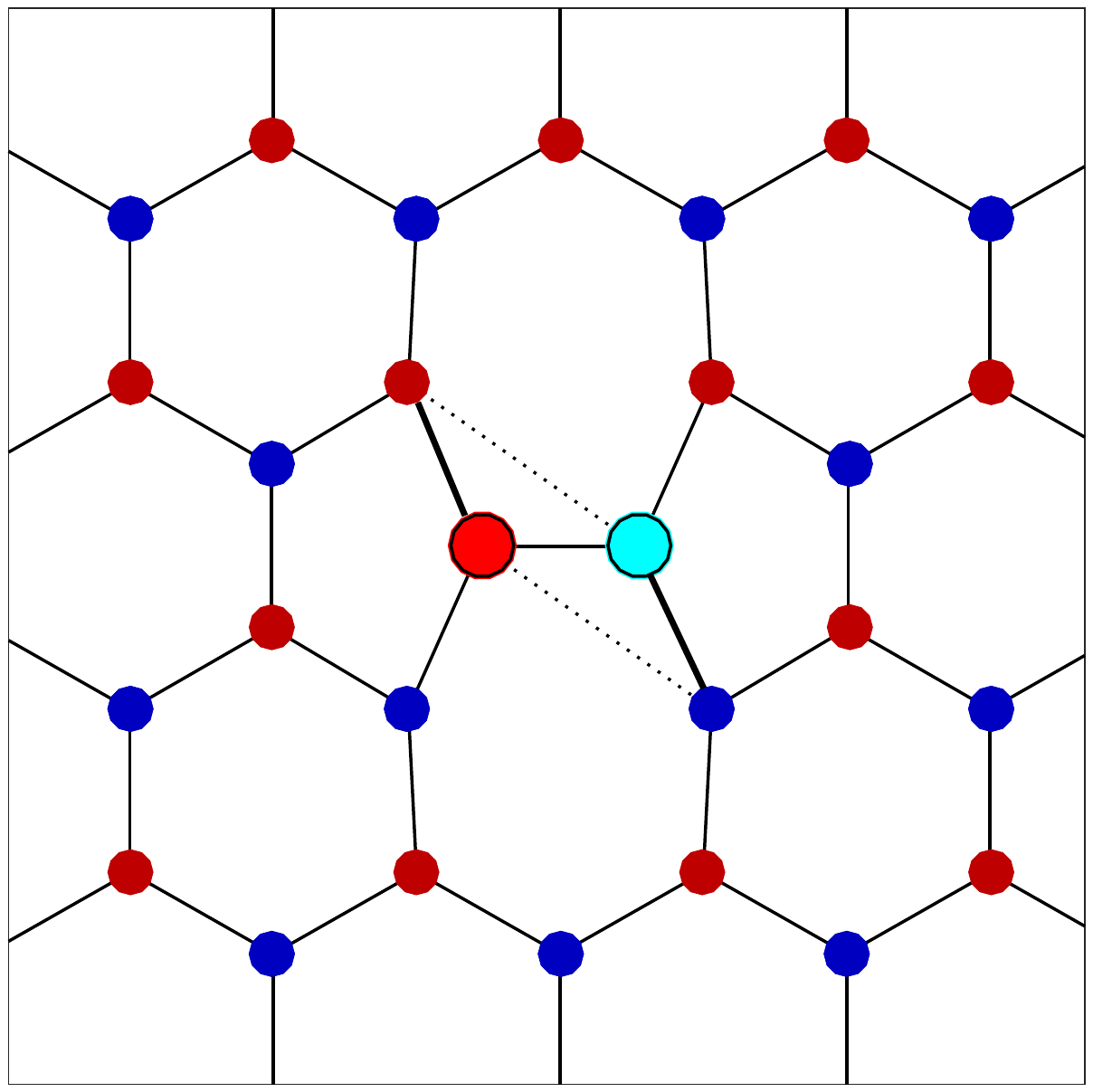}}
\caption{Examples of a perfect graphene sheet and a Stone--Wales defect.
The dotted lines in the right display indicate bonds that are broken during the rotation of the highlighted atoms.}\label{fig:stone}
\end{figure}

We choose hexagonal domains for $\Omega_{\rm core}, \Omega_\a, \Omega$, etc.,
and use a blending function which approximately minimizes the $L^2$ norm of
$\nabla^2 \varphi$ on $\Omega_{\b}$~\cite{bqce12}.  We select the inner width,
$r_{\rm core}$, of the hexagon $\Omega_{\rm core}$ to be from the range ${{R_{\a}=}}\left\{8,
12,16,20,24\right\}$ with $\kappa = 1/2$, and then the remaining domains are
chosen as scaled hexagons satisfying the requirements of Section~\ref{bqcf} and
Theorem~\ref{main_thm} \dao{(see Figure 10 in~\cite{bqcf13} for a representative illustration of this domain decomposition for a Bravais lattice)}.  Finally, our finite element mesh is graded
radially with approximate mesh size $h(r) = \big(\frac{r}{R_\a}\big)^{3/2}$ as described earlier in this
section with $d = 2$.  The BQCF equations were solved by a preconditioned nonlinear conjugate
gradient algorithm with line-search based on force-orthogonality only
{\helen{(in BQCF there is no energy functional for which descent can be imposed).}}

In Figure~\ref{fig:error} we show the error in the displacement gradients and the single graphene
shift vector for the computed BQCF solution versus the number of degrees of
freedom. Both match our theoretical predictions
from~\eqref{main_esty2} and indeed demonstrate that the error
estimates are sharp (up to logarithms). We also show the error committed by the
atomistic Galerkin method (which is estimated in~\eqref{minor_esty1}),
to demonstrate the practical gain achieved by the BQCF method.

\begin{figure}
   \centering
\subfigure[Error in displacement field for Stone--Wales defect.]
   {\includegraphics[width=0.45\textwidth]{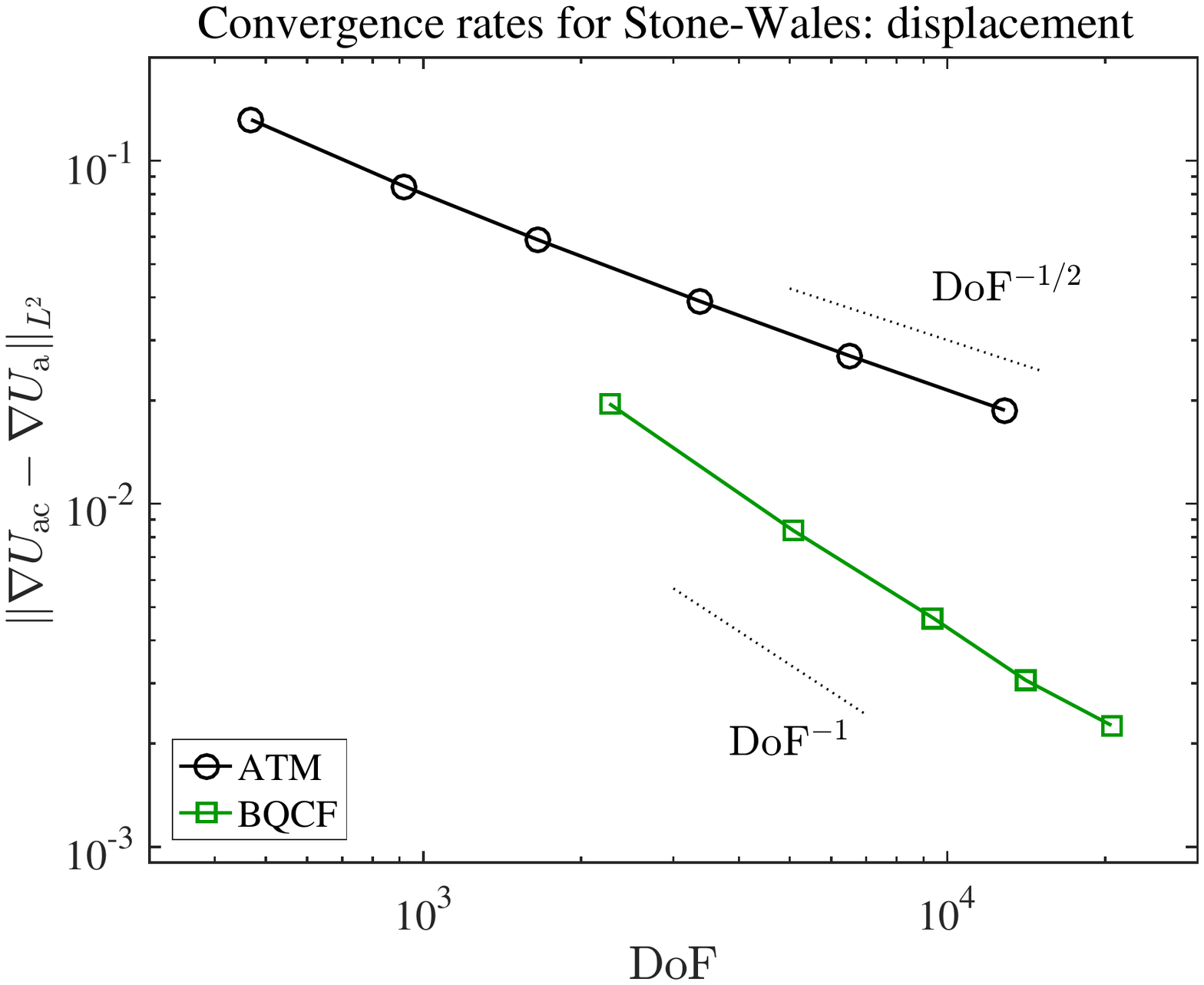}}
\subfigure[Error in shift field for Stone--Wales defect. ]
{\includegraphics[width=0.45\textwidth]{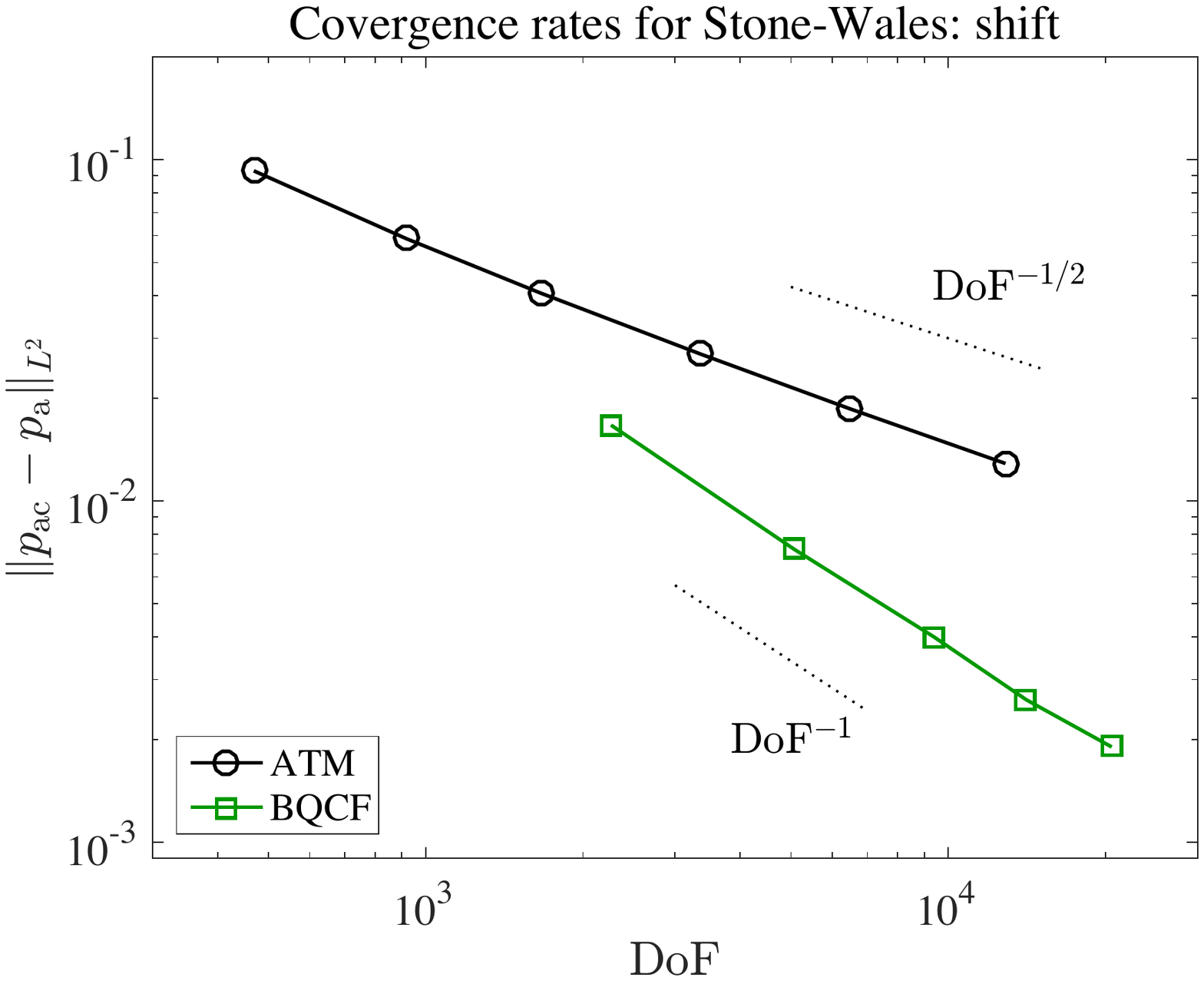}}
\caption{BQCF error plotted against degrees of freedom. {\helen{We have also plotted the ``purely atomistic'' error, denoted by ATM, which is the solution obtained by truncating the infinite dimensional atomistic problem to a finite domain using homogeneous Dirichlet boundary conditions.}}
}\label{fig:error}
\end{figure}

\FloatBarrier

\section{Proofs} \label{analysis}

The remainder of this paper is devoted to proving our main result, Theorem~\ref{main_thm}.  As in \cite{blended2014}, the abstract
framework for the proof is provided by the
inverse function theorem~\cite{acta.atc,ortnerInverse,hubbard2009}, {\helen{which we recall for reference and which is used to establish well-posedness of the nonlinear BQCF variational equation in Theorem ~\ref{main_thm}.}}

\begin{theorem}[Inverse Function Theorem \cite{ortnerInverse,hubbard2009}]\label{inverseFunctionTheorem}
Let $X$ and $Y$ be Banach spaces with $f:X \to Y$, $f \in {\rm{C}}^1(U)$ with $U \subset X$ an open set containing $x_0$.
Suppose that $\eta > 0, \sigma > 0$, and $L > 0$ exist such that $\|f(x_0)\|_Y < \eta$,  $\delta f(x_0)$ is invertible with $\|\delta f(x_0)^{-1}\|_{\calL(Y,X)} < \sigma$, $B_{2\eta\sigma}(x_0) \subset U$, $\delta f$ is Lipschitz continuous on $B_{2\eta\sigma}(x_0)$ with Lipschitz constant $L$, and $2L\eta\sigma^2 < 1$. Then there exists a ${\rm C}^1$ inverse function $g:B_{\eta}(y_0) \to B_{2\eta\sigma}(x_0)$ and thus an element $\bar{x} \in X$ such that $f(\bar{x}) = 0$ and
\begin{align*}
\|x_0 - \bar{x}\|_{X} <~& 2\eta\sigma.
\end{align*}
\end{theorem}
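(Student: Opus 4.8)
The plan is to prove this quantitative inverse function theorem by the Banach contraction mapping principle applied to a Newton-type fixed-point operator, solving $f(x)=y$ for every target $y$ in a whole ball $B_\eta(y_0)$ with $y_0 := f(x_0)$. Write $A := \delta f(x_0)$, which is invertible with $\|A^{-1}\|_{\calL(Y,X)} < \sigma$, and fix the radius $r := 2\eta\sigma$. For each $y \in B_\eta(y_0)$ define $T_y : \bar{B}_r(x_0) \to X$ by $T_y(x) := x - A^{-1}(f(x) - y)$, so that the fixed points of $T_y$ are precisely the solutions of $f(x) = y$. Differentiating gives $\delta T_y(x) = A^{-1}(\delta f(x_0) - \delta f(x))$, whence the Lipschitz hypothesis yields $\|\delta T_y(x)\| \le \sigma L \|x - x_0\|_X \le \sigma L r = 2L\eta\sigma^2 =: \kappa < 1$ on $\bar{B}_r(x_0)$. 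Since the ball is convex, the mean value inequality shows $T_y$ is a $\kappa$-contraction there.

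The crucial point is to verify that $T_y$ maps $\bar{B}_r(x_0)$ into itself, and here the naive linear bound is not sharp enough: one must exploit the second-order structure. Using the integral form of Taylor's theorem, $f(x) - y = (f(x_0) - y) + \int_0^1 \delta f(x_0 + t(x-x_0))(x-x_0)\,dt$, and subtracting $A(x - x_0)$, I obtain $T_y(x) - x_0 = A^{-1}\int_0^1 \big(\delta f(x_0) - \delta f(x_0 + t(x-x_0))\big)(x-x_0)\,dt + A^{-1}(y - f(x_0))$. Bounding the integrand by $L t \|x - x_0\|_X$ produces the quadratic estimate $\|T_y(x) - x_0\|_X \le \tfrac12 \sigma L \|x-x_0\|_X^2 + \sigma\|y - y_0\|_Y < \tfrac12 \sigma L r^2 + \sigma\eta = \eta\sigma(\kappa + 1) < 2\eta\sigma = r$, so $T_y$ sends the closed ball strictly into itself. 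Banach's theorem then furnishes, for every $y \in B_\eta(y_0)$, a unique fixed point $g(y) \in \bar{B}_r(x_0)$ with $f(g(y)) = y$, and the strict self-map bound gives $\|g(y) - x_0\|_X < 2\eta\sigma$.

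It then remains to upgrade $g$ to a $C^1$ map. Lipschitz continuity follows from the contraction property: subtracting the fixed-point identities for $y, y' \in B_\eta(y_0)$ and using $T_y(x') - T_{y'}(x') = A^{-1}(y - y')$ gives $\|g(y) - g(y')\|_X \le \kappa \|g(y)-g(y')\|_X + \sigma\|y-y'\|_Y$, hence $\|g(y)-g(y')\|_X \le \tfrac{\sigma}{1-\kappa}\|y-y'\|_Y$. Because $\|\delta f(x) - A\| \le L r$ with $\sigma L r = \kappa < 1$, a Neumann series shows $\delta f(x)$ is invertible throughout $\bar{B}_r(x_0)$ with $\|\delta f(x)^{-1}\| \le \tfrac{\sigma}{1-\kappa}$; combining this with the Fr\'echet differentiability of $f$ and the Lipschitz bound on $g$, the standard argument identifies $\delta g(y) = \delta f(g(y))^{-1}$, and continuity of $x \mapsto \delta f(x)$ together with continuity of inversion on $\calL(X,Y)$ yields $g \in C^1(B_\eta(y_0))$. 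Finally, since $\|0 - y_0\|_Y = \|f(x_0)\|_Y < \eta$ we have $0 \in B_\eta(y_0)$, so $\bar{x} := g(0)$ satisfies $f(\bar{x}) = 0$ and $\|x_0 - \bar{x}\|_X < 2\eta\sigma$, as claimed.

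The main obstacle is the self-map step under the \emph{sharp} hypothesis $2L\eta\sigma^2 < 1$: the crude linear estimate $\|T_y(x) - x_0\| \le \kappa r + \sigma\eta$ would only close for $\kappa \le \tfrac12$, so it is essential to use the quadratic remainder (equivalently, the integral form of Taylor's theorem) to land inside the ball of radius exactly $2\eta\sigma$. Once the correct operator and ball are fixed, the contraction estimate, the Lipschitz bound, and the $C^1$ upgrade are all routine.
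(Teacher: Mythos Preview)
Your proof is correct and follows the standard Newton--Kantorovich / Banach fixed-point approach. The paper, however, does not prove this statement at all: Theorem~\ref{inverseFunctionTheorem} is quoted from the references \cite{ortnerInverse,hubbard2009} and used as a black box in the proof of Theorem~\ref{main_thm}, so there is no proof in the paper to compare against.

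One minor technical quibble: you work on the \emph{closed} ball $\bar B_r(x_0)$ with $r=2\eta\sigma$, but the hypotheses only guarantee $B_{2\eta\sigma}(x_0)\subset U$ and Lipschitz continuity of $\delta f$ on the \emph{open} ball, so $\delta f$ need not be defined or Lipschitz at boundary points. This is harmless in practice: since your self-map bound is strict, $T_y$ actually maps $\bar B_{r'}(x_0)$ into itself for any $r'<r$ sufficiently close to $r$ (with a contraction constant $\sigma L r' < \kappa <1$), and the fixed point lies in the open ball $B_r(x_0)$ anyway. You might either note this explicitly or simply run the argument on a closed ball of radius $r'$ slightly less than $2\eta\sigma$ and pass to the limit.
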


\medskip

The nonlinear operator we consider is the variational BQCF operator $\mathcal{F}^{\rm BQCF}(U, \bm{p})$, and the point about which we linearize is $x_0 = (U_h, \bm{p}_h) := \Pi_h(U^\infty, \bm{p}^\infty)$ where $\Pi_h$ is a projection operator defined in the following section.  In Section~\ref{cons} we prove a consistency estimate on the residual $\mathcal{F}^{\rm BQCF}(U_h, \bm{p}_h)$:
\begin{equation}\label{cons_est}
\begin{split}
\sup_{\|(W,\bm{r})\|_{\rm ml} = 1} \big|\< \mathcal{F}^{\rm BQCF}(U_h, \bm{p}_h), (W,\bm{r}) \>\big| &\lesssim \|h \nabla^2 \tilde{U}^\infty\|_{L^2(\Omega_\c)} + \|h\nabla \tilde{\bm{p}}^\infty\|_{L^2(\Omega_\c)}  \\
&\qquad\qquad + \|\nabla \tilde{U}^\infty\|_{L^2(\Omega_{\rm ext})}
  + \|\tilde{\bm{p}}^\infty\|_{L^2(\Omega_{\rm ext})}.
\end{split}
\end{equation}
The invertibility condition on the derivative of $\mathcal{F}^{\bqcf}$ is proven as a coercivity condition in Section~\ref{stab} where we show that
\begin{equation}\label{stab_est}
\<\delta \mathcal{F}^{\rm BQCF}(U_h, \bm{p}_h){\helen{(W,\bm{r})}},(W,\bm{r}) \> \gtrsim \|(W,\bm{r})\|_{\rm ml}^2, \quad \forall (W,\bm{r}) \in \bm{\mathcal{U}}_{h,0} \times \bm{\mathcal{P}}_{h,0},
\end{equation}
provided that the atomistic region is sufficiently large.

After we prove these two estimates, in Section \ref{sec:proof_main_result} we
combine them with a Lipschitz estimate on $\delta\mathcal{F}^{\rm bqcf}$ and
apply the inverse function theorem to prove Theorem~\ref{main_thm}.

% We will begin by describing a number of technical results needed throughout the analysis, which are extensions of analogous results for Bravais lattices~\cite{blended2014}.  In section~\ref{cons} we estimate the BQCF {\em consistency error}: the residual of the BQCF variational operator when evaluated at a projection of the exact solution onto the approximation space.  In section~\ref{stab} we {\em stability} of BQCF: the linearized BQCF operator is invertible at this same projected solution. These two fundamental results then allow us to invoke the inverse function theorem which completes the proof of Theorem~\ref{main_thm}.

Throughout this analysis, we continue to use the modified Vinogradov notation $x \lesssim y$, where the implied constants are allowed to depend on the shape regularity constants $C_{\mathcal{T}_h}, C_{\rm o}$ ({\helen{which are defined in Assumption~\ref{assumption-shapereg} and \eqref{def_constant_Co} }}), the interatomic potentials (and their interaction range), and the stability constant $\gamma_\a$.

\subsection{Cauchy--Born Modeling Error}\label{tech}
In preparation for the consistency analysis in Section~\ref{cons} we first establish several auxiliary results about the Cauchy--Born model.

A central technical tool in the analysis of AtC coupling methods is the ability to compare discrete atomistic displacements which are the natural atomistic kinematic variables ({\helen{recall that the atomistic displacements are equivalent to atomistic site displacements plus atomistic shift vectors}}), and continuous displacement and shift fields which capture the continuum kinematics.  We have already introduced several interpolants which serve this task: a micro-interpolant, $I$; a finite element interpolant, $I_h$; and a smooth interpolant, $\tilde{I}$.  We will also introduce a quasi-interpolant in this section which will allow us to define an analytically convenient atomistic version of stress~\cite{theil2012}.

We use $\bar{\zeta}(x)$ to denote the nodal basis function associated with the origin for the atomistic finite element mesh $\mathcal{T}_\a$ and $\bar{\zeta}_\xi(x) := \bar{\zeta}(x-\xi)$ to denote the nodal basis function at site $\xi$.   We may then write the micro-interpolant $Iu = \bar{u}$ as
\begin{equation*}\label{def_baru}
\bar{u}(x) = \sum_{\xi \in \mathcal{L}} u(\xi) \bar{\zeta}(x-\xi).
\end{equation*}
The quasi-interpolant of $u$ is then defined by a convolution with $\bar{\zeta}$
\begin{equation}\label{def_quasi_interp}
u^*(x) := (\bar{\zeta} * \bar{u})(x).
\end{equation}

It will later be important that this convolution operation is invertible and stable. This is a consequence of~\cite[Lemma 5]{atInterpolant}, which we state here for reference.

\begin{lemma}\cite[Lemma 5]{atInterpolant}\label{iso_lemma}
For a given atomistic displacement, $u$, there exists a unique atomistic displacement $\acute{u}$ with the property that $\bar{\zeta} * \bar{\acute{u}}(\xi) = u(\xi)$ for all $\xi \in \mathcal{L}$.
\end{lemma}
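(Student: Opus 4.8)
The plan is to recognise the map $u \mapsto (\bar{\zeta} * \bar{u})\big|_{\mathcal{L}}$ as a discrete convolution on the lattice and to invert it by passing to the semidiscrete Fourier transform. Writing $\bar{u}(x) = \sum_{\xi \in \mathcal{L}} u(\xi)\bar{\zeta}(x-\xi)$ and using bilinearity of convolution, one obtains for $\eta \in \mathcal{L}$ that $(\bar{\zeta} * \bar{u})(\eta) = \sum_{\xi \in \mathcal{L}} b(\eta-\xi)\,u(\xi)$, where $b(\zeta) := (\bar{\zeta}*\bar{\zeta})(\zeta)$ for $\zeta \in \mathcal{L}$ is a finitely supported, symmetric ($b(-\zeta) = b(\zeta)$), real-valued stencil. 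Thus the assertion is exactly that the convolution operator with stencil $b$ is a bijection on the relevant space of lattice displacements, and $\acute{u}$ is its preimage of $u$.

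First I would introduce the symbol $\widehat{b}(k) := \sum_{\zeta \in \mathcal{L}} b(\zeta)\,e^{-ik\cdot\zeta}$ over the Brillouin zone $\mathbb{R}^d/(2\pi\mathcal{L}^*)$; it is a trigonometric polynomial, hence continuous and periodic. The crux is to show $\widehat{b}$ is bounded away from zero. By the Poisson summation formula (using $\det\mathsf{F}=1$, so the unit cell has volume one),
\[
\widehat{b}(k) \;=\; \sum_{G \in 2\pi\mathcal{L}^*} \big|\widehat{\bar{\zeta}}(k+G)\big|^2 \;\geq\; 0 .
\]
Since $\bar{\zeta}$ is the continuous piecewise-linear nodal basis function of the simplicial mesh $\mathcal{T}_\a$, its shifts $\{\bar{\zeta}(\cdot-\xi)\}_{\xi \in \mathcal{L}}$ form a Riesz basis of their $L^2$-span — equivalently $0 < c_0 \leq \widehat{b}(k) \leq C_0 < \infty$ — which is the classical statement that the simplicial linear B-spline is a stable generator; alternatively one verifies strict positivity directly by computing the few nonzero values of $b$ for the explicit triangulation of Section~\ref{model} and checking $b(0) + 2\sum_{\zeta\neq 0} b(\zeta)\cos(k\cdot\zeta) > 0$, or by using that $\bar{\zeta}\geq 0$ with $\widehat{\bar{\zeta}}(0)=1$ and $\widehat{\bar{\zeta}}(G)=0$ for $0\neq G\in 2\pi\mathcal{L}^*$ (partition of unity), so the $G=0$ term dominates near $k=0$ while real-analyticity and non-vanishing of $\widehat{\bar{\zeta}}$ preclude simultaneous vanishing of all terms.

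Given the uniform lower bound, I would then define $\acute{u}$ by $\widehat{\acute{u}}(k) := \widehat{u}(k)/\widehat{b}(k)$; boundedness of $1/\widehat{b}$ makes this a bounded bijection on $\ell^2(\mathcal{L};\mathbb{R}^n)$, and $(\bar{\zeta}*\bar{\acute{u}})\big|_{\mathcal{L}} = u$ holds by construction, with uniqueness because $\widehat{b}$ never vanishes. Finally, since $1/\widehat{b}$ is real-analytic (being the reciprocal of a strictly positive trigonometric polynomial), the inverse stencil $a$ with $a*b=\delta_0$ decays exponentially; convolution against an exponentially localised stencil is bounded on all the displacement spaces used here (finite-energy displacements in $\mathcal{U}$, polynomially growing displacements, etc.), which upgrades the $\ell^2$ statement to the generality in which the lemma is applied. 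The main obstacle is the uniform lower bound on the symbol $\widehat{b}$, i.e.\ the Riesz-basis lower bound for the piecewise-linear nodal basis on this particular simplicial mesh; everything else is routine Fourier bookkeeping. Since this is precisely \cite[Lemma 5]{atInterpolant}, in the present paper we simply invoke it.
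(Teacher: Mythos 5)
The paper offers no proof of this statement at all: it is imported verbatim from \cite[Lemma 5]{atInterpolant}, so there is no internal argument to compare yours against, and your job here is really to reconstruct the cited reference's proof. On its own terms your reconstruction is the standard one and is essentially correct: since $(\bar{\zeta}*\bar{u})(\eta)=\sum_{\xi\in\mathcal{L}} b(\eta-\xi)u(\xi)$ with $b(\zeta):=(\bar{\zeta}*\bar{\zeta})(\zeta)$, the map is discrete convolution against the (translation-invariant) $\mathcal{P}_1$ mass-matrix stencil of $\mathcal{T}_\a$, and invertibility reduces to the symbol $\widehat{b}$ being bounded away from zero on the Brillouin zone. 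Two points deserve more care than you give them. First, the identity $\widehat{b}(k)=\sum_{G\in 2\pi\mathcal{L}^*}|\widehat{\bar{\zeta}}(k+G)|^2$ uses that $\bar{\zeta}$ is even (so that convolution coincides with autocorrelation and $\widehat{\bar{\zeta}}$ is real); this does hold here because both the $2$D mesh $\{\xi\pm\mathsf{F}\hat{T}\}$ and the Kuhn-type $3$D mesh are invariant under $x\mapsto -x$, whence $\bar{\zeta}(-x)=\bar{\zeta}(x)$ by uniqueness of the nodal interpolant — but this should be said, since otherwise $\widehat{(\bar{\zeta}*\bar{\zeta})}=\widehat{\bar{\zeta}}^{\,2}$ is not manifestly nonnegative. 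Second, of your three sketches for strict positivity of $\widehat{b}$, only the Riesz-basis/mass-matrix one is actually a proof; the remark that ``real-analyticity and non-vanishing of $\widehat{\bar{\zeta}}$ preclude simultaneous vanishing'' is not an argument (a nonnegative real-analytic function can vanish at a point, and $\widehat{\bar{\zeta}}$ does vanish at reciprocal lattice points), so you should invoke the uniform spectral equivalence of the $\mathcal{P}_1$ mass matrix with the identity — or equivalently the classical stability of the Courant generator — as the key input rather than as one of several interchangeable options. Your final step, upgrading from $\ell^2$ to the displacement classes actually used in the paper via exponential decay of the inverse stencil (the symbol $1/\widehat{b}$ being analytic and nonvanishing), together with uniqueness from the nonvanishing symbol, is the right way to match the generality in which Lemma~\ref{iso_lemma} is applied (e.g.\ to the compactly supported functions $\psi_n Z_n$ in the stability proof).
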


\medskip

One of the primary uses of the $u^*$ interpolant will be the development of an \textit{atomistic stress function} which can be compared to the continuum stress in the Cauchy--Born model~\cite{theil2012}.  The first variation of the continuum model may be written in terms of a stress tensor,
\begin{equation}\label{cont_tensor_eq1}
\begin{split}
&\la \delta \mathcal{E}^{\c} (U, \bm{q}), (W, \bm{r})\ra
=~ \int_{\mathbb{R}^d} \sum_{\triple} V_{,\triple}((\nabla_{\tau} U+q_{\delta}-q_{\gamma})_{\tripleTau \in \R})
\cdot \left(\nabla_{\rho} W+r_{\beta}-r_{\alpha}\right) \\
&~=~ \int_{\mathbb{R}^d} \sum_{\triple} V_{,\triple}(\nabla(U, \bm{q})) \otimes \rho : \nabla W  + \int_{\mathbb{R}^d} \sum_{\triple} V_{,\triple}(\nabla(U, \bm{q}))_{\tripleTau \in \R})
\cdot \left(r_{\beta}-r_{\alpha}\right)
\\
&~=~ \int_{\R^d} \sum_{\beta}[{\Scd}(U,\bm{q})(x)]_{\beta} : \nabla W
      + \int_{\R^d} \sum_{\alpha,\beta} [{\Scs}(U,\bm{q})(x)]_{\alpha\beta}
               (r_\beta - r_\alpha),
\end{split}
\end{equation}
where we defined
\begin{equation}\label{cont_stress_tensor}
\begin{split}
[{\Scd}(U,\bm{q})(x)]_{\beta} :=~& \sum_{\substack{\alpha, \rho: \\ \triple\in\mathcal{R}}} V_{,\triple}(\nabla(U, \bm{q})(x))\otimes \rho, \\
[{\Scs}(U,\bm{q})(x)]_{\alpha\beta} :=~& \sum_{\rho \in \mathcal{R}_1} V_{,\triple}(\nabla(U, \bm{q})(x)).
\end{split}
\end{equation}

To compare the atomistic and continuum models, we now construct an analogous atomistic stress tensor. Its definition will make it clear why we introduced the seemingly unnecessary sum over $\beta$ in the first group in \eqref{cont_tensor_eq1}. \dao{The basic idea is to extend the construction of~\cite{theil2012}: the argument $\nabla(U, \bm{q})(x)$ in~\eqref{cont_stress_tensor} will be replaced by a local averaging of first order finite difference approximations $D(U, \bm{q})(\xi)$ for $\xi$ near $x$.}

\begin{lemma}
   For $(U,\bm{q}) \in \bm{\mathcal{U}}$, define the atomistic stress tensors
   \begin{align}
      \label{eq:defn_Sa}
    \begin{split}
   [\Sad(U,\bm{q})(x)]_\beta :=~& \sum_{\substack{\alpha, \rho: \\ \triple\in\mathcal{R}}} \sum_{\xi \in \mathcal{L}} \big( V_{,\triple}\big(D(U,\bm{q})(\xi)\big) \otimes \rho \big) \omeRho(\xi-x),\\
   [\Sas(U,\bm{q})(x)]_{\alpha\beta} :=~& \sum_{\rho \in \mathcal{R}_1} \sum_{\xi\in\mathcal{L}} V_{,\triple} \big(D(U,\bm{q})(\xi)\big) \omega_{0}(\xi-x).
   \end{split} \\
   \label{omega_rho}
   \text{where} \qquad \omeRho(x) &:= \int_{0}^1 \barZeta(x+t\rho)dt.
   \end{align}
   Then
   \begin{equation}
      \label{atom_tensor_eq1}
      \begin{split}
         \big\la \delta \mathcal{E}^{\a}_{\rm hom}(U,\bm{q}), (W^{*}, \bm{r}^{*}) \big\ra
   =& \int_{\mathbb{R}^d}  \bigg\{ \sum_\beta [\Sad(U,\bm{q})]_\beta : \left(\nabla \bar{W}+\nabla \bar{r_\beta} \right) \\
    &\qquad\qquad+ \sum_{\alpha,\beta} [\Sas(U,\bm{q})]_{\alpha\beta}\cdot (\bar{r}_{\beta}-\bar{r}_\alpha) \bigg\} dx.
			%\big\la \delta \mathcal{E}^{\a}_{\rm hom}(U,\bm{q}), (W^{*}, \bm{r}^{*}) \big\ra
      %=& \helen{\int_{\mathbb{R}^d}  \bigg\{ \Sad(U,\bm{q}): \left(\nabla \bar{W}+\nabla \bar{\bm{r}}\right)
      % + \sum_{\alpha,\beta} [\Sas(U,\bm{q})(x)]_{\alpha\beta}\cdot (\bar{r}_{\beta}-\bar{r}_\alpha) \bigg\} dx.  }
      \end{split}
   \end{equation}
   {\helen{ where $W^{*}$ and  $\bm{r}^{*}$ are defined through  \eqref{def_quasi_interp}. }}
\end{lemma}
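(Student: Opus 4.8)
The plan is to compute $\la \delta \mathcal{E}^{\a}_{\rm hom}(U,\bm{q}), (W^{*}, \bm{r}^{*}) \ra$ directly from its definition and rearrange it into the stated integral form by reorganizing finite-difference sums into convolutions against $\barZeta$. First I would write out the first variation: using the site-potential form $\mathcal{E}^{\a}_{\rm hom} = \sum_\xi V(D\bm{u}(\xi))$ and the chain rule, one gets
\[
\la \delta \mathcal{E}^{\a}_{\rm hom}(U,\bm{q}), (W^{*}, \bm{r}^{*}) \ra
= \sum_{\xi \in \mathcal{L}} \sum_{\triple \in \mathcal{R}} V_{,\triple}(D(U,\bm{q})(\xi)) \cdot D_{\triple}(W^*, \bm{r}^*)(\xi),
\]
and then split $D_{\triple}(W^*,\bm{r}^*)(\xi) = D_\rho W^*(\xi) + r_\beta^*(\xi+\rho) - r_\alpha^*(\xi)$ into a "displacement-gradient" part and a "shift-difference" part, matching the two terms on the right-hand side of \eqref{atom_tensor_eq1}.

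For the displacement-gradient part, the key identity is that a discrete forward difference $D_\rho W^*(\xi) = W^*(\xi+\rho) - W^*(\xi)$ can be written as an integral of $\nabla \bar W$ along the segment from $\xi$ to $\xi+\rho$, since $W^* = \barZeta * \bar W$ is smooth (in fact $W^*(x) = \int \bar W(x-y)\barZeta(y)\,dy$, so $\nabla W^*$ makes sense and one can alternatively differentiate under the convolution). Concretely, $D_\rho W^*(\xi) = \int_0^1 \nabla \bar{W}(\xi + t\rho)\,\rho\, dt$ up to the convolution smoothing, which is exactly where the weight $\omeRho(x) = \int_0^1 \barZeta(x + t\rho)\,dt$ in \eqref{omega_rho} comes from: interchanging the sum over $\xi$ with the $t$-integral and using $\sum_\xi f(\xi)\,g(\xi - x) = \int f \cdot (\text{translated } g)$-type identities converts $\sum_\xi V_{,\triple}(D(U,\bm q)(\xi)) \otimes \rho : (\text{difference of } W^*)$ into $\int_{\R^d} [\Sad]_\beta : \nabla \bar W$. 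The artificial sum over $\beta$ in \eqref{cont_tensor_eq1} is introduced precisely so that the $\nabla \bar r_\beta$ contributions (arising from the $r_\beta^*(\xi+\rho)$ term, treated the same way as $W^*$) can be absorbed into the same stress object $[\Sad]_\beta$; the $r_\alpha^*(\xi)$ term contributes a pure $\bar r_\alpha$ (no gradient, since there is no shift $\rho$) and lands in $\Sas$.

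I expect the main obstacle to be the careful bookkeeping of the convolution/averaging identities — specifically, verifying that $\sum_{\xi}(\text{finite difference of } W^*\text{ at }\xi)$ reassembles into $\int \nabla \bar W$ against the averaged weight $\omeRho$, with the right index contractions ($\otimes \rho$ vs. the $\beta$-sums) and with the $\bm{r}^*$ contributions correctly partitioned between $\Sad$ (the $\nabla \bar r_\beta$ piece) and $\Sas$ (the $\bar r_\beta - \bar r_\alpha$ piece). This requires using Lemma~\ref{iso_lemma} implicitly only insofar as $W^*, \bm r^*$ are well-defined and the manipulations are justified (absolute convergence holds because $(W,\bm r)$ can be taken in the dense test space $\bm{\mathcal{U}}_0$ with compact support, then extended by density using Theorem~\ref{well_defined}). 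One must also check that $\omega_0 = \barZeta$ (the $\rho = 0$ case of \eqref{omega_rho} with the degenerate integrand), so that $\Sas$ is genuinely a local average of $V_{,\triple}(D(U,\bm q)(\xi))$ against the nodal basis, consistent with the stated formula. Once these identities are in place, the result follows by collecting terms.
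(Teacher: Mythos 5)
Your proposal is correct and follows essentially the same route as the paper's proof: write out the first variation tested against $(W^*,\bm r^*)$, split each $D_{\triple}(W^*,\bm r^*)(\xi)$ into the gradient part $D_\rho W^*(\xi)+D_\rho r_\beta^*(\xi)$ and the shift part $r_\beta^*(\xi)-r_\alpha^*(\xi)$, and use the convolution identities $D_\rho W^*(\xi)=\int_{\R^d}\omeRho(\xi-x)\nabla_\rho\bar W\,dx$ and $r_\beta^*(\xi)-r_\alpha^*(\xi)=\int_{\R^d}\omega_0(\xi-x)(\bar r_\beta-\bar r_\alpha)\,dx$ (the latter with $\omega_0=\barZeta$) before collecting terms into $\Sad$ and $\Sas$. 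The bookkeeping you flag as the main obstacle is exactly what the paper does, citing the analogous identity in Hudson--Ortner/Theil-type stress representations, so there is no gap.
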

\begin{proof}
We start by computing {\helen{the first variation of $\mathcal{E}^{\a}_{\rm hom}(U,\bm{q})$ with the test pair $(W^{*}, \bm{r}^{*})$:}}
\begin{equation}\label{delEa_eq1}
\begin{split}
&\la \delta \mathcal{E}^{\a}_{\rm hom}(U,\bm{q}), (W^{*}, \bm{r}^{*})\ra\\
&=\sum_{\xi\in\mathcal{L}}\sum_{\triple\in\mathcal{R}}V_{,\triple}\big(D(U,\bm{q})(\xi)\big)
\cdot\Big(D_{\rho}W^*(\xi)+D_{\rho}r_{\beta}^*(\xi)+r_{\beta}^*(\xi)-r_{\alpha}^*(\xi)\Big).
\end{split}
\end{equation}
Arguing as in \cite[Eq. (2.4)]{theil2012} we obtain
\begin{align}
   \label{convo_finite_diff}
   D_{\rho}W^*(\xi)+D_{\rho}r_{\beta}^*(\xi)
   &= \int_{\mathbb{R}^d} \omeRho(\xi-x)\left( \nabla_{\rho}\bar{W}+\nabla_{\rho}\bar{r}_{\beta}\right)\, dx
   \qquad \text{and} \\
   \label{convo_beta_alpha}
   r^*_{\beta}(\xi)-r^*_{\alpha}(\xi) &= \int_{\mathbb{R}^d}\omega_0(\xi-x) \left(\bar{r}_{\beta}-\bar{r}_{\alpha}\right)\, dx.
\end{align}

% Applying \eqref{def_quasi_interp} and \eqref{omega_rho}, we can rewrite the
% finite difference as
% \begin{align*}
% D_{\rho}W^*(\xi) =~& \int_0^1 \frac{d}{dt}W^*(\xi + t\rho)\, dt =~ \int_0^1 \nabla_{\rho}W^*(\xi + t\rho)\, dt =~ \int_0^1 (\bar{\zeta}*\nabla_{\rho}\bar{W})(\xi + t\rho)\, dt\\
% =~& \int_{\mathbb{R}^d}\int_0^1\bar{\zeta}(\xi+t\rho -x)\nabla_\rho \bar{W}(x)\, dt\, dx = \int_{\mathbb{R}^d} \omeRho(\xi-x) \nabla_{\rho}\bar{W}(x)\, dx.
% \end{align*}
% Similarly,
% and similar calculations also hold for $W$ replaced by $r_{\beta}$. Thus,
% \begin{equation}\label{convo_finite_diff}
% D_{\rho}W^*(\xi)+D_{\rho}r_{\beta}^*(\xi)
% =\int_{\mathbb{R}^d} \omeRho(\xi-x)\left( \nabla_{\rho}\bar{W}+\nabla_{\rho}\bar{r}_{\beta}\right)\, dx.
% \end{equation}
%
% \dao{Meanwhile, $\omega_0  (x) = \bar{\zeta}(x)$ so
% \begin{equation}\label{convo_beta_alpha}
% r^*_{\beta}(\xi)-r^*_{\alpha}(\xi) =~ \int_{\mathbb{R}^d}\omega_0(\xi-x) \left(\bar{r}_{\beta}-\bar{r}_{\alpha}\right)(x)dx.
% \end{equation}}

Substituting \eqref{convo_finite_diff} and \eqref{convo_beta_alpha} into \eqref{delEa_eq1} and recalling the definitions of the atomistic stress tensors from~\eqref{eq:defn_Sa} yields the stated claim.
%% CO: no point in stating it again
% \begin{equation*}
% \begin{split}
% &\la \delta \mathcal{E}^{\a}_{\rm hom}(U,\bm{q}), (W^{*}, \bm{r}^{*})\ra =\int_{\mathbb{R}^d}  \bigg\{ \Sad(U,\bm{q}): \left(\nabla \bar{W}+\nabla \bar{\bm{r}}\right)
%        + \sum_{\alpha,\beta} [\Sas(U,\bm{q})(x)]_{\alpha\beta}\cdot (\bar{r}_{\beta}-\bar{r}_\alpha) \bigg\} dx.
% \end{split}
% \end{equation*}
\end{proof}

We refer to the error between the continuum and atomistic stress functions as the \textit{Cauchy--Born modeling error} and quantify it in the next lemma; see \cite{theil2012} for an analogous result for Bravais lattices.

\begin{lemma}\label{cb_error2}
   Assume that $U \in {\rm C}^{2,1}(\R^d; \R^n)$ and $p_\alpha \in {\rm C}^{1,1}(\R^d, \R^{n})$ for each $\alpha$. Fix $x \in \R^d$ and set
	\[
	r_{\rm cut} = \max_{\rho \in \mathcal{R}_1} |\rho|,  \qquad \nu_x := B_{2r_{\rm cut}}(0).
	\]
   1. If $\nabla U$ and $\bm{p}$ are constant in $\nu_x$, then
   \begin{equation}\label{cb_identity1}
   {[{\rm S}^{\rm a}_{\rm d}(U, \bm{p})(x)]_\beta = [{\rm S}^{\rm c}_{\rm d}(U, \bm{q})(x)]_\beta \quad \mbox{and} \quad [{\rm S}^{\rm a}_{\rm s}(U, \bm{p})(x)]_{\alpha\beta} = [{\rm S}^{\rm c}_{\rm s}(U, \bm{q})(x)]_{\alpha\beta}.}
   \end{equation}

   2. In general,
   {\begin{equation*}
      \label{eq:bound_Sa-Sc}
			\begin{split}
   \big| [S^{\rm a}_{\rm d}(U,p)(x)]_\beta - [S_{\rm d}^{\rm c}(U,p)(x)]_\beta\big|
   \lesssim~&
         \| \nabla^2 U \|_{L^\infty(\nu_x)} + \| \nabla \bm{q} \|_{L^\infty(\nu_x)},\\
   \big| [S^{\rm a}_{\rm s}(U,p)(x)]_{\alpha\beta} - [S_{\rm s}^{\rm c}(U,p)(x)]_{\alpha\beta}\big|
   \lesssim~&
         \| \nabla^2 U \|_{L^\infty(\nu_x)} + \| \nabla \bm{q} \|_{L^\infty(\nu_x)}.
  \end{split}
	\end{equation*}}
   with the implied constant depending only on the interatomic potential $V$.
\end{lemma}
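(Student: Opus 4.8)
The plan is to follow the strategy of \cite{theil2012}, adapting it to the multilattice setting where we must track both the displacement gradient and the shift fields. The first part is essentially a consistency check: if $\nabla U$ and $\bm{p}$ are constant on $\nu_x$, then for every $\xi$ with $\barZeta_\xi$ or $\omeRho(\xi - x)$ supported near $x$, the finite differences $D_{\triple}(U,\bm{q})(\xi)$ coincide exactly with the Cauchy--Born stencil $\nabla_{\triple}(U,\bm{q})(x)$, because $D_\rho U(\xi) = \nabla_\rho U(x)$ for affine $U$ and $p_\beta(\xi+\rho) - p_\alpha(\xi) = p_\beta - p_\alpha$ for constant shifts. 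Hence $V_{,\triple}(D(U,\bm{q})(\xi)) = V_{,\triple}(\nabla(U,\bm{q})(x))$ is independent of $\xi$ and can be pulled out of the $\xi$-sum in \eqref{eq:defn_Sa}; what remains is $\sum_{\xi} \omeRho(\xi - x) = \int_0^1 \sum_\xi \barZeta(\xi - x + t\rho)\,dt = 1$ (partition of unity), and similarly $\sum_\xi \omega_0(\xi - x) = 1$, which reduces both atomistic tensors to their continuum counterparts \eqref{cont_stress_tensor}. One must be a little careful that the averaging weights $\omeRho$ only see sites $\xi$ within distance $\sim r_{\rm cut}$ of $x$, which is exactly why $\nu_x = B_{2r_{\rm cut}}(0)$ is the right neighbourhood (combining the $|\rho| \le r_{\rm cut}$ reach of the stencil with the $O(r_{\rm cut})$ support of $\barZeta$-averaging).

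For the general estimate, I would subtract: using $\sum_\xi \omeRho(\xi - x) = 1$ again, write
\[
[\Sad(U,\bm{q})(x)]_\beta - [\Scd(U,\bm{q})(x)]_\beta = \sum_{\substack{\alpha,\rho:\\ \triple\in\mathcal{R}}} \sum_{\xi\in\mathcal{L}} \Big( V_{,\triple}\big(D(U,\bm{q})(\xi)\big) - V_{,\triple}\big(\nabla(U,\bm{q})(x)\big)\Big) \otimes \rho \, \omeRho(\xi - x),
\]
and likewise for the shift component with $\omega_0$. Since $\mathcal{R}$ is finite and $|\rho| \le r_{\rm cut}$, it suffices to bound each summand. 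By (V.2), $V_{,\triple}$ is Lipschitz (indeed $C^3$) with constant $\le M$, so $|V_{,\triple}(D(U,\bm{q})(\xi)) - V_{,\triple}(\nabla(U,\bm{q})(x))| \le M \, |D(U,\bm{q})(\xi) - \nabla(U,\bm{q})(x)|$. The remaining task is to bound the stencil discrepancy $|D_{\tripleTau}(U,\bm{q})(\xi) - \nabla_{\tripleTau}(U,\bm{q})(x)|$ for $\xi$ in the support of the averaging kernels, i.e. $|\xi - x| \lesssim r_{\rm cut}$. Decompose $D_{\tripleTau}(U,\bm{q})(\xi) - \nabla_{\tripleTau}(U,\bm{q})(x) = \big(D_\tau U(\xi) - \nabla_\tau U(x)\big) + \big(q_\delta(\xi+\tau) - q_\delta(x)\big) - \big(q_\gamma(\xi) - q_\gamma(x)\big)$. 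A first-order Taylor expansion of $U \in C^{2,1}$ gives $|D_\tau U(\xi) - \nabla_\tau U(x)| = |U(\xi+\tau) - U(\xi) - \nabla U(x)\tau| \lesssim \|\nabla^2 U\|_{L^\infty(\nu_x)}(|\xi - x| + |\tau|) \lesssim r_{\rm cut}\|\nabla^2 U\|_{L^\infty(\nu_x)}$, and a zeroth-order expansion of $q_\delta, q_\gamma \in C^{1,1}$ gives $|q_\delta(\xi+\tau) - q_\delta(x)| \lesssim \|\nabla \bm{q}\|_{L^\infty(\nu_x)} r_{\rm cut}$. Summing the finitely many contributions and absorbing $r_{\rm cut}$ and $M$ and $\#\mathcal{R}$ into the implied constant yields the claimed bound; the shift-tensor estimate is identical with $\omega_0$ in place of $\omeRho$.

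I do not expect a genuine obstacle here — the argument is a routine adaptation of \cite[Eq. (2.4) ff.]{theil2012}. The one point requiring minor care is bookkeeping the two different smoothness classes: $U$ contributes a second-derivative term (it is differenced after one finite-difference, so we need $C^{2}$-type control, i.e.\ a first-order Taylor remainder governed by $\nabla^2 U$), whereas the shifts $\bm{q} = \bm{p}$ enter \emph{undifferenced} in the continuum stencil, so only a zeroth-order Taylor remainder — hence first derivatives of $\bm{q}$ — is needed. A second bookkeeping point: one must confirm that $\omeRho$ and $\omega_0$ are nonnegative, have total mass one, and are supported in a ball of radius $\lesssim r_{\rm cut}$ about the origin, so that all the relevant $\xi$ lie in $\nu_x$; this is immediate from \eqref{omega_rho} and the fact that $\barZeta$ is the P1 nodal basis on $\mathcal{T}_\a$, supported in a neighbourhood of the origin of diameter comparable to the lattice spacing (hence $\lesssim r_{\rm cell} \le r_{\rm buff}$). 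No additional ideas beyond Taylor expansion, Lipschitz continuity of $V_{,\triple}$, and the partition-of-unity property are needed.
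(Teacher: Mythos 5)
Your proposal is correct and follows essentially the same route as the paper: partition of unity for the identity in part 1, then Lipschitz continuity of $V_{,\triple}$ plus a Taylor expansion of the stencil discrepancy on the support of $\omeRho$ for part 2. The only cosmetic difference is that the paper phrases the comparison via an auxiliary homogeneous state $(U^{\rm h},\bm{q}^{\rm h})$ with $\nabla U^{\rm h}\equiv\nabla U(x)$, $\bm{q}^{\rm h}\equiv\bm{q}(x)$ and invokes part 1, whereas you insert $\nabla(U,\bm{q})(x)$ directly into the averaged sum — these are the same estimate.
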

\begin{proof}
   1. The identity \eqref{cb_identity1} is an immediate consequence of the definitions \eqref{cont_stress_tensor}, \eqref{eq:defn_Sa} and of
   \begin{equation*}\label{omega_properties_simplified}
   \sum_{\xi}\omega_\rho(\xi - x) = 1.
   \end{equation*}

   2. We define an auxiliary homogeneous displacement $(U^{\rm h}, \bm{q}^{\rm h})$ with $\nabla U^{\rm h} \equiv \nabla U(x)$ and $\bm{q}^{\rm h} \equiv \bm{q}(x)$. Then we have{
   \begin{align*}
      [{\rm S}^{\rm a}_{\rm d}(U, \bm{q})(x)]_\beta - [{\rm S}^{\rm c}_{\rm d}(U, \bm{q})(x)]_\beta
      = [{\rm S}^{\rm a}_{\rm d}(U, \bm{q})(x)]_\beta - [{\rm S}^{\rm a}_{\rm d}(U^{\rm h}, \bm{q}^{\rm h})(x)]_\beta.
   \end{align*}}
   Since we assumed that $V$ is twice continuously differentiable, with globally bounded second derivatives, we
   obtain
   \begin{align*}
      \big| [{\rm S}^{\rm a}_{\rm d} &(U, \bm{q})(x)]_\beta - [{\rm S}^{\rm c}_{\rm d}(U, \bm{q})](x)_\beta \big|
      = \big|[{\rm S}^{\rm a}_{\rm d}(U, \bm{q})(x)]_\beta - [{\rm S}^{\rm a}_{\rm d}(U^{\rm h}, \bm{q}^{\rm h})(x)]_\beta\big| \\
      &\dao{= \Big|\sum_{\substack{\alpha, \rho: \\ \triple\in\mathcal{R}}} \sum_{\xi \in \mathcal{L}} \big(\big[ V_{,\triple}\big(D(U,\bm{q})(\xi)\big) -  V_{,\triple}\big(D(U^{\rm h}, \bm{q}^{\rm h})(\xi)\big)\big]\otimes \rho \big) \omeRho(\xi-x)\Big| }\\
			&\dao{\lesssim \sum_{\substack{\alpha, \rho: \\ \triple\in\mathcal{R}}} \sum_{\xi \in \mathcal{L}} \big|D(U,\bm{q})(\xi) -  D(U^{\rm h}, \bm{q}^{\rm h})(\xi)\big| \omeRho(\xi-x)} \\
      &\lesssim \big\| \nabla U - \nabla U^{\rm h} \|_{L^\infty(\nu_x)}
         + \big\| \bm{q} - \bm{q}^{\rm h} \|_{L^\infty(\nu_x)} + \dao{\| \nabla^2 U \|_{L^\infty(\nu_x)} + \| \nabla \bm{q} \|_{L^\infty(\nu_x)}} \\
      &\lesssim \| \nabla^2 U \|_{L^\infty(\nu_x)} + \| \nabla \bm{q} \|_{L^\infty(\nu_x)},
   \end{align*}
	\dao{where in obtaining the last two inequalities we have used a Taylor expansion of the finite differences and the fact that $\omeRho(\xi-x)$ as defined in~\eqref{omega_rho} vanishes off of $\nu_x$.} The proof for the comparison of the ``shift'' stress tensors is nearly identical so is omitted.
\end{proof}

With this pointwise estimate, and using the fact that $\tilde{U}$ is piecewise polynomial, it is straightforward to deduce the following Cauchy--Born modeling error estimate over $\Omega_\c$.
\begin{corollary}\label{globel_stress}
%Assume that $U \in W^{3,\infty}(\mathbb{R}^d,\mathbb{R}^n)$ and each $q^\alpha \in W^{3,\infty}(\mathbb{R}^d,\mathbb{R}^n)$.
{For the atomistic and continuum stress tensors defined above,
\begin{equation*}\label{global_est}
\begin{split}
\big\|[{\rm{S}}^\a_{\rm d}(\tilde{U}^\infty,\tilde{\bm{q}}^\infty)]_\beta -[{\rm{S}}^\c_{\rm d}(\tilde{U}^\infty,\tilde{\bm{q}}^\infty)]_\beta \big\|_{L^2(\Omega_\c)} \lesssim~&  \|\nabla^2 \tilde{U}^\infty\|_{L^2(\Omega_\c)} + \|\nabla \tilde{\bm{q}}^\infty\|_{L^2(\Omega_\c)}, \\
\big\|[{\rm{S}}^\a_{\rm s}(\tilde{U}^\infty,\tilde{\bm{q}}^\infty)]_{\alpha\beta} -[{\rm{S}}^\c_{\rm s}(\tilde{U}^\infty,\tilde{\bm{q}}^\infty)]_{\alpha\beta} \big\|_{L^2(\Omega_\c)} \lesssim~&  \|\nabla^2 \tilde{U}^\infty\|_{L^2(\Omega_\c)} + \|\nabla \tilde{\bm{q}}^\infty\|_{L^2(\Omega_\c)}.
\end{split}
\end{equation*}}
\end{corollary}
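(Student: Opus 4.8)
The plan is to integrate the pointwise bound of Lemma~\ref{cb_error2}(2) over $\Omega_\c$ and convert the local $L^\infty$ norms appearing on its right-hand side into $L^2$ norms, exploiting that $\tilde{I}U^\infty$ and $\tilde{I}\bm{q}^\infty$ are piecewise polynomials of fixed degree on the \emph{uniform} atomistic triangulation $\mathcal{T}_\a$ (Lemma~\ref{smoothInterpolant}), whose elements are congruent copies of $\mF\hat{T}$ and hence have diameter of order one, independent of the continuum mesh $\mathcal{T}_h$.

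First I would square the first inequality of Lemma~\ref{cb_error2}(2) and integrate, obtaining
\[ \big\|[\Sad(\tilde{U}^\infty,\tilde{\bm{q}}^\infty)]_\beta - [\Scd(\tilde{U}^\infty,\tilde{\bm{q}}^\infty)]_\beta\big\|_{L^2(\Omega_\c)}^2 \lesssim \int_{\Omega_\c} \Big( \|\nabla^2 \tilde{U}^\infty\|_{L^\infty(\nu_x)}^2 + \|\nabla \tilde{\bm{q}}^\infty\|_{L^\infty(\nu_x)}^2\Big)\, dx, \]
with $\nu_x = B_{2r_{\rm cut}}(x)$. Each $\nu_x$ meets only a uniformly bounded number of elements $T \in \mathcal{T}_\a$; on each such $T$, $\nabla^2 \tilde{U}^\infty|_T$ is a polynomial of bounded degree on a domain of diameter of order one, so equivalence of norms on that finite-dimensional space, with a constant uniform over $\mathcal{T}_\a$, gives $\|\nabla^2\tilde{U}^\infty\|_{L^\infty(T)} \lesssim \|\nabla^2\tilde{U}^\infty\|_{L^2(T)}$, and hence $\|\nabla^2 \tilde{U}^\infty\|_{L^\infty(\nu_x)} \lesssim \|\nabla^2 \tilde{U}^\infty\|_{L^2(\omega_x)}$ with $\omega_x := \bigcup\{T \in \mathcal{T}_\a : T \cap \nu_x \neq \emptyset\}$ a patch of diameter $\lesssim r_{\rm cut}$; the same holds for $\tilde{\bm{q}}^\infty$ with $\nabla^2$ replaced by $\nabla$.

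Next I would substitute this bound and interchange the order of integration (Tonelli): since $y \in \omega_x$ forces $x$ into a set of measure $\lesssim r_{\rm cut}^d$, the covering $\{\omega_x\}_{x \in \Omega_\c}$ has uniformly bounded multiplicity, whence
\[ \int_{\Omega_\c} \|\nabla^2 \tilde{U}^\infty\|_{L^2(\omega_x)}^2\, dx \;\lesssim\; \int_{\Omega_\c + B_{C r_{\rm cut}}} |\nabla^2 \tilde{U}^\infty(y)|^2\, dy . \]
The enlargement of $\Omega_\c$ by $O(r_{\rm cut})$ is absorbed into the $B_{4r_{\rm buff}}$ buffer built into the definition of $\Omega_\c$ (recall $r_{\rm buff} \geq r_{\rm cut}$), so the right-hand side is $\lesssim \|\nabla^2 \tilde{U}^\infty\|_{L^2(\Omega_\c)}^2$. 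Adding the two contributions and taking square roots yields the first stated inequality, and the estimate for $[\Sas(\tilde{U}^\infty,\tilde{\bm{q}}^\infty)]_{\alpha\beta} - [\Scs(\tilde{U}^\infty,\tilde{\bm{q}}^\infty)]_{\alpha\beta}$ follows identically from the second inequality of Lemma~\ref{cb_error2}(2).

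The only slightly delicate point, beyond the Bravais-lattice template of~\cite{theil2012}, is the bookkeeping in these last two steps: the uniform local $L^\infty$-to-$L^2$ equivalence, which hinges on $\mathcal{T}_\a$ being the uniformly refined lattice mesh rather than the graded continuum mesh $\mathcal{T}_h$, and the finite-overlap / Tonelli argument with its uniform multiplicity bound. The shift fields add nothing conceptually new, contributing only the parallel estimate on $\nabla \tilde{\bm{q}}^\infty$.
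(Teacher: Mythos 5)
Your proposal is correct and is exactly the argument the paper intends: the paper states only that the corollary follows ``straightforwardly'' from the pointwise bound of Lemma~\ref{cb_error2}(2) together with the piecewise-polynomial structure of $\tilde{U}^\infty$ and $\tilde{\bm{q}}^\infty$ on the uniform mesh $\mathcal{T}_\a$, which is precisely your local inverse-estimate plus finite-overlap integration. The only cosmetic caveat is that the Tonelli step formally produces the $L^2$ norm over $\Omega_\c + B_{Cr_{\rm cut}}$ rather than $\Omega_\c$ itself, a harmless adjustment of the buffer constant that the paper also glosses over.
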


\begin{remark}
The stress estimates for a multilattice are one order lower in terms of derivatives than the corresponding Bravais lattice estimates.  A refined analysis shows that this estimate cannot be improved without an underlying point symmetry for the multilattice.  When this symmetry is present in multilattices, it is possible to define a symmetrized Cauchy--Born energy with an improved estimate~\cite{koten2013}.
\end{remark}

\subsection{Consistency}\label{cons}
Our first task in completing the residual estimate~\eqref{cons_est} is to define the projection from atomistic functions to finite element functions satisfying the Dirichlet boundary conditions so we first truncate the solution to a finite domain.  For that, let $\eta$ be a smooth ``bump function'' with support in $B_{1}(0)$ and equal to one on $B_{3/4}(0)$.   Let \dao{$A_R$ be an ``annular region'' containing the support of $\nabla (I \eta(x/R))$, i.e,} $A_R := B_{R+2r_{\rm buff}}(0)\setminus B_{3/4R-2r_{\rm buff}} \supset {\rm supp}(\nabla (I \eta(x/R)))$ and define the truncation operator by
\begin{equation*}\label{trunc}
T_{R}u_\alpha(x) = \eta(x/R)\bigg(Iu_\alpha - \frac{1}{|A_R|}\int\limits_{A_R} Iu_0\, dx\bigg).
\end{equation*}
Further, let $S_h$ be the Scott--Zhang {\helen{quasi-interpolation}} operator~\cite{scott1990} onto the finite element mesh $\mathcal{T}_h$.  We then define the projection operator by
\begin{align}\label{proj_operator}
&\qquad\qquad\qquad\quad\Pi_{h} u_\alpha := S_h (T_{r_{\rm i}}u_\alpha), \quad \Pi_{h} \bm{u} :=~ \left\{\Pi_{h} u_\alpha\right\}_{\alpha = 0}^{S-1}, \\
&\Pi_{h} p_\alpha := \Pi_{h} (u_\alpha - u_0), \quad \Pi_{h} \bm{p} :=~ \left\{\Pi_{h} p_\alpha\right\}_{\alpha = 0}^{S-1}, \qquad \Pi_{h}(U,\bm{p}) := (\Pi_{h}U, \Pi_{h}\bm{p}).\nonumber
\end{align}
(Recall that $r_{\rm i}$ is the radius of the largest ball inscribed in $\Omega$.) Note that $\nabla \Pi_{h} u_\alpha$ as well as
\[
\Pi_{h} u_\alpha - \Pi_{h} u_\beta = S_h\big[\eta(x/r_{\rm i})\big( I u_\alpha - Iu_\beta \big)\big]
\]
have support contained in $\Omega$.
We also have the following approximation results.
\begin{lemma}\label{approx_lem}
Take $(U,\bm{p}) = \bm{u} \in \bm{\mathcal{U}}$.  Then
\begin{equation*}\label{approx_est}
\begin{split}
\|\nabla \bar{U} - \nabla \Pi_{h,R} U\|_{L^2(\mathbb{R}^d)} +  \|\bar{\bm{p}}_\alpha - \Pi_{h,R} \bm{p}_\alpha\|_{L^2(\mathbb{R}^d)} \lesssim~& \|h \nabla^2 \tilde{U}^\infty\|_{L^2(\Omega_\c)} + \|h\nabla \tilde{\bm{p}}^\infty\|_{L^2(\Omega_\c)} \\
&+ \| \nabla \tilde{U}\|_{L^2(\Omega_{\rm ext})} + \|\tilde{\bm{p}}\|_{L^2(\Omega_{\rm ext})}, \\
\|\nabla \tilde{U} - \nabla \Pi_{h,R} U\|_{L^2(\Omega_\c)} +  \|\tilde{\bm{p}}_\alpha - \Pi_{h,R} \bm{p}_\alpha\|_{L^2(\Omega_\c)} \lesssim~& \|h \nabla^2 \tilde{U}^\infty\|_{L^2(\Omega_\c)} + \|h\nabla \tilde{\bm{p}}^\infty\|_{L^2(\Omega_\c)} \\
&+ \| \nabla \tilde{U}\|_{L^2(\Omega_{\rm ext} \cap \Omega_\c)} + \|\tilde{\bm{p}}\|_{L^2(\Omega_{\rm ext} \cap \Omega_\c)}.
\end{split}
\end{equation*}
\end{lemma}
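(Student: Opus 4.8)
The plan is to estimate the two quantities separately, using the triangle inequality to split each into a "truncation" contribution (arising from the cutoff $\eta(x/r_{\rm i})$) and a "Scott--Zhang interpolation" contribution (arising from $S_h$), and then to relate the piecewise-linear interpolants $\bar U,\bar{\bm p}$ to the smooth interpolants $\tilde U,\tilde{\bm p}$ to which the decay estimates of Theorem~\ref{decay_thm1} apply. Concretely, write $\Pi_h U - \bar U = \big(S_h(T_{r_{\rm i}}U) - T_{r_{\rm i}}U\big) + \big(T_{r_{\rm i}}U - \bar U\big)$, and similarly for each shift. The second bracket is supported in $\R^d\setminus B_{3r_{\rm i}/4 - 2r_{\rm buff}} \subset \Omega_{\rm ext}$ (possibly after adjusting constants, since $\Omega_{\rm ext} = \R^d\setminus B_{r_{\rm i}/2}$), so its $L^2$ norm, together with that of its gradient, is controlled by $\|\nabla\bar U\|_{L^2(\Omega_{\rm ext})} + \|\bar{\bm p}\|_{L^2(\Omega_{\rm ext})}$ plus the contribution of $\nabla(I\eta(x/r_{\rm i}))$ acting on $I u_0$; a Poincar\'e-type argument using the averaged subtraction in the definition of $T_R$ absorbs the latter into the same far-field quantities. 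These, in turn, are equivalent up to constants to $\|\nabla\tilde U\|_{L^2(\Omega_{\rm ext})} + \|\tilde{\bm p}\|_{L^2(\Omega_{\rm ext})}$ by the standard comparison between $\bar{\,\cdot\,}$ and $\tilde I$ interpolants (as in~\cite{blended2014}).

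For the first bracket I would invoke the standard Scott--Zhang estimates~\cite{scott1990}: on each element $T$, $\|\nabla(S_h v - v)\|_{L^2(T)} \lesssim h_T \|\nabla^2 v\|_{L^2(\omega_T)}$ and $\|S_h v - v\|_{L^2(T)} \lesssim h_T\|\nabla v\|_{L^2(\omega_T)}$, where $\omega_T$ is the element patch. Since the mesh is fully refined on $\Omega_\a$ and $\varphi$-dependent domains are arranged so that $h \equiv 1$ there, the interpolation error vanishes on the atomistic region and we are left with a sum over $T\in\mathcal{T}_h$ with $T\cap\Omega_\c\neq\emptyset$. Summing and using that $\nabla^2 \bar U$ is a sum of Dirac masses on edges is awkward, so instead I would first replace $\bar U$ by $\tilde U$ inside this term: $S_h(T_{r_{\rm i}}\bar U)$ differs from $S_h(T_{r_{\rm i}}\tilde U)$ by a quantity controlled, via $L^2$-stability of $S_h$ and the $\bar{\,\cdot\,}$-versus-$\tilde I$ comparison, by $\|h\nabla^2\tilde U\|_{L^2(\Omega_\c)}$ and far-field terms; and then apply the Scott--Zhang bounds to the smooth function $\tilde U$ directly, giving $\|h\nabla^2\tilde U^\infty\|_{L^2(\Omega_\c)}$ for the gradient of the displacement error and $\|h\nabla\tilde{\bm p}^\infty\|_{L^2(\Omega_\c)}$ for the $L^2$ of the shift error (the shifts need only one derivative because the relevant norm on shifts is $L^2$, not $H^1$). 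The mesh growth condition $h(x)\lesssim (|x|/R_\a)^s$ together with the decay rates of Theorem~\ref{decay_thm1} guarantees these weighted norms are finite. The second displayed inequality is proved identically, the only change being that the cutoff term $T_{r_{\rm i}} - I$ is now restricted to $\Omega_\c$, so only $\Omega_{\rm ext}\cap\Omega_\c$ appears, and one does not need the full-space estimate for $\nabla\tilde U - \nabla\Pi_h U$ since on $\Omega_\a\subset\Omega_\c^c$ everything is exact.

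The main obstacle is bookkeeping rather than a genuinely new idea: one must carefully track the difference between the three interpolants $I$, $\tilde I$, and $I_h(S_h)$ on the blending and continuum regions, and verify that the subtraction of the patch-average in the definition of $T_R$ does indeed produce the correct far-field quantities after a Poincar\'e inequality on the annulus $A_{r_{\rm i}}$ (whose width is comparable to $r_{\rm i}$, so the Poincar\'e constant scales like $r_{\rm i}$, but this is absorbed because $\nabla(I\eta(x/r_{\rm i}))$ scales like $r_{\rm i}^{-1}$). A minor additional point is confirming that the support of $T_{r_{\rm i}}u_\alpha - Iu_\alpha$ together with the Scott--Zhang patches stays within $\Omega$ and within $\Omega_{\rm ext}$, which follows from the buffer $r_{\rm buff}$ built into the definitions of the subdomains and the annulus $A_R$. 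All of these are direct analogues of the corresponding steps in the Bravais-lattice analysis of~\cite{blended2014}, carried out componentwise over the species index and over the displacement/shift split.
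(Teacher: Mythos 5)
Your argument is correct and follows exactly the route the paper itself indicates: the paper omits the proof of Lemma~\ref{approx_lem}, stating only that it parallels the proof of the density lemma with additional Scott--Zhang interpolation estimates, and your decomposition into a truncation error (handled by Poincar\'e on the annulus $A_{r_{\rm i}}$, using the subtracted average and the $r_{\rm i}^{-1}$ scaling of $\nabla\eta(\cdot/r_{\rm i})$) plus a Scott--Zhang error (handled by first passing from $\bar U,\bar{\bm p}$ to the smooth interpolants $\tilde U,\tilde{\bm p}$) is precisely that argument. No gaps.
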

The proof is very similar to the proof of Lemma~\ref{lem:dense} (with only additional estimates required for the finite element interpolants) and therefore omitted. See also~\cite[Lemma 1.8]{olson2015} for similar estimates, the main difference being the usage of the Scott--Zhang interpolant which allows for $L^2$ interpolation bounds on $H^1$ functions, see~\cite{brenner2008,scott1990}.

We can now prove the bound~\eqref{cons_est}.

\begin{theorem}[BQCF Consistency]\label{consistency_thm}
Define $(U_h, \bm{p}_h):= \Pi_h(U^\infty, \bm{p}^\infty)$ where $(U^\infty, \bm{p}^\infty)$ satisfies Assumption~\ref{assumption2}. If Assumptions~\ref{assumption1} and~\ref{assumptionSite} are valid also and if the blending function, $\varphi$, and finite element mesh, $\mathcal{T}_h$, satisfy the requirements of Section~\ref{bqcf}, then the BQCF consistency error is bounded by
\begin{equation*}\label{cons_est1}
\begin{split}
\left|\la\mathcal{F}^{\bqcf}(U_h,\bm{p}_h), (W,\bm{r})\ra\right| \lesssim~ \gamma_{\rm tr} \, &\Big(\|h\nabla^2 \tilde{U}\|_{L^2(\Omega_\c)} + \|h\nabla \tilde{\bm{p}}_\alpha\|_{L^2(\Omega_\c)}  + \|\nabla \tilde{U}\|_{L^2(\Omega_{\rm ext})}   \\
&+  \| \tilde{\bm{p}}\|_{L^2(\Omega_{\rm ext})}\Big)\cdot \|(W,\bm{r})\|_{{\rm ml}}, \quad \forall (W,\bm{r}) \, \in \bm{\mathcal{U}}_{h,0} \times \bm{\mathcal{P}}_{h,0},
\end{split}
\end{equation*}
and $\gamma_{\rm tr}$ is a trace inequality constant (see Lemma 4.6 in \cite{blended2014}) given by
\[
\gamma_{\rm tr} = \begin{cases} &\sqrt{1 + \log(R_{\rm o}/R_{\a})}, \quad \mbox{if $d = 2$,} \\
                                 &1, \quad \mbox{if $d = 3$.} \end{cases}
\]
\end{theorem}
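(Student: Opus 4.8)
The plan is to adapt the consistency argument of \cite[\S4]{blended2014} to the multilattice setting. Everything starts from the identity \eqref{v_bqcf} for the variational BQCF operator together with the exact equilibrium $\delta\mathcal{E}^\a(\bm{u}^\infty) = 0$ in $\bm{\mathcal{U}}^*$, which holds because $\bm{u}^\infty = (U^\infty,\bm{p}^\infty)$ is a local minimiser and $\mathcal{E}^\a \in {\rm C}^3(\bm{\mathcal{U}})$ by Theorem~\ref{well_defined}. Writing $(U_h,\bm{p}_h) = \Pi_h\bm{u}^\infty$ and using linearity of $\delta\mathcal{E}^\a(\bm{u}^\infty)$ in its test slot together with $(W,\bm{r}) = ((1-\varphi)W,(1-\varphi)\bm{r}) + (\varphi W,\varphi\bm{r})$, one obtains
\begin{align*}
   \la \mathcal{F}^{\bqcf}(U_h,\bm{p}_h),(W,\bm{r})\ra
   &= \la\delta\mathcal{E}^\a(U_h,\bm{p}_h) - \delta\mathcal{E}^\a(\bm{u}^\infty),\,((1-\varphi)W,(1-\varphi)\bm{r})\ra \\
   &\quad + \big[\la\delta\mathcal{E}^\c(U_h,\bm{p}_h),(I_h(\varphi W),I_h(\varphi\bm{r}))\ra - \la\delta\mathcal{E}^\a(\bm{u}^\infty),(\varphi W,\varphi\bm{r})\ra\big],
\end{align*}
so the task reduces to estimating the two groups on the right.

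The first group vanishes: since $\mathcal{T}_h$ is fully refined on $\Omega_\a$, the Scott--Zhang operator reproduces $\mathcal{P}_1$ functions, and $\Omega_\a$ lies in the region where the truncation cutoff $\eta(\cdot/r_{\rm i})$ equals one, the nodal values of $U_h$ agree with those of $U^\infty$ (up to an irrelevant constant) throughout $\Omega_\a \supset {\rm supp}(1-\varphi) + B_{2r_{\rm cut}}$; hence $D(U_h,\bm{p}_h)(\xi) = D\bm{u}^\infty(\xi)$ at every site $\xi$ contributing to this term. (If one prefers not to invoke exactness here, the term is bounded by a localised version of Lemma~\ref{approx_lem} and the ${\rm C}^2$-boundedness of $\mathcal{E}^\a$, producing only terms already present on the right-hand side.) For the second group I would insert the smooth interpolant $\tilde{I}\bm{u}^\infty = (\tilde{U}^\infty,\tilde{\bm{p}}^\infty)$ of Lemma~\ref{smoothInterpolant} and split it into three pieces: (i) a \emph{solution-coarsening} error $\la\delta\mathcal{E}^\c(U_h,\bm{p}_h) - \delta\mathcal{E}^\c(\tilde{I}\bm{u}^\infty),(I_h(\varphi W),I_h(\varphi\bm{r}))\ra$, estimated via the Lipschitz bound on $\delta\mathcal{E}^\c$ (a consequence of $W_{\rm CB}\in{\rm C}^3$ with bounded derivatives, by (V.2)) by $\big(\|\nabla U_h - \nabla\tilde{U}^\infty\|_{L^2(\Omega_\c)} + \|\bm{p}_h - \tilde{\bm{p}}^\infty\|_{L^2(\Omega_\c)}\big)\,\|(I_h(\varphi W),I_h(\varphi\bm{r}))\|_{\rm ml}$, the first factor being controlled by Lemma~\ref{approx_lem} (which yields the $\|h\nabla^2\tilde{U}^\infty\|_{L^2(\Omega_\c)}$, $\|h\nabla\tilde{\bm{p}}^\infty\|_{L^2(\Omega_\c)}$ and $\Omega_{\rm ext}$ terms) and the second by standard interpolation estimates for $I_h$, the bound $\|\nabla^j\varphi\|_{L^\infty}\lesssim R_\a^{-j}$ of Assumption~\ref{assumption-shapereg}, and a trace inequality on the annular blending region (the source of $\gamma_{\rm tr}$, cf.\ \cite[Lemma 4.6]{blended2014}); (ii) a \emph{test-coarsening} error $\la\delta\mathcal{E}^\c(\tilde{I}\bm{u}^\infty),(I_h(\varphi W) - \varphi W,\,I_h(\varphi\bm{r}) - \varphi\bm{r})\ra$, which vanishes where $\varphi\equiv1$ (there $\varphi W = W \in \mathcal{U}_h$) and elsewhere is estimated by $\mathcal{P}_1$ interpolation error on $\mathcal{T}_h$ together with $\|\nabla^j\varphi\|_{L^\infty}\lesssim R_\a^{-j}$, the decay estimates of Theorem~\ref{decay_thm1}, and the cancellation of the $V_{,\triple}(0)$-contributions afforded by the reference equilibrium \eqref{ostrich1}; and (iii) the \emph{Cauchy--Born modelling} error $\la\delta\mathcal{E}^\c(\tilde{I}\bm{u}^\infty),(\varphi W,\varphi\bm{r})\ra - \la\delta\mathcal{E}^\a(\bm{u}^\infty),(\varphi W,\varphi\bm{r})\ra$.

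Piece (iii) is the crux and is handled via the stress-function machinery of Section~\ref{tech}: one writes the continuum first variation through the tensors $\Scd,\Scs$ using \eqref{cont_tensor_eq1}, the atomistic one through $\Sad,\Sas$ using \eqref{atom_tensor_eq1}, first passing from $(\varphi W,\varphi\bm{r})$ to the equivalent quasi-interpolated test pair supplied by the invertibility and stability of $\bar{\zeta}*\cdot$ (Lemma~\ref{iso_lemma}) and reconciling the interpolants of the test functions occurring in the two representations (a step that introduces only terms of the kinds already controlled). The difference then collapses to an integral over $\Omega_\c$ of $(\Scd-\Sad)$ and $(\Scs-\Sas)$ paired with gradients and differences of the (interpolated) test functions, which by Cauchy--Schwarz, Corollary~\ref{globel_stress}, and the elementary bound $h(x)\geq c_0 > 0$ (the fully refined simplices being the smallest) is $\lesssim \big(\|h\nabla^2\tilde{U}^\infty\|_{L^2(\Omega_\c)} + \|h\nabla\tilde{\bm{p}}^\infty\|_{L^2(\Omega_\c)}\big)\,\gamma_{\rm tr}\,\|(W,\bm{r})\|_{\rm ml}$. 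Assembling the first group and the three pieces of the second yields the claimed estimate; the reduced regularity in Corollary~\ref{globel_stress} (one derivative less than in the Bravais case) is immaterial here because the modelling error is dominated by the $\mathcal{P}_1$ coarsening term $\|h\nabla^2\tilde{U}^\infty\|_{L^2(\Omega_\c)}$.

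The step I expect to be the main obstacle, as anticipated in the introduction, is the systematic bookkeeping of the shift fields throughout: one must keep the descriptions $\bm{u} = (u_\alpha)$ and $(U,\bm{p})$ synchronised, carry the $p_\beta - p_\alpha$ differences through the atomistic and Cauchy--Born stencils and through the shift stress tensors $\Scs,\Sas$, and verify that the quasi-interpolant reduction (Lemma~\ref{iso_lemma}) and the Cauchy--Born comparison (Corollary~\ref{globel_stress}) behave correctly on these additional degrees of freedom --- none of which is present in \cite{blended2014}. A secondary, purely technical nuisance is the appearance and propagation of $\gamma_{\rm tr}$, which enters each of the blending-region bounds through the trace inequality of \cite[Lemma 4.6]{blended2014}.
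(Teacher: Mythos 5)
Your overall skeleton --- cancel the atomistic contribution on ${\rm supp}(1-\varphi)$ using full refinement and the projection property, then split the remaining continuum-versus-atomistic comparison into solution-coarsening, test-coarsening and Cauchy--Born modelling errors estimated through the stress tensors of Section~\ref{tech} --- is exactly the paper's route, and your pieces (i) and (ii) correspond to the paper's terms $T^1_{\rm d}$ and $T_{\rm s}$ together with Lemmas~\ref{approx_lem} and~\ref{interpolation_lemma}. The gap is in piece (iii). The claim that the modelling error ``collapses to an integral over $\Omega_\c$ of $(\Scd-\Sad)$ and $(\Scs-\Sas)$ paired with test functions'' is not correct: because the atomistic stencil reads $D_{\triple}(Y,\bm{p})=D_\rho Y+D_\rho p_\beta+p_\beta-p_\alpha$ while the Cauchy--Born stencil is $\nabla_\rho U+p_\beta-p_\alpha$, the atomistic stress representation \eqref{atom_tensor_eq1} pairs $[\Sad]_\beta$ with $\nabla \bar{W}+\nabla\bar{r}_\beta$, so the difference of the two first variations contains the extra term $\int_{\R^d}\sum_\beta[{\rm S}^{\rm a}_{\rm d}(\tilde{U}^\infty,\tilde{\bm{p}}^\infty)]_\beta:\nabla\bar{z}_\beta$ (the paper's $T^2_{\rm d}$), which is \emph{not} a stress difference. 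You cannot bound it by Cauchy--Schwarz plus Corollary~\ref{globel_stress}: near the reference state the stress $\Scd(\tilde{U}^\infty,\tilde{\bm{p}}^\infty)$ is only $O(1)$ in $L^\infty$ (the equilibrium condition \eqref{ostrich1} makes its divergence small, not the stress itself), and $\nabla\bar{z}_\beta$ is controlled only through the inverse estimate $\|\nabla\bar{z}_\beta\|\lesssim\|\bar{z}_\beta\|\lesssim\|r_\beta\|_{L^2}$, so the naive bound is $O(\|\bm{r}\|_{L^2})$ with no small prefactor.

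The paper disposes of this term by writing $[\Sad]_\beta=([\Sad]_\beta-[\Scd]_\beta)+[\Scd]_\beta$: the first part is handled by Corollary~\ref{globel_stress} and the inverse estimate on the fully refined mesh, and the second by integrating by parts, so that ${\rm div}\,[\Scd(\tilde{U}^\infty,\tilde{\bm{p}}^\infty)]_\beta$ --- which by the chain rule involves $\nabla^2\tilde{U}^\infty$ and $\nabla\tilde{\bm{p}}^\infty$ --- pairs with $\bar{z}_\beta$ itself and supplies the required smallness. No integration (or summation) by parts appears anywhere in your proposal, so this multilattice-specific term --- which is precisely the shift bookkeeping you yourself flag as the expected obstacle --- is left uncontrolled. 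A minor point: the decay estimates of Theorem~\ref{decay_thm1} are not needed (and should not be invoked) in the consistency proof, whose bound is stated in terms of solution norms over $\Omega_\c$ and $\Omega_{\rm ext}$; they enter only later when converting the error estimate into a complexity estimate.
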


Before beginning the proof, we make some preliminary remarks. First, we observe that, since the Scott--Zhang interpolation operator is a projection it follows that
\[
   D_{\triple}U_h(\xi)=D_{\triple}U^\infty(\xi)
   \qquad \text{for} \quad
   \xi\in\mathcal{L}^{\a},
\]
where $\mathcal{L}^\a := \mathcal{L} \cap ({\rm supp}(1-\varphi) + \mathcal{R}_1)$.
Furthermore, since $\delta \mathcal{E}^{\a}(U^\infty, \bm{p}^\infty) = 0$, the residual error in the BQCF variational operator is equivalent to
\begin{equation}\label{test_going}
\begin{split}
\la & \mathcal{F}^{\bqcf}(U_h,\bm{p}_h), (W,\bm{r})\ra
-\la \delta \mathcal{E}^{\a}(U^\infty, \bm{p}^\infty), (U,\bm{q})\ra\\
&\quad= \la \delta \mathcal{E}^{\a}(U^\infty,\bm{p}^\infty), (1-\varphi)(W, \bm{r})\ra
+\la \delta \mathcal{E}^{\c}(U_h, \bm{p}_h), \big(I_h(\varphi W),I_h(\varphi\bm{r}) \big)\ra \\
&\qquad\qquad -\la \delta \mathcal{E}^{\a}(U^\infty, \bm{p}^\infty), (U,\bm{q})\ra,
\end{split}
\end{equation}
where $(W, \bm{r}) \in \bm{\mathcal{U}}_{h,0} \times \bm{\mathcal{P}}_{h,0}$ is an arbitrary given pair of test functions in the finite element test function space, while $(U, \bm{q})  \in \bm{\mathcal{U}} \times \bm{\mathcal{P}}$ is a test pair that we are free to choose. The obvious candidate choice is $(U, \bm{q}) = (W, \bm{r})$ in which case we would have
\begin{align*}
\la & \mathcal{F}^{\bqcf}(U_h,\bm{p}_h), (W,\bm{r})\ra
-\la \delta \mathcal{E}^{\a}(U^\infty, \bm{p}^\infty), (U,\bm{q})\ra\\
&\quad= -\la \delta \mathcal{E}^{\a}(U^\infty,\bm{p}^\infty), (\varphi)(W, \bm{r})\ra
+\la \delta \mathcal{E}^{\c}(U_h, \bm{p}_h), \big(I_h(\varphi W),I_h(\varphi\bm{r}) \big)\ra.
\end{align*}
The resulting residual error is concentrated only over $\Omega_\c$ due to $\nabla \varphi$ having support in $\Omega_\c$.  The issue in estimating this quantity is that when we convert the atomistic residual into the atomistic-stress format, the test function appears as a piecewise linear function with respect to the atomistic mesh $\mathcal{T}_\a$, whereas the test function is piecewise linear with respect to the graded mesh $\mathcal{T}_h$ in the continuum portion.  For this reason, we \dao{shall add correction terms to our previous candidate choice $(U,\bm{q}) = (W,\bm{r})$ via
\begin{equation}\label{test_choice}
U = W + (Z^* - \varphi W), \quad q_\alpha = r_\alpha + (z_\alpha^* - \varphi r_\alpha), \quad \alpha = 1,\ldots, S-1,
\end{equation}
where $(Z,\bm{z}) \in \bm{\mathcal{U}} \times \bm{\mathcal{P}}$ will be chosen to satisfy certain approximation estimates as stated in Lemma~\ref{interpolation_lemma} below.  The reason we use $Z^*$ and $z_\alpha^*$ instead of merely $Z$ and $z_\alpha$ is that we shall eventually make use of the atomistic stress representation from~\eqref{atom_tensor_eq1}.} The BQCF residual error from~\eqref{test_going} then becomes
\begin{equation}\label{residual_est_new}
\begin{split}
\la & \mathcal{F}^{\bqcf}(U_h,\bm{p}_h), (W,\bm{r})\ra
-\la \delta \mathcal{E}^{\a}(U^\infty, \bm{p}^\infty), (W + (Z^* - \varphi W),\bm{r} + (\bm{z}^* - \varphi \bm{r}))\ra\\
&\quad= \la \delta \mathcal{E}^{\c}(U_h, \bm{p}_h), \big(I_h(\varphi W),I_h(\varphi\bm{r}) \big)\ra - \la \delta \mathcal{E}^{\a}(U^\infty,\bm{p}^\infty), (Z^*, \bm{z}^*)\ra
\end{split}
\end{equation}
Moreover, since we are blending by site and using $\mathcal{P}_1$ elements for the shifts, we may use the same form for $Z$ and $\bm{z}$ as obtained in the simple lattice case~\cite{blended2014} for both displacements \textit{and} shifts.

\begin{lemma}\label{interpolation_lemma}
Suppose $W \in \bm{\mathcal{U}}_{h,0}$ and $\bm{r} \in \bm{\mathcal{P}}_{h,0}$.  Then for $f \in W^{1,2}_{\rm loc}(\mathbb{R}^d)$ and for $Z_h, Z,\zzhalpha, z_\alpha$ as defined above,
\begin{align}
\int_{\Omega_\c} f (\bar{Z} - Z_h) dx \lesssim~& \|\nabla f\|_{L^2(\Omega_\c)} \cdot \|\nabla Z_h\|_{L^2(\Omega_\c)}, \label{weight_inter}\\
\int_{\Omega_\c} f\cdot (\zzhalpha - \bar{z}_\alpha)\, dx \lesssim~& \| \nabla f\|_{L^2(\Omega_\c)} \cdot \| \zzhalpha \|_{L^2(\Omega_\c)}  \label{za_result} \\
\| \ZZh- \bar{Z}\|_{L^2(\Omega_\c)} \lesssim~& \|\nabla \ZZh\|_{L^2(\Omega_\c)}, \label{inter_diff}\\
\| \zzhalpha - \bar{z}_\alpha\|_{L^2(\Omega_\c)} \lesssim~& \| \zzhalpha\|_{L^2(\Omega_\c)}, \label{inter_shift} \\
\|\nabla \ZZh\|_{L^2(\Omega_\c)} \lesssim~& \gamma_{\rm tr}\|\nabla W \|_{L^2(\Omega_\c)},\label{h1_int_norm_est} \\
\|\zzhalpha\|_{L^2(\Omega_\c)} \lesssim~& \|r_\alpha\|_{L^2(\Omega_\c)} \label{l2_int_norm_est}.
\end{align}
\end{lemma}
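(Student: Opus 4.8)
The plan is to follow the Bravais-lattice argument of \cite{blended2014} essentially line by line; the only genuinely new feature is the presence of the shift fields $r_\alpha$, and since we blend by site and discretise the shifts with $\mathcal{P}_1$ elements these behave in complete analogy with (indeed slightly more favourably than) the displacement $W$. Concretely, I would take the coarse correctors to be $Z_h := I_h(\varphi W)$ and $\zzhalpha := I_h(\varphi r_\alpha)$, and define the lattice functions $Z, z_\alpha : \mathcal{L}\to\mathbb{R}^n$ so that $Z$ represents $\varphi W$ on $\mathcal{L}$ (via Lemma~\ref{iso_lemma}, so that the atomistic stress identity \eqref{atom_tensor_eq1} applies to $(Z^*,\bm{z}^*)$), with $\bar Z = IZ$ and $\bar z_\alpha = Iz_\alpha$ the piecewise linear interpolants on the fully refined mesh $\mathcal{T}_\a$, and $z_\alpha,\zzhalpha$ obtained in exactly the same way from $\varphi r_\alpha$. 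The crucial structural observation is that $\varphi\equiv 1$ off $\Omega_\a$, that $W$ and $r_\alpha$ are elementwise affine, and that the finite element mesh is fully refined on $\Omega_\a\supseteq\Omega_\b$; consequently $\bar Z - Z_h$ and $\bar z_\alpha - \zzhalpha$ are supported in the blending annulus $\Omega_\b$, where the mesh size is $O(1)$. This localisation, together with the scaling bounds $\|\nabla^j\varphi\|_{L^\infty}\lesssim R_\a^{-j}$ and $\mathrm{supp}\,\nabla\varphi\subseteq\Omega_\b\subseteq\Omega_\c$ from Assumption~\ref{assumption-shapereg}, is what drives all six estimates.

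The analytic ingredients I would invoke are: (i) standard $\mathcal{P}_1$ interpolation error estimates on shape-regular meshes; (ii) the elementary $L^2$-stability of nodal interpolation of a product $\varphi g$ with $0\le\varphi\le 1$ and $g$ affine on each element, $\|I_h(\varphi g)\|_{L^2(T)}\lesssim\|g\|_{L^2(T)}$, which follows from the equivalence of the $L^2$-norm with the scaled nodal $\ell^2$-norm on $\mathcal{P}_1(T)$; (iii) the fact that $W$ is elementwise affine, so that $\nabla^2(\varphi W)=\nabla^2\varphi\, W + 2\,\nabla\varphi\otimes\nabla W$ (and likewise for $\varphi r_\alpha$) is supported in $\Omega_\b$ with $\|\nabla^2(\varphi W)\|_{L^2(\Omega_\b)}\lesssim R_\a^{-2}\|W\|_{L^2(\Omega_\b)}+R_\a^{-1}\|\nabla W\|_{L^2(\Omega_\b)}$; and (iv) a Poincar\'e/trace inequality on the annulus $\Omega_\b$, using that $W$ vanishes outside $\Omega$, of the form $R_\a^{-1}\|W\|_{L^2(\Omega_\b)}\lesssim\gamma_{\rm tr}\,\|\nabla W\|_{L^2(\Omega_\c)}$. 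Ingredient (iv) is precisely the trace estimate of \cite[Lemma 4.6]{blended2014}, and is the sole origin of the logarithmic factor $\gamma_{\rm tr}$ in dimension two; note that no such factor is required for $r_\alpha$, since only $\|r_\alpha\|_{L^2}$, and no gradient of $r_\alpha$, ever enters the shift estimates.

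With these in place the six bounds follow in the natural order. Estimate~\eqref{l2_int_norm_est} is immediate from (ii). For \eqref{inter_shift} and \eqref{inter_diff} one writes $\bar z_\alpha-\zzhalpha$ and $\bar Z - Z_h$ as interpolation-type errors of $\varphi r_\alpha$, $\varphi W$ localised to $\Omega_\b$, and bounds them, using (i) and $h=O(1)$ on $\Omega_\b$, by $\|\nabla^2(\varphi r_\alpha)\|_{L^2(\Omega_\b)}$ resp. $\|\nabla^2(\varphi W)\|_{L^2(\Omega_\b)}$; inserting (iii), and then (iv) for the displacement only, produces the right-hand sides $\|\zzhalpha\|_{L^2(\Omega_\c)}$ and $\|\nabla Z_h\|_{L^2(\Omega_\c)}$. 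The weighted estimates \eqref{weight_inter} and \eqref{za_result} are obtained by an elementwise Cauchy--Schwarz: on each $T\subseteq\Omega_\b$ (where $h_T=O(1)$) one subtracts the mean of $f$ over $T$ and uses the Poincar\'e inequality on $T$ to get $\int_T f(\bar Z - Z_h)\lesssim h_T\|\nabla f\|_{L^2(T)}\|\bar Z-Z_h\|_{L^2(T)}$, then sums over $T$ and inserts the bound on $\|\bar Z - Z_h\|$ just derived (identically for the shift term). Finally, for \eqref{h1_int_norm_est}: off $\Omega_\b$ the function $\varphi$ is locally constant, so $\|\nabla Z_h\|_{L^2(\Omega_\c\setminus\Omega_\b)}\le\|\nabla W\|_{L^2(\Omega_\c)}$, while on $\Omega_\b$ one has $\|\nabla I_h(\varphi W)\|_{L^2(\Omega_\b)}\le\|\nabla(\varphi W)\|_{L^2(\Omega_\b)}+\|\nabla(\varphi W-I_h(\varphi W))\|_{L^2(\Omega_\b)}\lesssim\|\nabla W\|_{L^2(\Omega_\b)}+R_\a^{-1}\|W\|_{L^2(\Omega_\b)}$ by (i) and (iii); absorbing the last term by (iv) yields the factor $\gamma_{\rm tr}$.

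I expect the main obstacle to be ingredient (iv): establishing the Poincar\'e/trace inequality on the blending annulus with (a) the sharp logarithmic constant $\gamma_{\rm tr}$ in two dimensions and (b) a right-hand side involving only $\|\nabla W\|_{L^2(\Omega_\c)}$, so that the defect-core contribution does not pollute the bound. As in \cite{blended2014} this is done by integrating $\partial_r W$ along radial rays from a circle lying inside $\Omega_\b$ outward to $\partial\Omega$, where $W$ vanishes, and then integrating in the angular variable; the logarithm is an unavoidable artefact of the two-dimensional radial weight. A secondary, purely bookkeeping, point is to check that every interpolation-error term genuinely localises to $\Omega_\b$ — this rests on $\mathcal{T}_h=\mathcal{T}_\a$ on $\Omega_\a\supseteq\Omega_\b$ together with $\varphi\equiv 1$ and $W\in\mathcal{P}_1(\mathcal{T}_h)$ off $\Omega_\a$; everything else is routine interpolation theory carried over unchanged from the Bravais-lattice case, with the shift estimates being strictly simpler copies of the displacement ones.
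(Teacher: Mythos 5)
There is a genuine gap, and it sits at the foundation of your argument. Your ``crucial structural observation'' --- that $\bar Z - Z_h$ and $\bar z_\alpha - \zzhalpha$ are supported in the blending annulus $\Omega_\b$ --- is false. The function $\bar Z$ is by construction piecewise linear with respect to the \emph{atomistic} triangulation $\mathcal{T}_\a$ (it is the micro-interpolant of a lattice function, needed so that the atomistic stress identity \eqref{atom_tensor_eq1} applies to $Z^* = \bar\zeta * \bar Z$), whereas $Z_h = I_h(\varphi W)$ is piecewise linear with respect to the coarse mesh $\mathcal{T}_h$; these two triangulations do not nest. In the far field, where $\varphi\equiv 1$ and $Z_h = W$, the fine-mesh function $\bar Z$ differs from $W$ on every fine element cut by a coarse element edge (and, on top of that, $Z$ is produced by the nonlocal deconvolution of Lemma~\ref{iso_lemma}, so $Z(\xi)\neq(\varphi W)(\xi)$ pointwise in any case). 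This fine/coarse mismatch throughout $\Omega_\c$ is precisely the difficulty the lemma is designed to control --- it is flagged explicitly in the discussion preceding \eqref{test_choice} --- so it cannot be localised away. Two concrete consequences: (a) your elementwise Poincar\'e argument for \eqref{weight_inter} and \eqref{za_result} would have to be run over the coarse elements as well, where the local Poincar\'e constant is $h_T\to\infty$, so the bound as you set it up does not close; (b) your interpolation-error bounds for \eqref{inter_diff} and \eqref{inter_shift} in terms of $\|\nabla^2(\varphi W)\|_{L^2(\Omega_\b)}$ miss the far-field contribution entirely. A secondary issue is that even on $\Omega_\b$ your route yields \eqref{inter_diff} with right-hand side $\gamma_{\rm tr}\|\nabla W\|_{L^2(\Omega_\c)}$ rather than $\|\nabla Z_h\|_{L^2(\Omega_\c)}$; these are not interchangeable (the latter cannot be bounded below by the former), although the weaker form would still suffice for the consistency theorem downstream.

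The paper's proof avoids all of this by never leaving the lattice scale: it decomposes $\Omega_\c$ into the unit-size patches $\omega_\xi = \mathrm{supp}\,\bar\zeta(\cdot-\xi)$, on which the local Poincar\'e constant is uniformly $O(1)$ regardless of the coarse mesh, observes that $\bar Z$ and $Z_h$ agree (and are constant) only on patches \emph{not} contained in $\Omega_\c$, and on the remaining patches compares $Z_h$ to the nodal value $Z(\xi)$ via $\|Z(\xi)-Z_h\|_{L^2(\omega_\xi)}\lesssim\|\nabla Z_h\|_{L^2(\omega_\xi)}$. The norm comparisons \eqref{inter_diff}--\eqref{l2_int_norm_est} are then obtained by choosing $f=\bar Z - Z_h$ in \eqref{weight_inter}, Young's inequality for the convolution defining $Z$, the inverse inequality $\|\nabla z_h\|_{L^2(T)}\lesssim\|z_h\|_{L^2(T)}$ on $\mathcal{T}_\a$, monotonicity of nodal interpolation under $0\le\varphi\le1$, and a direct citation of \cite[Lemma 7]{blended2014} for \eqref{h1_int_norm_est} --- no second-derivative interpolation error of $\varphi W$ appears anywhere. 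Your identification of the annulus Poincar\'e/trace inequality as the source of $\gamma_{\rm tr}$ is correct, but the patch-based mechanism above is the missing idea.
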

%%%---------
\begin{proof}
We begin by letting $\omega_\xi := {\rm supp}(\bar{\zeta}(x-\xi))$ and $\mathcal{C} := \{\xi \in \mathcal{L} : \omega_\xi  \subset \Omega_\c \}$.  Then we observe that $Z_h$ and $\bar{Z}$ are constant on any patch $\omega_\xi$ with $\xi \notin \mathcal{C}$, \helen{and furthermore $Z_h = \bar{Z}$.  Intuitively, this should hold because if $\xi \notin \mathcal{C}$, then either $\xi$ is near the defect core where $\varphi = 0$ and hence $Z_h = 0$ and $\bar{Z} = 0$; or $\xi$ is near the exterior to the boundary of $\Omega$ where $Z_h$ is constant. For this to rigorously hold, we need to recall the buffer, $B_{4\rm buff}$, in the definition of $\Omega_\c$ which then makes proving the statement possible.  Moreover, $Z_h = \bar{Z}$ on any patch $\omega_\xi$ with $\xi \notin \mathcal{C}$ due to the normalization factor in the definition of $Z$.} For $f \in W^{1,2}_{\rm loc}(\mathbb{R}^d)$ we then have
   \begin{equation}\label{Z_result}
	\begin{split}
      &\int_{\Omega_\c} f (\bar{Z} - Z_h) dx = \sum_{\xi \in \mathcal{L}} \int_{\omega_\xi \cap \Omega_\c} f(x) \big( Z(\xi) - Z_h(x) \big) \bar{\zeta}(x-\xi) dx \\
			&= \dao{\sum_{\substack{\xi \in \mathcal{L}: \\ \omega_\xi \subset \Omega_\c}} \int_{\omega_\xi} f(x) \big( Z(\xi) - Z_h(x) \big) \bar{\zeta}(x-\xi) dx \quad \mbox{since $Z_h = Z$ is constant for $\xi \notin \mathcal{C}$}}\\
      &= \sum_{\xi \in \mathcal{C}} \int_{\omega_\xi} \bigg(f(x) - \dashint_{\omega_\xi} f \bigg) \big( Z(\xi) - Z_h(x) \big) \bar{\zeta}(x-\xi) dx \\
      &\leq \sum_{\xi \in \mathcal{C}} \bigg\| f - \dashint_{\omega_\xi}
                f  \bigg\|_{L^2(\omega_\xi)} \| Z(\xi) - Z_h \|_{L^2(\omega_\xi)} \\
      &\lesssim \sum_{\xi \in \mathcal{C}}\| \nabla f \|_{L^2(\omega_\xi)} \| \nabla Z_h \|_{L^2(\omega_\xi)} \\
		&\lesssim \| \nabla f \|_{L^2(\Omega_\c)} \| \nabla Z_h \|_{L^2(\Omega_\c)}.
   \end{split}
	\end{equation}
This proves~\eqref{weight_inter}.  Proving~\eqref{za_result} is analogous:
\begin{equation*}
\int_{\Omega_\c} f\cdot (\zzhalpha - \bar{z}_\alpha)\, dx \lesssim~ \| \nabla f\|_{L^2(\Omega_\c)} \cdot {\| \nabla \zzhalpha \|_{L^2(\Omega_\c)}} {\lesssim \| \nabla f\|_{L^2(\Omega_\c)} \cdot \| \zzhalpha \|_{L^2(\Omega_\c)} },
\end{equation*}
where in obtaining the final inequality we have used that for $T \in \mathcal{T}_\a$,
\[
\| \nabla z_h \|_{L^2(T)} \lesssim h_T \| z_h \|_{L^2(T)} \lesssim \| z_h \|_{L^2(T)}.
\]
For these choices, we also have the following norm estimates~\eqref{inter_diff} and~\eqref{inter_shift}:
\begin{align*}
\| \ZZh- \bar{Z}\|_{L^2(\Omega_\c)} \lesssim~& \|\nabla \ZZh\|_{L^2(\Omega_\c)}, \\
\| \zzhalpha - \bar{z}_\alpha\|_{L^2(\Omega_\c)} \lesssim~& \| \zzhalpha\|_{L^2(\Omega_\c)}.
\end{align*}
To obtain the first of these, we simply take {\helen{$ f= \bar{Z}-\ZZh$}} in~\eqref{weight_inter} yielding\helen{
\begin{equation*}
\begin{split}
&\| \ZZh- \bar{Z}\|_{L^2(\Omega_\c)}^2 \lesssim~ \| \nabla \ZZh- \nabla \bar{Z}\|_{L^2(\Omega_\c)}\cdot \| \nabla \ZZh\|_{L^2(\Omega_\c)} \lesssim~ \| \nabla \ZZh\|_{L^2(\Omega_\c)}^2 +\|\nabla \bar{Z}\|_{L^2(\Omega_\c)}^2 \\
&~\lesssim~ \| \nabla \ZZh\|_{L^2(\Omega_\c)}^2 +\|\nabla Z\|_{L^2(\Omega_\c)}^2 \lesssim~ \| \nabla \ZZh\|_{L^2(\Omega_\c)}^2 +\|\nabla \ZZh\|_{L^2(\Omega_\c)}^2,
\end{split}
\end{equation*}
where we have applied Young's inequality to deduce the estimate
\begin{align*}
\|\nabla Z\|_{L^2(\Omega_\c)}^2 = \|\nabla Z\|_{L^2(\mathbb{R}^d)}^2  = \Big\|\frac{(\barZeta*\nabla \ZZh)}{\int \barZeta dx}\Big\|^2_{L^2(\mathbb{R}^d)} \leq \|\nabla \ZZh\|_{L^2(\mathbb{R}^d)}^2 \|\barZeta\|_{L^1(\mathbb{R}^d)}^2 \lesssim~ \|\nabla \ZZh\|_{L^2(\Omega_\c)}^2.
\end{align*}
}
For the second of these, we simply have
\begin{equation*}
\begin{split}
\| \zzhalpha - \bar{z}_\alpha\|_{L^2(\Omega_\c)} \leq~& \| \zzhalpha\|_{L^2(\Omega_\c)} +  \|\bar{z}_\alpha\|_{L^2(\Omega_\c)} \lesssim~ \| \zzhalpha\|_{L^2(\Omega_\c)} +  \|z_\alpha\|_{L^2(\Omega_\c)} \\
\lesssim~& \| \zzhalpha\|_{L^2(\Omega_\c)} + \| \zzhalpha\|_{L^2(\Omega_\c)},
\end{split}
\end{equation*}
where we have again used Young's inequality for convolutions.  Next, upon recalling the definition
\[
\gamma_{\rm tr} = \begin{cases} &\sqrt{1 + \log(R_{\rm o}/R_{\a})}, \quad \mbox{if $d = 2$,} \\
                                 &1, \quad \mbox{if $d = 3$,} \end{cases}
\]
we have~\eqref{h1_int_norm_est} and~\eqref{l2_int_norm_est}:
\begin{equation*}
\begin{split}
\|\nabla \ZZh\|_{L^2(\Omega_\c)} \lesssim~& \gamma_{\rm tr}\|\nabla W \|_{L^2(\Omega_\c)}, \\
\|\zzhalpha\|_{L^2(\Omega_\c)} \lesssim~& \|r_\alpha\|_{L^2(\Omega_\c)}.
\end{split}
\end{equation*}
The first of these is a consequence of~\cite[Lemma 7]{blended2014}.  The second is a result of $0 \leq \varphi \leq 1$:
\begin{equation*}
   \|\zzhalpha\|_{L^2(\Omega_\c)} = \|I_h(\varphi r_\alpha) \|_{L^2(\Omega_\c)} \leq~ \|I_h(r_\alpha) \|_{L^2(\Omega_\c)} = \|r_\alpha\|_{L^2(\Omega_\c)}.
\end{equation*}

%
%first defining $\zzhalpha$ as
%\begin{equation}\label{Def_zzhalpha}
%\zzhalpha:=I_h(\varphi r_{\alpha})\quad \text{for}\quad \alpha=1,\dots,s-1,
%\end{equation}
%and then setting
%\begin{equation}\label{Def_zalpha}
%z_{\alpha} = \check{\zzhalpha}.
%\end{equation}
\end{proof}

We are now ready to prove Theorem~\ref{consistency_thm}.
%%%----------------------

\begin{proof}[Proof of Theorem~\ref{consistency_thm}]
\dao{Since $\tilde{I}u$ interpolates $u$ at $\xi \in \mathcal{L}$, we may replace discrete $U^{\infty}$ with continuous $\tilde{I}U=\tilde{U}^{\infty}$ in~\eqref{residual_est_new} which leaves us with estimating
\begin{equation}\label{residual_est}
\begin{split}
\la & \mathcal{F}^{\bqcf}(U_h,\bm{p}_h), (W,\bm{r})\ra = \la \mathcal{F}^{\bqcf}(U_h,\bm{p}_h), (W,\bm{r})\ra
-\la \delta \mathcal{E}^{\a}(U^\infty, \bm{p}^\infty), (U,\bm{q})\ra\\
% &\quad= \la \delta \mathcal{E}^{\c} (U_h,\bm{p}_h), \big(I_h(\varphi W), I_h(\varphi \bm{r})\big)\ra
% -\la \delta\mathcal{E}^{\a}(U^\infty,\bm{p}^\infty), (Z^*,{\bm{z}}^*)\ra\\
&\quad= \la \delta \mathcal{E}^{\c} (U_h,\bm{p}_h), \big(I_h(\varphi W), I_h(\varphi \bm{r})\big)\ra
-\la \delta\mathcal{E}^{\a}(\tilde{U}^\infty,\tilde{\bm{p}}^\infty), (Z^*,\bm{z}^*)\ra.
\end{split}
\end{equation}
}
Recalling that $Z_h := I_h(\varphi W)$, $\bm{z}_h := I_h(\varphi \bm{r})$, and the atomistic and continuum stress representations of~\eqref{eq:defn_Sa} and~\eqref{cont_stress_tensor}, we split this into three terms {using simple algebraic manipulations as}
\begin{align}
&\la \delta \mathcal{E}^{\c} (U_h,\bm{p}_h), \big(I_h(\varphi W), I_h(\varphi \bm{r})\big)\ra
-\la \delta\mathcal{E}^{\a}(\tilde{U}^\infty,\tilde{\bm{p}}^\infty), (Z^*,\bm{z}^*)\ra\ra\nonumber\\
&\leq \bigg| \int_{\mathbb{R}^d} \sum_\beta {\big[[{\rm S}^{\rm c}_{\rm d}(U_h,\bm{p}_h)]_\beta : \nabla Z_h - [{\rm S}^{\rm a}_{\rm d}(\tilde{U}^\infty,\tilde{\bm{p}}^\infty)]_\beta : \nabla \bar{Z}\big]} \bigg|
  + \bigg| \int_{\mathbb{R}^d} {\sum_{\alpha,\beta}} [{\rm S}^{\rm c}_{\rm s} (U_h,\bm{p}_h)]_{\alpha\beta} \cdot( \zzhalpha- \zzhbeta) \nonumber \\
	&\quad - {\sum_{\alpha,\beta}} [{\rm S}^{\rm a}_{\rm s}(\tilde{U}^\infty,\tilde{\bm{p}}^\infty)]_{\alpha\beta} \cdot (\bar{z}_\alpha - \bar{z}_\beta)  \bigg| +\bigg| \int_{\mathbb{R}^d} {\sum_\beta [{\rm S}^{\rm a}_{\rm d}(\tilde{U}^\infty,\tilde{\bm{p}}^\infty)]_\beta : \nabla \bar{z}_\beta }\bigg| \nonumber \\%\label{BQCF_consist_eq2}\\
&~=: T^1_{\rm d} + T_{\rm s}+ T^2_{\rm d}.\nonumber
\end{align}
Next, we analyze these terms separately.

{\it Term $T^1_{\rm d}$: } The $T^1_{\rm d}$ term is identical to the simple lattice case after accounting for the additional approximation of the shifts. Following the ideas
from the simple lattice case~\cite{blended2014},
\helen{we break down $T_{\rm d}^{1}$ into three additional terms as in Section 6.4.1 of~\cite{blended2014} (the difference being we do not consider a quadrature error),
 and apply the
estimates of stress differences from Corollary~\ref{globel_stress} and the approximating estimates from Lemma~\ref{approx_lem} and \eqref{h1_int_norm_est}. This produces
\begin{equation*}\label{T1d_est}
\begin{split}
T^1_{\rm d} &\lesssim~ \bigg| \int_{\mathbb{R}^d} \sum_\beta \big\{[{\rm S}^{\rm c}_{\rm d}(U_h,\bm{p}_h)]_\beta -[{\rm S}^{\rm c}_{\rm d}(\tilde{U}^\infty,\tilde{\bm{p}}^\infty)]_\beta \big\} : \nabla Z_h\, dx\bigg| \\
&\qquad +~ \bigg| \int_{\mathbb{R}^d} [{\rm S}^{\rm c}_{\rm d}(\tilde{U}^\infty,\tilde{\bm{p}}^\infty)]_\beta \big\} : (\nabla Z_h-\nabla \bar{Z})\, dx\bigg| \\
& \qquad +~ \bigg| \int_{\mathbb{R}^d} \sum_\beta \big\{[{\rm S}^{\rm c}_{\rm d}(\tilde{U}^\infty,\tilde{\bm{p}}^\infty)]_\beta -[{\rm S}^{\rm a}_{\rm d}(\tilde{U}^\infty,\tilde{\bm{p}}^\infty)]_\beta \big\} : \nabla \bar{Z}\, dx\bigg| \\
&\lesssim~ \gamma_{\rm tr}\Big(\|h \nabla^2 \tilde{U}^\infty\|_{L^2(\Omega_\c)}  + \|h\nabla \tilde{\bm{p}}^\infty\|_{L^2(\Omega_\c)} \\
& \qquad  \qquad+ \|\nabla \tilde{U}^\infty\|_{L^2(\Omega_{\rm ext})} + \|\tilde{\bm{p}}\|_{L^2(\Omega_{\rm ext})} \Big)\cdot \|\nabla W\|_{L^2(\mathbb{R}^d)}.
\end{split}
\end{equation*}
}
{\it Term $T_{\rm s}$: } For the shift term $T_{\rm s}$, we have
\begin{align*}
   T_{\rm s} &\lesssim  \bigg| \int_{\mathbb{R}^d} {\sum_{\alpha,\beta}} [{\rm S}^{\rm c}_{\rm s} (U_h,\bm{p}_h)]_{\alpha\beta} \cdot( \zzhalpha- \zzhbeta) - {\sum_{\alpha,\beta}} [{\rm S}^{\rm c}_{\rm s}(\tilde{U}^\infty,\tilde{\bm{p}}^\infty)]_{\alpha\beta} \cdot( \zzhalpha- \zzhbeta)  \bigg| \\
	&\quad + \bigg| \int_{\mathbb{R}^d} {\sum_{\alpha,\beta}} [{\rm S}^{\rm c}_{\rm s} (\tilde{U}^\infty,\tilde{\bm{p}}^\infty)]_{\alpha\beta} \cdot( \zzhalpha - \zzhbeta -(\bar{z}_\alpha - \bar{z}_\beta)) \bigg| \\
&\quad + \bigg| \int_{\mathbb{R}^d} {\sum_{\alpha,\beta}} [{\rm S}^{\rm c}_{\rm s} (\tilde{U}^\infty,\tilde{\bm{p}}^\infty)]_{\alpha\beta} \cdot(\bar{z}_\alpha - \bar{z}_\beta)  - {\sum_{\alpha,\beta}} [{\rm S}^{\rm a}_{\rm s}(\tilde{U}^\infty,\tilde{\bm{p}}^\infty)]_{\alpha\beta} \cdot (\bar{z}_\alpha - \bar{z}_\beta)  \bigg| \\
   &=: T_{\rm s,1} + T_{\rm s,2} + T_{\rm s,3}.
\end{align*}
%where $\tilde{S}^{\rm c}_{\rm s} := S^{\rm c}_{\rm s}(\tilde{U}, \tilde{\bm{p}})$.

Using Lipschitz continuity of $\delta V$ {(in the definition of ${\rm S}^{\rm c}_{\rm s}$)} and the fact that $\bm{z}_h$ is supported in $\Omega_{\rm c}$ \dao{followed by an application of the test function estimate~\eqref{l2_int_norm_est}}, we obtain
\begin{equation*}
\begin{split}
   \label{Ts1}
   |T_{\rm s,1}| \lesssim~&
   \Big(\| \nabla \Pi_h U - \nabla \tilde{U} \|_{L^2(\Omega_{\rm c})}
      + \| \Pi_h \bm{p} - \tilde{\bm{p}} \|_{L^2(\Omega_{\rm c})}
      \Big) \| \bm{z}_h \|_{L^2(\mathbb{R}^d)} \\
			\lesssim~& \Big(\| \nabla \Pi_h U - \nabla \tilde{U} \|_{L^2(\Omega_{\rm c})}
      + \| \Pi_h \bm{p} - \tilde{\bm{p}} \|_{L^2(\Omega_{\rm c})}
      \Big) \| \bm{r}\|_{L^2(\mathbb{R}^d)}
\end{split}
\end{equation*}
Using the stress estimate, Corollary~\ref{globel_stress}, \helen{followed by the application of the test function norm estimates~\eqref{inter_shift} and~\eqref{l2_int_norm_est},} we get
\begin{align*}
   \label{Ts3}
   |T_{\rm s,3}| &\lesssim
      \Big(\| \nabla^2 \tilde{U} \|_{L^2(\Omega_{\rm c})}
      + \| \nabla \tilde{\bm{p}} \|_{L^2(\Omega_{\rm c})} \Big)
      \| \bar{\bm{z}} \|_{L^2(\mathbb{R}^d)}\\
      &\lesssim \Big(\| \nabla^2 \tilde{U} \|_{L^2(\Omega_{\rm c})}
      + \| \nabla \tilde{\bm{p}} \|_{L^2(\Omega_{\rm c})} \Big)
      \| \bm{r} \|_{L^2(\mathbb{R}^d)}.
\end{align*}
Finally, to treat $\bm{z}_h - \bar{\bm{z}}$ inside $T_{\rm s,2}$, we use~\eqref{za_result} of Lemma~\ref{interpolation_lemma} with
$f= [S^\c_s(\tilde{U}^\infty,\tilde{\bm{p}}^\infty )]_{\alpha\beta}$  {\helen{followed by an application of \eqref{l2_int_norm_est}, the chain rule, and \eqref{cont_stress_tensor}:}}
{\helen{
\begin{equation*}\label{Ts2}
\begin{split}
|T_{\rm s,2}| &\lesssim~ \big\|\nabla \Big( {\rm S}^{\rm c}_{ \rm s }\big(\tilde{U}^\infty,\tilde{\bm{p}}^\infty\big)\Big)\big\|_{L^2(\Omega_{\rm c})} \|\bm{z}_h \|_{L^2(\mathbb{R}^d)}\\
&\lesssim~ \|\nabla {\rm S}^{\rm c}_{ \rm s }\big(\tilde{U}^\infty,\tilde{\bm{p}}^\infty\big)
\cdot \nabla \big(\nabla\tilde{U}^\infty+\tilde{\bm{p}}^\infty\big)\|_{L^2(\Omega_{\rm c})} \|\bm{r} \|_{L^2(\mathbb{R}^d)} .
\end{split}
\end{equation*}
}}
Combining our estimates for $T_{\rm s,1}, T_{\rm s,2}$, and $T_{\rm s,3}$ and \dao{appealing to Lemma~\ref{approx_lem} to estimate $T_{\rm s,1}$ along with the crude estimate $h \gtrsim 1$} gives
\begin{align*}\label{Ts_est}
|T_{\rm s}|
\lesssim~ \Big(\|h \nabla^2 \tilde{U}^\infty\|_{L^2(\Omega_\c)} &+\|h\nabla \tilde{\bm{p}}^\infty\|_{L^2(\Omega_\c)} \\
&+ \|\nabla \tilde{U}^\infty\|_{L^2(\Omega_{\rm ext})} + \|\tilde{\bm{p}}\|_{L^2(\Omega_{\rm ext})} \Big) \|\bm{r} \|_{L^2(\mathbb{R}^d)}.\nonumber
\end{align*}

{\it Term $T^2_{\rm d}$: }
Finally, to estimate $T^2_{\rm d}$ we split it into
\begin{equation*}\label{T2d_decomp}
\begin{split}
|T^2_{\rm d}|=~& \bigg|\int_{\mathbb{R}^d}  {\sum_\beta[{\rm S}^{\rm a}_{\rm d}(\tilde{U}^\infty,\tilde{\bm{p}}^\infty)]_\beta : \nabla \bar{z}_\beta} \bigg| \\
\lesssim~&
 \bigg|\int_{\mathbb{R}^d} {\sum_\beta \big({\rm S}^{\rm c}_{\rm d}(\tilde{U}^\infty,\tilde{\bm{p}}^\infty)]_\beta- [{\rm S}^{\rm a}_{\rm d}(\tilde{U}^\infty,\tilde{\bm{p}}^\infty)]_\beta \big) : \nabla \bar{z}_\beta} \bigg| \\
 &\quad +  \bigg|\int_{\mathbb{R}^d} {\sum_\beta[{\rm S}^{\rm c}_{\rm d}(\tilde{U}^\infty,\tilde{\bm{p}}^\infty)]_\beta : \nabla \bar{z}_\beta} \bigg| \\
=:~& T^2_{\rm d, 1} + T^2_{\rm d,2}.
\end{split}
\end{equation*}
%{\helen{Since $V_{\xi,\triple}(\widetilde{Y},\widetilde{\bm{p}})$ is a function of lattice site $\xi$ only, thus
%$\mathsf{div}_{\rho} $ only works on $\omeRho(\xi-x)$, so we have decay issues on this term \eqref{T3_eq1}.}}
To estimate $T^2_{\rm d, 1}$, we note that it is similar to $T^1_{\rm d}$ in that $\nabla_{\rho}\overline{z_{\beta}}$ is zero off $\Omega_\c$
 \dao{(which is due to the support of the blending function and the definition of $\Omega_{\rm c}$; see the proof of Lemma~\ref{interpolation_lemma} for further explanation)}
 so we utilize the stress estimate in Corollary~\ref{globel_stress} along with the bound
\dao{
\[
\|\nabla \bar{z}_\beta\| \lesssim~ \|\bar{z}_\beta\| \lesssim~ \|r_\beta\|
\]
which follows from
\begin{align*}
\|\bar{z}_\beta\| \lesssim~& \| \zzhbeta\|_{L^2(\Omega_\c)} \qquad \mbox{by~\eqref{inter_shift}} \\
\lesssim~& \|r_\beta\|_{L^2(\Omega_\c)} \qquad \, \mbox{  by~\eqref{l2_int_norm_est}.}
\end{align*}
This produces
\begin{align*}
 T^2_{\rm d, 1}\lesssim~& \left(\|\nabla^2 \tilde{U}^\infty\|_{L^2(\Omega_\c)} + \| \nabla \tilde{\bm{p}}^\infty\|_{L^2(\Omega_\c)}\right) \| \nabla \bar{\bm{z}} \|_{L^2(\mathbb{R}^d)} \\
\lesssim~& \left(\|\nabla^2 \tilde{U}^\infty\|_{L^2(\Omega_\c)} + \| \nabla \tilde{\bm{p}}^\infty\|_{L^2(\Omega_\c)}\right) \|  \bm{r} \|_{L^2(\mathbb{R}^d)}.
\end{align*}}
Meanwhile, we may integrate $T^2_{\rm d,2}$ by parts and use the aforementioned fact that $\|\bar{z}_\beta\| \lesssim~ \|r_\beta\|$ to obtain
\begin{align*}
T^2_{\rm d,2} \lesssim~
{\sum_\beta \left\|{\rm div} \left(  [{\rm S}^{\rm c}_{\rm d}(\tilde{U}^\infty,\tilde{\bm{p}}^\infty )]_\beta  \right)\right\|_{L^2(\Omega^c)} \|\bm{r} \|_{L^2(\mathbb{R}^d)}}.
\end{align*}
{\helen{Applying the chain rule to ${\rm div} \left(  [{\rm S}^{\rm c}_{\rm d}(\tilde{U}^\infty,\tilde{\bm{p}}^\infty )]_\beta  \right)$
(just like for $T_{\rm s, 2}$), we get }}
\begin{align*}
|T^2_{\rm d}|\lesssim T^2_{\rm d,1} +T^2_{\rm d,2}&\lesssim \left(\|\nabla^2 \tilde{U}^\infty\|_{L^2(\Omega_\c)} +\| \nabla \tilde{\bm{p}}^\infty\|_{L^2(\Omega_\c)}\right) \| \bm{r} \|_{L^2(\mathbb{R}^d)}\\
&\lesssim \left(\|h\nabla^2 \tilde{U}^\infty\|_{L^2(\Omega_\c)} +\| h\nabla \tilde{\bm{p}}^\infty\|_{L^2(\Omega_\c)}\right) \| \bm{r} \|_{L^2(\mathbb{R}^d)}.
\end{align*}

%  defined in \eqref{BQCF_consist_eq2} is estimated in Lemma~\ref{lem:est_T2d}  by
% \begin{equation}\label{T2d_est}
% |T^2_{\rm d}|\lesssim~ (\|\nabla^2 \tilde{U}^\infty\|_{L^2(\Omega_\c)} + \| \nabla \tilde{\bm{p}}^\infty\|_{L^2(\Omega_\c)})\cdot \| \bm{r} \|_{L^2(\mathbb{R}^d)}.
% \end{equation}
Combining our estimates for $T^1_{\rm d},T_{\rm s}$,
and $T^2_{\rm d}$ yields the stated result.
\end{proof}

\subsection{Stability}\label{stab}
The second key ingredient in our proof of Theorem~\ref{main_thm} is the stability estimate~\eqref{stab_est}; \dao{this in turn implies a bound on the inverse of {\helen{the linearised BQCF operator}}, which we will use in a quantitative version of the inverse function theorem to establish existence of the solution to our BQCF equations.}  Conceptually, the proof of stability is similar to that of the simple lattice case presented in~\cite{blended2014}.

% The precise two results we obtain in Sections~\ref{stab_def_free} and~\ref{stab_defect_present} are as follows.

\begin{theorem}[Stability of BQCF]\label{stab_theorem_full}
 Suppose that Assumptions~\ref{assumption1},~\ref{assumptionSite}, and~\ref{assumption2} hold. There exists a critical size, $R_{\rm core}^*$, of the atomistic region such that, for all shape regular meshes and blending functions meeting the requirements of Section~\ref{bqcf} and $R_{\rm core} \geq R_{\rm core}^*$,
\[
\frac{\gamma_{\a}}{2}\|(W,\bm{r})\|_{\rm ml}^2 \leq~ \<\delta \mathcal{F}^{\bqcf}(\Pi_h(U^\infty, \bm{p}^\infty)) (W, \bm{r}),(W, \bm{r})\>, \quad \forall \, (W, \bm{r}) \in \bm{\mathcal{U}}_{h,0} \times \bm{\mathcal{P}}_{h,0}.
\]
\end{theorem}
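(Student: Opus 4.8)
The task is to bound from below the quadratic form $\mathcal{B}(W,\bm{r}):=\langle\delta\mathcal{F}^{\bqcf}(U_h,\bm{p}_h)(W,\bm{r}),(W,\bm{r})\rangle$, where $(U_h,\bm{p}_h):=\Pi_h(U^\infty,\bm{p}^\infty)$; the resulting bound then feeds the inverse function theorem used to establish existence. Differentiating the variational formula \eqref{v_bqcf}, using that $\delta^2\mathcal{E}^\a$ is symmetric, and adding and subtracting $\langle\delta^2\mathcal{E}^\a(U_h,\bm{p}_h)(W,\bm{r}),(\varphi W,\varphi\bm{r})\rangle$ together with $(1-\varphi)+\varphi\equiv1$, the plan is to decompose $\mathcal{B}(W,\bm{r})=A+B+C$ with
\[
A:=\langle\delta^2\mathcal{E}^\a(U_h,\bm{p}_h)(W,\bm{r}),(W,\bm{r})\rangle,
\]
\[
B:=\big\langle\big[\delta^2\mathcal{E}^\c(U_h,\bm{p}_h)-\delta^2\mathcal{E}^\a(U_h,\bm{p}_h)\big](W,\bm{r}),(\varphi W,\varphi\bm{r})\big\rangle,
\]
\[
C:=\big\langle\delta^2\mathcal{E}^\c(U_h,\bm{p}_h)(W,\bm{r}),\big(I_h(\varphi W)-\varphi W,\ I_h(\varphi\bm{r})-\varphi\bm{r}\big)\big\rangle.
\]
The observation driving the argument is that the single term $A$ already carries coercivity with a constant arbitrarily close to $\gamma_\a$, while $B$ and $C$ are perturbations that vanish as $R_{\rm core}\to\infty$.

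For the term $A$, since the continuum mesh is a coarsening of the atomistic triangulation $\mathcal{T}_\a$, the lattice restriction of any $(W,\bm{r})\in\bm{\mathcal{U}}_{h,0}\times\bm{\mathcal{P}}_{h,0}$ belongs to $\bm{\mathcal{U}}_0$ and satisfies $\|(W,\bm{r})\|_\a=\|(W,\bm{r})\|_{\rm ml}$ (the atomistic interpolant $I$ acts as the identity on finite element functions), so Assumption~\ref{assumption2} gives $\langle\delta^2\mathcal{E}^\a(U^\infty,\bm{p}^\infty)(W,\bm{r}),(W,\bm{r})\rangle\geq\gamma_\a\|(W,\bm{r})\|_{\rm ml}^2$. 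I would then pass from $(U^\infty,\bm{p}^\infty)$ to $(U_h,\bm{p}_h)$ using the (finite, by (V.2) and (V.4)) Lipschitz modulus of $\delta^2\mathcal{E}^\a$ and the fact that $\|D(U_h,\bm{p}_h)-D(U^\infty,\bm{p}^\infty)\|_{\ell^\infty}\to0$ as $R_{\rm core}\to\infty$, which follows from the decay estimates of Theorem~\ref{decay_thm1} and the approximation properties of $\Pi_h$ (Lemma~\ref{approx_lem}). Hence $A\geq(\gamma_\a-o(1))\|(W,\bm{r})\|_{\rm ml}^2$.

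For the term $C$, boundedness of the derivatives of $V$ bounds $\delta^2\mathcal{E}^\c(U_h,\bm{p}_h)$ uniformly, and since $W,\bm{r}$ are piecewise affine, $\varphi\in{\rm C}^{2,1}$ with $\|\nabla^j\varphi\|_{L^\infty}\lesssim R_\a^{-j}$, and $\nabla\varphi$ is supported in the fully refined blending region, standard interpolation-error bounds together with a Poincaré--type inequality on the blending region (which contributes at most a logarithmic factor $\gamma_{\rm tr}$, harmless against powers of $R_\a$) give $|C|\lesssim R_\a^{-1}\gamma_{\rm tr}\,\|(W,\bm{r})\|_{\rm ml}^2=o(1)\|(W,\bm{r})\|_{\rm ml}^2$. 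The term $B$ is the technical core. I would first remove the defect: on $\operatorname{supp}\varphi$ the field $(U_h,\bm{p}_h)$ is pointwise $O(R_{\rm core}^{1-d})$ by Theorem~\ref{decay_thm1}, so Lipschitz continuity of $\delta^2\mathcal{E}^\a$ and $\delta^2\mathcal{E}^\c$ lets me replace both Hessians by the homogeneous ones at $\bm{y}^{\rm ref}$ at a cost $O(R_{\rm core}^{1-d})\|(W,\bm{r})\|_{\rm ml}^2$. What remains is the Cauchy--Born modelling error of the homogeneous second variation, tested against $(\varphi W,\varphi\bm{r})$; following \cite{blended2014} I would rewrite the atomistic contribution in the stress form \eqref{atom_tensor_eq1}, apply the Hessian-level analogue of the modelling-error bound of Corollary~\ref{globel_stress}, and exploit that the surviving term is localised to $\operatorname{supp}\nabla\varphi$, where the extra factor $\|\nabla\varphi\|_{L^\infty}\lesssim R_\a^{-1}$ (with inverse estimates on the refined blending mesh) yields $|B|\lesssim R_\a^{-1}\gamma_{\rm tr}\,\|(W,\bm{r})\|_{\rm ml}^2$.

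Collecting the three estimates, $\mathcal{B}(W,\bm{r})\geq\big(\gamma_\a-C(R_\a^{-1}\gamma_{\rm tr}+R_{\rm core}^{1-d})\big)\|(W,\bm{r})\|_{\rm ml}^2$, and since $R_\a/4\leq R_{\rm core}\leq 3R_\a/4$ by \eqref{def_constant_Co} I may fix $R_{\rm core}^*$ large enough that the prefactor is at least $\gamma_\a/2$ whenever $R_{\rm core}\geq R_{\rm core}^*$, which is the assertion. The main obstacle is the term $B$: compared with the Bravais-lattice setting the multilattice Cauchy--Born modelling error is one derivative weaker (Corollary~\ref{globel_stress} and the remark after it) and couples the displacement and shift blocks, so one must verify that this weaker estimate still closes the argument. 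It does, because the modelling error enters only through $\operatorname{supp}\varphi$, where the extra power $\|\nabla\varphi\|_{L^\infty}\lesssim R_\a^{-1}$ and the decay of $\nabla^2\tilde{U}^\infty$ and $\nabla\tilde{\bm{p}}^\infty$ from Theorem~\ref{decay_thm1} restore the needed smallness, so that --- exactly as in the consistency estimate --- the asymptotic behaviour is not degraded.
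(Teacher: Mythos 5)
Your reduction of the defect Hessian to the homogeneous one (the Lipschitz estimate using the decay of $(U^\infty,\bm{p}^\infty)$ on $\operatorname{supp}\varphi$) mirrors what the paper actually does to pass from Theorem~\ref{stab_theorem} to Theorem~\ref{stab_theorem_full}, and your term $C$ is genuinely controllable since $I_h(\varphi W)-\varphi W$ is supported in the fully refined blending region. The gap is in term $B$. You invoke a ``Hessian-level analogue of Corollary~\ref{globel_stress}'' and claim the surviving contribution is localised to $\operatorname{supp}\nabla\varphi$, but neither holds. Corollary~\ref{globel_stress} bounds the stress difference evaluated at the \emph{smooth, decaying} field $(\tilde{U}^\infty,\tilde{\bm{p}}^\infty)$ by $\|\nabla^2\tilde U^\infty\|+\|\nabla\tilde{\bm p}^\infty\|$; in $B$ the Hessian difference acts on the \emph{test function} $(W,\bm{r})$, an arbitrary unit-norm piecewise-linear field with no extra smoothness and no decay, for which $[\delta^2\mathcal{E}^\c-\delta^2\mathcal{E}^\a](W,\bm{r})$ is merely bounded. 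Moreover $(\varphi W,\varphi\bm{r})$ is supported on all of $\operatorname{supp}\varphi\supset\Omega_\c$, not on $\operatorname{supp}\nabla\varphi$: the localisation to $\operatorname{supp}\nabla\varphi$ is a feature of the \emph{consistency} (first-variation) estimate, where it comes from $\delta\mathcal{E}^\a(U^\infty,\bm{p}^\infty)=0$, and there is no analogous cancellation at the Hessian level. Concretely, on a coarse element $T$ the two stencils agree in the interior, but near the element skeleton $|D_\rho W-\nabla_\rho W|$ is of the order of the jump of $\nabla W$, and summing these contributions over the skeleton yields $|B|\lesssim\sum_T h_T^{-1}\|\nabla W\|_{L^2(T)}^2$, an $O(1)$ multiple of $\|(W,\bm{r})\|_{\rm ml}^2$ with no factor of $R_\a^{-1}$. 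So the perturbative bookkeeping $\mathcal{B}\geq(\gamma_\a-o(1))\|\cdot\|_{\rm ml}^2$ does not close.

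This is precisely why the paper does not argue perturbatively. Its proof of stability is by contradiction and concentration compactness: one assumes a sequence of unit-norm unstable modes for growing atomistic regions, rescales by $\epsilon_n=R_{\a,n}^{-1}$, extracts weak limits, splits each mode into a strongly convergent part $(X_n,\bm{s}_n)$ and a part $(Z_n,\bm{t}_n)$ converging weakly to zero (Lemma~\ref{seq_lemma}), kills the cross terms by weak--strong pairing (Lemma~\ref{more_lemma}), and symmetrises the non-symmetric force-based form by distributing $\sqrt{1-\varphi_n}$ and $\sqrt{\varphi_n}$ onto both slots (Lemma~\ref{weak_lemma}), so that coercivity of the full atomistic and full Cauchy--Born Hessians can be applied to each piece separately in the limit. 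The commutator terms generated by this symmetrisation, such as $\nabla\sqrt{\varphi_n}\otimes Z_n$, are shown to vanish only because $Z_n\to 0$ strongly in $L^2$ on the (rescaled, compact) blending region along the contradiction sequence --- a statement that is not available uniformly over all unit-norm $(W,\bm{r})$, which is exactly the uniformity your direct argument would need. If you want to salvage a direct proof you would have to confront this symmetrisation error head on; as the paper notes, that has only been achieved by a very refined analysis in one dimension.
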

As an intermediate step we also prove stability of the reference state.

\begin{theorem}[Stability of BQCF at Reference State]\label{stab_theorem}
Suppose that Assumptions~\ref{assumption1},~\ref{assumptionSite}, and~\ref{assumption2} hold.  There exists a critical size $R_{\rm core}^*$ of the atomistic region such that, for all meshes having shape regularity constant bounded below by $C_{\mathcal{T}_h}$ and blending functions meeting the requirements of Section~\ref{bqcf} and $R_{\rm core} \geq R_{\rm core}^*$,
\[
   \frac{3}{4}\gamma_\a\|(W,\bm{r})\|_{\rm ml}^2 \leq~ \<\delta \mathcal{F}^{\bqcf}_{\rm hom}(0) (W, \bm{r}),(W, \bm{r})\>, \quad \forall \, (W, \bm{r}) \in \bm{\mathcal{U}}_{h,0} \times \bm{\mathcal{P}}_{h,0}.
\]
\end{theorem}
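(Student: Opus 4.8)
The plan is to transfer the Bravais-lattice stability argument of \cite{blended2014} to the multilattice setting, carrying the shift degrees of freedom through, and to use Theorem~\ref{stab_theorem} as the first rung of a two-step perturbation argument that yields Theorem~\ref{stab_theorem_full}. Linearising the variational BQCF operator \eqref{v_bqcf} at the homogeneous reference state, for $(W,\bm r)\in\bm{\mathcal{U}}_{h,0}\times\bm{\mathcal{P}}_{h,0}$ one has
\[
\la \delta\mathcal{F}^{\bqcf}_{\rm hom}(0)(W,\bm r),(W,\bm r)\ra = \la \delta^2\mathcal{E}^\a_{\rm hom}(0)(W,\bm r),\big((1-\varphi)W,(1-\varphi)\bm r\big)\ra + \la \delta^2\mathcal{E}^\c_{\rm hom}(0)(W,\bm r),\big(I_h(\varphi W),I_h(\varphi\bm r)\big)\ra,
\]
which is \emph{not} symmetric --- the essential feature of force-based coupling. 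The starting point is homogeneous atomistic stability, $\la\delta^2\mathcal{E}^\a_{\rm hom}(0)\bm v,\bm v\ra\ge\gamma_\a\|\bm v\|_\a^2$, which follows from Assumption~\ref{assumption2} together with the decay estimates of Theorem~\ref{decay_thm} (the operator $\delta^2\mathcal{E}^\a(\bm{u}^\infty)-\delta^2\mathcal{E}^\a_{\rm hom}(0)$ is bounded and concentrated near the defect core; cf.\ \cite{Ehrlacher2013,olsonOrtner2016}). On $\bm{\mathcal{U}}_{h,0}\times\bm{\mathcal{P}}_{h,0}$ the seminorm $\|\cdot\|_\a$ agrees with $\|\cdot\|_{\rm ml}$, since $\mathcal{T}_h$ coincides with $\mathcal{T}_\a$ on the fully refined region and a $\mathcal{P}_1$ function is unchanged by micro-interpolation on a refinement; hence $\la\delta^2\mathcal{E}^\a_{\rm hom}(0)(W,\bm r),(W,\bm r)\ra\ge\gamma_\a\|(W,\bm r)\|_{\rm ml}^2$.

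Next I would perform two reductions. First, replace $I_h(\varphi W),I_h(\varphi\bm r)$ by $\varphi W,\varphi\bm r$ inside the continuum term: the interpolation error is supported on elements meeting ${\rm supp}(\nabla\varphi)\subset\Omega_\b\subset\Omega_\a$, where the mesh is fully refined ($h\sim1$), so on each such element the $H^1$-interpolation error of $\varphi W$ (resp.\ $\varphi\bm r$) is $\lesssim\|\nabla^2\varphi\|_{L^\infty}|W|+\|\nabla\varphi\|_{L^\infty}|\nabla W|$ (the $\nabla^2 W$ term vanishing), which by Assumption~\ref{assumption-shapereg} is $\lesssim C_\varphi R_\a^{-2}|W|+C_\varphi R_\a^{-1}|\nabla W|$. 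Normalising the mean of $W$ over $\Omega_\b$ (legitimate, since both sides of the claimed inequality are invariant under $W\mapsto W+{\rm const}$, the total force vanishing), using the Poincar\'e inequality $\|W\|_{L^2(\Omega_\b)}\lesssim R_\a\|\nabla W\|_{L^2(\Omega_\b)}$ and inverse inequalities to bound $\|\nabla\bm r\|_{L^2(\Omega_\b)}\lesssim\|\bm r\|_{L^2(\Omega_\b)}$, and boundedness of $\delta^2\mathcal{E}^\c_{\rm hom}(0)$ on $H^1\times L^2$, this reduction costs $\lesssim C_\varphi R_\a^{-1}\|(W,\bm r)\|_{\rm ml}^2$. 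Second, write $\delta^2\mathcal{E}^\c_{\rm hom}(0)=\delta^2\mathcal{E}^\a_{\rm hom}(0)+K$ with $K$ the linearised Cauchy--Born modelling error; since $(1-\varphi)\bm V+\varphi\bm V=\bm V$ pointwise, after the first reduction
\[
\la\delta\mathcal{F}^{\bqcf}_{\rm hom}(0)(W,\bm r),(W,\bm r)\ra = \la\delta^2\mathcal{E}^\a_{\rm hom}(0)(W,\bm r),(W,\bm r)\ra + \la K(W,\bm r),(\varphi W,\varphi\bm r)\ra + \mathcal{O}\!\big(C_\varphi R_\a^{-1}\|(W,\bm r)\|_{\rm ml}^2\big),
\]
and the first term is $\ge\gamma_\a\|(W,\bm r)\|_{\rm ml}^2$.

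The main work, and the principal obstacle, is to bound the blended Cauchy--Born term $\mathcal{C}:=\la K(W,\bm r),(\varphi W,\varphi\bm r)\ra$ and show that $|\mathcal{C}|\le\tfrac14\gamma_\a\|(W,\bm r)\|_{\rm ml}^2$ once $R_\a$ is large. For this I would use the atomistic and continuum stress representations of Section~\ref{tech} (the identity \eqref{atom_tensor_eq1} and \eqref{eq:defn_Sa}), linearised at $0$: then $\mathcal{C}$ becomes an integral over $\Omega_\b$ of the stress differences $[\Scd]_\beta-[\Sad]_\beta$ and $[\Scs]_{\alpha\beta}-[\Sas]_{\alpha\beta}$ tested against $\nabla(\varphi W),\nabla(\varphi\bm r)$ and $\varphi(r_\beta-r_\alpha)$. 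Because $\nabla\varphi$ is slowly varying, the ghost-force-type contributions in these stress differences telescope and one gains a power of $\|\nabla\varphi\|_{L^\infty}\lesssim C_\varphi R_\a^{-1}$, exactly as in the Bravais case; combined with the pointwise comparison of Lemma~\ref{cb_error2} and Corollary~\ref{globel_stress} this gives $|\mathcal{C}|\lesssim(C_\varphi R_\a^{-1})^{\theta}\|(W,\bm r)\|_{\rm ml}^2$ for some $\theta>0$. The genuinely new point for multilattices is the shift stress $\Sas$: it is paired with an $L^2$ test function $\varphi(r_\beta-r_\alpha)$ rather than a gradient, and --- as the Remark after Corollary~\ref{globel_stress} records --- the multilattice stress error is one derivative order lower than in the Bravais case, so one must check that the telescoping still produces a positive power of $R_\a$ rather than merely an $\mathcal{O}(1)$ bound; the decay gap between displacements and shifts in Theorem~\ref{decay_thm} is the mechanism that makes the analogous book-keeping work at the consistency level, and a similar argument is needed here. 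Assembling the three estimates, for $R_{\rm core}\ge R_{\rm core}^*$ --- equivalently, $R_\a$ large enough by \eqref{def_constant_Co} --- the error terms are together at most $\tfrac14\gamma_\a\|(W,\bm r)\|_{\rm ml}^2$, leaving $\la\delta\mathcal{F}^{\bqcf}_{\rm hom}(0)(W,\bm r),(W,\bm r)\ra\ge\tfrac34\gamma_\a\|(W,\bm r)\|_{\rm ml}^2$, as claimed; the remaining $\tfrac14\gamma_\a$ of slack is precisely what is consumed by the defect perturbation $\delta^2\mathcal{E}^\a(\bm u^\infty)-\delta^2\mathcal{E}^\a_{\rm hom}(0)$ when upgrading to Theorem~\ref{stab_theorem_full}.
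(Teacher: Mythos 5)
Your overall architecture (reduce to homogeneous atomistic stability plus a remainder, then absorb the remainder for large $R_\a$) founders at its central step: the bound $|\mathcal{C}|\lesssim (C_\varphi R_\a^{-1})^{\theta}\|(W,\bm r)\|_{\rm ml}^2$ for $\mathcal{C}=\la K(W,\bm r),(\varphi W,\varphi\bm r)\ra$, with $K=\delta^2\mathcal{E}^\c_{\rm hom}(0)-\delta^2\mathcal{E}^\a_{\rm hom}(0)$, is not justified and is false in general. The Cauchy--Born modelling error is \emph{not} small in operator norm on the finite element space: the stress-difference estimates (Lemma~\ref{cb_error2}, Corollary~\ref{globel_stress}) control $[\Scd]-[\Sad]$ only by $\|\nabla^2 U\|+\|\nabla\bm q\|$, which is small when tested against the smooth interpolant of the decaying solution (this is exactly how they are used in the consistency proof) but is $O(\|\nabla W\|)$ for a test function $W$ oscillating at the lattice scale on the fully refined part of $\Omega_\b\cup\Omega_\c$. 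Concretely, take $W$ supported where $\varphi\approx c\in(0,1)$ is essentially constant and oscillating at wavelength $O(1)$; then $\mathcal{C}\approx c\,\la KW,W\ra$, which is the difference of the discrete and long-wave quadratic forms and is of size $O(1)\|\nabla W\|_{L^2}^2$ with a sign and constant that have nothing to do with $\gamma_\a$ and do not improve as $R_\a\to\infty$. The ``telescoping against $\nabla\varphi$'' that gains powers of $R_\a^{-1}$ is a consistency (ghost-force) mechanism, i.e.\ it applies to the residual at a smooth state, not to the coercivity defect of the Hessian difference against arbitrary admissible test pairs; and, as you yourself note, for the shift block the stress error is already one order lower, so even that mechanism is in doubt. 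Since the remainder cannot be made $\le\tfrac14\gamma_\a\|(W,\bm r)\|_{\rm ml}^2$, the argument does not close.

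The paper avoids ever requiring $K$ to be small. It argues by contradiction: assuming a sequence of unit-norm unstable modes for $R_{\a,n}\to\infty$, it rescales by $\epsilon_n=1/R_{\a,n}$, extracts weak limits, and splits each test pair into a strongly convergent ``smooth'' part $(X_n,\bm s_n)$ plus a weakly null oscillatory part $(Z_n,\bm t_n)$ (Lemma~\ref{seq_lemma}). The cross terms vanish (Lemma~\ref{more_lemma}); for the oscillatory part one moves $\sqrt{1-\varphi_n}$ and $\sqrt{\varphi_n}$ onto the test functions (Lemma~\ref{weak_lemma}), paying only commutators involving $\nabla\sqrt{\varphi_n}$, which \emph{are} $O(R_\a^{-1})$, and then invokes coercivity of the atomistic and Cauchy--Born Hessians \emph{separately} on $\sqrt{1-\varphi_n}(Z_n,\bm t_n)$ and $\sqrt{\varphi_n}(Z_n,\bm t_n)$; the smooth part converges to a pure continuum quadratic form that is coercive by the atomistic-stability-implies-CB-stability result. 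Summing recovers $\gamma_\a$ in the limit and contradicts the assumed bound $\tfrac34\gamma_\a$. If you want to salvage a direct (non-compactness) proof, the object you must estimate is not $\la Kv,\varphi v\ra$ but the commutators $\la\delta^2\mathcal{E}^\a v,(1-\varphi)v\ra-\la\delta^2\mathcal{E}^\a(\sqrt{1-\varphi}\,v),\sqrt{1-\varphi}\,v\ra$ and its continuum analogue, which is where the smallness in $\nabla\varphi$ genuinely lives.
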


Before we present the proofs of these results in Sections~\ref{stab_def_free} and~\ref{stab_defect_present} we apply them to complete the proof of our main result, Theorem~\ref{main_thm}.

\subsection{Proof of the main result}
\label{sec:proof_main_result}
% With these two theorems, we have all of the tools necessary to prove our main result, Theorem~\ref{main_thm}.

\begin{proof}[Proof of Theorem~\ref{main_thm}]
We apply the inverse function theorem, Theorem~\ref{inverseFunctionTheorem},  to the BQCF variational operator $\mathcal{F}^{\bqcf}$ at the linearization point $\Pi_h(U^\infty, \bm{p}^\infty)$.  The parameters $\eta$ and $\sigma$ defined in Theorem~\ref{inverseFunctionTheorem} are
\begin{equation*}
\begin{split}
&\eta :=~ \gamma_{\rm tr}\big(\|h\nabla^2 \tilde{U}\|_{L^2(\Omega_\c)} + \|h\nabla \tilde{\bm{p}}\|_{L^2(\Omega_\c)}  + \|\nabla \tilde{U}\|_{L^2(\Omega_{\rm ext})} \big. \\
&\big. \qquad\qquad +  \| \tilde{\bm{p}}\|_{L^2(\Omega_{\rm ext})}\big)\cdot \|(W,\bm{r})\|_{\rm ml}, \quad \forall (W,\bm{r}) \, \in \bm{\mathcal{U}}_{h,0} \times \bm{\mathcal{P}}_{h,0},
\end{split}
\end{equation*}
which is the consistency error from Theorem~\eqref{consistency_thm}, and
\[
\sigma^{-1} := \frac{\gamma_\a}{2},
\]
which is the coercivity constant from Theorem~\eqref{stab_theorem_full} that exists so long as $R_{\rm core} \geq R_{\rm core}^*$, where $R_{\rm core}^*$ is furnished by Theorem~\eqref{stab_theorem_full}.  (The requirement $R_{\rm core} \geq R_{\rm core}^*$ means the domain decomposition procedure meets the requirements stated in Theorem~\ref{main_thm}.)  The Lipschitz estimate on $\delta \mathcal{F}^{\bqcf}$ is a direct result of the assumptions made on the site potential in Assumption~\ref{assumption1}.  Applying the inverse function theorem with these parameters gives existence of $(U^{\bqcf}, \bm{p}^{\bqcf})$ and the stated error estimate,~\eqref{main_estimate}, follows from the inverse function theorem and the approximation lemma, Lemma~\ref{approx_lem}.
\end{proof}

The remainder of the paper is devoted to proving Theorems~\ref{stab_theorem_full} and~\ref{stab_theorem}.

\subsection{Stability of BQCF at defect-free reference state}\label{stab_def_free}

We first prove Theorem \ref{stab_theorem}, that is, coercivity of the
homogeneous BQCF operator,
\begin{align*}
\<\delta \mathcal{F}^{\bqcf}_{\rm hom}(0)(W,\bm{r}),(W,\bm{r})\> =~& \<\delta^2\mathcal{E}^\a_{\rm hom}(0)((1-\varphi)W,(1-\varphi)\bm{r}), (W, \bm{r})\> \\
&\qquad +  \<\delta^2\mathcal{E}^\c(0)(I_h(\varphi W),I_h(\varphi \bm{r})), (W, \bm{r})\>.
\end{align*}
That is, we want to show that there exists $\gamma_{\bqcf}$ independent of the approximation parameters such that, for sufficiently large $R_{\rm core}$,
\begin{equation} \label{eq:gamma_bqcf}
   0 < \gamma_{\rm bqcf}\|(W,\bm{r})\|_{\rm ml}^2 \leq \<\delta \mathcal{F}^{\bqcf}_{\rm hom}(0) (W, \bm{r}),(W, \bm{r})\>.
\end{equation}
The proof via contradiction is involved; hence we first outline and motivate the procedure and then give a number of technical results required to prove the theorem at the end of this section. \dao{The main idea is that the linearized BQCF operator consists of an atomistic second variation and a continuum second variation.  Each of these can be individually shown to be coercive so intuitively, we would expect this linearized operator to be coercive for any test pair $(W,\bm{r})$ with support concentrated near the origin (in which case the blending function is zero) and for $(W,\bm{r})$ with support concentrated far from the origin (in which case the blending function would be one).  Thus, we expect the only possible instabilities to occur with test pairs having some support over the blending region.  Since there is no defect in the homogeneous case, any such instability should also occur for any geometric setup, i.e., we can consider the BQCF method for a sequence of growing atomistic domain sizes and should still have an unstable mode.  Thus we shall consider such a sequence and then rescale this sequence so that the atomistic region in each case is contained in a ball of fixed radius about the origin and such that these unstable modes converge (in a sense to be made precise momentarily) to some continuum limit.  We then consider evaluating the suitably rescaled linearized BQCF operator on this sequence and show using the aforementioned stability of the atomistic and continuum components \textit{and} convergence of the test pairs $(W,\bm{r})$ that this leads to a contradiction.  One of the main technical difficulties encountered here is that due to blending by forces, the individual atomistic/continuum components and hence the linearized BQCF operator is not a symmetric bilinear form.  Thus we must take some care in converting the force-based formulation to a form suitable to using the existing coercivity estimates on the atomistic and continuum Hessians.}

The negation of \eqref{eq:gamma_bqcf} is: ``for all atomistic region sizes $R_{\rm a}$, there exists a blending function $\varphi$ and a mesh $\mathcal{T}_h$ compatible with the assumptions of Section~\ref{sec:approx_params} (and in particular Assumption~\ref{assumption-shapereg}), {\helen{as well as a test pair, $(W, \bm{r})$ with norm scaled to one, such that}}
\begin{equation} \label{eq:negation}
\frac{3}{4} \gamma_{\a}> \<\delta \mathcal{F}_{\rm hom}^{\bqcf}(0) (W, \bm{r}),(W, \bm{r})\>.\mbox{''}
\end{equation}
Thus, for contradiction, suppose that there exists a sequence $R_{\a,n} \to \infty$ with associated meshes $\mathcal{T}_{h,n}$, blending functions $\varphi_n$, finite element spaces $\bm{\mathcal{U}}_{h,0}^n \times \bm{\mathcal{P}}_{h,0}^n$, and test pairs $(W_n, \bm{r}_n) \in \bm{\mathcal{U}}_{h,0}^n \times \bm{\mathcal{P}}_{h,0}^n$ with norm one such that
\begin{equation}\label{contra_seq}
\begin{split}
&\frac{3}{4}\gamma_\a  >
\sum_{\xi \in \mathcal{L}} \sum_{(\rho\alpha\beta)}\sum_{(\tau\gamma\delta)} V_{,(\rho\alpha\beta)(\tau\gamma\delta)}:D_{(\rho\alpha\beta)}((1-{\varphi}_n) {W}_n,(1-{\varphi}_n){\bm{r}}_n):D_{(\rho\alpha\beta)}({W}_n,{\bm{r}}_n) \\
& \qquad \qquad + \int_{\mathbb{R}^d} \sum_{(\rho\alpha\beta)}\sum_{(\tau\gamma\delta)} V_{,(\rho\alpha\beta)(\tau\gamma\delta)}:\nabla_{(\rho\alpha\beta)}( {I_{h}}  ( {\varphi}_n( {W}_n,  {\bm{r}}_n))) :\nabla_{(\rho\alpha\beta)}( {W}_n,  {\bm{r}}_n) \, dx,
\end{split}
\end{equation}
where we have omitted the argument, $0$, in $V_{,(\rho\alpha\beta)(\tau\gamma\delta)}(0)$ and where $I_{h}$ is now the piecewise linear interpolant on $\mathcal{T}_{h,n}$.

We now rescale space in \eqref{eq:negation} and derive a continuum scaling limit,
from which we will be able to obtain a contradiction.  To that end, let $\epsilon_n = 1/R_{\a,n}$, and define the set of scaled parameters
\begin{equation} \label{eq:stab:rescaling}
\begin{split}
\hat{\xi}_n =~& \epsilon_n \xi \\
\hat{x}_n =~& \epsilon_n x \\
\hat{r}_n(\dao{\hat{x}_n}) =~& \epsilon_n^{-d/2} r_n(\dao{\hat{x}_n}/\epsilon_n) \\
\hat{W}_n(\dao{\hat{x}_n}) =~& \epsilon_n^{1-d/2} W_n(\dao{\hat{x}_n}/\epsilon_n) \\
\hat{\varphi}_n(\dao{\hat{x}_n}) =~& \varphi_n(\dao{\hat{x}_n}/\epsilon_n).
\end{split}
\end{equation}
In terms of these rescaled quantities, {\helen{we define $\hat{\nabla}:=\epsilon_n^{-1}\nabla_x=\nabla_{\hat{x}_n}$} (when the subscript $n$ is clear we use $\hat{\nabla}$)} and then have
{\helen{
\begin{align*}
\| \nabla_{\hat{x}_n} \hat{W}_n \|^2_{L^2(\mathbb{R}^d)} = \| \nabla_{x} W_n \|^2_{L^2(\mathbb{R}^d)},& \,\, \|\epsilon_n\nabla_{\hat{x}_n} \hat{r}_n\|^2_{L^2(\mathbb{R}^d)} = \|\nabla_x r_n\|^2_{L^2(\mathbb{R}^d)}, \\
 \| \hat{r}_n^\alpha \|^2_{L^2(\mathbb{R}^d)} =~& \| r_n^\alpha \|^2_{L^2(\mathbb{R}^d)},
\end{align*}
}}
and the rescaled BQCF operator is
{\helen{
\begin{equation}\label{bqcfHessian}
\begin{split}
&
 \<\delta \mathcal{F}_{{\rm hom},n}^{\bqcf}(0) (\hat{W}_n, \hat{\bm{r}}_n),(\hat{W}_n, \hat{\bm{r}}_n)\>  := \\
% &=~ \<\delta^2 \mathcal{E}^\a_{\rm hom}(0), (1-\varphi)(W_n,\bm{r}_n), (W_n, \bm{r}_n)\> + \<\delta^2 \mathcal{E}^\c(0), I_h\varphi(W_n, \bm{r}_n), (W_n, \bm{r}_n)\> \\
&\qquad \epsilon^d_n \sum_{\hat{\xi} \in \epsilon_n\mathcal{L}} \mathbb{C}:D_{n}((1-\hat{\varphi}_n)(\hat{W}_n,\hat{\bm{r}}_n)):D_n(\hat{W}_n,\hat{\bm{r}}_n)(\hat{\xi}) \\
&\qquad + \int_{\mathbb{R}^d} \mathbb{C}:\hat{\nabla}( I_{h,n}(\hat{\varphi}_n(\hat{W}_n, \hat{\bm{r}}_n))) :\hat{\nabla}(\hat{W}_n, \hat{\bm{r}}_n) \, d\hat{x}_n,
\end{split}
\end{equation}
}}
where $I_{h,n}$ is the piecewise linear interpolant on $\epsilon_n\mathcal{T}_{h,n}$ and
{\helen{
\begin{align*}
D_{n}( \hat{W} , \hat{\bm{r}} ) :=~& \big(D_{\triple,n}( \hat{W} , \hat{\bm{r}} )\big)_{\triple \in \mathcal{R}}, \\
D_{\triple,n}( \hat{W} , \hat{\bm{r}} ) (\hat{\xi}):=~& \frac{\hat{W}(\hat{\xi} + \epsilon_n \rho) + \epsilon_n \hat{r}_{n}^{\beta}(\hat{\xi} + \epsilon_n \rho) - \hat{W}(\hat{\xi}) - \epsilon_n \hat{r}_{n}^{\alpha}(\hat{\xi})}{\epsilon_n}.
\end{align*}
}}
The rescaling of the shifts $\hat{r}_n^\alpha$ is one order lower than the rescaling of displacements,  which is due to the fact that shifts are already discrete gradients.

We also define an interpolant onto the scaled lattice $\epsilon_n\mathcal{L}$ by $I_n$, a projection operator from the scaled lattice to finite element spaces $\bm{\mathcal{U}}^{n}_{h,0} \times \bm{\mathcal{P}}^n_{h,0}$ on $\mathcal{T}_{h,n}$ by $\Pi_{h,n} := S_{h,n} T_{r_{{\rm i},n}}$, and the scaled finite element basis function
\[
\bar{\zeta}_n(x) := \epsilon_n^{-d}\bar{\zeta}(x/\epsilon_n).
\]

Since $\dao{\hat{\nabla}} \hat{W}_n$ is bounded in $L^2$ and since each $\hat{r}_n^\alpha$ is also bounded (both having norm less than one), we may extract weakly convergent subsequences.  Furthermore, $\epsilon_n \dao{\hat{\nabla}} \hat{r}_n^\alpha$ is also bounded in $L^2$ so we may take it to be weakly convergent as well.  By replacing the original sequences with these weakly convergent subsequences (for notational convenience), we have $\dao{\hat{\nabla}} \hat{W}_n \weakto \dao{\hat{\nabla}} \hat{W}_0$, $\hat{r}_n^\alpha \weakto \hat{r}_0^\alpha$,
and $\epsilon_n \dao{\hat{\nabla}} \hat{r}_n^\alpha \weakto \hat{R}^\alpha_0$ in $L^2(\mathbb{R}^d)$ for some functions $\hat{W}_0, \hat{r}_0^\alpha$, and $\hat{R}^\alpha_0$ for each $\alpha$.  However, since $\hat{r}_n^\alpha$ is bounded in $L^2$ and $\epsilon_n\hat{r}_n^\alpha \to 0$ in $L^2$, $\hat{R}_0^\alpha = 0$.

Next, we choose explicit equivalence representatives for $\hat{W}_n$; namely, we choose $\hat{W}_n$ such that $\int_{B_1(0)} \hat{W}_n = 0$.   For this choice, we have $\|\hat{W}_n\|_{L^2(B_1(0))} \lesssim \|\dao{\hat{\nabla}} \hat{W}_n\|_{L^2(B_1(0))}$, and as $H^1$ is compactly embedded in $L^2$, there exists a strongly convergent subsequence, which we again denote by $\hat{W}_n$, such that $\hat{W}_n \to \hat{W}_0$ strongly in $L^2(B_1(0))$.

We also note here that $\hat{W}_n \weakto \hat{W}_0$ in the space
\[
\dot{\bm{H}}^{1} {\helen{(\mathbb{R}^d,\mathbb{R}^n)}} := \left\{f \in H^1_{\rm loc}(\mathbb{R}^d,\mathbb{R}^n)/\mathbb{R}^n : \|\nabla f\|_{L^2(\mathbb{R}^d)} < \infty \right\},
\]
and so $\hat{W}_0 \in \dot{\bm{H}}^1(\mathbb{R}^d, \mathbb{R}^n)$ as well~\cite{suli2012}.

The purpose of these subsequences is to use the pairs $(\hat{W}_n, \hat{\bm{r}}_n)$ to test with $\delta\mathcal{F}_{{\rm hom},n}^{\bqcf}(0)$. However, as these test pairs only consist of weakly convergent sequences and since the inner product of two weakly convergent sequences is not necessarily convergent, we further split $\hat{W}_n$ and $\hat{\bm{r}}_n$ into the sum of a strongly convergent sequence and a sequence weakly convergent to zero.

This splitting is accomplished by setting
\begin{equation}\label{splitting}
\hat{X}_n := \Pi_{h,n}(\eta_{j_n}  * \hat{W}_0), \qquad
\hat{s}_n^\alpha := \Pi_{h,n}(\eta_{j_n}  * \hat{r}_0^\alpha),
\end{equation}
where $\eta$ is a standard mollifier, $\eta_j(x) = j^{-d} \eta(x/j)$, and $j_n \to 0$ sufficiently slowly to ensure that  the sequences $\hat{X}_n$ and $\hat{s}_n^\alpha$ are strongly convergent to, respectively, $\hat{W}_0$ and $\hat{r}_0^\alpha$. We will impose several further properties on the sequence $j_n$ in Lemma~\ref{seq_lemma} below, 
but for the remainder of the present section, we make the following conventions for notational convenience.

\begin{remark}\label{remark_drop_hats}
{\helen{
To simplify and lessen the notations hereafter, we drop the hat notation on the sequences $X_n, Z_n, \bm{s}_n, \bm{t}_n$ as well as on their derivatives, and so forth.
}}
\end{remark}

Further, we define
\[
   \psi_n := 1-\varphi_n, \quad \mbox{and}
   \quad V_{,\triple\tripleTau} := V_{,\triple\tripleTau}\big(0\big),
\]
and use the notation
\begin{align*}\label{stab_notation}
V_{,\triple\tripleTau}\big( \cdot \big):v:w :=~& w^{\transpose}\big[V_{,\triple\tripleTau}\big( \cdot \big)\big]v \quad \forall v,w \in \mathbb{R}^n, \nonumber \\
\mathbb{C} : D(W,\bm{q}): D(Z,\bm{r}) :=~& \sum_{\triple \in \mathcal{R}} \sum_{\tripleTau \in \mathcal{R}} V_{,\triple\tripleTau}:D_{\triple}(W,\bm{q}):D_{\tripleTau}(Z,\bm{r}), \\
\mathbb{C} : \nabla (W,\bm{q}): \nabla (Z,\bm{r}) :=~& \sum_{\triple \in \mathcal{R}} \sum_{\tripleTau \in \mathcal{R}} V_{,\triple\tripleTau}:(\nabla(W,\bm{q})):(\nabla(Z,\bm{r})),
\end{align*}
where the argument of $V_{,\triple\tripleTau}(\cdot )$ is omitted if evaluated at the reference state.

\begin{lemma}\label{seq_lemma}
There exists $\psi_0 \in {\rm C}^1$ is such that $\psi_n \to \psi_0$ in ${\rm C}^1(B_1(0))$.  Furthermore, there exists a sequence $j_n \to 0$ such that the sequences defined by $X_n, \bm{s}_n$ in~\eqref{splitting} and $Z_n := W_n - X_n$ and $t_n^\alpha := r_n^\alpha - s_n^\alpha$ satisfy the following convergence properties, where $\to$ and $\rightharpoonup$ denote respectively strong and weak $L^2(\R^d)$ convergence.
\begin{equation}\label{verge_result}
\begin{split}
\nabla  W_n \weakto~& \nabla  W_0, \quad
r^\alpha_n \weakto~ r^\alpha_0, \quad
\epsilon_n\nabla r^\alpha_n \weakto 0, \quad
\nabla  X_n \to~ \nabla  W_0, \quad
s_n^\alpha \to~  r^\alpha_0,  \\
\epsilon_n \nabla s^\alpha_n \to~& 0, \quad
\nabla Z_n \weakto~ 0, \quad
t_n^\alpha \weakto~ 0, \quad
\epsilon_n \nabla t_n^\alpha \weakto~ 0, \\
W_n \to~& W_0 \, \mbox{in $L^2(B_1(0))$,} \quad X_n \to~ W_0 \, \mbox{in $L^2(B_1(0))$}, \quad Z_n \to~ 0 \, \mbox{in $L^2(B_1(0))$}
\end{split}
\end{equation}
Moreover, {\helen{let $I$ denote the identity and upon defining the quantities}}
\begin{align*}
 &{\rm R}^{\rm def}_n(x) {\helen{ :={\rm R}^{\rm def}_n\left(\psi_n\right)(x)=~ }} \\
 &\epsilon_n^d \sum_{\xi \in \epsilon_n\mathcal{L}}\sum_{\substack{\triple \\ \tripleTau}} V_{,(\rho\alpha\beta)\tripleTau}(0)D_{\tripleTau,n}(\psi_n( X_n,s_n))\otimes\frac{\rho}{\epsilon_n}\int_0^{\epsilon_n} \zeta_n(\xi + t\rho - x)\, dt, \\
&{\rm R}^{\rm shift}_n(x) {\helen{ :={\rm R}^{\rm shift}_n\left(\psi_n\right)(x)=~ }} \\
&\epsilon_n^d \sum_{\xi \in \epsilon_n\mathcal{L}}\sum_{\substack{\triple \\ \tripleTau}}  V_{,(\rho\alpha\beta)\tripleTau}(0)D_{\tripleTau,n}(\psi_n{X}_n,\psi_n{s}_n) \bar{\zeta}_n(\xi- x), \\
 &{\rm S}^{\rm def}_n(x) {\helen{ := {\rm R}^{\rm def}_n\left(I\right)(x),}} \quad {\rm S}^{\rm shift}_n(x) {\helen{ :={\rm R}^{\rm shift}_n\left(I\right)(x) }} \\
% &\epsilon_n^d \sum_{\xi \in \epsilon_n\mathcal{L}}\sum_{\substack{\triple \\ \tripleTau}} V_{,(\rho\alpha\beta)\tripleTau}(0)D_{\tripleTau,n}(X_n,s_n)\otimes\frac{\rho}{\epsilon_n}\int_0^{\epsilon_n} \zeta_n(\xi + t\rho - x)\, dt, \\
  %&\epsilon_n^d \sum_{\xi \in \epsilon_n\mathcal{L}}\sum_{\substack{\triple \\ \tripleTau}} V_{,(\rho\alpha\beta)\tripleTau}(0)D_{\tripleTau,n}({X}_n,{s}_n) \bar{\zeta}_n(\xi- x), \\
&{\rm S}_n^{\rm inner}(x):=~\\
&\epsilon_n^d \sum_{\xi \in \epsilon_n\mathcal{L}} \sum_{\substack{\triple \\ \tripleTau}} V_{,(\rho\alpha\beta)\tripleTau}:D_{\triple,n}(\psi_n X_n, \psi_n \bm{s}_n):  D_{\tripleTau,n}(X_n,\bm{s}_n),  \\
\end{align*}
the sequence $j_n$ may further be chosen so that
\begin{align}\label{verge_res_2}
%\begin{split}
{\rm S}^{\rm def}_n(x)  &\to \sum_{\substack{\triple \\ \tripleTau}} V_{(\rho\alpha\beta)\tripleTau}(0)\nabla_{\tripleTau}({W}_0,{\bm{s}}_0),\nonumber \\
{\rm S}^{\rm shift}_n(x)  &\to\sum_{\substack{\triple \\ \tripleTau}} V_{(\rho\alpha\beta)\tripleTau}(0)\nabla_{\tripleTau}({W}_0,{\bm{s}}_0),\nonumber\\
 {\helen{ {\rm R}^{\rm def}_n(x)}}  &\helen{\to  \sum_{\substack{\triple \\ \tripleTau}} V_{,(\rho\alpha\beta)\tripleTau}(0)\nabla_{\tripleTau}(\psi_0{W}_0,\psi_0{\bm{s}}_0), } \\
{\rm R}^{\rm shift}_n(x) &\to \sum_{\substack{\triple \\ \tripleTau}} V_{,(\rho\alpha\beta)\tripleTau}(0)\nabla_{\tripleTau}(\psi_0{W}_0,\psi_0{\bm{s}}_0), \nonumber\\
{\rm S}_n^{\rm inner}(x) &\to \int_{\mathbb{R}^d} \sum_{\substack{\triple \\ \tripleTau}} V_{,\triple\tripleTau} :\big(\nabla_{\triple}(\psi_0( W_0, \bm{s}_0))\big):\big(\nabla_{\tripleTau}( W_0,\bm{s}_0)\big) dx,\nonumber
%\end{split}
\end{align}
with convergence being in $L^2(\mathbb{R}^d)$.
\end{lemma}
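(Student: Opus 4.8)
The plan is to adapt the rescaling-and-compactness argument of the simple-lattice stability proof in~\cite{blended2014}, carrying the shift fields along as additional kinematic variables. The three groups of assertions in the lemma will be proved in three largely independent steps.

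I would first construct $\psi_0$. In the rescaled variables the bounds of Assumption~\ref{assumption-shapereg} become $\|\nabla^j\psi_n\|_{L^\infty}\le C_\varphi$ for $j=1,2,3$, uniformly in $n$, because the factors $R_{\a,n}^{-j}$ are exactly cancelled by the rescaling $\hat x=\epsilon_n x$ with $\epsilon_n=R_{\a,n}^{-1}$; also $0\le\psi_n\le1$. Hence $\nabla^2\psi_n$ is uniformly bounded and uniformly Lipschitz, so by Arzel\`a--Ascoli and a diagonal exhaustion of $\R^d$ by balls a subsequence converges in ${\rm C}^2_{\rm loc}(\R^d)$ to some $\psi_0\in{\rm C}^{2,1}$ with $0\le\psi_0\le1$; in particular $\psi_n\to\psi_0$ in ${\rm C}^1(B_1(0))$.

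Next I would set up the splitting and establish~\eqref{verge_result}. For a fixed mollification radius $j$, the functions $\eta_j*W_0$ and $\eta_j*r_0^\alpha$ are smooth and converge, as $j\to0$, to $W_0$ in $\dot{\bm{H}}^1$ and to $r_0^\alpha$ in $L^2$; for fixed $j$ their finite-element projections $\Pi_{h,n}$ converge, as $n\to\infty$, to $\eta_j*W_0$ and $\eta_j*r_0^\alpha$ in the same norms --- this uses Scott--Zhang interpolation estimates together with the facts that the scaled mesh $\epsilon_n\mathcal{T}_{h,n}$ is fully refined on $B_1(0)$ with mesh size $\epsilon_n\to0$ there and grows at a controlled rate outside, and that the truncation radius $r_{{\rm i},n}\to\infty$ (cf. the tail estimates behind Lemmas~\ref{lem:dense} and~\ref{approx_lem}). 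A first diagonal selection then yields $j_n\to0$ --- decaying slowly enough that also $\epsilon_n/j_n\to0$ --- with $X_n=\Pi_{h,n}(\eta_{j_n}*W_0)\to W_0$ in $\dot{\bm{H}}^1$, $s_n^\alpha=\Pi_{h,n}(\eta_{j_n}*r_0^\alpha)\to r_0^\alpha$ in $L^2$, and $\epsilon_n\nabla X_n,\ \epsilon_n\nabla s_n^\alpha\to0$ in $L^2$. All lines of~\eqref{verge_result} then follow: the strong convergences are the above; $Z_n=W_n-X_n$ and $t_n^\alpha=r_n^\alpha-s_n^\alpha$ are weakly-convergent-minus-strongly-convergent sequences with the same limit, hence $\weakto 0$; and $W_n\to W_0$, $X_n\to W_0$, $Z_n\to0$ in $L^2(B_1(0))$ by the normalisation $\int_{B_1(0)}W_n=0$, Poincar\'e, and compactness of $H^1(B_1(0))\hookrightarrow L^2(B_1(0))$.

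The convergence of the stress-type quantities in~\eqref{verge_res_2} is the technical heart, and I would reduce it to two elementary facts. First, for each bond triple, $D_{\tripleTau,n}(X_n,\bm{s}_n)\to\nabla_{\tripleTau}(W_0,\bm{s}_0)$ in $L^2(\R^d)$: the difference quotient $\epsilon_n^{-1}(X_n(\cdot+\epsilon_n\tau)-X_n(\cdot))=\int_0^1\nabla_\tau X_n(\cdot+t\epsilon_n\tau)\,dt$ is an average of $\nabla_\tau X_n$ over a vanishing segment, and such averaging operators converge strongly to the identity on $L^2$ by continuity of translation, while $\nabla_\tau X_n\to\nabla_\tau W_0$ and $s_n^\alpha\to r_0^\alpha$ strongly; inserting $\psi_n$ and using $\psi_n\to\psi_0$ in ${\rm C}^1_{\rm loc}$ with $0\le\psi_n\le1$ for the far field gives likewise $D_{\tripleTau,n}(\psi_nX_n,\psi_n\bm{s}_n)\to\nabla_{\tripleTau}(\psi_0W_0,\psi_0\bm{s}_0)$. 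Second, the ``de-discretisation'' maps $f\mapsto\epsilon_n^d\sum_{\xi\in\epsilon_n\mathcal{L}}f(\xi)\barZeta_n(\xi-\cdot)$ and $f\mapsto\epsilon_n^d\sum_{\xi\in\epsilon_n\mathcal{L}}f(\xi)\,\rho\int_0^1\barZeta_n(\xi+s\epsilon_n\rho-\cdot)\,ds$ (the scaled analogue of the $\omega_\rho$-weight of~\eqref{omega_rho}) are bounded on $L^2$ uniformly in $n$, by the partition-of-unity identity $\sum_\xi\barZeta((\cdot)-\xi)\equiv1$, and satisfy an error bound $\lesssim\epsilon_n\|\nabla f\|_{L^2}$; applied to $f_n=\sum V_{,\triple\tripleTau}(0)D_{\tripleTau,n}(X_n,\bm{s}_n)$, whose $L^2$-gradient is $O(j_n^{-1})$, this error is $O(\epsilon_n/j_n)\to0$. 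Combining the two gives the stated $L^2(\R^d)$ limits for ${\rm S}^{\rm def}_n,{\rm S}^{\rm shift}_n,{\rm R}^{\rm def}_n,{\rm R}^{\rm shift}_n$, and ${\rm S}^{\rm inner}_n=\epsilon_n^d\sum_{\xi\in\epsilon_n\mathcal{L}}g_n(\xi)$ is a Riemann sum of $g_n=\sum V_{,\triple\tripleTau}D_{\triple,n}(\psi_nX_n,\psi_n\bm{s}_n):D_{\tripleTau,n}(X_n,\bm{s}_n)$, which converges in $L^1(\R^d)$ (a product of two $L^2$-convergent sequences, with uniform tail control from $0\le\psi_n\le1$), so the Riemann sum converges to the corresponding integral (again using $\epsilon_n/j_n\to0$ for the quadrature error). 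Since every requirement on $j_n$ has the form ``$j_n$ must decay more slowly than some explicit null sequence'', a final diagonal selection meets them all at once. I expect this bookkeeping to be the main obstacle: one must produce a single $j_n\to0$ that simultaneously makes the projected mollifications converge strongly to $(W_0,\bm{r}_0)$ in the right norms, keeps all Taylor/interpolation/quadrature remainders (of order $\epsilon_n/j_n$ and similar) vanishing, and controls the truncation error of $\Pi_{h,n}$ --- competing demands, since a smaller $j_n$ improves the mollification but worsens every derivative-dependent error. The only genuinely multilattice-specific feature is the half-order-lower rescaling of the shifts ($\hat r_n=\epsilon_n^{-d/2}r_n$ versus $\hat W_n=\epsilon_n^{1-d/2}W_n$), which is why only $\epsilon_n\nabla s_n^\alpha$ --- not $\nabla s_n^\alpha$ itself --- can be controlled.
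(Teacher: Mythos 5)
Your proposal is correct and follows essentially the same route as the paper: Arzel\`a--Ascoli for $\psi_0$, mollification of $(W_0,\bm{r}_0)$ followed by the projection $\Pi_{h,n}$, and a diagonal selection of a single slowly decaying $j_n$ (the paper packages this into Lemma~\ref{sub_lemma} and imposes $j_n\ge \epsilon_n^{1/2}$ precisely so that all remainders of order $\epsilon_n/j_n$ vanish). The only difference is organizational: the paper verifies convergence of each stress-type operator at \emph{fixed} $j$ on the genuinely smooth mollified data, via pointwise Taylor expansion and the partition-of-unity identity for the weights $\omega_\rho$, and then diagonalizes, whereas you track the $\epsilon_n/j_n$ rates directly through the difference quotients and the de-discretisation maps --- both are sound.
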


\begin{proof}
The key fact in proving this result is that $j_n$ may be chosen to tend to zero sufficiently slowly such that any one of these properties holds individually, and by appropriately selecting subsequences using a diagonalization argument, they may be chosen so that all hold simultaneously.  The full proof is given in the Appendix.
\end{proof}

We now state a convergence result for ``cross-terms'' appearing in $\delta \mathcal{F}^{\bqcf}_{{\rm hom},n}(0)$ involving products of strongly and weakly convergent (to zero) sequences. The proof is given in the appendix.

\begin{lemma}\label{more_lemma}
With $Z_n, X_n, \bm{t}_n$, and $\bm{s}_n$ as defined in Lemma~\ref{seq_lemma},
\begin{align}
&\epsilon^d \sum_{\xi \in \epsilon_n\mathcal{L}} \mathbb{C}:D_{n}(\psi_n {Z}_n,\psi_n \bm{t}_n):D_{n}({X}_n,\bm{s}_n) \to 0,
\quad \text{and} \label{more_1} \\
&\epsilon^d \sum_{\xi \in \epsilon_n\mathcal{L}} \mathbb{C}:D_{n}(\psi_n {X}_n,\psi_n\bm{s}_n):D_{n}( {Z}_n, \bm{t}_n) \to 0 \label{more_2}.
\end{align}
\end{lemma}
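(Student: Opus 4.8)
The plan is to exploit that in both \eqref{more_1} and \eqref{more_2} one argument of the (symmetric) bilinear form $\mathbb{C}:(\cdot):(\cdot)$ is built from the strongly convergent pair $(X_n,\bm{s}_n)$ and the other from the pair $(Z_n,\bm{t}_n)$ converging weakly to zero, so that the whole sum is a ``weak times strong'' quantity which must vanish in the limit. The only complication is the cutoff $\psi_n=1-\varphi_n$, which after rescaling is supported in a fixed ball (say $B_1(0)$) and converges in ${\rm C}^1(B_1(0))$ to $\psi_0$; I would use a discrete Leibniz rule to move $\psi_n$ onto the strongly convergent factor. Throughout I identify a lattice function $g$ on $\epsilon_n\mathcal{L}$ with its piecewise constant interpolant on the lattice cells, so that $\epsilon_n^d\sum_{\xi}g(\xi)h(\xi)=\int_{\mathbb{R}^d}gh$, use that $\|g\|_{\ell^2_{\epsilon_n}}$ is comparable to the $L^2$ norm of the $\mathcal{P}_1$ interpolant on $\epsilon_n\mathcal{T}_\a$, and write $D_{\rho,n}f(\xi):=\epsilon_n^{-1}\big(f(\xi+\epsilon_n\rho)-f(\xi)\big)$.

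For \eqref{more_1}, applying the discrete Leibniz rule separately to the ``displacement'' and ``shift'' parts of $D_{\triple,n}(\psi_n Z_n,\psi_n\bm{t}_n)$ gives
\[
D_{\triple,n}(\psi_n Z_n,\psi_n\bm{t}_n)(\xi)=\psi_n(\xi)\,D_{\triple,n}(Z_n,\bm{t}_n)(\xi)+\big(\psi_n(\xi+\epsilon_n\rho)-\psi_n(\xi)\big)D_{\rho,n}Z_n(\xi)+\big(D_{\rho,n}\psi_n\big)(\xi)\big(Z_n(\xi)+\epsilon_n\,t_n^\beta(\xi+\epsilon_n\rho)\big).
\]
Here $D_{\rho,n}\psi_n$ is bounded uniformly in $n$ and supported on a fixed bounded set, so (using $Z_n\to0$ in $L^2_{\rm loc}$, $\bm{t}_n$ bounded, $\epsilon_n\to0$) the last group tends to zero in the $\ell^2_{\epsilon_n}$ norm, while the middle group carries a uniform factor $O(\epsilon_n)$ and is likewise $o(1)$ in $\ell^2_{\epsilon_n}$; paired against the bounded sequence $D_{\tripleTau,n}(X_n,\bm{s}_n)$ in $\epsilon_n^d\sum_\xi V_{,\triple\tripleTau}:(\cdot):(\cdot)$ both contribute $o(1)$. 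Hence \eqref{more_1} equals, up to $o(1)$,
\[
\sum_{\triple,\tripleTau}\ \epsilon_n^d\sum_{\xi}V_{,\triple\tripleTau}:D_{\triple,n}(Z_n,\bm{t}_n)(\xi):\big(\psi_n(\xi)\,D_{\tripleTau,n}(X_n,\bm{s}_n)(\xi)\big).
\]
For each fixed $\triple,\tripleTau$ the first factor converges weakly to zero in $L^2(\mathbb{R}^d)$ — indeed $D_{\rho,n}Z_n\weakto\nabla_\rho Z_0=0$ since $\nabla Z_n\weakto0$, and $t_n^\beta(\cdot+\epsilon_n\rho)-t_n^\alpha(\cdot)\weakto0$ since $\bm{t}_n\weakto0$ — while the second factor converges strongly in $L^2(\mathbb{R}^d)$ to $\psi_0\,\nabla_{\tripleTau}(W_0,\bm{s}_0)$, because $\psi_n\to\psi_0$ uniformly on its fixed support and $D_{\tripleTau,n}(X_n,\bm{s}_n)$ converges strongly (this strong convergence is essentially contained in Lemma~\ref{seq_lemma}; if $D_{\tripleTau,n}(X_n,\bm{s}_n)$ is not already among the sequences listed there, it is appended via the same diagonalisation). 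A finite sum of ``weak times strong'' $L^2$ pairings thus vanishes, proving \eqref{more_1}.

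For \eqref{more_2}, the symmetry of $\mathbb{C}:(\cdot):(\cdot)$ lets me rewrite the left-hand side as $\epsilon_n^d\sum_\xi\mathbb{C}:D_n(Z_n,\bm{t}_n):D_n(\psi_n X_n,\psi_n\bm{s}_n)$, so that now the cutoff multiplies the strongly convergent pair. By the discrete Leibniz rule together with $\psi_n\to\psi_0$ in ${\rm C}^1$, the sequence $D_{\tripleTau,n}(\psi_n X_n,\psi_n\bm{s}_n)$ converges strongly in $L^2(\mathbb{R}^d)$ to $\nabla_{\tripleTau}(\psi_0 W_0,\psi_0\bm{s}_0)$; this is exactly the strong convergence of the quantities ${\rm R}^{\rm def}_n$ and ${\rm R}^{\rm shift}_n$ in Lemma~\ref{seq_lemma} (convolving there with $\omeRho$, $\barZeta_n$ does not alter the strong limit). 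Since $D_{\triple,n}(Z_n,\bm{t}_n)\weakto0$ as above, \eqref{more_2} is again a finite sum of ``weak times strong'' $L^2$ pairings and tends to zero. I expect the main obstacle to be the careful passage between discrete lattice sums $\epsilon_n^d\sum_\xi(\cdot)$ and $L^2(\mathbb{R}^d)$ integrals that is needed before one can invoke ``weak times strong'' cancellation: one must verify that the piecewise constant interpolant of $D_{\triple,n}(Z_n,\bm{t}_n)$ genuinely converges weakly to zero in $L^2(\mathbb{R}^d)$ (using $\nabla Z_n\weakto0$, the comparability of difference quotients with interpolant gradients, and $\bm{t}_n\weakto0$) and that the discrete sums above agree up to $o(1)$ with the corresponding $L^2$ pairings; the remaining ingredients — the discrete Leibniz rule, the $O(\epsilon_n)$ bound on $\psi_n(\xi+\epsilon_n\rho)-\psi_n(\xi)$, and the strong convergence of $D_n(X_n,\bm{s}_n)$ and $D_n(\psi_n X_n,\psi_n\bm{s}_n)$ — are routine refinements of estimates already carried out for Lemma~\ref{seq_lemma}. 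Alternatively, one may route the whole argument through integrals from the start using the atomistic stress representation \eqref{atom_tensor_eq1} (applied to quasi-interpolated test functions via Lemma~\ref{iso_lemma}) together with the weak, respectively strong, $L^2$ convergence of the corresponding stress tensors of Lemma~\ref{seq_lemma}.
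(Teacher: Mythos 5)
Your proposal is correct in substance but follows a genuinely different route from the paper. You peel the cutoff $\psi_n$ off the weakly convergent pair with a discrete Leibniz rule (controlling the commutator terms by the uniform bound on $D_{\rho,n}\psi_n$, the strong $L^2(B_1(0))$ convergence $Z_n\to 0$, and the extra factor $\epsilon_n$ on the shifts), and for \eqref{more_2} you invoke the symmetry of $\mathbb{C}$ (legitimate, since $V_{,\triple\tripleTau}$ is the Hessian of a ${\rm C}^4$ potential) to place $\psi_n$ on the strongly convergent pair; the conclusion is then a finite sum of weak--strong $L^2$ pairings of piecewise-constant interpolants of the stencils. The paper instead never touches a product rule: it uses the quasi-interpolant inverse of Lemma~\ref{iso_lemma} to write $\psi_n Z_n=\bar\zeta_n*\bar Y_n$ and $\psi_n\bm{t}_n=\bar\zeta_n*\bar{\bm{b}}_n$ (respectively $Z_n=\bar\zeta_n*\bar{\acute Z}_n$, $\bm{t}_n=\bar\zeta_n*\bar{\acute{\bm{t}}}_n$ for \eqref{more_2}), converts each discrete sum \emph{exactly} into $\int {\rm S}^{\rm def}_n:(\nabla\bar Y_n+\epsilon_n\nabla\bar b_n^\beta)+\int{\rm S}^{\rm shift}_n:(\bar b_n^\beta-\bar b_n^\alpha)$ via the stress representation \eqref{atom_tensor_eq1}, and then pairs the strong $L^2$ convergence of ${\rm S}^{\rm def}_n,{\rm S}^{\rm shift}_n,{\rm R}^{\rm def}_n,{\rm R}^{\rm shift}_n$ (already proved in Lemma~\ref{seq_lemma}) against the weak nullity of $\nabla\bar Y_n$ and $\bar b_n^\alpha$ (the latter shown by a short duality computation moving $\bar\zeta_n$ onto the test function). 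What the paper's route buys is that all the discrete-to-continuum bookkeeping is absorbed into machinery already built; what your route buys is elementarity, at the price of the two verifications you yourself flag as the main obstacle --- that the piecewise-constant interpolant of $D_{\triple,n}(Z_n,\bm{t}_n)$ is genuinely weakly null in $L^2(\R^d)$ and that $D_{\tripleTau,n}(X_n,\bm{s}_n)$ (unconvolved, unlike ${\rm S}^{\rm def}_n$) converges strongly --- neither of which is literally a statement of Lemma~\ref{seq_lemma}, though both follow by the same diagonalisation and the identity \eqref{convo_finite_diff}. With those two facts supplied, your argument closes; I see no fatal gap.
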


The next lemma manipulates the product of two weakly convergent sequences.  The idea is that we may shift the blending function \dao{function $\psi_n = 1-\varphi_n$} in a way to use coercivity of the atomistic and continuum Hessians. The proof is again given in the appendix.

\begin{lemma}\label{weak_lemma}
Let $Z_n, X_n, \bm{t}_n$, $\bm{s}_n$, $\theta_n = \sqrt{\psi_n}$, and $\theta_0 = \sqrt{\psi_0}$ be as defined above in Lemma~\ref{seq_lemma}.  Then
\begin{align*}
&\lim_{n\to\infty}\epsilon_n^d \sum_{\xi \in \epsilon_n\mathcal{L}} \mathbb{C}:D_{n}(\theta^2_nZ_n,\theta^2_n\bm{t}_n):D_{n}(Z_n,\bm{t}_n) \\
&=~ \lim_{n\to\infty}\epsilon_n^d \sum_{\xi \in \epsilon_n\mathcal{L}} \mathbb{C}:D_{n}(\theta_nZ_n,\theta_n\bm{t}_n):D_{n}(\theta_nZ_n,\theta_n\bm{t}_n).
\end{align*}
\end{lemma}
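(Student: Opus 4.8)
The plan is to reduce the claimed identity to the statement that transferring a factor $\theta_n=\sqrt{\psi_n}$ from the first argument of the bilinear form to the second alters the expression only by a null sequence. The tool is a discrete Leibniz (product) rule: directly from the definition of $D_{\triple,n}$ one checks that, for any scalar field $\phi$ on $\epsilon_n\mathcal{L}$ and any pair $(\hat V,\hat{\bm q})$,
\[
D_{\triple,n}(\phi\hat V,\phi\hat{\bm q})(\hat\xi)=\phi(\hat\xi)\,D_{\triple,n}(\hat V,\hat{\bm q})(\hat\xi)+\frac{\phi(\hat\xi+\epsilon_n\rho)-\phi(\hat\xi)}{\epsilon_n}\,\big(\hat V+\epsilon_n\hat q^\beta\big)(\hat\xi+\epsilon_n\rho).
\]
First I would record that, because the blending functions are chosen so that $\theta_n=\sqrt{1-\varphi_n}$ is bounded in $W^{1,\infty}$ uniformly after the rescaling \eqref{eq:stab:rescaling} (cf.\ Assumption~\ref{assumption-shapereg} and the accompanying remark) and is supported in a fixed ball — the buffer $4r_{\rm buff}/R_{\a,n}$ shrinks to zero — the finite-difference quotient of $\theta_n$ in the identity above is bounded in $L^\infty$, uniformly in $n$ and $\hat\xi$, by $|\rho|\,\|\hat\nabla\theta_n\|_{L^\infty}\lesssim 1$.

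Next I would apply the Leibniz identity twice: once with $\phi=\theta_n$ and argument $(\theta_nZ_n,\theta_n\bm t_n)$, giving $D_n(\theta_n^2Z_n,\theta_n^2\bm t_n)=\theta_n\,D_n(\theta_nZ_n,\theta_n\bm t_n)+A_n$, and once with argument $(Z_n,\bm t_n)$, giving $D_n(\theta_nZ_n,\theta_n\bm t_n)=\theta_n\,D_n(Z_n,\bm t_n)+B_n$. Pointwise, $A_n$ and $B_n$ are a uniformly bounded multiple of $\big(Z_n+\epsilon_n t_n^\beta\big)$ evaluated at a neighbouring site (using $0\le\theta_n\le1$); hence by re-indexing the lattice sum, together with $Z_n\to0$ strongly in $L^2(B_1(0))$, the boundedness of $\bm t_n$ in $L^2$, and $\epsilon_n\to0$ from Lemma~\ref{seq_lemma}, one gets $\|A_n\|_{\ell^2_{\epsilon_n}}\to0$ and $\|B_n\|_{\ell^2_{\epsilon_n}}\to0$, where $\|f\|_{\ell^2_{\epsilon_n}}^2:=\epsilon_n^d\sum_{\hat\xi\in\epsilon_n\mathcal{L}}|f(\hat\xi)|^2$. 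On the other hand $\|D_n(Z_n,\bm t_n)\|_{\ell^2_{\epsilon_n}}$ stays bounded, via the standard equivalence of the discrete stencil norm with $\|\hat\nabla Z_n\|_{L^2}+\|\bm t_n\|_{L^2}+\epsilon_n\|\hat\nabla\bm t_n\|_{L^2}$ and the bounds in \eqref{verge_result}.

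Finally I would substitute these two expansions into the two sides of the asserted identity: on the left, $D_n(\theta_n^2Z_n,\theta_n^2\bm t_n)=\theta_n\big(\theta_nD_n(Z_n,\bm t_n)+B_n\big)+A_n$; on the right, $D_n(\theta_nZ_n,\theta_n\bm t_n)=\theta_nD_n(Z_n,\bm t_n)+B_n$ in both slots. Expanding the bilinear forms $\mathbb{C}:\,\cdot\,:\,\cdot\,$, which is driven by the globally bounded tensor $V_{,\triple\tripleTau}(0)$, every resulting term except $\epsilon_n^d\sum_{\hat\xi}\theta_n^2\,\mathbb{C}:D_n(Z_n,\bm t_n):D_n(Z_n,\bm t_n)$ carries a factor $A_n$ or $B_n$ paired with a quantity that is uniformly bounded in $\ell^2_{\epsilon_n}$ (or with another null sequence), and hence tends to zero by Cauchy--Schwarz and $0\le\theta_n\le1$. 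Thus both sides equal $\epsilon_n^d\sum_{\hat\xi}\theta_n^2\,\mathbb{C}:D_n(Z_n,\bm t_n):D_n(Z_n,\bm t_n)+o(1)$, which yields the lemma (and in particular the existence of one limit entails the other with the same value). The main obstacle I anticipate is exactly the uniform $W^{1,\infty}$ control of $\theta_n=\sqrt{\psi_n}$ under the rescaling: unlike $\psi_n$ itself, its square root need not be differentiable where $\psi_n$ vanishes, so one must use — or impose — that $1-\varphi$ is of a form (for instance $1-\varphi=\chi^2$ with $\chi$ smooth and monotone across the blending layer) for which $\sqrt{1-\varphi}$ is Lipschitz with its gradient scaling like $R_\a^{-1}$; a secondary bookkeeping point is to keep track, in the multilattice setting, that it is $\epsilon_n\bm t_n$ (not $\bm t_n$) that appears at the shifted site $\hat\xi+\epsilon_n\rho$, so the $\epsilon_n$ gain renders the shift contributions to $A_n,B_n$ negligible even though $\bm t_n$ is merely bounded in $L^2$.
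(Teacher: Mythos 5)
Your proof is correct and is essentially the paper's own argument: the paper likewise peels the factor $\theta_n$ off one slot at a time via the discrete product rule (its expansion into the four terms $A_1^n,\dots,A_4^n$ and the identities $D_{\rho,n}(\theta_n^2 Z_n)=\theta_n Z_n D_{\rho,n}(\theta_n)+T_{\rho,n}(\theta_n)D_{\rho,n}(\theta_n Z_n)$, etc.), and kills the commutator terms using exactly your two mechanisms, namely the strong $L^2(B_1(0))$ convergence $Z_n\to 0$ on the support of $\nabla\theta_n$ together with the extra factor $\epsilon_n$ multiplying the shifts at the translated site. The one obstacle you flag is in fact not one: since $\psi_n=1-\varphi_n\ge 0$ and, after the rescaling \eqref{eq:stab:rescaling}, $\|\nabla^2\psi_n\|_{L^\infty}\le C_\varphi$ by Assumption~\ref{assumption-shapereg}, the elementary inequality $|\nabla\psi_n|^2\le 2\|\nabla^2\psi_n\|_{L^\infty}\,\psi_n$ for nonnegative ${\rm C}^{1,1}$ functions gives $|\nabla\theta_n|=|\nabla\psi_n|/(2\sqrt{\psi_n})\le\sqrt{C_\varphi/2}$ uniformly in $n$, so no additional structural hypothesis on the blending function is needed.
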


We are now positioned to prove Theorem~\ref{stab_theorem}.

\begin{proof}[Proof of Theorem~\ref{stab_theorem}, Stability of BQCF at Reference State]
We use the scaling~\eqref{bqcfHessian} and substitute \dao{(from Lemma~\ref{seq_lemma}) the quantities} $W_n = Z_n + X_n$, $r_n^\alpha = t_n^\alpha + s_n^\alpha$, $\psi_n = 1-\varphi_n$, and $\theta_n = \sqrt{1- {\varphi}_n}$.  We divide the proof into three steps: (1) we derive an expression for the atomistic portion of $\delta\mathcal{F}_{{\rm hom},n}^{\bqcf}(0)$ in the $\liminf$ as $n \to \infty$, (2) we derive an expression for the continuum component of $\delta\mathcal{F}_{{\rm hom},n}^{\bqcf}(0)$, and (3) we combine the results and use stability of the individual atomistic and continuum components to derive a contradiction.

\medskip
\noindent \textit{Step 1:}  The first variation, $\delta\mathcal{F}_{{\rm hom},n}^{\bqcf}(0)$, computed in~\eqref{bqcfHessian} is a sum of an atomistic and continuum component. The discrete, atomistic contribution is
\begin{align}
& \big\< \delta^2 \mathcal{E}^{\rm a}_{{\rm hom}, n } (0)(1-\varphi_n)(W_n,\bm{r}_n),(W_n,\bm{r}_n)\>  \nonumber\\
&= \epsilon_n^d \sum_{\xi \in \epsilon_n\mathcal{L}} \mathbb{C}:D_{n}(\theta^2_n {W}_n,\theta^2_n {\bm{r}}_n):D_{n}( {W}_n, {\bm{r}}_n) \nonumber\\
&= \epsilon_n^d \sum_{\xi \in \epsilon_n\mathcal{L}} \mathbb{C}:D_{n}(\theta^2_n {Z}_n + \theta^2_n {X}_n,\theta^2_n {\bm{t}}_n + \theta^2_n{\bm{s}}_n):D_{n}( {Z}_n +  {X}_n, {\bm{t}}_n +  {\bm{s}}_n) \nonumber\\
&=~ \epsilon_n^d \sum_{\xi \in \epsilon_n\mathcal{L}} \mathbb{C}:\Big[D_{n}(\theta^2_n {Z}_n,\theta^2_n {\bm{t}}_n):D_{n}( {Z}_n, {\bm{t}}_n) +D_{n}(\theta^2_n {Z}_n,\theta^2_n {\bm{t}}_n):D_{n}({X}_n,{\bm{s}}_n)\nonumber \\
&\quad+~ D_{n}(\theta^2_n {X}_n,\theta^2_n {\bm{s}}_n):D_{n}({Z}_n,{\bm{t}}_n) + D_{n}(\theta^2_n {X}_n,\theta^2_n{\bm{s}}_n):D_{n}({X}_n,{\bm{s}}_n)\Big].
\label{eq:step1}
\end{align}
\helen{This final expression consists of four different pairings of the form $D_n(\cdot,\cdot): D_n(\cdot,\cdot)$; upon taking $\liminf$ as $n \to \infty$, we use Lemma~\ref{weak_lemma} on the first pairing, Lemma~\ref{more_lemma} on the second and third pairings, and the final convergence property of $S_n^{\rm inner}(x)$ from Lemma~\ref{seq_lemma} on the fourth pairing to arrive at the following expression for the atomistic contribution}:
\begin{equation}\label{soup_1}
\begin{split}
    &\liminf_{n\to\infty} \big\< \delta^2 \mathcal{E}^{\rm a}_{{\rm hom}, n} (0)(1-\varphi)(W_n,\bm{r}_n),(W_n,\bm{r}_n)\>\\
    &\;=~ \liminf_{n\to\infty}\epsilon_n^d \sum_{\xi \in \epsilon_n\mathcal{L}} \mathbb{C}:D_{n}(\theta_nZ_n,\theta_n\bm{t}_n):D_{n}(\theta_nZ_n,\theta_n\bm{t}_n)  \\
		&\qquad +\int_{\mathbb{R}^d}\mathbb{C}:\nabla (\theta_0^2 W_0, \theta^2_0 \bm{r}_0):\nabla (W_0,\bm{r}_0)\, dx.
\end{split}
\end{equation}

\medskip
\noindent \textit{Step 2:}  Meanwhile, the continuum component of $\delta\mathcal{F}_{{\rm hom},n}^{\bqcf}(0)$ from~\eqref{bqcfHessian} is
\begin{equation}\label{bqcfHessian_cont_part}
\begin{split}
&\big\< \delta^2 \mathcal{E}^{\rm c}(0)  I_{h,n}(\varphi_n W_n,\varphi_n \bm{r}_n), (W_n, \bm{r}_n) \big\> =~ \int_{\mathbb{R}^d}\mathbb{C}:\nabla \big( I_{h,n}(\varphi_n W_n), I_{h,n}(\varphi_n\bm{r}_n)\big): \nabla (W_n,\bm{r}_n)\, dx.
\end{split}
\end{equation}

Using standard $\mathcal{P}_1$-nodal interpolation error estimates \dao{and the fact that each $\nabla \varphi_n$ has support on $B_1$}, it is straightforward to
prove that (c.f. Lemma~\ref{p1_lemma})
\begin{equation} \label{eq:conv-(Ih-I)(phiW)}
   \begin{split}
   & \lim_{n\to \infty}\|\nabla I_{h,n}(\varphi_n W_n) - \nabla (\varphi_n W_n)\|_{L^2(\mathbb{R}^d)} = 0, \\
   &  \lim_{n\to \infty}\|I_{h,n}(\varphi_n r^\alpha_n) - (\varphi_n r^\alpha_n)\|_{L^2(\mathbb{R}^d)} = 0.
   \end{split}
\end{equation}

Thus, taking the $\liminf$ of~\eqref{bqcfHessian_cont_part} and applying \eqref{eq:conv-(Ih-I)(phiW)} we obtain
\begin{equation}\label{bqcfHessian_cont_part_no_interp}
\liminf_{n\to\infty}\big\< \delta^2 \mathcal{E}^{\rm c}(0) I_{h,n}(\varphi W_n,\varphi\bm{r}_n), (W_n, \bm{r}_n) \big\>
= \liminf_{n\to\infty} \int_{\mathbb{R}^d}\mathbb{C}:\nabla(\varphi_nW_n, \varphi_n\bm{r}_n): \nabla(W_n,\bm{r}_n)\, dx.
\end{equation}
Substituting the decomposition $(W_n, \bm{r}_n) := (Z_n + X_n, \bm{t}_n + \bm{s}_n)$ into~\eqref{bqcfHessian_cont_part_no_interp} yields
\begin{equation}\label{long_limit}
\begin{split}
&\liminf_{n\to\infty}\big\< \delta^2 \mathcal{E}^{\rm c}(0)  I_{h,n}(\varphi W_n,\varphi\bm{r}_n), (W_n, \bm{r}_n) \big\>  \\
&=~\liminf_{n\to\infty}\int_{\mathbb{R}^d}\Big[ \mathbb{C}: \nabla (\varphi_n Z_n, \varphi_n\bm{t}_n):\nabla (Z_n, \bm{t}_n) + \mathbb{C}: \nabla (\varphi_n Z_n, \varphi_n\bm{t}_n):\nabla (X_n, \bm{s}_n) \\
&\qquad +  \mathbb{C}: \nabla (\varphi_n X_n, \varphi_n\bm{s}_n):\nabla (Z_n, \bm{t}_n) + \mathbb{C}: \nabla (\varphi_n X_n, \varphi_n\bm{s}_n):\nabla (X_n, \bm{s}_n)\Big]\, dx.
\end{split}
\end{equation}
\dao{This final expression again gives four pairings just as in step one but now of the form $\nabla(\cdot, \cdot): \nabla (\cdot, \cdot)$. The first pairing we momentarily leave alone, the second and third pairings both converge to zero by virtue of strong convergence of $\nabla X_n, \bm{s}_n$  and weak convergence of $\nabla Z_n, \bm{t}_n$ to $0$ from Lemma~\ref{seq_lemma}, and the final pairing converges to $\nabla(\varphi_0 W_0,\varphi_0\bm{r}_0):\nabla(W_0,\bm{r}_0)$ again as a result of the strong convergence properties of $\nabla X_n, \bm{s}_n$ from Lemma~\ref{seq_lemma}.  These facts simplify~\eqref{long_limit} to}
\begin{equation}\label{soup11}
\begin{split}
&\liminf_{n\to\infty}\big\< \delta^2 \mathcal{E}^{\rm c}(0) I_{h,n}(\varphi W_n,\varphi\bm{r}_n), (W_n, \bm{r}_n) \big\> \\
&=~ \liminf_{n\to\infty}\int_{\mathbb{R}^d}\big[\mathbb{C}: \nabla (\varphi_n Z_n, \varphi_n \bm{t}_n): \nabla(Z_n,\bm{t}_n) \\
&\qquad\qquad\qquad\qquad+ \mathbb{C}:\nabla(\varphi_0 W_0,\varphi_0\bm{r}_0):\nabla(W_0,\bm{r}_0)\big]\, dx.
\end{split}
\end{equation}
As in the atomistic case, our goal is again to think of $\varphi_n$ as a square, $\varphi_n := \sqrt{\varphi_n}^2$ and to shift one factor of $\sqrt{\varphi_n}$ to each component of the duality pairing.  Using an argument very similar to that in the proof of Lemma~\ref{weak_lemma} (which we therefore omit) we obtain
\begin{equation*}\label{limit_term1_soup11}
\begin{split}
&\liminf_{n\to\infty}\int_{\mathbb{R}^d}\mathbb{C}: \nabla(\varphi_n Z_n,\varphi_n \bm{t}_n): \nabla(Z_n,\bm{t}_n)\\
&\quad=~ \liminf_{n\to\infty} \int_{\mathbb{R}^d} \mathbb{C}: \nabla(\sqrt{\varphi_n} Z_n, \sqrt{\varphi_n} \bm{t}_n) : \nabla(\sqrt{\varphi_n} Z_n,  \sqrt{\varphi_n}\bm{t}_n).
\end{split}
\end{equation*}

% To do that, we use the following lemma whose proof is very similar to that of Lemma~\ref{weak_lemma} and is thus omitted.
% %%%
% {\begin{lemma}\label{lem:term1_soup11}
% With $Z_n$ and $\bm{t}_n$ defined as in Lemma~\ref{seq_lemma},
% \begin{equation}\label{limit_term1_soup11}
% \begin{split}
% &\liminf_{n\to\infty}\int_{\mathbb{R}^d}\mathbb{C}: \nabla(\varphi_n Z_n,\varphi_n \bm{t}_n): \nabla(Z_n,\bm{t}_n)\\
% &\quad=~ \liminf_{n\to\infty} \int_{\mathbb{R}^d} \mathbb{C}: \nabla(\sqrt{\varphi_n} Z_n, \sqrt{\varphi_n} \bm{t}_n) : \nabla(\sqrt{\varphi_n} Z_n,  \sqrt{\varphi_n}\bm{t}_n).
% \end{split}
% \end{equation}
% \end{lemma}
% }
%%%

Inserting the last result into \eqref{soup11}, we obtain {
\begin{align}\label{limit_bqcfH_cont_part_no_interp}
%\begin{split}
&\liminf_{n\to\infty}\big\< \delta^2 \mathcal{E}^{\rm c}(0)  I_{h,n}(\varphi W_n,\varphi\bm{r}_n), (W_n, \bm{r}_n) \big\> \\
&=~ \liminf_{n\to\infty} \int_{\mathbb{R}^d} \big[\mathbb{C}: \nabla(\sqrt{\varphi_n} Z_n, \sqrt{\varphi_n} \bm{t}_n) : \nabla(\sqrt{\varphi_n} Z_n,  \sqrt{\varphi_n}\bm{t}_n) +\mathbb{C}:\nabla (\varphi_0 W_0,\varphi_0 \bm{r}_0): \nabla (W_0,\bm{r}_0)\big]\, dx.\nonumber
%\end{split}
\end{align}
}
\medskip
\noindent \textit{Step 3:}  Upon adding the atomistic components from~\eqref{soup_1} to {{ the continuum contributions \eqref{limit_bqcfH_cont_part_no_interp}}} and recalling that $\theta_0^2 = 1-\varphi_0$, we have the following expression for $\delta\mathcal{F}_{{\rm hom},n}^{\bqcf}(0)$:
\begin{equation}\label{short_limit}
\begin{split}
&\liminf_{n\to\infty}\<\delta \mathcal{F}_{{\rm hom},n}^{\bqcf}(0)(W_n, \bm{r}_n), (W_n, \bm{r}_n)\> \\
&=~ \liminf_{n\to\infty} \int_{\mathbb{R}^d} \big[\mathbb{C}: \nabla(\sqrt{\varphi_n} Z_n, \sqrt{\varphi_n} \bm{t}_n) : \nabla(\sqrt{\varphi_n} Z_n,  \sqrt{\varphi_n}\bm{t}_n) +\mathbb{C}:\nabla (W_0,\bm{r}_0): \nabla (W_0,\bm{r}_0) \big]\, dx\\
&\qquad +~ \liminf_{n\to\infty}\epsilon_n^d \sum_{\xi \in \epsilon_n\mathcal{L}} \mathbb{C}:D_{n}(\sqrt{1-\varphi_n}Z_n,\sqrt{1-\varphi_n}\bm{t}_n):D_{n}(\sqrt{1-\varphi_n}Z_n,\sqrt{1-\varphi_n}\bm{t}_n)
\end{split}
\end{equation}
\dao{Next, using stability of the homogeneous atomistic model {\helen{in this scaling}},
\[
\la \delta^2 \mathcal{E}^{\a}_{{\rm hom},n}(0)(W_n, \bm{r}_n), (W_n, \bm{r}_n)\ra
\ge \gamma_\a \|(W_n, \bm{r}_n)\|_{\rm a}^2,
\]
(which can easily be proven (c.f.~\cite{olsonOrtner2016,Ehrlacher2013}) due to Assumption~\ref{assumption2}) and the fact that atomistic stability implies Cauchy--Born Stability~\cite[Theorem 3.6]{olsonOrtner2016}, that is,
\begin{equation*}\label{CB_stability}
\begin{split}
\la \delta^2& \mathcal{E}^{\c}(0)(W_n, \bm{r}_n), (W_n, \bm{r}_n)\ra
\ge \gamma_\a \|(W, \bm{r})\|_{\rm ml}^2,
\end{split}
\end{equation*}
we hence have from~\eqref{short_limit} that
\begin{align}\label{red_pill}
&\liminf_{n\to\infty} \la \delta  \mathcal{F}_{{\rm hom},n}^{\bqcf}(0) (W_n, \bm{r}_n),(W_n, \bm{r}_n) \\
&=~ \liminf_{n\to\infty}\big[\la\delta^2 \mathcal{E}^\c (\sqrt{\varphi_n} Z_n, \sqrt{\varphi_n} \bm{t}_n), (\sqrt{\varphi_n} Z_n, \sqrt{\varphi_n} \bm{t}_n) \ra + \la\delta^2 \mathcal{E}^\c (W_0,\bm{r}_0), (W_0,\bm{r}_0) \ra \nonumber\\
& \qquad +~ \la \delta^2 \mathcal{E}^\a_{{\rm hom},n} (\sqrt{1-\varphi_n}Z_n,\sqrt{1-\varphi_n}\bm{t}_n), (\sqrt{1-\varphi_n}Z_n,\sqrt{1-\varphi_n}\bm{t}_n)\ra \big]\nonumber\\
&\geq \liminf_{n\to\infty} \gamma_\a\Big[\|\nabla (\sqrt{\varphi_n} Z_n)\|^2_{L^2(\mathbb{R}^d)} + \|\sqrt{\varphi_n}\bm{t}_n\|^2_{L^2(\mathbb{R}^d)} + \|\nabla{W}_0\|^2_{L^2(\mathbb{R}^d)} + \|\bm{r}_0\|^2_{L^2(\mathbb{R}^d)}  \nonumber\\
&\qquad +  \|\nabla I_n(\sqrt{1-\varphi_n} Z_n)\|^2_{L^2(\mathbb{R}^d)} + \|I_n(\sqrt{1-\varphi_n}\bm{t}_n)\|^2_{L^2(\mathbb{R}^d)}\Big].\nonumber
%&\geq~ \gamma_\a\left[ \|\nabla\hat{W}_0\|^2 + \sum\| {r}_0^\beta -  {r}_0^\alpha\|^2 + \|\nabla {Z}_n\|^2 + \sum\| {t}_n^\beta - {t}_n^\alpha\|^2\right]
\end{align}
}
Similar to \eqref{eq:conv-(Ih-I)(phiW)} (c.f. Lemma~\ref{p1_lemma}), standard nodal interpolation error estimates imply that
\begin{align*}
 \lim_{n\to \infty}\|\nabla I_n(\sqrt{1-\varphi_n} Z_n) - \nabla (\sqrt{1-\varphi_n} Z_n)\|_{L^2(\mathbb{R}^d)} =& 0, \quad \text{and} \\
%\| I_n(\sqrt{1-\varphi_n}t_n^\alpha)-  (\sqrt{1-\varphi_n}t_n^\alpha)\|_{L^2(\mathbb{R}^d)} \to~& 0.\\
   \lim_{n\to \infty}\| I_n(\sqrt{1-\varphi_n}\bm{t}_n)-  (\sqrt{1-\varphi_n}\bm{t}_n)\|_{L^2(\mathbb{R}^d)}  =& 0.
\end{align*}
Thus,~\eqref{red_pill} becomes
\begin{equation}\label{soup14}
\begin{split}
&\liminf_{n\to\infty} \la \delta  \mathcal{F}_{{\rm hom},n}^{\bqcf}(0) (W_n, \bm{r}_n),(W_n, \bm{r}_n) \\
&\quad\geq~ \liminf_{n\to\infty} \gamma_\a\Big[\|\nabla (\sqrt{\varphi_n} Z_n)\|^2_{L^2(\mathbb{R}^d)} + \|\sqrt{\varphi_n}\bm{t}_n\|^2_{L^2(\mathbb{R}^d)} + \|\nabla{W}_0\|^2_{L^2(\mathbb{R}^d)} + \|\bm{r}_0\|^2_{L^2(\mathbb{R}^d)} \\
&\qquad+~ \|\nabla (\sqrt{1-\varphi_n} Z_n)\|^2_{L^2(\mathbb{R}^d)} + \|\sqrt{1-\varphi_n}\bm{t}_n\|^2_{L^2(\mathbb{R}^d)} \Big] \\
&\quad=~ \liminf_{n\to\infty}\gamma_\a\Big[\|\nabla (\sqrt{\varphi_n} Z_n)\|^2_{L^2(\mathbb{R}^d)} + \|\nabla (\sqrt{1-\varphi_n} Z_n)\|^2_{L^2(\mathbb{R}^d)} + \|\bm{t}_n\|^2_{L^2(\mathbb{R}^d)} \\
&\qquad\qquad\qquad\qquad  + \|\nabla{W}_0\|^2_{L^2(\mathbb{R}^d)} + \|\bm{r}_0\|^2_{L^2(\mathbb{R}^d)}\Big].
\end{split}
\end{equation}
Next observe
\begin{equation}\label{soup15}
\begin{split}
&\|\nabla (\sqrt{\varphi_n} Z_n)\|^2_{L^2(\mathbb{R}^d)} + \|\nabla (\sqrt{1-\varphi_n} Z_n)\|^2_{L^2(\mathbb{R}^d)} \\ % =~ \int |\nabla(\sqrt{\varphi_n} Z_n)|^2 + |\nabla (\sqrt{1-\varphi_n} Z_n)|^2 \\
&=~ \int \Big[|\nabla(\sqrt{\varphi_n}) \otimes Z_n + \sqrt{\varphi_n}\nabla Z_n|^2 + |\nabla (\sqrt{1-\varphi_n}) \otimes Z_n + \sqrt{1-\varphi_n}\nabla Z_n|^2\Big] \, dx\\
&=~ \int \Big[2\nabla(\sqrt{\varphi_n}) \otimes Z_n : \sqrt{\varphi_n}\nabla Z_n + |\nabla(\sqrt{\varphi_n}) \otimes Z_n|^2 + \varphi_n|\nabla Z_n|^2\Big]\, dx  \\
& +\int \Big[2\nabla(\sqrt{1-\varphi_n}) \otimes Z_n : \sqrt{1-\varphi_n}\nabla Z_n + |\nabla(\sqrt{1-\varphi_n}) \otimes Z_n|^2 + (1-\varphi_n)|\nabla Z_n|^2\big]\, dx.
\end{split}
\end{equation}
Since $Z_n$ converges strongly to zero in $L^2({\rm supp}(\nabla(\sqrt{1-\varphi_n})))$ by Lemma~\ref{seq_lemma} (${\rm supp}(\nabla(\sqrt{1-\varphi_n})) \subset B_1(0)$), it follows from~\eqref{soup15} that
\begin{equation}\label{soup16}
\liminf_{n\to\infty} \|\nabla (\sqrt{\varphi_n} Z_n)\|^2_{L^2(\mathbb{R}^d)} + \|\nabla (\sqrt{1-\varphi_n} Z_n)\|^2_{L^2(\mathbb{R}^d)} =~ \liminf_{n\to\infty} \|\nabla Z_n\|^2_{L^2(\mathbb{R}^d)}.
\end{equation}
Substituting~\eqref{soup16} into~\eqref{soup14} produces
\begin{equation}\label{soup17}
\begin{split}
&\liminf_{n\to\infty} \la \delta  \mathcal{F}_{{\rm hom},n}^{\bqcf}(0) (W_n, \bm{r}_n),(W_n, \bm{r}_n)\ra \\
&\quad\geq~ \liminf_{n\to\infty} \gamma_\a\Big[\|\nabla Z_n\|^2_{L^2(\mathbb{R}^d)} + \|\bm{t}_n\|^2_{L^2(\mathbb{R}^d)} + \|\nabla{W}_0\|^2_{L^2(\mathbb{R}^d)} + \|\bm{r}_0\|^2_{L^2(\mathbb{R}^d)}\Big] \\
&\quad\geq~ \liminf_{n\to\infty} \gamma_\a\Big[\|\nabla Z_n\|^2_{L^2(\mathbb{R}^d)} + \|\bm{t}_n\|^2_{L^2(\mathbb{R}^d)} + \|\nabla{X}_n\|^2_{L^2(\mathbb{R}^d)} + \|\bm{s}_n\|^2_{L^2(\mathbb{R}^d)}\Big] \\
&\quad=~\liminf_{n \to \infty} \gamma_\a \big[ \|\nabla {W}_n\|^2_{L^2(\mathbb{R}^d)} + \|\bm{r}_n\|^2_{L^2(\mathbb{R}^d)} \big] = \gamma_\a,
\end{split}
\end{equation}
% Finally, note
% \begin{equation}\label{soup18}
% \begin{split}
% &\|\nabla {W}_0\|^2_{L^2(\mathbb{R}^d)} + \helen{\|\bm{r}_0\|^2_{L^2(\mathbb{R}^d)}} + \|\nabla {Z}_n\|^2_{L^2(\mathbb{R}^d)} + \helen{\|\bm{t}_n\|^2_{L^2(\mathbb{R}^d)}} \\
% &=~ \|\nabla {W}_0\|^2_{L^2(\mathbb{R}^d)} - \|\nabla  {X}_n\|^2_{L^2(\mathbb{R}^d)} + \|\nabla  {X}_n\|^2_{L^2(\mathbb{R}^d)} + \|\nabla {Z}_n\|^2_{L^2(\mathbb{R}^d)} +    \\
% &\qquad\qquad + \sum\limits_{\alpha}\big[\|{r}_0^\alpha\|^2_{L^2(\mathbb{R}^d)} - \| {s}_n^\alpha\|^2_{L^2(\mathbb{R}^d)} + \|{s}_n^\alpha\|^2_{L^2(\mathbb{R}^d)} + \|{t}_n^\alpha\|^2_{L^2(\mathbb{R}^d)}\big] \\
% &=~ \|\nabla (X_n + Z_n)\|^2_{L^2(\mathbb{R}^d)} - 2(\nabla X_n, \nabla Z_n)_{L^2(\mathbb{R}^d)} + \sum\limits_{\alpha}\big[\| {s}_n^\alpha+  {t}_n^\alpha\|^2_{L^2(\mathbb{R}^d)} - 2( {s}_n^\alpha,  {t}_n^\alpha)_{L^2(\mathbb{R}^d)}  \big] \\
% &=~ \|\nabla {W}_n\|^2_{L^2(\mathbb{R}^d)} + \helen{\|\bm{r}_n\|^2_{L^2(\mathbb{R}^d)}} .
% \end{split}
% \end{equation}
% Upon substituting~\eqref{soup18} into~\eqref{soup17}, we arrive at
% \[
% \liminf_{n\to\infty} \la \delta  \mathcal{F}_{\rm hom}^{\bqcf}(0) (W_n, \bm{r}_n),(W_n, \bm{r}_n)\ra \geq~ \liminf_{n\to\infty} \gamma_\a\left[\|\nabla {W}_n\|^2_{L^2(\mathbb{R}^d)} + \|\bm{r}_n\|^2_{L^2(\mathbb{R}^d)}\right] = \gamma_\a,
% \]
which contradicts our assumption in~\eqref{contra_seq}.
\end{proof}

\subsection{Reference Stability Implies Defect Stability}\label{stab_defect_present}

Having established stability of the homogeneous BQCF operator we obtain
stability of $\delta \mathcal{F}^{\bqcf}(\Pi_{h,n}(U^\infty,\bm{p}^\infty))$,
i.e. Theorem \ref{stab_theorem_full}, as a relatively straightforward
consequence. Before entering into the proof we remark that we now no longer employ the rescalings of Section~\ref{stab_def_free}. \dao{The basic idea of the proof is that the linearized homogeneous BQCF operator and linearized BQCF operator agree for any $(W,\bm{r})$ which is zero in a large enough neighborhood about the origin.  Thus, to prove stability of the true linearized BQCF operator, we again consider the possibility of a sequence, $(W_n, \bm{r}_n)$, of unstable modes whose support is contained in larger and larger balls about the origin. We will then split each $(W_n,\bm{r}_n)$ into components concentrated near the origin (where we can use atomistic stability) and correction terms supported far from the origin where we use stability of the linearized homogeneous operator. As before, the main difficulty is converting the atomistic component of the BQCF operator to a form where we may utilize atomistic coercivity.}

\begin{proof}[Proof of Theorem~\ref{stab_theorem_full}]
We prove this result by contradiction as well. Therefore suppose, as in the proof of Theorem~\ref{stab_theorem}, that there exists $R_{\a,n} \to \infty$ with associated meshes $\mathcal{T}_{h,n}$, blending functions $\varphi_n$, {\helen{and test pairs $(W_n, \bm{r}_n) \in \bm{\mathcal{U}}_{h,0}^n \times \bm{\mathcal{P}}_{h,0}^n$ with norm scaled to one, such that}}
\begin{equation}\label{contra_seq_def}
\begin{split}
&\frac{\gamma_\a}{2} >
\sum_{\xi \in \mathcal{L}} \sum_{(\rho\alpha\beta)}\sum_{(\tau\gamma\delta)} V_{,(\rho\alpha\beta)(\tau\gamma\delta)}(D\bm{U}_n):D_{(\rho\alpha\beta)}((1-\hat{\varphi}_n)\hat{W}_n,(1-\hat{\varphi}_n)\hat{\bm{r}}_n):D_{(\rho\alpha\beta)}(\hat{W}_n,\hat{\bm{r}}_n) \\
& \qquad \qquad + \int_{\mathbb{R}^d} \sum_{(\rho\alpha\beta)}\sum_{(\tau\gamma\delta)} V_{,(\rho\alpha\beta)(\tau\gamma\delta)}(\nabla \bm{U}_n):\nabla_{\rho\alpha\beta}( I_{h,n}(\hat{\varphi}_n(\hat{W}_n, \hat{\bm{r}}_n))) :\nabla_{\rho\alpha\beta}(\hat{W}_n, \hat{\bm{r}}_n) \, dx \\
&\quad=: \<\delta \mathcal{F}^{\bqcf}_n(\bm{U}_n)(W_n, \bm{r}_n),(W_n, \bm{r}_n)\>,
\end{split}
\end{equation}
where, for notational simplicity we have defined $\bm{U}_n := \Pi_{h,n}(U^\infty,\bm{p}^\infty)$ and redefined $\delta \mathcal{F}^{\bqcf}_n$ from the previous section without a scaling by $\epsilon$.

Upon extracting a subsequence, we may assume without loss of generality that $\nabla W_n \weakto \nabla W_0$ for $W_0 \in \dot{\bm{H}}^1$ and $\bm{r}_n \weakto \bm{r}_0 \in L^2$. For each $R_{\a,n}$, $W_n$ and $\bm{r}_n$ are piecewise linear with respect to the mesh $\mathcal{T}_\a$ on $\Omega_{\a, n}$.  Hence the convergence is strong on any finite collection of elements on $\mathcal{T}_\a$ since weak convergence implies strong convergence on finite dimensional spaces. It also follows from the full refinement of the mesh assumption on $\Omega_{\a, n}$ that $W_0$ and $\bm{r}_0$ are also piecewise linear with respect to $\mathcal{T}_\a$.

Having established these basic facts, we will yet again split $(W_n, \bm{r}_n)$ into the sum of a strongly convergent sequence and weakly convergent sequence as in~\cite[Theorem 4.9]{blended2014}.  For each $n$, we take $\eta_n(x)$ to be a smooth bump function satisfying $\eta_n(x) = 1$ on $B_{1/2r_{{\rm core},n}}(0)$ and $\eta_n(x)$ has support contained in $B_{r_{{\rm core},n}}(0)$.  Similar to the definition of $\Pi_h$, we then set 
\[
\dao{A_{n} := B_{r_{{\rm core},n}}\setminus B_{(1/2)r_{{\rm core},n}} + B_{2r_{\rm buff}}}
\]
 and
\begin{equation}\label{DefectStab_test}
X_n := I_n(\eta_n W_0) - I_n(\eta_n)\dashint_{A_n}  W_0\, dx ,\quad Z_n := W_n - X_n,\quad \bm{s}_n := I_n(\eta_n \bm{r}_0),\quad \bm{t}_n := \bm{r}_n - \bm{s}_n.
\end{equation}
Similar to Lemma~\ref{approx_lem}, we have, with these definitions,
\begin{equation*}\label{convo_props}
\nabla X_n \to \nabla W_0, \quad \mbox{and} \quad \nabla Z_n \weakto 0
   \qquad \text{in } L^2(\R^d)
\end{equation*}
and
\begin{align*}\label{convo_props1}
\bm{s}_n \to \bm{r}_0, \quad &\mbox{and} \quad \bm{t}_n \weakto 0
\qquad \text{in } L^2(\R^d).
\end{align*}
Then we note that the norm defined by
\[
\| (U,\bm{p}) \|_{\a_1}^2 := \sum_{\xi \in \mathcal{L}} |D(U,\bm{p})(\xi)|^2, \quad \mbox{where} \quad |D(U,\bm{p})(\xi)|^2 := \sum_{\triple \in \mathcal{R}} |D_{\triple} (U,\bm{p})(\xi)|^2.
\]
is equivalent to the $\|\cdot\|_{\a}$ norm on $\bm{\mathcal{U}}$ by~\cite[Lemma 2.1]{olsonOrtner2016}. Thus, \dao{since we are dealing with functions which are $\mathcal{P}^1$ with respect to $\mathcal{T}_\a$ on a growing atomistic region, then the continuous convergence results for $\nabla X_n, \nabla Z_n, \bm{s}_n$, and $\bm{t}_n$ imply corresponding discrete convergence results:}
\begin{align}\label{convo_props2}
D(X_n, \bm{s}_n) \to D(W_0, \bm{r}_0) \quad  &\mbox{and} \quad D(Z_n, \bm{t}_n) \weakto 0 \qquad \mbox{in $\ell^2(\mathcal{L})$}.
\end{align}
With this decomposition, \dao{we now substitute the test pair {\helen{$(W_n, \bm{r_n}) = (X_n + Z_n, \bm{s}_n + \bm{t}_n)$ from \eqref{DefectStab_test} into}}}
\begin{equation}\label{steamroll}
\begin{split}
&\<\delta \mathcal{F}^{\bqcf}_{ n}(\bm{U}_n)(X_n + Z_n,\bm{s}_n + \bm{t}_n),(X_n + Z_n,\bm{s}_n + \bm{t}_n)\> \\
&=~ \<\delta  \mathcal{F}^{\bqcf}_{n} (\bm{U}_n)(X_n,\bm{s}_n),(X_n,\bm{s}_n)\> + \<\delta \mathcal{F}^{\bqcf}_{n} (\bm{U}_n)(X_n,\bm{s}_n),(Z_n,\bm{t}_n)\> \\
&\qquad +~  \<\delta  \mathcal{F}^{\bqcf}_{ n} (\bm{U}_n)(Z_n,\bm{t}_n),(X_n,\bm{s}_n)\> + \<\delta \mathcal{F}^{\bqcf}_{ n} (\bm{U}_n)(Z_n,\bm{t}_n),(Z_n,\bm{t}_n)\>.
\end{split}
\end{equation}
{\helen{
Also recall the definition of $\delta\mathcal{F}_n^{\rm bqcf}$, which is
\begin{align*}
\la&\delta \mathcal{F}^{\rm bqcf}_n(\bm{U}_n)(W_n, \bm{r}_n),(W_n, \bm{r}_n)\ra\\
%&:=\sum_{\xi \in \mathcal{L}} \sum_{\substack{(\rho\alpha\beta) \\ (\tau\gamma\delta)}} V_{,(\rho\alpha\beta)(\tau\gamma\delta)}(D\bm{U}_n):D_{(\rho\alpha\beta)}((1-\varphi_n)(W_n,\bm{r}_n)):D_{\tripleTau}(W_n,\bm{r}_n) \\
%&\qquad ~+ \int_{\mathbb{R}^d} \sum_{\substack{(\rho\alpha\beta) \\ (\tau\gamma\delta)}} V_{,(\rho\alpha\beta)(\tau\gamma\delta)}(\nabla \bm{U}_n):\nabla_{\rho\alpha\beta}( I_{h,n}(\varphi_n(W_n, \bm{r}_n))) :\nabla_{\tripleTau}(W_n, \bm{r}_n) \, dx\\
&= \la\delta^2 \mathcal{E}^{\rm a}(\bm{U}_n)\big((1-\varphi_n)(W_n, \bm{r}_n)\big),(W_n, \bm{r}_n)\ra \\
& \qquad +\la\delta^2 \mathcal{E}^{\rm c}(\bm{U}_n)\big(\varphi_n(W_n, \bm{r}_n)\big),(W_n, \bm{r}_n)\ra.
\end{align*}
}}
Since $D(X_n, \bm{s}_n)$ each have support where $\varphi_n = 0$ and $\Pi_{h,n}(\bm{U}_n) = (\bm{U}_n)$ there, we can rewrite the first three terms of~\eqref{steamroll} without the blending function as
\begin{equation}\label{stew1}
\begin{split}
&\<\delta  \mathcal{F}^{\bqcf}_{n} (\bm{U}_n)(X_n + Z_n,\bm{s}_n + \bm{t}_n),(X_n + Z_n,\bm{s}_n + \bm{t}_n)\> \\
&=~ \<\delta^2 \mathcal{E}^{\a}(\bm{U}_n)(X_n,\bm{s}_n),(X_n,\bm{s}_n)\> + \<\delta^2 \mathcal{E}^{\a} (\bm{U}_n)(X_n,\bm{s}_n),(Z_n,\bm{t}_n)\> \\
&\qquad +~  \<\delta^2 \mathcal{E}^{\a} (\bm{U}_n)(Z_n,\bm{t}_n),(X_n,\bm{s}_n)\> + \<\delta \mathcal{F}^{\bqcf}_{ n} (\bm{U}_n)(Z_n,\bm{t}_n),(Z_n,\bm{t}_n)\>.
\end{split}
\end{equation}
Moreover, $D(Z_n, \bm{t}_n)$ has support only where $V_\xi \equiv V$ and so from the convergence properties~\eqref{convo_props2}, it follows that $\<\delta^2 \mathcal{E}^{\a}(\bm{U}_n)(X_n,\bm{s}_n),(Z_n,\bm{t}_n)\>$ and $\<\delta^2 \mathcal{E}^{\a}(\bm{U}_n)(Z_n,\bm{t}_n),(X_n,\bm{s}_n)\>$ both go to zero as $n \to \infty$.

For the first term in \eqref{stew1}, using the atomistic stability assumption, Assumption~\ref{assumption2}, we obtain
\begin{equation}\label{stew2}
\<\delta^2 \mathcal{E}^{\a}  (\bm{U}_n)(X_n,\bm{s}_n),(X_n,\bm{s}_n)\> \geq~ \gamma_\a \|(X_n, \bm{s}_n)\|_{\rm ml}^2.
\end{equation}
Thus, taking the lim inf as $n \to \infty$ in~\eqref{stew1} yields
\begin{equation}\label{steam_peas}
\begin{split}
&\liminf_{n \to \infty}\<\delta  \mathcal{F}^{\bqcf}_{n} (\bm{U}_n)(X_n + Z_n,\bm{s}_n + \bm{t}_n),(X_n + Z_n,\bm{s}_n + \bm{t}_n)\> \\
&\qquad \geq \liminf_{n \to \infty} \gamma_\a \|(X_n, \bm{s}_n)\|_{\rm ml}^2 + \liminf_{n \to \infty} \<\delta \mathcal{F}^{\bqcf}_{ n} (\bm{U}_n)(Z_n,\bm{t}_n),(Z_n,\bm{t}_n)\>
\end{split}
\end{equation}
Thus, we are only left to treat $\<\delta \mathcal{F}^{\bqcf}_{ n} (\bm{U}_n)(Z_n,\bm{t}_n),(Z_n,\bm{t}_n)\>$, the
far-field contribution, as defined in \eqref{DefectStab_test}. The strategy here is that far from the defect core we
may replace $\delta\mathcal{F}^{\bqcf}_n(\bm{U}_n)$ with $\delta\mathcal{F}^{\bqcf}_{{\rm hom},n}(0)$ and then apply Theorem \ref{stab_theorem}. Thus, we first estimate,
\begin{equation}\label{stew3}
\begin{split}
&\<\delta \mathcal{F}^{\bqcf}_{n}(\bm{U}_n)(Z_n,\bm{t}_n),(Z_n,\bm{t}_n)\>  \\
&=~ \<\delta \mathcal{F}_{{\rm hom}, n }^{\bqcf}(\bm{U}_n)(Z_n,\bm{t}_n),(Z_n,\bm{t}_n)\> \\
%&=~ \<\delta^2 \mathcal{E}^\a_{\rm hom}(\Pi_{h,n}(U^\infty,\bm{p}^\infty))(1-\varphi_n)(Z_n,\bm{t}_n),(Z_n,\bm{t}_n)\> + \<\delta^2 \mathcal{E}^\c(\Pi_{h,n}(U^\infty,\bm{p}^\infty))I_{h,n}(\varphi_n(Z_n,\bm{t}_n)),(Z_n,\bm{t}_n)\> \\
%&-~ \<\delta^2 \mathcal{E}^\a_{\rm hom}(0)(1-\varphi_n)(Z_n,\bm{t}_n),(Z_n,\bm{t}_n)\> + \<\delta^2 \mathcal{E}^\c(0)I_{h,n}(\varphi_n(Z_n,\bm{t}_n)),(Z_n,\bm{t}_n)\> \\
%&+~ \<\delta^2 \mathcal{E}^\a_{\rm hom}(0)(1-\varphi_n)(Z_n,\bm{t}_n),(Z_n,\bm{t}_n)\> + \<\delta^2 \mathcal{E}^\c(0)I_{h,n}(\varphi_n(Z_n,\bm{t}_n)),(Z_n,\bm{t}_n)\>.
&=~ \<\delta \mathcal{F}_{{\rm hom}, n} ^{\bqcf}(0)(Z_n,\bm{t}_n),(Z_n,\bm{t}_n)\>
   + \big\<\big[ \delta \mathcal{F}_{{\rm hom}, n}^{\bqcf}(\bm{U}_n) - \delta \mathcal{F}_{{\rm hom}, n }^{\bqcf}(0) \big] (Z_n,\bm{t}_n),(Z_n,\bm{t}_n)\big \>\\
% &\qquad + \big\<[(Z_n,\bm{t}_n),(Z_n,\bm{t}_n)\> \\
&\geq~ \frac{3}{4}\gamma_\a \|(Z_n, \bm{t}_n)\|_{\rm ml}^2 +
\big\<\big[ \delta \mathcal{F}_{{\rm hom},n}^{\bqcf}(\bm{U}_n) - \delta \mathcal{F}_{{\rm hom},n }^{\bqcf}(0) \big] (Z_n,\bm{t}_n),(Z_n,\bm{t}_n)\big \>.
\end{split}
\end{equation}
where we applied Theorem~\ref{stab_theorem} in the final step. ({\helen{Note that there is a slight}} notational discrepancy in that our $\mathcal{F}^\bqcf_{{\rm hom},n}$ is indexed by $n$ here while there is no index in Theorem~\ref{stab_theorem}.  However, we may still use this theorem since $R_{{\rm core},n} \to \infty$ so we may assume $R_{{\rm core},n} \geq R_{\rm core}^*$ in the statement of that theorem.)

Next, we estimate the remaining group in \eqref{stew3},
\begin{equation*}\label{stew4}
\begin{split}
   &\hspace{-1cm}\big\<\big[ \delta \mathcal{F}_{{\rm hom}, n }^{\bqcf}(\bm{U}_n) - \delta \mathcal{F}_{{\rm hom},n}^{\bqcf}(0) \big] (Z_n,\bm{t}_n),(Z_n,\bm{t}_n)\big \> \\
&\leq~ \big|\big\<\big[\delta^2 \mathcal{E}^\a_{\rm hom}(\bm{U}_n) - \delta^2\mathcal{E}^\a_{\rm hom}(0)\big] (1-\varphi_n)(Z_n,\bm{t}_n),(Z_n,\bm{t}_n)\> \big| \\
& \qquad\qquad +~ \big|\big\<\big[\delta^2 \mathcal{E}^\c(\bm{U}_n) - \delta^2\mathcal{E}^\c(0)\big]  I_{h,n}(\varphi_n(Z_n,\bm{t}_n)),(Z_n,\bm{t}_n)\big\>\big| \\
&\leq~  \sum_{\triple\tripleTau}\|V_{,\triple\tripleTau}(D \bm{U}_n) - V_{,\triple\tripleTau}(0) \|_{\ell^\infty({\rm supp}(D(Z_n,\bm{t}_n))} \\
&\qquad \qquad\qquad \cdot \|D_{\triple}((1-\varphi_n)(Z_n,\bm{t}_n))\|_{\ell^2(\mathbb{R}^d)}\|D_{\tripleTau}(Z_n,\bm{t}_n)\|_{\ell^2(\mathbb{R}^d)} \\
&+~ \sum_{\triple\tripleTau}\|V_{,\triple\tripleTau}\big(\nabla\bm{U}_n ) - V_{,\triple\tripleTau}(0)\|_{L^\infty({\rm supp}(\nabla(Z_n,\bm{t}_n))} \\
&\qquad\qquad\qquad \cdot \|\nabla_{\triple}((1-\varphi_n)(Z_n,\bm{t}_n))\|_{L^2(\mathbb{R}^d)}\|\nabla_{\tripleTau}(Z_n,\bm{t}_n)\|_{L^2(\mathbb{R}^d)}.
\end{split}
\end{equation*}

From Lemma~\ref{approx_lem} and the decay rates from Theorem~\ref{decay_thm1} we have
\begin{equation}\label{stew5}
\begin{split}
\|V_{,\triple\tripleTau}(D\bm{U}_n  ) - V_{,\triple\tripleTau}(0)\|_{\ell^\infty({\rm supp}(D(Z_n,\bm{t}_n))} \to~& 0,
\quad \text{and} \\
\|V_{,\triple\tripleTau}(\nabla \bm{U}_n) - V_{,\triple\tripleTau}(0)\|_{L^\infty({\rm supp}(\nabla(Z_n,\bm{t}_n))} \to~& 0.
\end{split}
\end{equation}
Consequently,
\[
\big\<\big[ \delta \mathcal{F}_{{\rm hom}, n }^{\bqcf}(\bm{U}_n) - \delta \mathcal{F}_{{\rm hom},n}^{\bqcf}(0) \big] (Z_n,\bm{t}_n),(Z_n,\bm{t}_n)\big \> \to 0,
\]
and from~\eqref{stew3},
\begin{equation}\label{inf_mod}
\liminf_{n \to \infty} \<\delta \mathcal{F}^{\bqcf}_{n}(\bm{U}_n)(Z_n,\bm{t}_n),(Z_n,\bm{t}_n)\> \geq \frac{3}{4}\gamma_\a \|(Z_n, \bm{t}_n)\|_{\rm ml}^2.
\end{equation}

Combining~\eqref{steam_peas} and~\eqref{inf_mod}, we can therefore conclude that
\begin{align}\label{stew100}
&\liminf_{n \to \infty} \<\delta \mathcal{F}^{\bqcf}_{n }(\Pi_{h,n}(\bm{U}_n))(X_n + Z_n,\bm{s}_n + \bm{t}_n),(X_n + Z_n,\bm{s}_n + \bm{t}_n)\> \nonumber\\
& \geq \liminf_{n \to\infty} \big[ \gamma_\a \|(X_n, \bm{s}_n)\|_{\rm ml}^2 + \frac{3}{4}\gamma_\a\|(Z_n, \bm{t}_n)\|_{\rm ml}^2  \big]  \\
& {\helen{\geq\liminf_{n \to\infty} \frac{3}{4}\gamma_\a\big[ \|\nabla X_n\|_{L^2(\mathbb{R}^d)}^2 +\|\bm{s}_n\|_{L^2(\mathbb{R}^d)}^2 + \|\nabla Z_n\|_{L^2(\mathbb{R}^d)}^2+\|\bm{t}_n\|_{L^2(\mathbb{R}^d)}^2 \big] }}.\nonumber
\end{align}
{\helen{
Notice that we have
\[
\begin{split}
\|\nabla W_n\|^2_{L^2(\mathbb{R}^d)}&=\la \nabla W_n, \nabla W_n\ra= \la \nabla (X_n+Z_n), \nabla (X_n+Z_n)\ra\\
&=\|\nabla X_n\|^2_{L^2(\mathbb{R}^d)}+2\la \nabla X_n, \nabla Z_n\ra+ \|\nabla Z_n\|^2_{L^2(\mathbb{R}^d)},
\end{split}
\]
so we get
\[
\|\nabla X_n\|^2_{L^2(\mathbb{R}^d)}+ \|\nabla Z_n\|^2_{L^2(\mathbb{R}^d)}
= \|\nabla W_n\|^2_{L^2(\mathbb{R}^d)}-2\la \nabla X_n, \nabla Z_n\ra.
\]
Applying the same treatments to $\|\bm{r}_{n}\|^2$, we have from \eqref{stew100} that
\[
\begin{split}
&\liminf_{n \to \infty} \<\delta \mathcal{F}^{\bqcf}_{n }(\Pi_{h,n}(\bm{U}_n))(X_n + Z_n,\bm{s}_n + \bm{t}_n),(X_n + Z_n,\bm{s}_n + \bm{t}_n)\> \\
&\geq \liminf_{n \to\infty} \frac{3}{4}\gamma_\a\Big[\|\nabla W_n\|^2_{L^2(\mathbb{R}^d)} - 2(\nabla Z_n, \nabla X_n)_{L^2(\mathbb{R}^d)} + \sum_\alpha\|r_n^\alpha\|^2_{L^2(\mathbb{R}^d)} \\
&\qquad\qquad\qquad\qquad\qquad\qquad\qquad\qquad- \sum_\alpha 2(s_n^\alpha,t_n^\alpha)_{L^2(\mathbb{R}^d)} \Big] \\
& \geq \liminf_{n \to\infty} \frac{3\gamma_\a}{4}\Big[\|\nabla W_n\|^2_{L^2(\mathbb{R}^d)} + \sum\|r_n^\alpha\|^2_{L^2(\mathbb{R}^d)}\Big] = \frac{3}{4}\gamma_\a,
\end{split}
\]
}}
which is a contradiction to~\eqref{contra_seq_def}.  In attaining the last equality, we have used $(\cdot, \cdot)_{L^2}$ to denote the $L^2$ inner product, and we have again used the fact that the inner product of {\helen{the strongly convergent sequence $\nabla X_n$ and weakly convergent sequence $\nabla Z_n$ (c.f. Lemma~\ref{seq_lemma}) converges to zero and similarly for the inner product of the strongly convergent $\bm{s}_n$ and weakly convergent $\bm{t}_n$.}}
\end{proof}

\section{Discussion}\label{discussion}

We presented the first complete error analysis of an atomistic-to-continuum coupling method for multilattices capable of incorporating defects in the analysis.  Our results for the blended force-based quasicontinuum method extend the existing results for Bravais lattices~\cite{blended2014}, with the striking conclusion that the convergence rates in the simple and multi-lattice cases coincide for the optimal mesh coarsening. Our computational results for a Stone-Wales defect in graphene confirm our theoretical predictions.

We have concerned ourselves here with the case of point defects, though we see no conceptually challenging obstacles to include dislocations in the analysis so long as there is an analogous decay result to Theorem~\ref{decay_thm}.  However, as previously mentioned, we are still limited in our ability to model physical effects such as bending or rippling in two-dimensional materials such as graphene due to several factors.  First, our assumption concerning stability of the multilattice, Assumption~\ref{assumption2} uses a norm, $\| \nabla I U\|_{L^2} + \|I\bm{p}\|_{L^2}$, which does not take any bending energy into account and so we do not guarantee our lattice is stable in this situation.  We could have of course formulated a different assumption using a discrete variant of $\|\nabla^2 U_3\|$ (where $U_3$ represents the out of plane displacement), but it is a very challenging question to extend the BQCF method and its analysis to such a situation.  The next issue that must be answered is what continuum model to use since the Cauchy--Born model used herein is not adequate to model such effects.  Possible alternatives would be to use higher-order Cauchy--Born rules~\cite{ericksen2008cauchy,yang2006generalized} which rely on higher-order strain gradients, or the so-called exponential Cauchy--Born rule~\cite{expCauchy}. In either of these cases, to use a similar analysis to what we have presented, one would have to establish new stress estimates akin to Corollary~\ref{globel_stress} as well ensuring that the continuum model chosen is stable provided the atomistic model is. We are also confronted with the problem of choosing a finite element space capable of approximating $H^2$ functions, which likewise challenges the analysis as well as the implementation. 

Finally, we remark that extensions to charged defects in ionic crystals, which represent a wide class of important multilattice crystals, represent yet another difficult challenge, largely due to the {\helen{long-range}} nature of the interatomic forces.
%%%%%%%%%%%%%%%%%%%%%%%%%%%%%%%%%%%%%%%%%%%%%%%%%%%%%%%%%%%%%%%%
\appendix

\section{Proofs of Convergence Lemmas}

The following elementary lemma will be used to construct the ``diagonal'' sequence alluded to when we first introduced Lemma~\ref{seq_lemma}.
\begin{lemma}\label{sub_lemma}
Let $\{\mathcal{L}_m^\alpha\}_{m =1}^\infty$ be a sequence of functions from $H^1(\mathbb{R}^d) \times (L^2(\mathbb{R}^d))^S \to V^\alpha$ for $V^\alpha$ a Banach space where $\alpha$ ranges in some finite index set $S$.  Let $(v_0,\bm{s}_0) \in H^1(\mathbb{R}^d) \times (L^2(\mathbb{R}^d))^S $ with $\eta$ the standard mollifier \dao{($\eta_j(x) = j^{-d} \eta(x/j)$)}, $v_j := \eta_{j} * v_0, \bm{s}_j := \eta_{j} * \bm{s}_0$, $j> 0$, and $f^\alpha:(H^1, (L^2)^S) \to V^\alpha$ continuous. Further assume that for each fixed $j > 0$, $\mathcal{L}^\alpha_m(v_j,\bm{s}_j) \to f^\alpha(v_j,\bm{s}_j)$ in $V^\alpha$ as $m \to \infty$ for each $\alpha$.  Then there exists a sequence $j_n \to 0$ such that
\[
\mathcal{L}^\alpha_{n}(v_{j_n},\bm{s}_{j_n}) \to f^\alpha(v_0, \bm{s}_0) \quad \mbox{for each $\alpha$}.
\]
Moreover, the sequence $j_n$ may be taken to satisfy $j_n \geq 1/\sqrt{R_{\a,n}}$ where $R_{\a,n}$ is taken from~\eqref{eq:stab:rescaling}
\end{lemma}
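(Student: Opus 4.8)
The plan is a routine diagonal-extraction argument resting on two elementary facts. First, mollification converges: since $v_0 \in H^1(\mathbb{R}^d)$ and $\bm{s}_0 \in (L^2(\mathbb{R}^d))^S$ we have $v_j = \eta_j * v_0 \to v_0$ in $H^1(\mathbb{R}^d)$ and $\bm{s}_j = \eta_j * \bm{s}_0 \to \bm{s}_0$ in $(L^2(\mathbb{R}^d))^S$ as $j \to 0$. Second, continuity of each $f^\alpha$ on $H^1 \times (L^2)^S$ then gives $f^\alpha(v_j,\bm{s}_j) \to f^\alpha(v_0,\bm{s}_0)$ in $V^\alpha$ as $j \to 0$, for every $\alpha$ in the finite index set $S$. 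The only point requiring care is to let the diagonal sequence $j_n$ decay slowly enough to respect the constraint $j_n \geq 1/\sqrt{R_{\a,n}}$, which is possible precisely because $R_{\a,n} \to \infty$.

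First I would fix, for each integer $k \geq 1$, the argument $(v_{1/k},\bm{s}_{1/k})$ and apply the standing hypothesis $\mathcal{L}_m^\alpha(v_{1/k},\bm{s}_{1/k}) \to f^\alpha(v_{1/k},\bm{s}_{1/k})$ to obtain an integer $m_k$ with
\[
\max_{\alpha \in S}\big\| \mathcal{L}_m^\alpha(v_{1/k},\bm{s}_{1/k}) - f^\alpha(v_{1/k},\bm{s}_{1/k}) \big\|_{V^\alpha} < \tfrac{1}{k} \qquad \text{for all } m \geq m_k,
\]
the maximum over the finite set $S$ being harmless. Since $R_{\a,n} \to \infty$ there is also $m_k'$ with $R_{\a,n} \geq k^2$ for all $n \geq m_k'$. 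Putting $n_0 := 0$ and $n_k := \max\{m_k, m_k', n_{k-1}+1\}$ produces a strictly increasing sequence, and I then define $j_n := 1/k$ for $n_k \leq n < n_{k+1}$ (and $j_n := 1$ for the finitely many $n < n_1$). Then $j_n \to 0$, and for $n_k \leq n < n_{k+1}$ one has $1/\sqrt{R_{\a,n}} \leq 1/\sqrt{k^2} = 1/k = j_n$, so the asserted lower bound holds.

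Finally I would close with the triangle inequality: for $n_k \leq n < n_{k+1}$ and any $\alpha$,
\[
\big\| \mathcal{L}_n^\alpha(v_{j_n},\bm{s}_{j_n}) - f^\alpha(v_0,\bm{s}_0) \big\|_{V^\alpha}
\leq \big\| \mathcal{L}_n^\alpha(v_{1/k},\bm{s}_{1/k}) - f^\alpha(v_{1/k},\bm{s}_{1/k}) \big\|_{V^\alpha}
 + \big\| f^\alpha(v_{1/k},\bm{s}_{1/k}) - f^\alpha(v_0,\bm{s}_0) \big\|_{V^\alpha}.
\]
The first term is $<1/k$ because $n \geq n_k \geq m_k$; the second term tends to $0$ as $k \to \infty$ by the mollification/continuity fact. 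As $n \to \infty$ we have $k \to \infty$, so both terms vanish and $\mathcal{L}_n^\alpha(v_{j_n},\bm{s}_{j_n}) \to f^\alpha(v_0,\bm{s}_0)$ for each $\alpha$. There is no genuine obstacle here; the only mildly delicate step is that the cutoff index $n_k$ must simultaneously enforce the convergence estimate and the geometric bound $R_{\a,n} \geq k^2$, which is exactly why $n_k$ is taken as a maximum of the two thresholds.
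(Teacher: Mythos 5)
Your proposal is correct and is essentially the same diagonalization argument as the paper's: both extract thresholds $n_k$ (resp.\ $n_0(1/m)$) from the convergence $\mathcal{L}^\alpha_m(v_j,\bm{s}_j)\to f^\alpha(v_j,\bm{s}_j)$ uniformly over the finite index set, define $j_n$ piecewise constant so that it decays slowly enough to honor $j_n\ge 1/\sqrt{R_{\a,n}}$, and close with the triangle inequality plus continuity of $f^\alpha$ and mollifier convergence. The only cosmetic difference is that the paper builds the constraint into $j_n$ directly via $j_n=\max\{1/m,1/\sqrt{R_{\a,m}}\}$, whereas you delay the drop of $j_n$ until $R_{\a,n}\ge k^2$; these are interchangeable.
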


\begin{proof}
Fix $\alpha$ and $j>0$ and note that $\mathcal{L}^\alpha_m (v_{j},\bm{s}_{j}) \to f^\alpha(v_{j},\bm{s}_{j})$ as $m \to \infty$.  Thus we may choose $n_0^\alpha(j)$ large enough such that for $n \geq n_0^\alpha(j)$
\begin{equation}\label{pluto1}
\|\mathcal{L}^\alpha_{n} (v_{j},\bm{s}_{j}) - f^\alpha(v_{j},\bm{s}_{j})\|_{V^\alpha} \leq j.
\end{equation}
Since $\alpha$ belongs to a finite set $S$, we may define $n_0(j) = \max_\alpha n_0^\alpha(j)$.  Now define $j_n$ by
\begin{align*}
      j_n &= \max\{1, 1/\sqrt{R_{\a,1}}\} \quad \text{for } n = 1, \dots, n_0(1/2)-1, \\
      j_n &= \max\{1/2,1/\sqrt{R_{\a,2}}\}  \quad \text{for } n = n_0(1/2), \dots, n_0(1/3)-1, \\
      j_n &= \max\{1/m, 1/\sqrt{R_{\a,m}}\}  \quad \text{for } n = n_0(1/m), \dots, n_0(1/(m+1))-1.
   \end{align*}
For $n \geq n_0(1)$, we obtain from~\eqref{pluto1} that
\begin{equation}\label{charon}
\|\mathcal{L}^\alpha_{n} (v_{j_n},\bm{s}_{j_n}) - f^\alpha(v_{j_n},\bm{s}_{j_n})\|_{V^\alpha} \leq j_n \to 0.
\end{equation}
By continuity of $f^\alpha$, $f^\alpha(v_{j_n},\bm{s}_{j_n}) \to f^\alpha(v_{0},\bm{s}_{0})$, and so~\eqref{charon} implies the desired result.
\end{proof}

\begin{remark}
In the above proof, we made the requirement $j_n \geq 1/\sqrt{R_{\a,n}}$ so that $1/R_{\a,n} = \epsilon_{n} \leq j_n^2$, where $\epsilon_n$ is used in the scaling~\eqref{eq:stab:rescaling}.  This ensures that
\begin{align*}
&\|\epsilon_{n}\nabla (\eta_{j_n}*\hat{s}_0)\|_{L^2(\mathbb{R}^d)} =~ \epsilon_{n} \|(\nabla \eta_{j_n})*\hat{s}_0\|_{L^2(\mathbb{R}^d)} \\
&\leq~ \epsilon_{n}\|(\nabla \eta_{j_n})\|_{L^1(\mathbb{R}^d)}\|\hat{s}_0\|_{L^2(\mathbb{R}^d)} \quad \mbox{by Young's Inequality} \\
&=~  \epsilon_{n} \cdot 1/j_n\|\nabla \eta\|_{L^1(\mathbb{R}^d)}\|\hat{s}_0\|_{L^2(\mathbb{R}^d)} \\
&\leq~ j_n\|\nabla \eta\|_{L^1(\mathbb{R}^d)}\|\hat{s}_0\|_{L^2(\mathbb{R}^d)} \to 0.
\end{align*}
\end{remark}

The collection of operators and continuous functions that we apply this lemma to are the ones enumerated in Lemma~\ref{seq_lemma}, which may now prove.

\begin{proof}[Proof of Lemma~\ref{seq_lemma}]
We begin by noting \dao{that the properties of~\eqref{verge_result} are clear from the definitions of $W_n, X_n, Z_n, \bm{r}_n, \bm{s}_n, \bm{t}_n$ provided that we can show $\nabla X_n \to \nabla W_0$ and $\bm{s}_n \to \bm{r}_0$.}  Moreover, we can immediately note that with $\psi_m = 1 - \varphi_m$, $\nabla \psi_m$ and $\nabla^2 \psi_m$ are uniformly bounded on the compact set ${\rm supp}(\psi_m) \subset \epsilon_m B_{R_{\a,m}}(0) = B_1(0)$ by the assumptions on the blending function in Assumption~\ref{assumption-shapereg} and the definition of $\epsilon_n$. Thus, by Arzela-Ascoli and by replacing the original sequence by a subsequence if necessary, we may assume $\psi_m \to \psi_0$ in ${\rm C}^1$ for some $\psi_0 \in {\rm C}^1(B_1(0))$.

\dao{We now set about proving $\nabla X_n \to \nabla W_0$ and $\bm{s}_n \to \bm{r}_0$ and the remaining properties in Lemma~\eqref{seq_lemma}.  To that end, define}
\begin{align*}
\mathcal{L}^1_m:=~& \Pi_{h,m}, \, \quad f^1(W,\bm{s}) = (W,\bm{s}),\\
\mathcal{L}^2_m(W,\bm{s}) :=~& \epsilon_m^d \sum_{\xi \in \epsilon_m\mathcal{L}}\sum_{(\rho\alpha\beta)}\sum_{(\tau\gamma\delta)} V_{,(\rho\alpha\beta)(\tau\gamma\delta)} :D_{\tripleTau,m}( W,\bm{s})\otimes\frac{\rho}{\epsilon_m}\int_0^{\epsilon_m} \bar{\zeta}_m(\xi + t\rho - x),\\
f^2(W,\bm{s}) :=~& \sum_{(\rho\alpha\beta)}\sum_{(\tau\gamma\delta)} V_{,(\rho\alpha\beta)(\tau\gamma\delta)}: \nabla_{\tripleTau}( W(x), \bm{s}(x))\otimes\rho, \\
\mathcal{L}^3_m(W,\bm{s}) :=~& \epsilon_m^d \sum_{\xi \in \epsilon_m\mathcal{L}}\sum_{(\rho\alpha\beta)}\sum_{(\tau\gamma\delta)} V_{,(\rho\alpha\beta)(\tau\gamma\delta)}:(D_{\tripleTau,m}( W,\bm{s})) \bar{\zeta}_m(\xi- x), \\
f^3(W,\bm{s}) :=~& \sum_{(\rho\alpha\beta)}\sum_{(\tau\gamma\delta)} V_{,(\rho\alpha\beta)(\tau\gamma\delta)} :\nabla_{\tripleTau}( W(x), \bm{s}(x)),\\
\mathcal{L}^4_m(W,\bm{s}) :=~& \epsilon_m^d \sum_{\xi \in \epsilon_m\mathcal{L}}\sum_{(\rho\alpha\beta)}\sum_{(\tau\gamma\delta)} V_{,(\rho\alpha\beta)(\tau\gamma\delta)}: D_{\tripleTau,m}( \psi_m W, \psi_m\bm{s})\otimes\frac{\rho}{\epsilon_m}\int_0^{\epsilon_m} \bar{\zeta}_m(\xi + t\rho - x), \\
f^4(W,\bm{s}) :=~& \sum_{(\rho\alpha\beta)}\sum_{(\tau\gamma\delta)} V_{,(\rho\alpha\beta)(\tau\gamma\delta)} :\nabla_{\tripleTau}(\psi_0W(x),\psi_0\bm{s}(x))\otimes\rho, \\
\mathcal{L}^5_m(W,\bm{s}) :=~& \epsilon_m^d \sum_{\xi \in \epsilon_m\mathcal{L}}\sum_{(\rho\alpha\beta)}\sum_{(\tau\gamma\delta)} V_{,(\rho\alpha\beta)(\tau\gamma\delta)}: (D_{\tripleTau,m}(\psi_m W,\psi_m \bm{s})) \bar{\zeta}_m(\xi- x), \\
f^5(W,\bm{s}) :=~& \sum_{(\rho\alpha\beta)}\sum_{(\tau\gamma\delta)} V_{,(\rho\alpha\beta)(\tau\gamma\delta)}: \nabla_{\tripleTau}(\psi_0 W(x),\psi_0 \bm{s}(x)), \\
\mathcal{L}^6_m(W,\bm{s}) :=~& \epsilon_m^d \sum_{\xi \in \epsilon_m\mathcal{L}} \sum_{(\rho\alpha\beta)}\sum_{\tau\gamma\delta} V_{,(\rho\alpha\beta)(\tau\gamma\delta)}:D_{\triple,m}(\psi_m W, \psi_m \bm{s}):  D_{\tau\gamma\delta,m}(W,\bm{s}),  \\
f^6(W, s) :=~&  \int_{\mathbb{R}^d} \sum_{(\rho\alpha\beta)}\sum_{(\tau\gamma\delta)}V_{,\triple\tripleTau} :\big(\nabla_\rho (\psi_0 W) +  \psi_0(s^\beta - s^\alpha)\big):\big(\nabla_\tau W + s^\delta - s^\gamma\big)\, dx.
\end{align*}

If we can show that each of these satisfies the hypothesis of Lemma~\ref{sub_lemma}, then we may apply the conclusion of that lemma to deduce Lemmma~\ref{seq_lemma}. We focus primarily on $\mathcal{L}^4$ and $f^4$ and briefly touch on the other cases at the end.

We fix $j$ and set $W := v_j$ and $\bm{s} := \bm{s}_j$ and set $\mu_m(x) := \frac{1}{\epsilon_m}\int_0^{\epsilon_m} \bar{\zeta}_m(x + t\rho)\, dt$.  Then note that $\mu_m(\xi -x) = 0$ unless $|x-\xi| \lesssim~ \epsilon_m|\rho|$.  Hence
\begin{align*}
&\lim_{m \to \infty}\mathcal{L}^2_m(W,\bm{s})(x) \\
&~=~ \lim_{m\to\infty} \epsilon_m^d \sum_{\xi \in \epsilon_m\mathcal{L}}\sum_{(\rho\alpha\beta)}\sum_{(\tau\gamma\delta)} V_{,(\rho\alpha\beta)(\tau\gamma\delta)} :D_{\tripleTau,m}(\psi_m W(\xi),\psi_m\bm{s}(\xi))\otimes\rho\mu_m(\xi - x) \\
&~=~ \lim_{m \to \infty}\epsilon_m^d \sum_{\xi \in \epsilon_m\mathcal{L}}\sum_{(\rho\alpha\beta)}\sum_{(\tau\gamma\delta)} V_{,(\rho\alpha\beta)(\tau\gamma\delta)} :\left[D_{\tripleTau,m}(\psi_mW(\xi),\psi_m\bm{s}(\xi))-\nabla_{\tripleTau} (\psi_mW(x),\psi_m\bm{s}(x)) \right. \\
&\qquad\qquad\qquad \left. + \nabla_{\tripleTau} (\psi_mW(x),\psi_m\bm{s}(x)) \right]\otimes\rho\mu_m(\xi - x) \\
&~=~ \lim_{m \to \infty}\epsilon_m^d \sum_{\xi \in \epsilon_m\mathcal{L}}\sum_{(\rho\alpha\beta)}\sum_{(\tau\gamma\delta)} V_{,(\rho\alpha\beta)(\tau\gamma\delta)} :\left[D_{\tripleTau,m}(\psi_mW(\xi),\psi_m\bm{s}(\xi)) \right. \\
&\qquad\qquad\qquad \left. -~\nabla_{\tripleTau} (\psi_mW(x),\psi_m\bm{s}(x))\right]\otimes\rho\mu_m(\xi-x) \\
&\qquad     +\lim_{m \to \infty}  \sum_{(\rho\alpha\beta)}\sum_{(\tau\gamma\delta)} V_{,(\rho\alpha\beta)(\tau\gamma\delta,m)} :\nabla_{\tripleTau} (\psi_mW(x),\psi_m\bm{s}(x))\otimes\rho \epsilon_m^d\sum_{\xi \in \epsilon_m\mathcal{L}}\mu_m(\xi-x) \\
&~=~ \lim_{m \to \infty}\epsilon_m^d \sum_{\xi \in \epsilon_m\mathcal{L}}\sum_{(\rho\alpha\beta)}\sum_{(\tau\gamma\delta)} V_{,(\rho\alpha\beta)(\tau\gamma\delta)} :\left[D_{\tripleTau,m}(\psi_mW(\xi),\psi_m\bm{s}(\xi)) \right. \\
&\qquad\qquad\qquad \left. -~\nabla_{\tripleTau} (\psi_mW(x),\psi_m\bm{s}(x))\right]\otimes\rho\mu_m(\xi-x) \\
&\qquad    +\sum_{(\rho\alpha\beta)}\sum_{(\tau\gamma\delta)} V_{,(\rho\alpha\beta)(\tau\gamma\delta,m)}: \nabla_{\tripleTau} (\psi_0W(x),\psi_0\bm{s}(x))\otimes\rho,
\end{align*}
since $\epsilon_m^d\sum_{\xi \in \epsilon_m\mathcal{L}}\mu_m(\xi-x) = 1$ and since $\psi_m \to \psi_0$ in ${\rm C}^1$.
%
%&~=~ \lim_{m \to \infty}\epsilon_m^d \sum_{\xi \in \epsilon_m\mathcal{L}}\sum_{(\rho\alpha\beta)}\sum_{(\tau\gamma\delta)} V_{,(%\rho\alpha\beta)(\tau\gamma\delta)} \left[D_{\tripleTau,m}(W(\xi),\bm{s}(\xi)) - D_{\tripleTau,m}(W(x),\bm{s}(x)) \right. \\
%&\qquad \left. + D_{\tripleTau,m}(W(x),\bm{s}(x))-\nabla_{\tripleTau} (W(x),\bm{s}(x))\right] \otimes\rho\mu_m(\xi-x) \\
%&\qquad\qquad + \sum_{(\rho\alpha\beta)}\sum_{(\tau\gamma\delta,m)} V_{,(\rho\alpha\beta)(\tau\gamma\delta,m)} \nabla_{\tripleTau} (W(x),\bm{s}%(x))\otimes\rho
%\end{align*}
For the first limit above, we note that $\mu_m(\xi -x) = 0$ unless $|x-\xi| \lesssim~ \epsilon_m|\rho|$ implies
\begin{align*}
&\left[D_{\tripleTau,m}(\psi_mW(\xi),\psi_m\bm{s}(\xi)) - \nabla_{\tripleTau} (\psi_mW(x),\psi_m\bm{s}(x))\right]  = \mathcal{O}(\epsilon_m)\\
%&=~ D_{\tau,m}(\psi_m W)(\xi) - \nabla_\tau (\psi_m W)(x) + (\psi_m s^\delta)(\xi+\epsilon_m\rho) - (\psi_m s^\delta)(x) + (\psi_m s^\gamma)(x)- (\psi_m s^\gamma)(\xi) \\
%&=~ \mathcal{O}(\epsilon_m).
\end{align*}
Thus
\begin{align*}
&\lim_{m \to \infty}\epsilon_m^d \sum_{\xi \in \epsilon_m\mathcal{L}}\sum_{(\rho\alpha\beta)}\sum_{(\tau\gamma\delta)} V_{,(\rho\alpha\beta)(\tau\gamma\delta)} :\left[D_{\tripleTau,m}(\psi_mW(\xi),\psi_m\bm{s}(\xi)) \right. \\
&\qquad\qquad\qquad\qquad\qquad\qquad \left. -~\nabla_{\tripleTau} (\psi_mW(x),\psi_m\bm{s}(x))\right]\otimes\rho\mu_m(\xi-x) \\
&=~ \lim_{m \to \infty}\sum_{(\rho\alpha\beta)}\sum_{(\tau\gamma\delta)} V_{,(\rho\alpha\beta)(\tau\gamma\delta)} :\left[\mathcal{O}(\epsilon_m)\right]\otimes\rho\epsilon_m^d \sum_{\xi \in \epsilon_m\mathcal{L}}\mu_m(\xi-x) \\
&=~\lim_{m \to \infty}\sum_{(\rho\alpha\beta)}\sum_{(\tau\gamma\delta)} V_{,(\rho\alpha\beta)(\tau\gamma\delta)}  :\left[\mathcal{O}(\epsilon_m)\right]\otimes\rho = 0.
\end{align*}
We have thus shown that
\[
\lim_{m \to \infty}\mathcal{L}^4_m(W,\bm{s}) = \sum_{(\rho\alpha\beta)}\sum_{(\tau\gamma\delta)} V_{(\rho\alpha\beta)(\tau\gamma\delta)}: \nabla_{\tripleTau}(\psi_0 W(x), \psi_0 \bm{s}(x))\otimes\rho = f^4(W, \bm{s}).
\]
The proof for $\mathcal{L}^5$ proceeds in exactly the same manner, and the proofs for $\mathcal{L}^2$ and $\mathcal{L}^3$ are likewise very similar with the exception that $\psi_m$ is no longer present.

For $\mathcal{L}^6$, using a Taylor expansion and the fact that $\nabla^2 \psi_m$ is uniformly bounded in $m$ by Assumption~\ref{assumption-shapereg},
\begin{align*}
D_{\tau,m}\psi_m(\xi) =~& D_{\tau,m}\psi_m(\xi) - \nabla_\tau \psi_m(\xi) + \nabla_\tau \psi_m(\xi) \\
=~& \mathcal{O}(\epsilon_m) +  \nabla_\tau \psi_m(\xi) \to \nabla_\tau \psi_0(\xi).
\end{align*}
Furthermore, since $W$ and $\bm{s}$ are smooth,  $D_{\tau,m}W$ converges to $\nabla_\tau W$ in $L^2$ and $\ell^2$ on compact subsets, and the same holds for $D_{\tau,m} s_\beta$ converging to $\nabla_\tau s_\beta$.  Consequently,
\[
D_{\tripleTau,m}(\psi_m W,\psi_m\bm{s}) \to \nabla_{\tripleTau}(\psi_0 W, \psi_0 \bm{s}) \quad \mbox{in $L^2$ and $\ell^2$ on bounded sets.}
\]
Convergence of $\mathcal{L}^6(W,\bm{s})$ to $f^6(W,\bm{s})$ now follows from this and convergence of the quadrature rule
\[
\lim_{m\rightarrow \infty}\epsilon_m^d \sum_{\xi \in \epsilon_m\mathcal{L}} \sum_{(\rho\alpha\beta)}\sum_{\tau\gamma\delta} V_{,(\rho\alpha\beta)(\tau\gamma\delta)}:\nabla_{\triple,m}(\psi_0 W, \psi_0 \bm{s}):  \nabla_{\tau\gamma\delta,m}(W,\bm{s}),
\]
to
\[
\int_{B_1(0)} \sum_{(\rho\alpha\beta)}\sum_{\tau\gamma\delta} V_{,(\rho\alpha\beta)(\tau\gamma\delta)}:\nabla_{\triple,m}(\psi_0 W, \psi_0 \bm{s}):  \nabla_{\tau\gamma\delta,m}(W,\bm{s})\, dx.
\]
Lastly, we note that $\mathcal{L}^1(W,\bm{s}) \to f^1(W,\bm{s})$ as a result of first approximating $(W,\bm{s})$ by smooth functions having support contained in $B_{\epsilon_m^{-\gamma}}(0)$ for some $0 < \gamma < 1$ and then using standard interpolation estimates for smooth functions along with the mesh growth assumption of Assumption~\ref{assumption-shapereg}.  Specifically, let $W_m,\bm{s}_m \in {\rm C}_0^\infty$ such that $\nabla W_m \to \nabla W, \nabla^2 W_m \to \nabla^2 W$, $\bm{s}_m \to \bm{s}$, and $\nabla \bm{s}_m \to \nabla \bm{s}$ in $L^2$ where $\nabla W_m$ and $\bm{s}_m$ have support in $B_{\epsilon_m^{-\gamma}}(0)$.  Then
\begin{equation}\label{stab_of_pi}
\begin{split}
\|\Pi_{h,m}(W,\bm{s}) - (W,\bm{s})\|_{\rm ml} \leq~& \|\Pi_{h,m}(W,\bm{s}) - \Pi_{h,m}(W_{m}, \bm{s}_m)\|_{\rm ml} + \|\Pi_{h,m}(W_m,\bm{s}_m) - (W_{m}, \bm{s}_m)\|_{\rm ml} \\
&\qquad+ \|(W_{m},\bm{s}_m) - (W,\bm{s})\|_{\rm ml} \\
\lesssim~& \|(W,\bm{s}) - (W_{m},\bm{s}_m)\|_{\rm ml} + \|\Pi_{h,m}(W_m,\bm{s}_m) - (W_{m}, \bm{s}_m)\|_{\rm ml},
\end{split}
\end{equation}
after using stability of $\Pi_{h,m}$ with respect to the ${\rm ml}$ norm.  By our choice of $(W_m, \bm{s}_m)$, it follows that
\begin{equation}\label{to_zero1}
\|(W,\bm{s}) - (W_{m},\bm{s}_m)\|_{\rm ml}  \to 0.
\end{equation}
Using the definition of the ${\rm ml}$ norm and standard interpolation estimates, we also possess
\begin{equation}\label{app_stand}
\begin{split}
\|\Pi_{h,m}(W_m,\bm{s}_m) - (W_{m}, \bm{s}_m)\|_{\rm ml} \lesssim~& \|h_{m} \nabla^2 W_m\|_{L^2(\mathbb{R}^d)} + \|h_m \nabla \bm{s}_m\|_{L^2(\mathbb{R}^d)}.
\end{split}
\end{equation}
Since, $\nabla W_m$ and $\bm{s}_m$ have support in $B_{\epsilon_m^{-\gamma}}(0)$ and since we have the mesh growth $|h_m(x)| \lesssim \epsilon_m(\epsilon_m|x/\epsilon_m|)^s$ from our choice of scaling and Assumption~\ref{assumption-shapereg}, it follows that $|h_m(x)| \lesssim \epsilon_m^{1-s\gamma}$ on $B_{\epsilon_m^{-\gamma}}(0)$.  It then follows from~\eqref{app_stand} that
\begin{equation}\label{to_zero2}
\|\Pi_{h,m}(W_m,\bm{s}_m) - (W_{m}, \bm{s}_m)\|_{\rm ml} \lesssim~ \epsilon_m^{1-s\gamma}\|\nabla^2 W_m\|_{L^2(\mathbb{R}^d)} + \epsilon_m^{1-s\gamma}\|\nabla\bm{s}_m\|_{L^2(\mathbb{R}^d)} \to 0,
\end{equation}
for appropriately chosen $\gamma$.  Applying the results~\eqref{to_zero1} and~\eqref{to_zero2} to~\eqref{stab_of_pi} then produces
\[
\|\Pi_{h,m}(W,\bm{s}) - (W,\bm{s})\|_{\rm ml}  \to 0,
\]
as desired.

Then as a consequence of Lemma~\ref{sub_lemma} applied to each of these operators, we obtain a sequence $j_n \to 0$ which satisfies the desired convergence results of~\eqref{seq_lemma}.
\end{proof}

\begin{proof}[Proof of Lemma~\ref{more_lemma}]
To prove the first convergence result~\eqref{more_1}, we note by Lemma~\ref{iso_lemma}, there exists $Y_n = \acute{(\psi_n Z_n)}$ such that $\psi_n Z_n(\xi) = \big(\bar{\zeta}_n * \bar{Y}_n\big)(\xi) =: \big(Y_n^*\big)(\xi)$ for all $\xi \in \epsilon_n\mathcal{L}$, and similarly there exists $\bm{b}_n = \acute{(\psi_n \bm{t}_n)}$ such that $\psi_n \bm{t}_n(\xi) = \big(\bar{\zeta}_n *\bar{\bm{b}}_n\big)(\xi) := \big(\bm{b}_n^*\big)(\xi)$.  We further note that from the proof of~\cite[Lemma 4.10, Step 3]{blended2014} and the fact that $\nabla (\psi_n Z_n) \weakto 0$, we also have $\nabla \bar{Y}_n \weakto 0$.  To show that $\bar{b}_n^\alpha \weakto 0$, observe for a smooth function $\mu$ with compact support that
% \begin{align*}
% \lim_{n\to\infty}\int \bar{\acute{t}}_n^\alpha \cdot \mu =~& \lim_{n\to\infty}\int \bar{\acute{t}}_n^\alpha \cdot (\bar{\zeta}_n *\mu) + \lim_{n\to\infty}\int \bar{\acute{t}}_n^\alpha \cdot (\mu -\bar{\zeta}_n *\mu) \\
% =~& \lim_{n\to\infty}\int (\bar{\acute{t}}_n^\alpha *\bar{\zeta}_n) \cdot \mu \\
% =~& \lim_{n\to\infty}\sum_{\xi \in \epsilon_n\mathcal{L}}\int_{\nu_\xi} (\bar{\acute{t}}_n^\alpha *\bar{\zeta}_n) \cdot \mu \\
% =~& \lim_{n\to\infty}\sum_{\xi \in \epsilon_n\mathcal{L}}\int_{\nu_\xi} \big(\bar{\acute{t}}_n^\alpha *\bar{\zeta}_n(\xi) + \mathcal{O}(\epsilon_n)\big) \cdot \mu \\
% =~& \lim_{n\to\infty}\sum_{\xi \in \epsilon_n\mathcal{L}}\int_{\nu_\xi} t_n^\alpha(\xi) \cdot \big(\mu(\xi) + \mathcal{O}(\epsilon_n)\big) \\
% =~& \lim_{n\to\infty}\epsilon_n^d\sum_{\xi \in \epsilon_n\mathcal{L}}t_n^\alpha(\xi) \cdot \mu(\xi).
% \end{align*}
% \todo{Then since $t_n^\alpha$ is piecewise linear and $t_n^\alpha \weakto 0$, this latter expression also converges to zero. (This last statement uses that $\epsilon_n \nabla t_n^\alpha$ also converges weakly to zero.)}
\begin{align*}
\lim_{n\to\infty}\int \bar{b}_n^\alpha \cdot \mu &= \lim_{n\to\infty}\int \bar{b}_n^\alpha \cdot (\bar{\zeta}_n *\mu) + \lim_{n\to\infty}\int \bar{b}_n^\alpha \cdot (\mu -\bar{\zeta}_n *\mu) \\
&= \lim_{n\to\infty} \int (\bar{b}_n^\alpha *\bar{\zeta}_n) \cdot \mu
= \lim_{n \to \infty} \int t_n^\alpha \big( \psi_n \mu \big).
%=: \lim_{n \to \infty} a_n.
\end{align*}
From Lemma~\ref{seq_lemma}, we have $\psi_n \to \psi_0$ in $L^2$ (since these functions are compactly supported), and this latter expression is then an inner product of a weakly convergent sequence ($t_n^\alpha \weakto 0$) and strongly convergent sequence which therefore converges to zero.
%From the choice of scalings in \eqref{eq:stab:rescaling} and the assumptions on $\varphi$ it is readily seen that $\psi_n$ is bounded in $C^2$.
%Let $(a_{n_j})$ be any convergence subsequence of $(a_n)$, then there exists a further subsequence $(a_{n_j'})$ such that $\psi_{n_j'} \mu$ converges strongly in $L^2$. Since $t_n^\alpha \rightharpoonup 0$ it follows that $a_{n_j'} \to 0$. Since the subsequence was arbitrary it finally follows that $a_n \to 0$.

% Since $\bar{\acute{t}}_n^\alpha *\bar{\zeta}_n = \psi_n t_n^\alpha \weakharpoonup 0$

Having established that $\nabla \bar{Y}_n, \bar{b}_n^\alpha \rightharpoonup 0$, we can use the functions ${\rm S}^{\rm def}_n(x)$ and ${\rm S}^{\rm shift}_n(x)$ from Lemma~\ref{seq_lemma} to write
\begin{align*}
&\epsilon^d \sum_{\xi \in \epsilon_n\mathcal{L}} \sum_{(\rho\alpha\beta)}\sum_{(\tau\gamma\delta)} V_{,(\rho\alpha\beta)(\tau\gamma\delta)}:D_{(\rho\alpha\beta),n}(\psi_n {Z}_n,\psi_n \bm{t}_n):D_{(\tau\gamma\delta),n}({X}_n,\bm{s}_n) \\
& \qquad \qquad =~ \int{\rm S}^{\rm def}_n(x):(\nabla \bar{Y}_n + \epsilon_n\nabla \bar{b}^\beta_n) + \int{\rm S}^{\rm shift}_n(x) : (\bar{b}^\beta_n - \bar{b}^\alpha_n) \to 0,
\end{align*}
using the strong convergence of ${\rm S}^{\rm def}_n(x)$ and ${\rm S}^{\rm shift}_n(x)$ from Lemma~\ref{seq_lemma} and the weak convergence: $\nabla \bar{Y}_n, \bar{b}_n^\alpha \rightharpoonup 0$.

The second convergence result~\eqref{more_2} is proven in nearly an identical manner by choosing $Y_n = \acute{Z}_n$ and $\bm{b}_n = \acute{\bm{t}}_n$ such that $Z_n = \bar{\zeta}_n * \bar{\acute{Z}}_n$ and $\bm{t}_n = \bar{\zeta}_n *\bar{\acute{\bm{t}}}_n$ and using the convergence results for ${\rm R}^{\rm def}_n(x)$ and ${\rm R}^{\rm shift}_n(x)$ from Lemma~\ref{seq_lemma}.
\end{proof}

\begin{proof}[Proof of Lemma~\ref{weak_lemma}]
Observe
\begin{equation}\label{base1}
\begin{split}
&\lim_{n\to\infty}\epsilon_n^d \sum_{\xi \in \epsilon_n\mathcal{L}} \sum_{(\rho\alpha\beta)}\sum_{(\tau\gamma\delta)} V_{,(\rho\alpha\beta)(\tau\gamma\delta)}:D_{(\rho\alpha\beta),n}(\theta^2_nZ_n,\theta^2_n\bm{t}_n):D_{(\tau\gamma\delta),n}(Z_n,\bm{t}_n), \\
&=\lim_{n\to\infty}\epsilon_n^d \sum_{\xi \in \epsilon_n\mathcal{L}} \sum_{(\rho\alpha\beta)}\sum_{(\tau\gamma\delta)} V_{,(\rho\alpha\beta)(\tau\gamma\delta)}:\left(D_{\rho,n}(\theta^2_nZ_n) + \epsilon_n D_{\rho,n}(\theta^2_nt^\beta_n) - \theta^2_n(t^\alpha_n - t^\beta_n)\right):
\left(D_{\tau,n}(Z_n) + \right. \\
&\qquad\qquad\qquad\qquad\qquad\qquad\qquad\qquad \left.\epsilon_nD_{\tau,n}(t^\delta_n) - (t^\gamma_n-t^\delta_n)\right) \\
&=~ \lim_{n\to\infty}\epsilon_n^d \sum_{\xi \in \epsilon_n\mathcal{L}} \sum_{(\rho\alpha\beta)}\sum_{(\tau\gamma\delta)} V_{,(\rho\alpha\beta)(\tau\gamma\delta)}: D_{\rho,n}(\theta^2_nZ_n):D_{\tau,n}(Z_n) \\
&+~ \lim_{n\to\infty}\epsilon_n^d \sum_{\xi \in \epsilon_n\mathcal{L}} \sum_{(\rho\alpha\beta)}\sum_{(\tau\gamma\delta)} V_{,(\rho\alpha\beta)(\tau\gamma\delta)}:D_{\rho,n}(\theta^2_nZ_n):\left(\epsilon_n D_{\tau,n}(t^\delta_n) - (t^\gamma_n-t^\delta_n)\right) \\
&+~ \lim_{n\to\infty}\epsilon_n^d \sum_{\xi \in \epsilon_n\mathcal{L}} \sum_{(\rho\alpha\beta)}\sum_{(\tau\gamma\delta)} V_{,(\rho\alpha\beta)(\tau\gamma\delta)}:(T_{\rho,n}(\theta^2_nt^\beta_n) - \theta^2_nt^\alpha_n):D_{\tau,n}(Z_n) \\
&+~ \lim_{n\to\infty}\epsilon_n^d \sum_{\xi \in \epsilon_n\mathcal{L}} \sum_{(\rho\alpha\beta)}\sum_{(\tau\gamma\delta)} V_{,(\rho\alpha\beta)(\tau\gamma\delta)}:(T_{\rho,n}(\theta^2_nt^\beta_n) - \theta^2_nt^\alpha_n):(\epsilon_nD_{\tau,n}(t^\delta_n) - (t^\gamma_n-t^\delta_n)).
\end{split}
\end{equation}
This gives four terms to manipulate, which we label in order as $A_1^n, A_2^n, A_3^n$, and $A_4^n$.  The first of these is, after using the product rule for finite differences (and the associated notation $T_{\rho, n} f(\xi) = f(\xi + \epsilon_n \rho)$),
\begin{equation}\label{soupy1}
\begin{split}
&\lim_{n \to \infty}A_1^n\\
&=~\lim_{n \to \infty}\epsilon_n^d \sum_{\xi \in \epsilon_n\mathcal{L}} \sum_{(\rho\alpha\beta)}\sum_{(\tau\gamma\delta)} V_{,(\rho\alpha\beta)(\tau\gamma\delta)}:
\big\{\theta_nZ_n D_{\rho,n}(\theta_n):D_{\tau,n}(Z_n) + T_{\rho,n}(\theta_n)D_{\rho,n}(\theta_nZ_n):D_{\tau,n}(Z_n)\big\}.
\end{split}
\end{equation}
Recall that $Z_n \to 0$ in $L^2$ on $B_1(0) \supset {\rm supp}(\theta_n)$ and hence also in $\ell^2$ due to being piecewise linear.  Therefore, continuing the limit in~\eqref{soupy1},
\begin{equation}\label{soupy15}
\begin{split}
&\lim_{n \to \infty}A_1^n\\
&=~ \lim_{n \to \infty}\epsilon_n^d \sum_{\xi \in \epsilon_n\mathcal{L}} \sum_{(\rho\alpha\beta)}\sum_{(\tau\gamma\delta)} V_{,(\rho\alpha\beta)(\tau\gamma\delta)}:\big\{T_{\rho,n}(\theta_n)D_{\rho,n}(\theta_nZ_n):D_{\tau,n}(Z_n) \\
&\qquad+~ D_{\rho,n}(\theta_nZ_n):D_{\tau,n}(\theta_n)Z_n\big\} \\
&=~ \lim_{n \to \infty}\epsilon_n^d\sum_{\xi \in \epsilon_n\mathcal{L}} \sum_{(\rho\alpha\beta)}\sum_{(\tau\gamma\delta)} V_{,(\rho\alpha\beta)(\tau\gamma\delta)}:\big\{D_{\rho,n}(\theta_nZ_n):(T_{\rho,n}(\theta_n)-T_{\tau,n}(\theta_n))D_{\tau,n}(Z_n) \\
&\qquad+~  D_{\rho,n}(\theta_nZ_n):T_{\tau,n}(\theta_n)D_{\tau,n}(Z_n) + D_{\rho,n}(\theta_nZ_n):D_{\tau,n}(\theta_n)Z_n \big\}\\
&=~ \lim_{n \to \infty}\epsilon_n^d \sum_{\xi \in \epsilon_n\mathcal{L}} \sum_{(\rho\alpha\beta)}\sum_{(\tau\gamma\delta)} V_{,(\rho\alpha\beta)(\tau\gamma\delta)}:\big\{D_{\rho,n}(\theta_nZ_n):D_{\tau,n}(\theta_nZ_n)\big\},
\end{split}
\end{equation}
where in arriving at the last line we used the fact that $(T_{\rho,n}(\theta_n)-T_{\tau,n}(\theta_n)) \to 0$ in $L^\infty$ and $H^1$ boundedness of $Z_n$.

By replacing $Z_n$ with $\epsilon_n t^\delta_n$, we have may write the second of the terms in~\eqref{base1} as
\begin{equation}\label{soup25}
\begin{split}
&\lim_{n \to \infty} A_2^n \\
&=~ \lim_{n \to \infty}\epsilon_n^d \sum_{\xi \in \epsilon_n\mathcal{L}} \sum_{(\rho\alpha\beta)}\sum_{(\tau\gamma\delta)} V_{,(\rho\alpha\beta)(\tau\gamma\delta)}:\big\{D_{\rho,n}(\theta_nZ_n):D_{\tau,n}(\epsilon_n \theta_n t^\delta_n ) + D_{\rho,n}(\theta^2_nZ_n):(t^\delta_n - t^\gamma_n)\big\} \\
&=~ \lim_{n \to \infty}\epsilon_n^d \sum_{\xi \in \epsilon_n\mathcal{L}} \sum_{(\rho\alpha\beta)}\sum_{(\tau\gamma\delta)} V_{,(\rho\alpha\beta)(\tau\gamma\delta)}:\big\{D_{\rho,n}(\theta_nZ_n):D_{\tau,n}(\epsilon_n \theta_n t^\delta_n ) \\
&\qquad + (D_{\rho,n}(\theta_n)(\theta_nZ_n)+ T_{\rho,n}(\theta_n)D_{\rho,n}(\theta_nZ_n)):(t^\delta_n - t^\gamma_n)\big\} \\
&=~ \lim_{n \to \infty}\epsilon_n^d \sum_{\xi \in \epsilon_n\mathcal{L}} \sum_{(\rho\alpha\beta)}\sum_{(\tau\gamma\delta)} V_{,(\rho\alpha\beta)(\tau\gamma\delta)}:\big\{D_{\rho,n}(\theta_nZ_n):D_{\tau,n}(\epsilon_n \theta_n t^\delta_n ) \\
&\qquad + D_{\rho,n}(\theta_nZ_n):(T_{\rho,n}(\theta_n) - \theta_n + \theta_n)(t^\delta_n - t^\gamma_n)\big\} \\
&=~ \lim_{n \to \infty}\epsilon_n^d \sum_{\xi \in \epsilon_n\mathcal{L}} \sum_{(\rho\alpha\beta)}\sum_{(\tau\gamma\delta)} V_{,(\rho\alpha\beta)(\tau\gamma\delta)}:\big\{D_{\rho,n}(\theta_nZ_n):\big(D_{\tau,n}(\epsilon_n \theta_n t^\delta_n ) + \theta_n(t^\delta_n - t^\gamma_n)\big)\big\}.
\end{split}
\end{equation}

The third term from~\eqref{base1} is, after using the previously established convergence properties of $Z_n, \bm{t}_n$ and $\theta_n$,
\begin{equation}\label{soup5}
\begin{split}
&\lim_{n \to \infty}A_3^n\\
&=~ \lim_{n \to \infty}\epsilon_n^d \sum_{\xi \in \epsilon_n\mathcal{L}} \sum_{(\rho\alpha\beta)}\sum_{(\tau\gamma\delta)} V_{,(\rho\alpha\beta)(\tau\gamma\delta)}:(T_{\rho,n}(\theta^2_nt^\beta_n) - \theta^2_nt^\alpha_n):D_{\tau,n}(Z_n) \\
&=~ \lim_{n \to \infty}\epsilon_n^d \sum_{\xi \in \epsilon_n\mathcal{L}} \sum_{(\rho\alpha\beta)}\sum_{(\tau\gamma\delta)} V_{,(\rho\alpha\beta)(\tau\gamma\delta)}:((T_{\rho,n}(\theta_n)-\theta_n)T_{\rho,n}(\theta_nt^\beta_n) + \theta_nT_{\rho,n}(\theta_n t^\beta_n) - \theta^2_nt^\alpha_n):D_{\tau,n}(Z_n) \\
&=~ \lim_{n \to \infty}\epsilon_n^d \sum_{\xi \in \epsilon_n\mathcal{L}} \sum_{(\rho\alpha\beta)}\sum_{(\tau\gamma\delta)} V_{,(\rho\alpha\beta)(\tau\gamma\delta)}:\left\{(T_{\rho,n}(\theta_nt^\beta_n) - \theta_nt^\alpha_n):\theta_nD_{\tau,n}(Z_n) \right\} \\
&=~ \lim_{n \to \infty}\epsilon_n^d \sum_{\xi \in \epsilon_n\mathcal{L}} \sum_{(\rho\alpha\beta)}\sum_{(\tau\gamma\delta)} V_{,(\rho\alpha\beta)(\tau\gamma\delta)}:\left\{(T_{\rho,n}(\theta_nt^\beta_n) - \theta_nt^\alpha_n):\big(T_{\tau,n}(\theta_n)D_{\tau,n}(Z_n) + D_{\tau,n}(\theta_n)(Z_n)\big) \right\} \\
&=~ \lim_{n \to \infty}\epsilon_n^d \sum_{\xi \in \epsilon_n\mathcal{L}} \sum_{(\rho\alpha\beta)}\sum_{(\tau\gamma\delta)} V_{,(\rho\alpha\beta)(\tau\gamma\delta)}:\left\{(T_{\rho,n}(\theta_nt^\beta_n) - \theta_nt^\alpha_n):D_{\tau,n}(\theta_n Z_n)\right\}
\end{split}
\end{equation}

After again using the convergence properties of $\theta_n$, it is then straightforward to show
\begin{equation}\label{soup9}
\begin{split}
&\lim_{n \to \infty} A_4^n \\
&=~ \lim_{n \to \infty}\epsilon_n^d \sum_{\xi \in \epsilon_n\mathcal{L}} \sum_{(\rho\alpha\beta)}\sum_{(\tau\gamma\delta)} V_{,(\rho\alpha\beta)(\tau\gamma\delta)}:(T_{\rho,n}(\theta_nt^\beta_n) - \theta_nt^\alpha_n):(T_{\tau,n}(\theta_nt^\delta_n) - \theta_nt^\gamma_n).
\end{split}
\end{equation}
The equations~\eqref{soupy15},~\eqref{soup25},~\eqref{soup5}, and~\eqref{soup9} applied to~\eqref{base1} give the conclusion of the Lemma.
\end{proof}

\begin{lemma}\label{p1_lemma}
Let $W_n, r_n$, and $\varphi_n$ be defined as in the proof of Theorem~\ref{stab_theorem} with $I_{h,n}$ the $\mathcal{P}_1$ interpolant onto $\mathcal{T}_{h,n}$.  Then
\begin{equation} \label{app_p1_lemma}
   \begin{split}
   & \|\nabla I_{h,n}(\varphi_n W_n) - \nabla (\varphi_n W_n)\|_{L^2(\mathbb{R}^d)} \to~ 0, \\
   & \|I_{h,n}(\varphi_n r^\alpha_n) - (\varphi_n r^\alpha_n)\|_{L^2(\mathbb{R}^d)} \to~ 0.
   \end{split}
\end{equation}
\end{lemma}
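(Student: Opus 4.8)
The plan is to localise each interpolation error to the (rescaled) blending region and then apply standard elementwise $\mathcal{P}_1$ interpolation estimates, exploiting that $W_n$ and $r_n^\alpha$ are affine on every mesh element, so that $\varphi_n$ carries all the curvature of the products $\varphi_n W_n$ and $\varphi_n r_n^\alpha$. Concretely, if $\varphi_n$ is constant on an element $T$ of the rescaled mesh $\epsilon_n\mathcal{T}_{h,n}$, then $\varphi_n W_n|_T$ and $\varphi_n r_n^\alpha|_T$ are affine on $T$ (since $W_n|_T, r_n^\alpha|_T\in\mathcal{P}_1(T)$), so $I_{h,n}$ reproduces them exactly on $T$ and both differences in \eqref{app_p1_lemma} vanish there. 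Hence the errors are supported on the set $\mathcal{T}'_n$ of elements meeting ${\rm supp}(\nabla\varphi_n)$. By the requirements of Section~\ref{sec:approx_params}, ${\rm supp}(\nabla\varphi_n)\subset\Omega_{\b,n}\subset\Omega_{\a,n}$ and the mesh is fully refined over $\Omega_{\a,n}$, so every $T\in\mathcal{T}'_n$ is a shape-regular atomistic element with $\diam(T)\lesssim\epsilon_n$; since $\Omega_{\a,n}$ rescales into $B_1(0)$, the union $\bigcup\mathcal{T}'_n$ lies in a fixed ball $B_{C_0}(0)$ with $C_0$ independent of $n$.

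For the gradient term, on each $T\in\mathcal{T}'_n$ the product $\varphi_n W_n$ lies in $H^2(T)$ and its Hessian there involves only $\nabla^2\varphi_n$, $\nabla\varphi_n$, $W_n$ and $\nabla W_n$ (the term $\varphi_n\nabla^2 W_n$ vanishing because $W_n|_T$ is affine); combined with the scaling-invariant form of Assumption~\ref{assumption-shapereg}, namely $\|\nabla^j\varphi_n\|_{L^\infty}\lesssim C_\varphi$ for $j=1,2$ after rescaling, this gives $\|\nabla^2(\varphi_n W_n)\|_{L^2(T)}\lesssim C_\varphi(\|W_n\|_{L^2(T)}+\|\nabla W_n\|_{L^2(T)})$. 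Using the elementwise estimate $\|\nabla(g-I_{h,n}g)\|_{L^2(T)}\lesssim\diam(T)\,\|\nabla^2 g\|_{L^2(T)}$ and summing over $T\in\mathcal{T}'_n$ yields
\[
\|\nabla I_{h,n}(\varphi_n W_n)-\nabla(\varphi_n W_n)\|_{L^2(\mathbb{R}^d)}\lesssim\epsilon_n C_\varphi\big(\|W_n\|_{L^2(B_{C_0})}+\|\nabla W_n\|_{L^2(\mathbb{R}^d)}\big).
\]
Since $\|\nabla W_n\|_{L^2(\mathbb{R}^d)}\le 1$ and the representative of $W_n$ was chosen with $\int_{B_1(0)}W_n=0$, the Poincar\'e inequality on $B_{C_0}(0)$ gives $\|W_n\|_{L^2(B_{C_0})}\lesssim 1$, so the right-hand side is $\lesssim\epsilon_n\to 0$, which is the first line of \eqref{app_p1_lemma}.

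For the shift term I would repeat the argument with the $L^2$ interpolation estimate $\|g-I_{h,n}g\|_{L^2(T)}\lesssim\diam(T)^2\|\nabla^2 g\|_{L^2(T)}$. The Hessian of $\varphi_n r_n^\alpha$ on $T$ again involves only $\nabla^2\varphi_n$, $\nabla\varphi_n$, $r_n^\alpha$ and $\nabla r_n^\alpha$, and since $r_n^\alpha|_T$ is affine on the atomistic element $T$ of size $\sim\epsilon_n$ the inverse inequality gives $\|\nabla r_n^\alpha\|_{L^2(T)}\lesssim\epsilon_n^{-1}\|r_n^\alpha\|_{L^2(T)}$; thus one power of $\epsilon_n$ survives the two powers of $\diam(T)$. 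Summing over $\mathcal{T}'_n$ and using $\|r_n^\alpha\|_{L^2(\mathbb{R}^d)}\le 1$ gives $\|I_{h,n}(\varphi_n r_n^\alpha)-\varphi_n r_n^\alpha\|_{L^2(\mathbb{R}^d)}\lesssim\epsilon_n\to 0$, which is the second line.

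The only point requiring real care — the main obstacle such as it is — is the localisation step: one must verify that after rescaling the error is genuinely carried by $n$-independently many atomistic-scale elements inside a fixed ball, which relies on combining the full-refinement-over-$\Omega_\a$ hypothesis, the inclusion $\Omega_\b\subset\Omega_\a$ (so that ${\rm supp}(\nabla\varphi_n)$ sits in the refined region), and the size/shape-regularity bounds of Assumption~\ref{assumption-shapereg}. Once that is established, the single remaining $n$-dependent factor $\|\nabla r_n^\alpha\|_{L^2}\sim\epsilon_n^{-1}$ is exactly absorbed by the extra power of $\diam(T)$ in the $L^2$ estimate, and nothing beyond routine interpolation theory is needed.
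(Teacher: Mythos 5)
Your argument is correct and is essentially the paper's own proof: elementwise $\mathcal{P}_1$ interpolation estimates with the Hessian of the product reduced to terms in $\nabla\varphi_n,\nabla^2\varphi_n$ (since $W_n,r_n^\alpha$ are elementwise affine), localisation to the fully refined region where ${\rm supp}(\nabla\varphi_n)$ lives so that $h_T=\mathcal{O}(\epsilon_n)$, the Poincar\'e inequality via the zero-mean normalisation of $W_n$ on $B_1(0)$, and the inverse inequality $h_T\|\nabla r_n^\alpha\|_{L^2(T)}\lesssim\|r_n^\alpha\|_{L^2(T)}$ to absorb the extra power of $h_T$ in the shift estimate. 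Your explicit observation that the interpolation error vanishes on elements where $\varphi_n$ is constant is a slightly cleaner way of stating the localisation the paper performs implicitly, but the substance is identical.
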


\begin{proof}
Let $T$ be any element of $\mathcal{T}_n$ and $I_T$ the linear interpolant onto this triangle.  Then
\begin{equation}\label{appetizer}
\begin{split}
   \| \nabla I_T (\varphi_n W_n) - \nabla (\varphi_n W_n) \|_{L^2(T)}
   & \lesssim h_T \| \nabla^2 (\varphi_n W_n) \|_{L^2(T)} \\
   &  \lesssim h_T \| \nabla^2 \varphi_n \otimes W_n \|_{L^2(T)}
            + h_T \| \nabla \varphi_n \otimes \nabla W_n \|_{L^2(T)} \\
   & \lesssim h_T \| W_n \|_{L^2(T)} + h_T \| \nabla W_n \|_{L^2(T)}.
\end{split}
\end{equation}
We recall that $\nabla \varphi_n$ has support on $B_1(0)$ due to our choice of scaling and that $\int_{B_1(0)} W_n\dx = 0$ due to our choice of equivalence class representative.  Because of this and the estimate~\eqref{appetizer}, it then follows that
\begin{align*}
   \| \nabla I_T (\varphi_n W_n) - \nabla (\varphi_n W_n) \|_{L^2(\mathbb{R}^d)} \lesssim~& h_T \|W_n\|_{L^2(B_1(0))} + h_T \| \nabla W_n \|_{L^2(B_1(0))} \\
	\lesssim~& h_T \| \nabla W_n \|_{L^2(B_1(0))},
\end{align*}
by the Poincar\'{e} inequality.  Since $h_T = \mathcal{O}(\epsilon_n)$ on $L^2(B_1(0))$ due to the full mesh refinement assumption, we obtain the first result.

For the second one, the argument is the similar except that we have $h_T^2$, we need not use the Poincar\'{e} inequality, and we do not immediately have that $\nabla r$ is bounded.  However, we note that
\begin{align*}
   h_T^2 \| \nabla r_n^\alpha \|_{L^2(T)} &\lesssim h_T^2 |T|^{1/2} \| \nabla r_n^\alpha \|_{L^\infty(T)}  \\
   &\lesssim h_T |T|^{1/2} \| r_n^\alpha \|_{L^\infty(T)}
   \lesssim h_T \| r_n^\alpha \|_{L^2(T)},
\end{align*}
which could be used to obtain the result for $r_n^\alpha$.
\end{proof}

\section{Notation}\label{sec:appnotation}
This section summarizes notation used in the manuscript.

\begin{itemize}

\item $\mathcal{L}$ --- a Bravais lattice

\item $\mathcal{M}$ --- a multilattice

\item $\mathcal{S} = \{0, \ldots, S-1\}$ --- the index set of atomic species

\item $\xi$ --- an element of $\mathcal{L}$ or $\epsilon\mathcal{L}$ for $\epsilon > 0$.

\item $\alpha,\beta,\gamma,\delta, \iota, \chi$ --- indexes denoting atomic species

\item $\rho, \tau, \sigma \in \mathcal{L}$ --- vectors between lattice sites

\item $\mathcal{R}$ --- an interaction range whose elements are triples of the form $\triple \in \mathcal{L} \times \mathcal{S} \times \mathcal{S}$

\item $\mathcal{R}_1  := \{ \rho \in \mathcal{L} : \exists \triple \in \mathcal{R}\}$ --- projection of $\mathcal{R}$ onto lattice direction

\item $r_{\rm cut} := \max \{ |\rho| :  \triple \in \mathcal{R} \}$ --- a finite cut-off distance

\item $r_{\rm cell}$ --- the radius of the smallest ball inscribing the unit cell of $\mathcal{L}$

\item $r_{\rm buff} := \max\{ r_{\rm cut}, r_{\rm cell}\}$

\item $\bm{u} = \big(u_\alpha\big)_{\alpha = 0}^{S-1}$ --- vector of displacements of all species of atoms

\item $(U, \bm{p})$ --- displacement/shift description defined by $U = u_0$ and $p_\alpha = u_\alpha - u_0$

\item $\bm{y}^{\rm ref}$ and $\bm{p}^{\rm ref}$ --- the reference deformation and shifts

\item $D_{\triple} \bm{u}(\xi) = u_\beta(\xi + \rho) - u_\alpha(\xi), D_{\triple}(U,\bm{p}) = U(\xi+\rho) - U(\xi) + p_\beta(\xi + \rho) - p_\alpha(\xi)$

\item $D\bm{u}(\xi) = \big(D_{\triple}\bm{u}(\xi)\big)_{\triple \in \mathcal{R}}, D(U,\bm{p})(\xi) = \big(D_{\triple}(U,\bm{p})(\xi)\big)_{\triple \in \mathcal{R}}$

\item $\hat{V}_\xi(D\bm{y}(\xi))$ and $V_\xi(D\bm{u})$ --- site potentials defined on deformations and displacements, respectively

\item $\mathcal{E}^\a(\bm{u})$ and $\mathcal{E}^\a_{\rm hom}(\bm{u})$ --- energy difference functionals for defective and defect free lattice.

\item $\mathcal{T}_{\a}$ --- atomistic scale finite element mesh of triangles in $2D$ and tetrahedra in $3D$

\item $\bar{\zeta}(x), \bar{\zeta}_\xi(x) = \bar{\zeta}(x-\xi)$ --- nodal basis function of $\mathcal{T}_\a$ associated with the origin and $\xi$ respectively

\item $\omeRho(x):= \int_{0}^1 \barZeta(x+t\rho)dt$ --- an auxiliary function

\item $Iu_\alpha, IU, Ip_\alpha$ or $\bar{u}_\alpha, \bar{U}, \bar{p}_\alpha$ --- a piecewise linear interpolant with respect to $\mathcal{T}_\a$

\item $\tilde{I}u_\alpha, \tilde{I}U, \tilde{I}p_\alpha$ or $\tilde{u}_\alpha, \tilde{U}, \tilde{p}_\alpha$ --- a ${\rm C}^{2,1}$ interpolant with respect to $\mathcal{T}_\a$

\item $u^*(x) := (\bar{\zeta} * \bar{u})(x)$ --- quasi-interpolant of $u$ defined through convolution

\item $|\cdot|$ --- meaning depends on context:  $|\cdot|$ is $\ell^2$ norm of a vector, matrix, higher order tensor, or finite difference stencil. $|T|$ is area or volume of element $T$ in a finite element partition, $|\gamma|$ is the order of a multiindex.

\item $\|\cdot\|_{\ell^2(A)}$ --- $\ell^2$ norm over a set $A$.  If $f:A \to \mathbb{R}^d$ is a vector-valued function, $\|f\|_{\ell^2(A)} = (\sum_{\alpha \in A}|f(\alpha)|^2)^{1/2}$.

\item $\|\cdot \|_{\a}$ --- norm on admissible displacements defined by  $\|\bm{u}\|_\a^2 :=~ \sum_{\alpha = 0}^{S-1}\|\nabla Iu_\alpha\|_{L^2(\mathbb{R}^d)}^2 + \sum_{\alpha \neq \beta}\| Iu_\alpha - Iu_\beta\|_{L^2(\mathbb{R}^d)}^2$.

\item $\bm{\mathcal{U}}$ ---space of admissible displacements defined by
\[
\mathcal{U} :=~ \left\{ \bm{u} = (u_\alpha)_{\alpha = 0}^{S-1} : u_\alpha:\mathcal{L} \to \mathbb{R}^n, \|\bm{u}\|_\a < \infty \right\}/\mathbb{R}^n
\]

\item $\bm{\mathcal{U}}_0$ ---space of test displacements defined by
\[
\left\{(U, \bm{p}) : {\rm supp}(\nabla IU), \, p_0 \equiv 0, \,  \mbox{and} \, {\rm supp}(Ip_\alpha) \, \mbox{are compact}\right\}/\mathbb{R}^n
\]

\item $\Omega$ --- a finite polygonal domain

\item $\varphi$ ---the blending function

\item $\Omega_\a :=~ {\rm supp}(1-\varphi) + B_{2r_{\rm buff}}$ --- the atomistic domain

\item $\Omega_{\rm b} :=~ {\rm supp}(\nabla \varphi) + B_{2r_{\rm buff}}$ --- the blending region

\item $\Omega_\c :=~ {\rm supp}(\varphi) \cap \Omega + B_{2r_{\rm buff}}$ --- the continuum region

\item $\Omega_{\rm core} :=~ \Omega\setminus\Omega_\c$ --- the defect core region

\item $\mathcal{T}_h$ --- the (coarse) finite element mesh on $\Omega$

\item $h(x) := \max_{T: x\in T} {\rm Diam}(T)$ --- the mesh size function

\item $R_{\rm t} := \inf_{R} \{R > 0: \Omega_{\rm t} \subset B_{R}(0)\}$ --- an exterior measure of a domain $\Omega_{\rm t}$

\item $r_{\rm t} := \sup_{r}\{r > 0: B_{r}(0) \subset \Omega_{\rm t}\}$ --- an interior measure of a domain $\Omega_{\rm t}$

\item $R_{\rm o} := \inf_{R} \{R > 0: \Omega \subset B_{R}(0)\}$  --- an exterior measurement of $\Omega$

\item $r_{\rm i} := \sup_{r}\{r > 0: B_{r}(0) \subset \Omega\}$ --- an interior measurement of $\Omega$

\item $\Omega_{\rm ext} := \mathbb{R}^d\setminus B_{r_{\rm i}/2}(0)$ --- exterior of $\Omega$

\item $I_h$--- the standard piecewise linear nodal interpolant on $\mathcal{T}_h$

\item $S_{h}$ --- the Scott-Zhang quasi-interpolant on $\mathcal{T}_h$.

\item $W_{\rm CB}(U,\bm{p})$ --- Cauchy--Born strain energy density function

\item $\mathcal{E}^\c(U,\bm{p})$ --- Cauchy--Born energy functional

\item $\mathcal{U}_h :=~ \left\{u \in {\rm C}^0(\Omega) : u|_{T} \in \mathcal{P}_1(T), \quad \forall \, T \in \mathcal{T}_h\right\}$ --- a finite element space

\item $\bm{\mathcal{U}}_h :=~ \mathcal{U}_h / \mathbb{R}^n$ space of admissible finite element displacements

\item $\mathcal{U}_{h,0} :=~ \left\{u \in {\rm C}^0(\mathbb{R}^d): u|_{T} \in \mathcal{P}_1(T), \quad \forall \, T \in \mathcal{T}_h, u = 0 \mbox{ on } \mathbb{R}^d\setminus\Omega \right\}$ --- finite element space satisfying homogeneous boundary conditions

\item $\bm{\mathcal{U}}_{h,0} :=~ \mathcal{U}_{h,0}/ \mathbb{R}^n$ --- finite element quotient space

\item $\bm{\mathcal{P}}_{h,0} :=~ \{0\}   \times  (\mathcal{U}_{h,0})^{S-1}$ --- finite element space for shifts

\item $\|(U, \bm{p})\|_{\rm ml}^2 := \|\nabla U \|_{L^2(\mathbb{R}^d)}^2 + \sum_{\alpha = 0}^{S-1}\|p_\alpha\|^2_{L^2(\mathbb{R}^d)} = \|\nabla U \|_{L^2(\mathbb{R}^d)}^2 + \|\bm{p}\|^2_{L^2(\mathbb{R}^d)}$ --- norm on finite element spaces

\item $\|\bm{p}\|_{L^p} := \sum_{\alpha = 0}^{S-1}\|p_\alpha\|_{L^p}, \|\nabla \bm{p}\|_{L^p} := \sum_{\alpha = 0}^{S-1}\|\nabla p_\alpha\|_{L^p}$

\item $\eta(x)$ --- a smooth bump function or standard mollifying function depending on the context

\item $T_{R}u_\alpha(x) = \eta(x/R)\bigg(Iu_\alpha - \frac{1}{|A_R|}\int\limits_{A_R} Iu_0\, dx\bigg)$ --- a truncation operator

\item $\Pi_{h} u_\alpha := S_h (T_{r_{\rm i}}u_\alpha)$ --- an projection operator from discrete displacements to finite element displacements

\item $\Pi_{h} p_\alpha := \Pi_{h} (u_\alpha - u_0)$ ---- a projection operator on shifts

\item $[{\Scd}(U,\bm{q})(x)]_{\beta}$ and $[{\Scs}(U,\bm{q})(x)]_{\alpha\beta}$--- continuum stress function associated with displacements and shifts

\item $[\Sad(U,\bm{q})(x)]_\beta$ and $[\Sas(U,\bm{q})(x)]_{\alpha\beta}$ ---atomistic stress function associated with displacements and shifts

\item $V_{,\triple\tripleTau}\big( \cdot \big):v:w :=~ w^{\transpose}\big[V_{,\triple\tripleTau}\big( \cdot \big)\big]v \quad \forall v,w \in \mathbb{R}^n$

\item $\mathbb{C} : D(W,\bm{q}): D(Z,\bm{r}) :=~ \sum_{\triple \in \mathcal{R}} \sum_{\tripleTau \in \mathcal{R}} V_{,\triple\tripleTau}:D_{\triple}(W,\bm{q}):D_{\tripleTau}(Z,\bm{r})$

\item $\mathbb{C} : \nabla (W,\bm{q}): \nabla (Z,\bm{r}) :=~ \sum_{\triple \in \mathcal{R}} \sum_{\tripleTau \in \mathcal{R}} V_{,\triple\tripleTau}:(\nabla_\rho W + q_\beta - q_\alpha):(\nabla_\tau Z + r_\beta- r_\alpha)$

\item $\varphi_n$ --- a sequence of blending functions

\item $\psi_n := 1-\varphi_n$

\item $\theta_n := \sqrt{\psi_n}$

\item $B_{r}, B_{r}(x)$ --- Ball of radius $r$ about the origin or ball of radius $r$ about $x$.

\item ${\rm supp}(f)$ --- support of a function $f$.

\item ${\rm Diam}(U)$ --- diameter of the set $U$ measured with the Euclidean norm.

\item $(\mathbb{R}^n)^{\calR}$ --- direct product of vectors with $|\calR|$ terms.

\item $\mathstrut^\transpose$ --- transpose of a matrix.

\item $\otimes$ --- tensor product.

\item $\nabla^j$ --- $j$th derivative of a function defined on $\mathbb{R}^d$.

\item $\partial_\gamma$ --- multiindex notation for derivatives.

\item $L^p(U)$ --- Standard Lebesgue spaces.

\item $W^{k,p}(U)$ --- Standard Sobolev spaces.

\item $W^{k,p}_{\rm loc}(U) = \left\{f:U \to \mathbb{R}^d : f \in W^{k,p}(V) \, \forall V \subset\subset U  \right\}$.

\item $H^k(U) = W^{k,2}(U)$, $H^1_0(U) = \left\{f \in H^k(U) : \mbox{Trace}(f) = 0~\, \mbox{on} \,~ \partial U \right\}$.

\item ${\rm C}^{k}$ --- space of $k$ times continuously differentiable functions

\item $\dashint_{U} f\, dx$ --- average value of $f$ over $U$.

\end{itemize}

%%%%%%%%%%%%%%%%%%%%%%%%%%%%%%%%%%%%%%%%%%%%%%%%%%%%%%%%%%%%%%%%%%%%%%
\newpage
\bibliographystyle{plain}	% (uses file "plain.bst")
\bibliography{myrefs}		% expects file "myrefs.bib"

\begin{thebibliography}{10}

\bibitem{AbdulleLinShapeev2012}
A.~Abdulle, P.~Lin, and A.~Shapeev.
\newblock Numerical methods for multilattices.
\newblock {\em Multiscale Model. Simul.}, 10(3):696--726, 2012.

\bibitem{AbdulleLinShapeevII}
A.~Abdulle, P.~Lin, and A.~Shapeev.
\newblock A priori and a posteriori ${W}^{1,\infty}$ error analysis of a qc
  method for complex lattices.
\newblock {\em SIAM J. Numer. Anal.}, 51(4):2357--2379, 2013.

\bibitem{abraham1998}
F.~Abraham, J.~Broughton, N.~Bernstein, and E.~Kaxiras.
\newblock Spanning the continuum to quantum length scales in a dynamic
  simulation of brittle fracture.
\newblock {\em Europhysics Letters}, 44(6):783, 1998.

\bibitem{expCauchy}
M.~Arroyo and T.~Belytschko.
\newblock Finite crystal elasticity of carbon nanotubes based on the
  exponential cauchy-born rule.
\newblock {\em Phys. Rev. B}, 69:115415, Mar 2004.

\bibitem{Bochev_08_OUP}
S.~Badia, P.~Bochev, M.~Gunzburger, R.~Lehoucq, and M.~Parks.
\newblock Blending methods for coupling atomistic and continuum models.
\newblock In J.~Fish, editor, {\em Bridging the scales in science and
  engineering}, pages 165--186. Oxford University Press, 2009.

\bibitem{badia2007force}
S.~Badia, P.~Bochev, R.~Lehoucq, M.~Parks, J.~Fish, M.~Nuggehally, and
  M.~Gunzburger.
\newblock A force-based blending model for atomistic-to-continuum coupling.
\newblock {\em International Journal for Multiscale Computational Engineering},
  5(5), 2007.

\bibitem{Bochev_08_MMS}
S.~Badia, M.~Parks, P.~Bochev, M.~Gunzburger, and R.~Lehoucq.
\newblock On atomistic-to-continuum coupling by blending.
\newblock {\em Multiscale Modeling \& Simulation}, 7(1):381--406, 2008.

\bibitem{Bauman_08_CM}
P.~Bauman, H.~Ben Dhia, N.~Elkhodja, J.~Oden, and S.~Prudhomme.
\newblock On the application of the {Arlequin method} to the coupling of
  particle and continuum models.
\newblock {\em Computational Mechanics}, 42:511--530, 2008.

\bibitem{born1954}
M.~Born and K.~Huang.
\newblock {\em Dynamical Theory of Crystal Lattices}.
\newblock Clarendon Press, first edition, 1954.

\bibitem{brenner2008}
S.~Brenner and R.~Scott.
\newblock {\em The mathematical theory of finite element methods}, volume~15.
\newblock Springer, 2008.

\bibitem{cauchy}
A.L. Cauchy.
\newblock De la pression ou la tension dans un systeme de points materiels.
\newblock In {\em Exercices de Mathematiques}. 1828.

\bibitem{datta2004}
D.~Datta, C.~Picu, and M.~Shephard.
\newblock Composite grid atomistic continuum method: an adaptive approach to
  bridge continuum with atomistic analysis.
\newblock {\em International Journal for Multiscale Computational Engineering},
  2(3), 2004.

\bibitem{dobson2007multilattice}
M.~Dobson, R.~Elliott, M.~Luskin, and E.~Tadmor.
\newblock A multilattice quasicontinuum for phase transforming materials:
  Cascading cauchy born kinematics.
\newblock {\em Journal of Computer-Aided Materials Design}, 14(1):219--237,
  2007.

\bibitem{dobson_esaim}
M.~Dobson and M.~Luskin.
\newblock Analysis of a force-based quasicontinuum approximation.
\newblock {\em ESAIM: Mathematical Modelling and Numerical Analysis},
  42:113--139, 0 2008.

\bibitem{dobson2010sharp}
M.~Dobson, M.~Luskin, and C.~Ortner.
\newblock Sharp stability estimates for the force-based quasicontinuum
  approximation of homogeneous tensile deformation.
\newblock {\em Multiscale Modeling \& Simulation}, 8(3):782--802, 2010.

\bibitem{E2006}
W.~E, J.~Lu, and J.Z. Yang.
\newblock Uniform accuracy of the quasicontinuum method.
\newblock {\em Phys. Rev. B}, 74:214115, 2006.

\bibitem{Ehrlacher2013}
V.~Ehrlacher, C.~Ortner, and A.~V. Shapeev.
\newblock Analysis of boundary conditions for crystal defect atomistic
  simulations.
\newblock {\em Archive for Rational Mechanics and Analysis}, 222(3):1217--1268,
  2016.

\bibitem{eidel2009}
B.~Eidel and A.~Stukowski.
\newblock A variational formulation of the quasicontinuum method based on
  energy sampling in clusters.
\newblock {\em Journal of the Mechanics and Physics of Solids}, 57(1):87--108,
  2009.

\bibitem{ericksen2008cauchy}
J.~Ericksen.
\newblock On the cauchy-born rule.
\newblock {\em Mathematics and Mechanics of solids}, 13(3-4):199--220, 2008.

\bibitem{hubbard2009}
J.~Hubbard and B.~Hubbard.
\newblock {\em Vector calculus, linear algebra, and differential forms : a
  unified approach}.
\newblock Matrix Editions, fourth edition, 2009.

\bibitem{hudson2015}
T.~Hudson and C.~Ortner.
\newblock Analysis of stable screw dislocation configurations in an anti-plane
  lattice model.
\newblock {\em SIAM J. Math. Anal.}, 41:291--320, 2015.

\bibitem{koten2013}
B.~Van Koten and C.~Ortner.
\newblock Symmetries of 2-lattices and second order accuracy of the
  cauchy--born model.
\newblock {\em SIAM Multiscale Modelling and Simulation}, 11:615--634, 2013.

\bibitem{li2012positive}
X.~Li, M.~Luskin, and C.~Ortner.
\newblock Positive definiteness of the blended force-based quasicontinuum
  method.
\newblock {\em Multiscale Modeling \& Simulation}, 10(3):1023--1045, 2012.

\bibitem{bqcf13}
X.~Li, M.~Luskin, C.~Ortner, and A.V. Shapeev.
\newblock Theory-based benchmarking of the blended force-based quasicontinuum
  method.
\newblock {\em Computer Methods in Applied Mechanics and Engineering},
  268:763--781, 2014.

\bibitem{blended2014}
X.~Li, C.~Ortner, A.V. Shapeev, and B.~Van~Koten.
\newblock Analysis of blended atomistic/continuum hybrid methods.
\newblock {\em Numerische Mathematik}, 134(2):275--326, 2016.

\bibitem{lu2013}
J.~Lu and P.~Ming.
\newblock Convergence of a force-based hybrid method in three dimensions.
\newblock {\em Communications on Pure and Applied Mathematics}, 66(1):83--108,
  2013.

\bibitem{lu2014}
J.~Lu and P.~Ming.
\newblock Stability of a force-based hybrid method with planar sharp interface.
\newblock {\em SIAM Journal on Numerical Analysis}, 52:2005--2026, 2014.

\bibitem{acta.atc}
M.~Luskin and C.~Ortner.
\newblock Atomistic-to-continuum coupling.
\newblock {\em Acta Numerica}, 22:397--508, 4 2013.

\bibitem{bqce12}
M.~Luskin, C.~Ortner, and B.~Van~Koten.
\newblock Formulation and optimization of the energy-based blended
  quasicontinuum method.
\newblock {\em Computer Methods in Applied Mechanics and Engineering},
  253:160--168, 2013.

\bibitem{MakridakisMitsoudisRosakis2012}
C.~Makridakis, D.~Mitsoudis, and P.~Rosakis.
\newblock On atomistic-to-continuum couplings without ghost forces in three
  dimensions.
\newblock {\em Applied Mathematics Research eXpress}, 2014(1):87--113, 2014.

\bibitem{miller2009}
R.~Miller and E.~Tadmor.
\newblock A unified framework and performance benchmark of fourteen multiscale
  atomistic/continuum coupling methods.
\newblock {\em Modeling and Simulation in Materials Science and Engineering},
  17(5):053001, 2009.

\bibitem{olsonThesis}
D.~Olson.
\newblock {\em Formulation and Analysis of an Optimization-Based
  Atomistic-to-Continuum Coupling Algorithm}.
\newblock PhD thesis, University of Minnesota, 2015.

\bibitem{Dev2013}
D.~Olson, P.~Bochev, M.~Luskin, and A.~Shapeev.
\newblock Development of an optimization-based atomistic-to-continuum coupling
  method.
\newblock In I.~Lirkov, S.~Margenov, and J.~Waśniewski, editors, {\em
  Large-Scale Scientific Computing}, volume 8353 of {\em Lecture Notes in
  Computer Science}, pages 33--44. Springer Berlin Heidelberg, 2014.

\bibitem{olsonOrtner2016}
D.~Olson and C.~Ortner.
\newblock Regularity and locality of point defects in multilattices.
\newblock {\em Applied Mathematics Research eXpress}, To appear, 2017.

\bibitem{olson2015}
D.~Olson, A.~Shapeev, P.~Bochev, and M.~Luskin.
\newblock Analysis of an optimization-based atomistic-to-continuum coupling
  method for point defects.
\newblock {\em ESAIM: M2AN}, 50(1):1--41, 2016.

\bibitem{ortiz1996quasicontinuum}
M.~Ortiz, R.~Phillips, and E.~Tadmor.
\newblock Quasicontinuum analysis of defects in solids.
\newblock {\em Philosophical Magazine A}, 73(6):1529--1563, 1996.

\bibitem{ortnerInverse}
C.~Ortner.
\newblock A posteriori existence in numerical computations.
\newblock {\em SIAM J. Numer. Anal.}, 47(4):2550--2577, 2009.

\bibitem{atInterpolant}
C.~Ortner and A.~Shapeev.
\newblock {Interpolants of Lattice Functions for the Analysis of
  Atomistic/Continuum Multiscale Methods}.
\newblock {\em ArXiv e-prints}, April 2012.
\newblock 1204.3705.

\bibitem{suli2012}
C.~Ortner and E.~S\"{u}li.
\newblock A note on linear elliptic systems on $\mathbb{R}^d$.
\newblock {\em ArXiv e-prints}, 2012.
\newblock 1202.3970.

\bibitem{theil2012}
C.~Ortner and F.~Theil.
\newblock Justification of the {C}auchy--{B}orn approximation of
  elastodynamics.
\newblock {\em Arch. Ration. Mech. Anal.}, 207:1025--1073, 2013.

\bibitem{OrtnerZhang2014bgfc}
C.~Ortner and L.~Zhang.
\newblock Atomistic/continuum blending with ghost force correction.
\newblock {\em ArXiv e-prints}, 1407.0053, 2014.

\bibitem{scott1990}
R.~Scott and S.~Zhang.
\newblock Finite element interpolation of nonsmooth functions satisfying
  boundary conditions.
\newblock {\em Mathematics of Computation}, 54(190):483--493, 1990.

\bibitem{seleson2013}
P.~Seleson, S.~Beneddine, and S.~Prudhomme.
\newblock A force-based coupling scheme for peridynamics and classical
  elasticity.
\newblock {\em Computational Materials Science}, 66:34--49, 2013.

\bibitem{shapeevMulti}
A.~Shapeev, D.~Olson, and M.~Luskin.
\newblock {\em Unpublished Manuscript}, 2015.

\bibitem{shapeev2012}
A.~V. Shapeev.
\newblock Consistent energy-based atomistic/continuum coupling for two-body
  potentials in three dimensions.
\newblock {\em SIAM J. Sci. Comput.}, 34(3):B335--B360, 2012.

\bibitem{shapeev_2011}
A.V. Shapeev.
\newblock Consistent energy-based atomistic/continuum coupling for two-body
  potentials in one and two dimensions.
\newblock {\em SIAM J. Multiscale Model. Simul.}, 9:905--932, 2011.

\bibitem{Shenoy:1999a}
V.~B. Shenoy, R.~Miller, E.~B. Tadmor, D.~Rodney, R.~Phillips, and M.~Ortiz.
\newblock {An adaptive finite element approach to atomic-scale mechanics--the
  quasicontinuum method}.
\newblock {\em {J. Mech. Phys. Solids}}, 47(3):611--642, 1999.

\bibitem{shilkrot2002coupled}
L.~Shilkrot, R.~Miller, and W.~Curtin.
\newblock Coupled atomistic and discrete dislocation plasticity.
\newblock {\em Physical Review Letters}, 89(2):025501, 2002.

\bibitem{shimokawa}
T.~Shimokawa, J.J. Mortensen, J.~Schi\o{}tz, and K.W. Jacobsen.
\newblock Matching conditions in the quasicontinuum method: Removal of the
  error introduced at the interface between the coarse-grained and fully
  atomistic region.
\newblock {\em Phys. Rev. B}, 69:214104, Jun 2004.

\bibitem{stillinger1985}
F.~Stillinger and T.~Weber.
\newblock Computer simulation of local order in condensed phases of silicon.
\newblock {\em Physical review B}, 31(8):5262, 1985.

\bibitem{tadmor2011}
E.~Tadmor and R.~Miller.
\newblock {\em Modeling Materials Continuum, Atomistic and Multiscale
  Techniques}.
\newblock Cambridge University Press, first edition, 2011.

\bibitem{xiao2004}
S.P. Xiao and T.~Belytschko.
\newblock A bridging domain method for coupling continua with molecular
  dynamics.
\newblock {\em Computer methods in applied mechanics and engineering},
  193(17):1645--1669, 2004.

\bibitem{yang2006generalized}
Jerry~Z Yang and E~Weinan.
\newblock Generalized cauchy-born rules for elastic deformation of sheets,
  plates, and rods: Derivation of continuum models from atomistic models.
\newblock {\em Physical Review B}, 74(18):184110, 2006.

\end{thebibliography}

\end{document}